\newcommand{\xmark}{\ding{55}}
\numberwithin{equation}{section}
\numberwithin{figure}{section}
\theoremstyle{plain}
\newtheorem{theorem}{Theorem}[section]
\newtheorem{lemma}[theorem]{Lemma}
\newtheorem{proposition}[theorem]{Proposition}
\newtheorem{corollary}[theorem]{Corollary}
\theoremstyle{definition}
\newtheorem{definition}[theorem]{Definition}
\newtheorem{remark}[theorem]{Remark}
\newcommand{\bitem}{\begin{itemize}}
\newcommand{\eitem}{\end{itemize}}
\newcommand{\mc}[1]{\mathcal{#1}}
\newcommand{\mr}[1]{\mathrm{#1}}
\newcommand{\N}{\mathbb{N}}
\newcommand{\R}{\mathbb{R}}
\newcommand{\EE}{\mathbb{E}}
\newcommand{\bpm}{\begin{pmatrix}}
\newcommand{\epm}{\end{pmatrix}}
\newcommand{\bvm}{\begin{vmatrix}}
\newcommand{\evm}{\end{vmatrix}}
\newcommand{\bsm}{\left(\begin{smallmatrix}}
\newcommand{\esm}{\end{smallmatrix}\right)}
\newcommand{\T}{\top}
\newcommand{\ol}[1]{\overline{#1}}
\newcommand{\wh}[1]{\widehat{#1}}
\newcommand{\wt}[1]{\widetilde{#1}}
\newcommand{\la}{\langle}
\newcommand{\ra}{\rangle}
\newcommand{\mrm}[1]{\mathrm{#1}}
\newcommand{\veps}{\varepsilon}
\newcommand{\vphi}{\varphi}
\newcommand{\eins}{\mathbb{1}}
\DeclareMathSymbol{\mydiv}{\mathbin}{symbols}{"04}
\DeclareMathOperator{\Diag}{Diag}
\DeclareMathOperator{\dom}{dom}
\DeclareMathOperator{\intr}{int}
\DeclareMathOperator{\rint}{rint}
\DeclareMathOperator{\argmin}{arg min}
\DeclareMathOperator{\ggrad}{grad}
\DeclareMathOperator{\Exp}{Exp}
\DeclareMathOperator{\KL}{KL}
\def\widebreve{\mathpalette\wide@breve}
\def\wide@breve#1#2{\sbox\z@{$#1#2$}
     \mathop{\vbox{\m@th\ialign{##\crcr
\kern0.08em\brevefill#1{0.8\wd\z@}\crcr\noalign{\nointerlineskip}
                    $\hss#1#2\hss$\crcr}}}\limits}
\def\brevefill#1#2{$\m@th\sbox\tw@{$#1($}
  \hss\resizebox{#2}{\wd\tw@}{\rotatebox[origin=c]{90}{\upshape(}}\hss$}
\newcommand{\authorfootnotes}{\renewcommand\thefootnote{\@fnsymbol\c@footnote}}
\title[Accelerated Bregman Divergence Optimization: An Information Geometric Point of View]{
Accelerated Bregman Divergence Optimization with SMART: \\ An Information Geometric Point of View}
\date{\today}
\begin{document}

\maketitle
\vspace{-0.5cm}
\centerline{\textit{Dedicated to Professor Yair Censor on occasion of his 80th birthday.}}\bigskip

\begin{center}
    \normalsize
      \authorfootnotes
      Maren Raus\footnote{raus@stud.uni-heidelberg.de}\textsuperscript{1}, Yara Elshiaty\footnote{elshiaty@math.uni-heidelberg.de}\textsuperscript{1},
      Stefania Petra\footnote{stefania.petra@uni-a.de}\textsuperscript{2} \par \bigskip
    
      \textsuperscript{1}Institute of Mathematics, Heidelberg University, INF 205, 69120 Heidelberg, Germany \par
      \textsuperscript{2}Institute for Mathematics, Augsburg University, Universit{\"a}tsstra{\ss}e 14, 86159 Ausgburg, Germany \par 
\end{center}

\begin{abstract}
We investigate the problem of minimizing Kullback-Leibler divergence between a linear model $Ax$ and a positive vector $b$ in different convex domains (positive orthant, $n$-dimensional box, probability simplex). Our focus is on the SMART method that employs efficient multiplicative updates. We explore the exponentiated gradient method, which can be viewed as a Bregman proximal gradient method and as a Riemannian gradient descent on the parameter manifold of a corresponding distribution of the exponential family. This dual interpretation enables us to establish connections and achieve accelerated SMART iterates while smoothly incorporating constraints. The performance of the proposed acceleration schemes is demonstrated by large-scale numerical examples.
\end{abstract}

\tableofcontents

\section{Introduction}

\subsection{Overview, Motivation}
We consider basic  \emph{SMART (Simultaneous Multiplicative Algebraic
Reconstruction Technique)} iterations \cite{Byrne:1993aa} for solving
\begin{equation}\label{eq:def-KL-objective}
\min_{x \in \R_{+}^{n}\cap \dom\phi} f(x),\qquad
f(x) = \KL(Ax,b), \qquad A \in \R_{+}^{m \times n},\quad b \in \R_{++}^{m},
\end{equation}
where the generalized \textit{Kullback-Leibler (KL)} divergence (aka relative entropy and I-divergence \cite{Csiszar:75}) is the Bregman divergence $D_{\phi_{f}}$ induced by the Bregman kernel $\phi_{f}$,
\begin{subequations}
\begin{align}\label{eq:def-our-D}
\KL(y,y') 
:= D_{\phi_f}(y,y') 
&= \la y, \log y - \log y'\ra - \la \eins, y-y' \ra,
\qquad y\in\R_{+}^{m},\quad y'\in\R_{++}^{m}
\\ \label{eq:def-Shannon-ent}
\phi_f(y) &=  \la y, \log y \ra - \la \eins, y\ra.
\end{align}
\end{subequations}
The KL divergence plays a distinguished role among all divergence functions \cite[Section 3.4]{InformationGeometry-2010}. The SMART iteration with parameter $\tau_{k}>0$ reads
\begin{equation}\label{eq:SMART-iteration-orthant}
(x_{k+1})_j = (x_{k})_j \prod_{i\in[m]}\Big(\frac{b_{i}}{(A x_{k})_{i}}\Big)^{\tau_{k} A_{ij}},\qquad j\in[n], \qquad x_{0} \in  \R^{n}_+, \qquad k = 0,1,\dotsc .    
\end{equation}

The objective function $f(x) = \KL(Ax,b)$ is motivated by inverse problems with inconsistent linear systems, where no $x\in\R_{+}^{n}$ satisfies $A x=b$. Taking nonnegativity into account, the KL divergence can be employed instead of the usual least-squares approach with the squared Euclidean norm. Since the KL divergence lacks symmetry, the order of arguments matters. The reverse order $\KL(b, A x)$ is applied, for example, when measurements arise from counting discrete events (e.g., photons, electrons) subject to noise described by a Poisson process \cite{Herman1985a,Bertero_2009}.

Besides nonnegativity $x\in\R_{+}^{n}$, problem \eqref{eq:def-KL-objective} takes also into account convex constraints $x\in\dom\phi$ in a generic way, in terms of another Bregman kernel $\phi$ used for proximal regularization. In this paper, the domain $\dom\phi$ of feasible solutions is turned into a Riemannian manifold using basic information geometry. This enables us to relate the basic multiplicative updates of the SMART iteration to a Riemannian gradient flow and also paves the way for accelerated optimization, as briefly reviewed below.

\subsection{Related Work}
Regarding the SMART iteration \eqref{eq:SMART-iteration}
introduced in \cite{DarrochRatcliff72},
significant contributions have been made by Censor in \cite{Lent1991a,CensorSegman87} and by Byrne in \cite{Byrne:1993aa,Byrne2014}. The authors of \cite{Petra2013a} considered SMART as a \textit{mirror descent (MD)} method and derived a convergence rate without the common $L$-smoothness assumption which does not hold
for \eqref{eq:def-KL-objective}.

MD has a rich history in optimization and has been extensively studied. Initially proposed by \cite{Nemirovski:1983}, its basic convergence property was studied in \cite{Beck:2003aa} from the viewpoint of the Bregman Proximal Gradient (BPG) method. Recently, the connection to information geometry was explored in \cite{Raskutti:2015aa,Kahl:2023aa}. The paper \cite{Raskutti:2015aa} does not cover the methods employed in the present paper, however, including the systematic use of e-geodesics. Moreover, advancements like Accelerated Bregman Proximal Gradient (ABPG) methods \cite{Hanzely:2021vc,Gutman:2022tu} and unified algorithms for both MD and dual averaging (DA) \cite{Juditsky:2022wf} have shown superior performance in specific scenarios. Other innovations, such as new regularization methods unifying additive and multiplicative updates \cite{Ghai:2020va} and efforts to substantiate discrete schemes by continuous-time ODEs and numerical integration \cite{Krichene:2015}, have expanded the understanding of MD methods, even though some of them become computationally more expensive when the objective lacks $L$-smoothness, as in \eqref{eq:def-KL-objective}.

Below, MD is shown to be closely related to Riemannian gradient descent involving a particular retraction. Riemannian optimization includes foundational work by \cite{Absil:2008aa} which emphasizes the use of retractions and vector transport for effcient optimization. Recent advancements along this line include conjugate gradient methods in the Riemannian setting \cite{Oviedo:2022,Sakai2022} and approaches for optimization on manifolds with lower-bounded curvature \cite{Ferreira:2019}. Acceleration within the Riemannian setting is considered in \cite{Jin:2022aa}. Additionally, first-order methods for geodesically convex optimization \cite{Zhang:2016} contribute to the broader landscape of Riemannian optimization techniques, but do not align precisely with the scenarios studied in this paper.

\subsection{Contribution, Organisation}
Our main contribution can be summarized as follows.
\begin{itemize}
\item Adopting the mirror descent point of view, we characterize SMART as Riemannian gradient descent with the exponential map induced by the e-connection of information geometry \cite{Amari:2000} as retraction.
\item Our generic approach is exemplified using three different domains
\begin{equation}\label{eq:three-simple-sets-boundary}
    \dom \phi = \R_{+}^{n},\qquad\quad
    \dom \phi = [0,1]^n,\qquad\quad
     \dom \phi = \Delta_{n},
\end{equation}
where $\Delta_{n}$ denotes the probability simplex \eqref{eq:Delta-n}. The (relative) interior of these domains are
turned into Riemannian manifolds $(\mc{M},g)$, endowed with the Fisher-Rao geometry $g$, and spefically denoted by
\begin{equation}\label{eq:intro-PBS}
    (\mc{P}_{n},g),\qquad
    (\mc{B}_{n},g),\qquad
    (\mc{S}_{n},g).
\end{equation}
The basic SMART iteration \eqref{eq:SMART-iteration-orthant} valid for the nonnegative orthant then takes the general form
\begin{subequations}\label{eq:SMART-iteration}
\begin{align}
x_{k+1} &= \frac{1}{Z(x_{k})} x_{k} \cdot e^{-\tau_{k} \partial f(x_{k})},\qquad x_{0} \in \mc{M} \in \{\mc{P}_n,\mc{B}_n, \mc{S}_{n}\}
\qquad\qquad(\text{SMART}) 
\intertext{where $\cdot$ denotes componentwise vector multiplication, $\partial f$ denotes the Euclidean gradient of the objective function $f$, and}
Z(x_{k}) &= \begin{cases}
1, &\text{if}\; \mc{M}=\mc{P}_{n}, \\
\eins - x_{k} + x_{k}\cdot e^{-\tau\partial f(x_{k})}, &\text{if}\;\mc{M}=\mc{B}_{n}, \\
\la x_{k}, e^{-\tau_{k}\partial f(x_{k})}\ra, &\text{if}\; \mc{M}=\mc{S}_{n}.
\end{cases}
\end{align}
\end{subequations}
In addition, the basic ingredients for first- and second-order geometric optimization, e-geodesics and vector transport, are computed in each case which extends the conference paper \cite{Kahl:2023aa}.
\item We also apply and empirically study the acceleration methods introduced by \cite{Hanzely:2021vc}.
\item The underlying geometry is exploited to enhance numerical techniques like (non-monotone) Riemannian line search strategies and Riemannian Conjugate Gradient (CG).
\end{itemize}

Section \ref{sec:Preliminaries} introduces basic notation and the information geometry of statistical manifolds in order to specify the Riemannian geometry of the specific manifolds \eqref{eq:intro-PBS}. 
Section \ref{sec:ConvexAcceleration} discusses SMART's acceleration from the viewpoint of Bregman Proximal Gradient, whereas Section \ref{sec:Geometry} exploits SMART's Riemannian geometry from Section \ref{sec:Preliminaries}. This enables to design advanced Riemannian updates and to explore various line search strategies as well as Riemannian conjugate gradient. In Section \ref{sec:Experiments}, results of large-scale experiments such as tomographic reconstruction, image deblurring and sparse signal recovery are reported to validate SMART's competitiveness with state-of-the-art Bregman first-order methods. On the other hand, the accelerated 
$\mc{O}(1/k^2)$ rate of the accelerated version \textit{could not} be certified numerically, whereas the Riemannian variants showcase accelerated convergence, but at a notably higher cost per iterate along e-geodesics. Corresponding open problems for future research on optimization problems that lack $L$-smoothness, are outlined in Section \ref{sec:conclusion}.

\section{Preliminaries}
\label{sec:Preliminaries}

\subsection{Basic Notation}\label{sec:notation}
We set $[n]=\{1,2,\dotsc,n\}$ for $n \in \N$ 
and write $\eins = (1,1,\dotsc,1)^\top$ for the constant one-vector whose dimension depends on the context. $\la \cdot,\cdot \ra$ denotes the Euclidean inner product with induced norm $\|\cdot\| = \sqrt{\la \cdot,\cdot \ra}$. We also use the $\ell^{1}$ and $\ell^{\infty}$ norm indicated by subscripts $\|\cdot\|_{1}$ and $\|\cdot\|_{\infty}$. 
For a vector $x$, the diagonal \textit{matrix} with the components of $x$ as entries is denoted by $\Diag(x)$. Matrix norms $\|A\|, \|A\|_{1}, \|A\|_{\infty}$ always are the norms induced by the respective vector norms. The nonnegative orthant in $\R^{n}$ is denoted by $\R^{n}_{+}$ and its interior by $\R^{n}_{++} = \intr(\R^{n}_{+})$. The relative interior of a set $S\subset\R^{n}$ contained in an affine subspace of $\R^{n}$ is denoted by $\rint(S)$.

\textit{Unary functions} like the natural logarithm $\log$ and the exponential function $e^{(\cdot)}$ apply \textit{componentwise} to vectors, e.g.~
\begin{subequations}\label{eq:notation-xz}
\begin{gather}
\log x = (\log x_{1},\dotsc, \log x_{n}),\qquad
 e^{x} = (e^{x_{1}},\dotsc,e^{x_{n}}).
\intertext{Likewise, we write \textit{componentwise multiplication and subdivision} of two vectors as}
z\cdot x = (z_{1} x_{1},\dotsc, z_{n} x_{n}) 
\qquad\text{and}\qquad 
\frac{z}{x} = \Big(\frac{z_{1}}{x_{1}},\dotsc,\frac{z_{n}}{x_{n}}\Big)\quad\text{when}\quad
x_{i}>0,\; \forall i\in[n].
\end{gather}
\end{subequations}
The \emph{probability simplex} is denoted by
\begin{equation}\label{eq:Delta-n}
\Delta_{n} = \{x \in \R^{n} \colon x \ge 0,\; \la \eins, x \ra=1\}
\end{equation}
and the \emph{unit simplex} by
\begin{equation}\label{eq:Delta-unit}
\Delta_{n-1}^0=\{x\in\R^{n-1}\colon x\geq 0,\;\eins^\top x\le 1\}.
\end{equation}

We denote the conjugate function  of a function $h:\R^n \to\R\cup \{+\infty\}$ with
\begin{equation}
h^{\ast}(p) = \sup_{x \in \R^{n}}\big\{\la p,x \ra - h(x)\big\}
\end{equation}
and the gradient of a continuously differentiable function $h$ is denoted by
\begin{equation}
\partial h(x) = \Big(\frac{\partial}{\partial x_{1}} h(x),\frac{\partial}{\partial x_{2}} h(x),\dotsc\Big)^{\T}.
\end{equation}
 The Hessian of the smooth function $h$ is denoted by
\begin{equation}
\partial ^2 h(x) \in \R^{n \times n},\qquad
\big(\partial ^2 h(x)\big)_{ij} = \frac{\partial^{2}}{\partial x_{i}\partial x_{j}} h(x),\quad i,j\in[n].
\end{equation}

For a smooth Riemannian manifold $(\mc{M},g)$ with metric $g$, $T_x\mc{M}$ denotes the tangent space at $x\in\mc{M}$ and $\mr{d}_x h : T_x\mc{M} \to\R$ the differential of a smooth function $h:\mc{M}\to \R$. For the Riemannian metric $g$, we use the notation
\begin{subequations}
\begin{align}
    g_{x}\colon T_{x}\mc{M}\times T_{x}\mc{M}\to\R,\qquad
    \la u,v\ra_{x} &:= g_{x}(u,v),\qquad x\in\mc{M} 
    \intertext{with the induced Riemannian norm}
    \|v\|_{x} = \sqrt{g_{x}(v,v)}.
\end{align}
\end{subequations}
The Riemannian gradient $\ggrad h(x) \in T_x\mc{M}$ of $h$ is uniquely defined by 
\begin{equation}
\mr{d}_x h(v) = g_x\left (\ggrad h(x), v \right ),\quad\forall v \in T_x \mc{M}. 
\end{equation}

\subsection{Bregman Divergences and Related Definitions}

\begin{definition}{(\textbf{Legendre functions} \cite[Chap. 26]{Rockafellar:1997})} A lower semicontinuous proper
convex function \( \vphi \colon \R^n \rightarrow (-\infty,\infty] \) is called
\begin{enumerate}
  \item[\textnormal{(i)}] \textit{essentially smooth}, if \( \vphi \) is differentiable on \( \intr(\dom \vphi) \neq\emptyset \) and \( \| \vphi(x_{k}) \| \to \infty \) for every sequence \( (x_{k})_{k\in\N} \subseteq \intr(\dom \vphi) \) converging to a boundary point of \( \dom \vphi \) for \( k \to \infty \);
  \item[\textnormal{(ii)}] of \emph{Legendre type}, if \( \vphi \) is essentially smooth and strictly convex on \( \intr(\dom \vphi) \).
\end{enumerate}
\end{definition}

\begin{proposition}[{\textbf{Legendre functions on affine subspaces} \cite[Prop.~5.3]{Alvarez:2004}}]\label{prop:Legendre-affine-subspace}
    Let
    \begin{equation}
        \mc{A} = \{x\in\R^{n}\colon A x = b\},\qquad
        A\in\R^{m\times n},\quad
        b\in\mrm{Im}(A),
    \end{equation}
    and suppose the convex function $\vphi\colon\R^{n}\to\R\cup \{+\infty\}$ is of Legendre type. If $\intr(\dom \vphi) \cap\mc{A}\neq\emptyset$, then the restriction $\vphi|_{\mc{A}}$ of $\vphi$ to $\mc{A}$ is of Legendre type and $\intr_{\mc{A}}\dom \vphi|_{\mc{A}} = \intr(\dom \vphi) \cap\mc{A}$ (where $\intr_{\mc{A}}\dom \vphi|_{\mc{A}}$ is the interior of $\dom \vphi|_{\mc{A}}$ in $\mc{A}$ as a topological subspace of $\R^{n}$).
\end{proposition}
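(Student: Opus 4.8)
The plan is to verify, for the restriction $\vphi|_{\mc A}$ with effective domain $\dom\vphi|_{\mc A}=\dom\vphi\cap\mc A$ (which is visibly lower semicontinuous, proper and convex), first the topological identity and then the three defining properties of a Legendre function, with the bulk of the work concentrated in the boundary condition of essential smoothness.

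\textbf{Step 1: the topological identity.} The inclusion $\intr(\dom\vphi)\cap\mc A\subseteq\intr_{\mc A}(\dom\vphi\cap\mc A)$ is immediate: intersecting with $\mc A$ a Euclidean ball contained in $\dom\vphi$ yields a neighbourhood of the point that is relatively open in $\mc A$. For the converse I would fix a reference point $z\in\intr(\dom\vphi)\cap\mc A$ (nonempty by hypothesis) and, given any $x$ in the relative interior of $\dom\vphi\cap\mc A$, move a little past $x$ along the line through $z$ and $x$ inside $\mc A$, reaching a point $x'\in\dom\vphi\cap\mc A$ with $x$ on the open segment $(z,x')$. The line-segment principle for the convex set $\dom\vphi$ (\cite[\S\,6]{Rockafellar:1997}) then forces $x\in\intr(\dom\vphi)$, hence $x\in\intr(\dom\vphi)\cap\mc A$. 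Since $\mc A$ is closed, a by-product is that every point $\bar x$ of the boundary of $\dom\vphi|_{\mc A}$ relative to $\mc A$ satisfies $\bar x\in\bd(\dom\vphi)$.

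\textbf{Step 2: differentiability and strict convexity.} On $\intr_{\mc A}(\dom\vphi\cap\mc A)=\intr(\dom\vphi)\cap\mc A$ the function $\vphi$ is differentiable and strictly convex because it is of Legendre type, and both properties pass to the affine slice: writing $\mc A=x_{0}+T$ with $T=\ker A$, the intrinsic gradient of $\vphi|_{\mc A}$ at $x$ is the orthogonal projection $P_{T}\,\partial\vphi(x)$ of the Euclidean gradient onto $T$, while any segment contained in $\intr(\dom\vphi)\cap\mc A$ is a segment contained in $\intr(\dom\vphi)$, so strict convexity is inherited verbatim. The only remaining point is therefore the gradient blow-up of $\vphi|_{\mc A}$ at the relative boundary of its domain.

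\textbf{Step 3: essential smoothness at the boundary (the crux).} Rather than estimating projected gradients directly, I would establish the equivalent statement that $\partial(\vphi|_{\mc A})(\bar x)=\emptyset$ for every point $\bar x$ in the relative boundary of $\dom\vphi|_{\mc A}$; together with Step 2 this gives $\dom\partial(\vphi|_{\mc A})=\intr_{\mc A}(\dom\vphi\cap\mc A)$ with $\vphi|_{\mc A}$ differentiable there, which is one of the standard characterisations of essential smoothness (\cite[\S\,26]{Rockafellar:1997}). Letting $\iota\colon T\embedded\R^{n}$, $\iota(u)=x_{0}+u$, be the affine inclusion, the restriction read as $u\mapsto\vphi(x_{0}+u)$ on $T$ equals $\vphi\circ\iota$, and the subdifferential chain rule (\cite[\S\,23]{Rockafellar:1997}) applies because its constraint qualification --- the range $\mc A$ of $\iota$ meets $\intr(\dom\vphi)$ --- is precisely the hypothesis $\intr(\dom\vphi)\cap\mc A\neq\emptyset$; it yields $\partial(\vphi|_{\mc A})(\bar x)=P_{T}\big(\partial\vphi(\bar x)\big)$. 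Since $\bar x\in\bd(\dom\vphi)$ by Step 1 and $\vphi$ is essentially smooth, $\partial\vphi(\bar x)=\emptyset$, hence $\partial(\vphi|_{\mc A})(\bar x)=\emptyset$, which with Step 2 completes the proof that $\vphi|_{\mc A}$ is of Legendre type. The one genuinely non-routine ingredient is this chain-rule identity --- that no subgradient is created or lost on restricting to the affine slice --- and it is exactly there, through the constraint qualification, that the assumption $\intr(\dom\vphi)\cap\mc A\neq\emptyset$ is indispensable; everything else reduces to the line-segment principle and to the fact that differentiability and strict convexity restrict to affine subspaces.
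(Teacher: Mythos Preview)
Your proof is correct and self-contained; the three steps cleanly separate the topological, differentiability/strict-convexity, and essential-smoothness parts, and the invocation of the subdifferential chain rule under the qualification $\intr(\dom\vphi)\cap\mc A\neq\emptyset$ is exactly what is needed to rule out subgradients on the relative boundary.

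There is, however, nothing to compare against in the paper: Proposition~\ref{prop:Legendre-affine-subspace} is stated with a citation to \cite[Prop.~5.3]{Alvarez:2004} and no proof is given here. So your argument is not so much an alternative to the paper's proof as a full substitute for the external reference. In that sense it adds value: it makes the result self-contained using only standard tools from \cite{Rockafellar:1997} (the line-segment principle and Theorem~23.9), whereas the paper simply defers to Alvarez--Bolte--Brahic.
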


\begin{theorem}\cite[Thm 26.5]{Rockafellar:1997}\label{th:gradient_mapping_one_to_one}
A convex function $\vphi$ is of Legendre type if and only if its conjugate $\vphi^\ast$ is. In this case, the gradient mapping $\partial \vphi\colon \intr(\dom \vphi) \mapsto \intr(\dom \vphi^\ast)$ is one-to-one and
\begin{subequations} \label{eq:one-to-one-gradient}
\begin{align}
\left(\partial \vphi\right)^{-1} 
&= \partial \vphi^\ast,
\\
\vphi^\ast(\partial \vphi(x)) 
&= \la x,\partial \vphi (x)\ra - \vphi(x),\qquad
\forall x\in\intr(\dom\vphi).
\end{align}
\end{subequations}
\end{theorem}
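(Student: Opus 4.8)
The plan is to derive the theorem from the Fenchel--Young inequality together with Rockafellar's subdifferential characterization of essential smoothness. Recall that for a proper lower semicontinuous convex $\vphi$ one has $\vphi^{\ast\ast}=\vphi$ and the Fenchel--Young relation $\la x,p\ra\le\vphi(x)+\vphi^{\ast}(p)$ for all $x,p$, with equality if and only if $p$ is a subgradient of $\vphi$ at $x$; by the symmetry $\vphi^{\ast\ast}=\vphi$ this is equivalent to $x$ being a subgradient of $\vphi^{\ast}$ at $p$. Hence the subdifferential maps of $\vphi$ and $\vphi^{\ast}$ are mutually inverse, their graphs being reflections of one another across the diagonal. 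Note already that once $\partial\vphi$ is known to be single-valued at $x\in\intr(\dom\vphi)$, the identity $\vphi^{\ast}(\partial\vphi(x))=\la x,\partial\vphi(x)\ra-\vphi(x)$ is just the equality case of Fenchel--Young applied to the pair $(x,\partial\vphi(x))$. The real content is therefore that $\partial\vphi$ restricts to a \emph{bijection} of $\intr(\dom\vphi)$ onto $\intr(\dom\vphi^{\ast})$, and that ``of Legendre type'' is a self-dual property.

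First I would record two facts. \emph{(a)} If $\vphi$ is essentially smooth, then the subdifferential of $\vphi$ is nonempty (and then a single point, namely $\partial\vphi(x)$) exactly on $\intr(\dom\vphi)$: single-valuedness on the interior is differentiability there, while a subgradient $p$ at a boundary point $\bar x$ would, by monotonicity of the subdifferential along the segment from $\bar x$ to an interior point, keep $\la\partial\vphi(x_k),x_k-\bar x\ra$ bounded as interior points $x_k\to\bar x$, contradicting $\|\partial\vphi(x_k)\|\to\infty$. \emph{(b)} A convex function that is differentiable on a convex set and has injective gradient there is strictly convex on that set, since a segment on which it fails to be strictly convex is a segment on which it is affine, hence one on which the gradient is constant. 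Both are essentially Rockafellar's; I would state them and cite.

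Now assume $\vphi$ is of Legendre type and set $U:=\intr(\dom\vphi)$. By \emph{(a)}, $\partial\vphi$ is a genuine single-valued map on $U$ whose subdifferential is empty elsewhere; by strict convexity on $U$ it is injective there, and, being the gradient of a convex function on an open set, it is continuous on $U$. Let $R:=\partial\vphi(U)$. By invariance of domain $R$ is open, and by the inversion above $\dom\partial\vphi^{\ast}=\rge\partial\vphi=R$ with $\partial\vphi^{\ast}$ single-valued on $R$ (it is the inverse of an injective map), so $\vphi^{\ast}$ is differentiable on $R$. To identify $R$ with $\intr(\dom\vphi^{\ast})$: each $p\in\rint(\dom\vphi^{\ast})$ has a subgradient $x$ of $\vphi^{\ast}$ at $p$, whence $p\in\partial\vphi(x)$, forcing $x\in U$ and hence $p\in R$; thus $\rint(\dom\vphi^{\ast})\subseteq R\subseteq\dom\vphi^{\ast}$ with $R$ open, which gives $R=\intr(\dom\vphi^{\ast})$. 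Strict convexity of $\vphi^{\ast}$ on this set follows from \emph{(b)}, since $\partial\vphi^{\ast}|_{R}=(\partial\vphi)^{-1}$ is injective. For essential smoothness of $\vphi^{\ast}$, suppose $p_k\in R$ tends to a boundary point $p$ of $\dom\vphi^{\ast}$ but $x_k:=\partial\vphi^{\ast}(p_k)$ stays bounded; passing to a convergent subsequence $x_k\to\bar x\in\cl(\dom\vphi)$, either $\bar x\in U$, in which case closedness of the subdifferential graph of $\vphi$ gives $p\in\partial\vphi(\bar x)$, i.e.\ $p\in R=\intr(\dom\vphi^{\ast})$, contradicting $p\in\bd(\dom\vphi^{\ast})$; or $\bar x\in\bd(\dom\vphi)$, in which case essential smoothness of $\vphi$ forces $\|p_k\|=\|\partial\vphi(x_k)\|\to\infty$, again a contradiction. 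Hence $\vphi^{\ast}$ is of Legendre type. The converse is this implication applied to $\vphi^{\ast}$ using $\vphi^{\ast\ast}=\vphi$, and the two displayed formulas are precisely the relation inversion and the Fenchel--Young equality already observed.

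The main obstacle will be the topological bookkeeping that ties $\rge\partial\vphi$ to $\intr(\dom\vphi^{\ast})$ and transports essential smoothness across conjugation: the openness of $R$ (invoking invariance of domain, or replacing it by Rockafellar's more elementary estimate based on local boundedness of $\partial\vphi$ on $U$) and the boundary blow-up argument, which hinges on the closedness of the subdifferential graph together with the precise quantitative form of essential smoothness. Everything else reduces to Fenchel--Young and the monotonicity and continuity of subgradients of convex functions.
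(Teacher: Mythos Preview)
The paper does not prove this theorem at all: it is stated with a citation to \cite[Thm~26.5]{Rockafellar:1997} and used as a black box, so there is no ``paper's own proof'' to compare against. Your proposal is therefore not redundant but rather supplies what the paper omits.

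Your argument is essentially correct and tracks the classical route through Fenchel--Young, the inversion $\partial\vphi^{\ast}=(\partial\vphi)^{-1}$ at the level of subdifferential relations, and the characterization of essential smoothness via the domain of the subdifferential. The two recorded facts \emph{(a)} and \emph{(b)} are exactly the right hinges (they are Rockafellar's Theorems~26.1 and~26.3, respectively), and the boundary blow-up dichotomy for essential smoothness of $\vphi^{\ast}$ is clean.

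One comment on the topological step: invoking invariance of domain to conclude that $R=\partial\vphi(U)$ is open is correct but heavier than needed, and you rightly flag this. Rockafellar avoids it by using the general fact $\rint(\dom\vphi^{\ast})\subseteq\rge(\partial\vphi)\subseteq\dom\vphi^{\ast}$ (his Theorem~23.4), which together with $\rge(\partial\vphi)=\dom(\partial\vphi^{\ast})$ and the single-valuedness you have already established gives $R=\intr(\dom\vphi^{\ast})$ directly, without Brouwer. If you want the proof to be self-contained within elementary convex analysis, that substitution is the one to make; otherwise your version stands.
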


A given Legendre function $\vphi$ induces a corresponding \textit{Bregman divergence},
\begin{equation}\label{def:Bregman}
D_\vphi (x, y) = \vphi(x) - \vphi(y)  -\langle \partial \vphi (y), x - y \rangle,
\end{equation}
as studied, e.g., by \cite{Censor:1981vy,Censor1981,Censor:1997aa}.

\begin{lemma}[\textbf{Bregman projection} {\cite[Thm.~3.12]{Bauschke:1997aa}}]\label{lem:Bregman-projection}
 Suppose $\vphi$ is closed proper convex and differentiable on $\intr(\dom\vphi)$, $C$ is closed convex with $C \cap \intr(\dom\vphi) \neq \emptyset$, and $y_{0} \in \intr(\dom\vphi)$. If $\vphi$ is Legendre, then the Bregman projection $\wt{y}$ of $y_{0}$ defined by 
\begin{equation} \label{eq:B-projection}
 \underset{y \in C \cap \dom\vphi}{\argmin} D_{\vphi}(y,y_{0})
 = \{\widetilde{y}\},\qquad \widetilde{y} \in \intr(\dom\vphi).
\end{equation}
is unique and contained in $\intr(\dom\vphi)$.
\end{lemma}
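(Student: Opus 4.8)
\emph{Proof plan.} The plan is to split the statement into three parts and treat them in turn: (i) the infimum of $g:=D_\vphi(\cdot,y_0)$ over $C\cap\dom\vphi$ is attained; (ii) every minimizer lies in $\intr(\dom\vphi)$; (iii) the minimizer is unique. The only external ingredients needed are the Legendre duality recorded in Theorem~\ref{th:gradient_mapping_one_to_one}, the elementary three-point identity $D_\vphi(a,b)+D_\vphi(b,c)-D_\vphi(a,c)=\la\partial\vphi(c)-\partial\vphi(b),\,a-b\ra$ for points in $\intr(\dom\vphi)$, and the standard fact that for a closed proper convex $\psi$ the perturbed function $\psi-\la c,\cdot\ra$ has bounded sublevel sets exactly when $c\in\intr(\dom\psi^{\ast})$.

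For existence, fix $y_{1}\in C\cap\intr(\dom\vphi)$, which is nonempty by hypothesis, and put $\alpha:=g(y_{1})<\infty$. Since $g(y)=\vphi(y)-\la\partial\vphi(y_{0}),y\ra+\text{const}$ and $\partial\vphi(y_{0})\in\intr(\dom\vphi^{\ast})$ by Theorem~\ref{th:gradient_mapping_one_to_one}, the sublevel set $\{g\le\alpha\}$ is bounded; it is closed because $g$ is lower semicontinuous (a lower semicontinuous $\vphi$ plus an affine term). Hence $S:=C\cap\{g\le\alpha\}$ is compact and nonempty ($y_{1}\in S$), so the lower semicontinuous function $g$ attains its minimum on $S$, and therefore on $C\cap\dom\vphi$.

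The crux — and the step I expect to be the main obstacle — is to rule out a minimizer $\wt{y}$ on $\bd(\dom\vphi)$. Suppose $\wt{y}\in C\cap\dom\vphi$ minimizes $g$ but $\wt{y}\notin\intr(\dom\vphi)$. Join it to $y_{1}$ by the segment $y_{t}=(1-t)\wt{y}+ty_{1}$, which lies in $C\cap\intr(\dom\vphi)$ for $t\in(0,1]$. The convex function $t\mapsto g(y_{t})$ has its minimum at $t=0$, hence is nondecreasing on $[0,1]$, so its derivative $\la\partial\vphi(y_{t})-\partial\vphi(y_{0}),\,y_{1}-\wt{y}\ra$ is $\ge 0$ on $(0,1]$. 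Substituting this into the three-point identity with $(a,b,c)=(y_{1},y_{t},y_{0})$ and using $y_{1}-y_{t}=(1-t)(y_{1}-\wt{y})$ together with $D_\vphi(y_{t},y_{0})\ge0$ gives the uniform bound $D_\vphi(y_{1},y_{t})\le D_\vphi(y_{1},y_{0})$ for all $t\in(0,1]$. Now pass to the conjugate side via the identity $D_\vphi(x,y)=D_{\vphi^{\ast}}(\partial\vphi(y),\partial\vphi(x))$ valid on $\intr(\dom\vphi)$: the bound becomes $D_{\vphi^{\ast}}\big(\partial\vphi(y_{t}),\partial\vphi(y_{1})\big)\le D_\vphi(y_{1},y_{0})$. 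Since $\vphi^{\ast}$ is Legendre (Theorem~\ref{th:gradient_mapping_one_to_one}) and $\partial\vphi^{\ast}(\partial\vphi(y_{1}))=y_{1}\in\intr(\dom\vphi)$, the level-boundedness fact shows the sublevel sets of $D_{\vphi^{\ast}}(\cdot,\partial\vphi(y_{1}))$ are bounded; hence $\{\partial\vphi(y_{t}):t\in(0,1]\}$ is bounded. But $y_{t}\to\wt{y}\in\bd(\dom\vphi)$ as $t\downarrow0$, so essential smoothness of $\vphi$ forces $\|\partial\vphi(y_{t})\|\to\infty$ — a contradiction. Therefore $\wt{y}\in\intr(\dom\vphi)$. (One could alternatively invoke the fact that essentially smooth functions have empty subdifferential at boundary points, combined with the optimality/sum rule under the constraint qualification $C\cap\intr(\dom\vphi)\neq\emptyset$, but the argument above keeps everything within the tools of the excerpt.)

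Uniqueness is then immediate: $g$ differs from $\vphi$ by an affine function, hence is strictly convex on $\intr(\dom\vphi)$; since all minimizers lie in the convex set $C\cap\intr(\dom\vphi)$, two distinct minimizers would have their midpoint in that set with strictly smaller $g$-value, which is impossible. The only routine points that need care are the verification that a convex combination $(1-t)\wt{y}+ty_{1}$ with $y_{1}\in\intr(\dom\vphi)$ and $t>0$ stays in $\intr(\dom\vphi)$, and the exact form of the conjugate identity $D_\vphi(x,y)=D_{\vphi^{\ast}}(\partial\vphi(y),\partial\vphi(x))$, both of which follow from Theorem~\ref{th:gradient_mapping_one_to_one} and the definition of $D_\vphi$.
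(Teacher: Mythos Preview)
Your argument is sound: the existence step via level-boundedness of $\vphi-\la\partial\vphi(y_0),\cdot\ra$, the interiority step via the three-point identity combined with the dual bound $D_{\vphi^\ast}(\partial\vphi(y_t),\partial\vphi(y_1))\le D_\vphi(y_1,y_0)$ and essential smoothness, and the uniqueness step via strict convexity on the interior all go through as written. The sign-tracking in the three-point identity and the passage to the conjugate divergence are correct.

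There is nothing to compare your proof against, however: the paper does not prove this lemma at all. It is stated with a citation to Bauschke--Borwein \cite[Thm.~3.12]{Bauschke:1997aa} and used as a black box (for the well-posedness remark~\ref{rem:MD-well-posedness} and the existence Proposition~\ref{prop:Existence}). Your write-up is thus a self-contained proof of a result the paper merely quotes; it stays within the tools the paper records (Theorem~\ref{th:gradient_mapping_one_to_one}, Lemma~\ref{lem:3-point}) plus one standard coercivity fact, which is entirely reasonable.
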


\begin{lemma}[\textbf{Three-points identity} {\cite[Lem.~3.1]{Chen_Teboulle_1993}}]\label{lem:3-point}
Let $\vphi$  closed proper convex and differentiable on $\intr(\dom\vphi)$.
For any $r \in \dom \vphi$, and $p,q\in \intr(\dom \vphi)$ the following identity holds
\begin{equation}\label{eq:Bregman-Fenchel}
D_{\vphi}(r,q) + D_{\vphi}(q,p) - D_{\vphi}(r,p)
= \la r-q,\partial \vphi (p)-\partial \vphi(q)\ra.
\end{equation}
\end{lemma}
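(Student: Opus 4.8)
The plan is to verify \eqref{eq:Bregman-Fenchel} by a direct expansion of the three Bregman divergences via their definition \eqref{def:Bregman} and collecting terms. First I would record the three expansions
\begin{equation}
\begin{aligned}
D_{\vphi}(r,q) &= \vphi(r)-\vphi(q)-\la\partial\vphi(q),r-q\ra,\\
D_{\vphi}(q,p) &= \vphi(q)-\vphi(p)-\la\partial\vphi(p),q-p\ra,\\
D_{\vphi}(r,p) &= \vphi(r)-\vphi(p)-\la\partial\vphi(p),r-p\ra,
\end{aligned}
\end{equation}
which are meaningful because $\vphi$ is finite at $r\in\dom\vphi$ and differentiable at the base points $p,q\in\intr(\dom\vphi)$; note this is exactly why the hypothesis on $r$ is weaker than the hypothesis on $p,q$.

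Next I would form the combination $D_{\vphi}(r,q)+D_{\vphi}(q,p)-D_{\vphi}(r,p)$ and observe that the scalar terms $\vphi(r),\vphi(q),\vphi(p)$ cancel pairwise, so that only the inner-product terms survive, leaving $-\la\partial\vphi(q),r-q\ra-\la\partial\vphi(p),q-p\ra+\la\partial\vphi(p),r-p\ra$. Using bilinearity to combine the two terms involving $\partial\vphi(p)$ gives $\la\partial\vphi(p),(r-p)-(q-p)\ra=\la\partial\vphi(p),r-q\ra$, and hence the whole expression equals $\la\partial\vphi(p)-\partial\vphi(q),r-q\ra=\la r-q,\partial\vphi(p)-\partial\vphi(q)\ra$, which is precisely \eqref{eq:Bregman-Fenchel}.

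The main obstacle: frankly there is none of substance — the identity is a purely algebraic consequence of \eqref{def:Bregman}, valid for any convex $\vphi$ differentiable on $\intr(\dom\vphi)$, with no appeal to strict convexity, the Legendre property, or nonnegativity of $D_{\vphi}$. The only points deserving a moment's care are the well-definedness of $\partial\vphi(p)$ and $\partial\vphi(q)$ (guaranteed by $p,q\in\intr(\dom\vphi)$, where $\vphi$ is assumed differentiable) and bookkeeping the signs so that the three divergences are combined with the stated $+,+,-$ pattern.
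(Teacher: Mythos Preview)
Your proof is correct. The paper does not supply its own proof of this lemma but simply cites \cite[Lem.~3.1]{Chen_Teboulle_1993}; your direct expansion via \eqref{def:Bregman} and cancellation is precisely the standard argument behind that reference.
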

   
\subsection{Fisher-Rao Geometry}\label{sec:FR-Geometry}

We summarize basic concepts of the information geometry of regular probability distributions of the exponential family, given by densities of the form
\begin{equation}\label{eq:p-exponential}
    p(y;\theta)=h(y) e^{\la\theta, t(y)\ra-\psi(\theta)}, \qquad \theta\in\Theta,
\end{equation}
where $h$ is the base measure, $t$ is the sufficient statistic and $\psi$ is the \textit{log-partition function} 
\begin{subequations}\label{eq:psi-Z}
\begin{align}
\psi(\theta)&=\log Z(\theta) 
\intertext{with the normalizing constant}
Z(\theta) &= \int h(y) e^{\la \theta, t(y) \ra} \dd y, 
\end{align}
\end{subequations}
and $\Theta$ parameter space. Basic references include \cite{Brown:1986vy}, \cite{Amari:2000}.

The density $p$ is \textit{regular} if there are no dependencies among the functions forming the vector $y\mapsto (1,t_{1}(y),t_{2}(y),\dotsc)$. As a consequence, the domain $\Theta$ of the \textit{exponential} (a.k.a.~\textit{natural, canonical}) \textit{parameters} $\theta$ is a open convex set with nonempty interior $\mathring{\Theta}\neq\emptyset$. Moreover, the log-partition function $\psi(\theta)$ is lower-semicontinuous, essentially smooth and convex such that the classical Legendre transformation applies (cf.~\cite[Section 26]{Rockafellar:1997}). 

An alternative parametrization of any regular density $p$ of the exponential family is provided by the so-called \textit{mean-parametrization}
\begin{subequations}\label{eq:mean-parametrization}
\begin{align}
    \eta &= \eta(\theta) = \partial\psi(\theta), 
    \label{eq:eta-by-theta} \\
    \mc{M} &:= \partial\psi(\Theta),
\end{align}
\end{subequations}
where `mean' refers to the expected value of the sufficient statistics, which defines $\eta$ by
\begin{equation}
    \eta \overset{\eqref{eq:psi-Z}}{=}
    \frac{1}{Z(\theta)} \int t(y) p(y;\theta)\dd y = \EE_{p}[t(Y)].
\end{equation}
Denoting the conjugate function of $\psi$ by 
\begin{equation}\label{eq:def-phi-psi-ast}
\phi(\eta) =\psi^{\ast}(\eta)
\end{equation}
and assuming $\eta$ and $\theta$ are related by \eqref{eq:eta-by-theta}, one has by the classical Legendre transform
\begin{subequations}
\begin{align}
    \theta = \theta(\eta) &= \partial\phi(\eta), 
    \label{eq:theta-by-eta} \\ \label{eq:Hpsi-Hphi}
    \big(\partial^{2}\psi(\theta)\big)^{-1} &= \partial^{2}\phi(\eta).
\end{align}
\end{subequations}
The Fisher-Rao metric is given by
\begin{equation}\label{eq:metric-Hpsi}
    H(\theta) \in\R^{n\times n},\qquad
    H_{ij}(\theta) = \EE_{p}\bigg[\Big(\frac{\partial}{\partial\theta_{i}}\log p(Y;\theta)\Big) \Big(\frac{\partial}{\partial\theta_{j}}\log p(Y;\theta)\Big)\Big]
    = \big(\partial^{2}\psi(\theta)\big)_{ij} .
\end{equation}
Under the coordinate change \eqref{eq:eta-by-theta}, it covariantly transforms to (cf.~\cite[Section 1.4]{Jost:2017aa})
\begin{equation}
    H_{ij}(\theta) = \sum_{k,l}G_{kl}(\eta)
    \Big(\frac{\partial}{\partial\theta_{i}}\big(\partial\psi(\theta)\big)_{k}\Big)
    \Big(\frac{\partial}{\partial\theta_{j}}\big(\partial\psi(\theta)\big)_{l}\Big)
    = \Big(\big(\partial^{2}\psi(\theta)\big) G(\eta) \big(\partial^{2}\psi(\theta)\big)\Big)_{ij}.
\end{equation}
Hence, taking into account \eqref{eq:Hpsi-Hphi} and \eqref{eq:metric-Hpsi}, one has
\begin{equation}\label{eq:metric-Geta}
    G(\eta) = \partial^{2}\phi(\eta).
\end{equation}
As a result, an exponential family of densities $p$ of the form \eqref{eq:p-exponential} may be identified with the Riemannian manifolds
\begin{equation}
        (\Theta, h)\qquad\text{and}\qquad
        (\mc{M}, g),
\end{equation}
with metrics $h$ and $g$ given by the metric tensors $H(\theta)$ and $G(\eta)$ as defined by \eqref{eq:metric-Hpsi} and \eqref{eq:metric-Geta}, respectively.

\subsection{Specific Manifolds}\label{sec:specific-manifolds}

We will consider the three sets
\begin{equation}\label{eq:three-simple-sets}
    \mc{P}_n = \R_{++}^{n},\qquad\quad
    \mc{B}_n = (0,1)^n,\qquad\quad
    \mc{S}_{n} = \rint(\Delta_{n}) = \Delta_{n}\cap\R_{++}^{n}
\end{equation}
and regard them as instances of a Riemannian manifold $\mc{M}$, respectively, as specified in the subsequent sections.

\subsubsection{Positive Orthant}
The Poisson distribution of a discrete random variable $Y\in\N_{0}$ is given by the probability mass function
\begin{equation}\label{eq:p-Poisson-eta}
    p(y;\eta) = \frac{\eta^{y} e^{-\eta}}{y!},\qquad \eta\in (0,\infty).
\end{equation}
Rewriting this as density of the exponential family gives
\begin{equation}
    p(y;\theta) = \frac{1}{y!} e^{\theta y - \psi(\theta)},\qquad
    \theta = \theta(\eta) = \log\eta,\quad
    \psi(\theta) = e^{\theta}.
\end{equation}
We note that denoting the parameter of \eqref{eq:p-Poisson-eta} with the symbol $\eta$ used for the mean parameter is consistent since $\EE_{p}[Y]=\psi'(\theta) = e^{\theta} = \eta$. Computing the conjugate function \eqref{eq:def-phi-psi-ast}, yields 
\begin{equation}\label{eq:Shannon-entropy-1D}
\phi(\eta) = \eta\log\eta - \eta
\end{equation}
and $\phi''(\eta) = \frac{1}{\eta}$.

In the $n$-dimensional case, one has the density in product form
\begin{equation}
    p(y;\eta) = \prod_{i\in[n]}\frac{\eta_{i}^{y_{i}} e^{-\eta_{i}}}{y_{i}!},\qquad
    \eta\in\mc{P}_{n}=\R_{++}^{n},
\end{equation}
and it is straightforward to check that one obtains the Riemannian manifold
\begin{equation}\label{eq:G-orthant}
    (\mc{P}_{n},g),\qquad G(\eta) = \Diag\Big(\frac{\eins_{n}}{\eta}\Big)
\end{equation}
with metric $g$ and corresponding metric tensor $G(\eta)$ given by \eqref{eq:metric-Geta}, yielding the diagonal matrix \eqref{eq:G-orthant}.

\subsubsection{Unit Box}\label{sec:Bernoulli-manifold} 
The Bernoulli distribution of  random variable $Y\in\{0,1\}$ reads
\begin{equation}
    p(y;\eta) = \eta^{y}(1-\eta)^{1-y},\qquad \eta\in (0,1).
\end{equation}
Rewriting this in exponential form gives
\begin{equation}\label{eq:p-Bernoulli-exp}
    p(y;\theta) = e^{\theta y-\psi(\theta)},\qquad
    \theta=\log\frac{\eta}{1-\eta},\quad
    \psi(\theta)=\log(1+e^{\theta})
\end{equation}
with conjugation function 
\begin{equation}\label{eq:Fermi-Dirac-entropy-1D}
\phi(\eta) = \psi^{\ast}(\eta) 
    = \eta\log\eta + (1-\eta)\log(1-\eta)
\end{equation}
 and $\phi''(\eta) = \frac{1}{\eta (1-\eta)}$.
Thus, in the $n$-dimensional case $\mc{B}_{n}=(0,1)^{n}$, we obtain by \eqref{eq:metric-Geta} the Riemannian manifold
\begin{equation}\label{eq:G-box}
    (\mc{B}_{n},g),\qquad G(\eta) = \Diag\Big(\frac{\eins_{n}}{\eta\cdot (\eins_{n}-\eta)}\Big),
\end{equation}
with a metric $g$ and the corresponding metric tensor $G(\eta)$ given by \eqref{eq:G-box}, which is a diagonal matrix.

\subsubsection{Probability Simplex}\label{sec:Probability-Simplex}
Every point $p\in\mc{S}_{n}$ is a discrete (a.k.a.~categorical) distribution governing a random variable $Y\in[n]$ by $p_{i} = \Pr(Y=i),\; i\in[n]$. Introducing local coordinates
\begin{equation}\label{eq:def-eta-simplex}
    \vartheta\colon\mc{S}_{n}\to\R_{++}^{n-1},\qquad
    \eta := \vartheta(p) = (p_{1},\dotsc,p_{n-1})^{\T},\qquad
    \eta_{i}=\EE_{p}[Y=i],\quad i\in[n-1]
\end{equation}
and
\begin{equation}\label{eq:open-unit-simplex}
\vartheta(\mc{S}_n) = \{\eta\in\R_{++}^{n-1}\colon\la\eins_{n-1},\eta\ra<1\}
\end{equation}
one has
\begin{equation}\label{eq:inv-varphi}
    p = \vartheta^{-1}(\eta) = \bpm \eta \\ 1-\la\eins_{n-1},\eta\ra \epm.
\end{equation}
Note that 
\begin{equation}\label{eq:Delta0-by-Sn}
\vartheta(\mc{S}_n)=\intr( \Delta^0_{n-1}).
\end{equation}

Defining the mapping 
\begin{equation}
\delta\colon[n]\to\{0,1\}^{n-1},\qquad
\delta_{i}(y):=\begin{cases}
1,&\text{if}\; y=i, \\
0,&\text{otherwise},
\end{cases}\qquad i\in[n-1],
\end{equation}
a minimal parameterization of the categorical distribution of a random variable 
$Y\in[n]$ reads
\begin{equation}
    p(y;\eta) = \prod_{i=1}^{n-1} p_{i}^{\delta_{i}(y)} p_{n}^{1-\la\eins_{n-1},\delta(y)\ra}.
\end{equation}
Rewriting this as distribution of the exponential family \cite{Brown:1986vy} gives
\begin{equation}\label{eq:def-psi-simplex}
    p(y;\theta) = e^{\la\theta,\delta(y)\ra - \psi(\theta)},\qquad
    \psi(\theta) = \log(1 + \la\eins_{n-1},e^{\theta}\ra)
\end{equation}
with the natural parameters
\begin{equation}\label{eq:def-theta}
    \theta_{i} := \log\frac{\eta_{i}}{1-\la\eins_{n-1},\eta\ra},\qquad i\in[n-1].
\end{equation}
Computing the conjugate function \eqref{eq:def-phi-psi-ast} we obtain
\begin{equation}
\phi(\eta) = \psi^\ast(\eta)= \la\eta,\log \eta\ra
      + (1-\la\eins_{n-1},\eta\ra)\log (1-\la\eins_{n-1},\eta\ra).
\end{equation}

Using \eqref{eq:metric-Geta} yields the metric tensor
\begin{equation}\label{eq:G-eta-simplex}
    G(\eta) = \Diag\Big(\frac{\eins_{n-1}}{\eta}\Big) + \frac{1}{1-\la\eins_{n-1},\eta\ra} \eins_{n-1}\eins_{n-1}^{\T}
\end{equation}
and in turn the Riemannian manifold (cf.~\eqref{eq:Delta0-by-Sn})
\begin{equation}\label{eq:Sn-g-local}
    \big(\vartheta(\mc{S}_{n}), g\big)
\end{equation}
with the Fisher-Rao metric $g$ given in coordinates by \eqref{eq:G-eta-simplex}.

It will be convenient to express \eqref{eq:Sn-g-local} in terms of the ambient coordinates $p = \vartheta^{-1}(\eta)$, see \eqref{eq:inv-varphi}. 

\begin{lemma}[\textbf{probability simplex, Fisher-Rao metric}]\label{lem:FR-metric}
    The manifold \eqref{eq:Sn-g-local} is isomorphic to
    \begin{equation}
        (\mc{S}_{n},g)
    \end{equation}
    where the Fisher-Rao metric is given in ambient coordinates by
    \begin{equation}
        G(p) = \Diag\Big(\frac{\eins_{n}}{p}\Big),\qquad p\in\mc{S}_{n}.
    \end{equation}
\end{lemma}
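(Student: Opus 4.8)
The plan is to take the coordinate chart $\vartheta$ from \eqref{eq:def-eta-simplex} itself as the isomorphism and to verify by a one–line matrix computation that it is an isometry between $(\mc{S}_n,g)$ equipped with $G(p)=\Diag(\eins_n/p)$ and the coordinate manifold \eqref{eq:Sn-g-local}. First I would note that $\vartheta\colon\mc{S}_n\to\vartheta(\mc{S}_n)$, $p\mapsto(p_1,\dots,p_{n-1})^{\T}$, is the restriction of a linear projection, and its inverse \eqref{eq:inv-varphi} is the restriction of an affine map; both are smooth, so $\vartheta$ is a diffeomorphism onto the open set $\vartheta(\mc{S}_n)=\intr(\Delta^0_{n-1})$ (cf.\ \eqref{eq:open-unit-simplex}, \eqref{eq:Delta0-by-Sn}). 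Since $\mc{S}_n$ lies in the hyperplane $\{x\colon\la\eins_n,x\ra=1\}$, one has $T_p\mc{S}_n=\{v\in\R^n\colon\la\eins_n,v\ra=0\}$, and the differential $\mr{d}_\eta\vartheta^{-1}\colon\R^{n-1}\to T_p\mc{S}_n$ is represented by the constant Jacobian
\[
J=\bpm I_{n-1}\\ -\eins_{n-1}^{\T}\epm\in\R^{n\times(n-1)},
\]
which is injective with image exactly $T_p\mc{S}_n$.

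Next I would check that $\vartheta$ pulls the metric \eqref{eq:G-eta-simplex} back to $G(p)=\Diag(\eins_n/p)\big|_{T_p\mc{S}_n}$, i.e.\ that $J^{\T}\Diag(\eins_n/p)\,J=G(\eta)$ whenever $p=\vartheta^{-1}(\eta)$. Expanding the product: the contribution of the first $n-1$ diagonal entries of $\Diag(\eins_n/p)$ is $\Diag\!\big(1/p_1,\dots,1/p_{n-1}\big)=\Diag(\eins_{n-1}/\eta)$, while the last entry $1/p_n$ contributes $(-\eins_{n-1})\tfrac{1}{p_n}(-\eins_{n-1}^{\T})=\tfrac{1}{1-\la\eins_{n-1},\eta\ra}\eins_{n-1}\eins_{n-1}^{\T}$, using $p_i=\eta_i$ for $i<n$ and $p_n=1-\la\eins_{n-1},\eta\ra$. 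The sum of these two terms is exactly the right-hand side of \eqref{eq:G-eta-simplex}. Hence for all $\bar u,\bar v\in\R^{n-1}$,
\[
g_p\big(\mr{d}_\eta\vartheta^{-1}(\bar u),\,\mr{d}_\eta\vartheta^{-1}(\bar v)\big)
=(J\bar u)^{\T}\Diag(\eins_n/p)(J\bar v)
=\bar u^{\T}G(\eta)\bar v,
\]
so $\vartheta^{-1}$, and therefore $\vartheta$, is an isometry; this is the claimed isomorphism.

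I do not expect a genuine obstacle here — the proof is a routine change of coordinates. The two points that need a little care are purely bookkeeping: first, that it is the $n$-th coordinate which is eliminated, so that the rank-one term in \eqref{eq:G-eta-simplex} comes precisely from the $1/p_n$ entry of $\Diag(\eins_n/p)$; and second, that $\Diag(\eins_n/p)$ is a priori a bilinear form on all of $\R^n$, so ``isometry'' is understood as equality of its restriction to the tangent spaces $T_p\mc{S}_n$, equivalently of its pullback along $J$. As an alternative to the Jacobian computation one could instead re-derive $G(\eta)$ from $G(p)$ via the covariant transformation law already invoked to obtain \eqref{eq:metric-Geta}, but the direct computation with $J$ is the shortest route.
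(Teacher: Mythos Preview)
Your proof is correct and follows essentially the same approach as the paper: your Jacobian $J$ is the paper's matrix $B_0$ from \eqref{eq:Jacobian-varphi-inv}, and the key identity $J^{\T}\Diag(\eins_n/p)\,J=G(\eta)$ you verify is exactly the relation $G(\eta)=B_0^{\T}G(p)B_0$ that the paper uses. You are a bit more explicit about the diffeomorphism and the splitting of the matrix product, but the argument is the same.
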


\begin{proof}
The constraint $\la\eins_{n},p\ra=1$ yields for any smooth curve $t\mapsto p(t)\in\mc{S}_{n}$ the condition $\la\eins_{n},\dot p\ra=0$ for tangent vectors, that is the tangent space
\begin{equation}\label{eq:def-T0}
    T_{0}:=\{v\in\R^{n}\colon\la\eins_{n},v\ra=0\}
\end{equation}
and the trivial tangent bundle $T\mc{S}_{n} = \mc{S}_{n}\times T_{0}$.

A basis of $T_{0}$ is given by $d\vartheta^{-1}(e_{i}) = e_{i}-e_{n},\; i\in[n-1]$, where $e_{1},\dotsc,e_{n}$ denote the canonical basis vectors of the ambient space $\R^{n}$. Collecting this basis of $T_{0}$ as column vectors of a matrix yields the coordinate representation of tangent vectors
\begin{equation}\label{eq:Jacobian-varphi-inv}
    v = B_{0} v_{\eta},\qquad
    B_{0} := (e_{1}-e_{n},\dotsc,e_{n-1}-e_{n}) = \bpm I_{n-1} \\ -\eins_{n-1}^{\T} \epm \in \R^{n\times (n-1)}
\end{equation}
and the equation
\begin{subequations}
\begin{align}
    g_{\eta}(u_{\eta},v_{\eta})
    = \la u_{\eta},G(\eta) v_{\eta} \ra
    = \la u_{\eta}, B_{0}^{\T} G(p) B_{0} v_{\eta} \ra
    &= \la u, G(p) v\ra = g_{p}(u,v),\qquad u,v\in T_{0}
\intertext{with}\label{eq:G-simplex}
    G(p) &= \Diag\Big(\frac{\eins_{n}}{p}\Big),\qquad p\in\mc{S}_{n}.
\end{align}
\end{subequations}
\end{proof}

We conclude this section by deriving the expression for Riemannian gradients \cite[p.~89]{Jost:2017aa} of functions specified in \textit{ambient} coordinates.

\begin{lemma}[\textbf{Riemannian gradient in ambient coordinates}]\label{lem:Rgrad-simplex}
    Let $f\colon\mc{S}_{n}\to\R$ be a smooth function specified in ambient coordinates $f(p)$ with gradient $\partial f(p) = (\partial_{p_{1}} f(p),\dotsc,\partial_{p_{n}} f(p))$. Then the Riemannian gradient of $f$ with respect to the Fisher-Rao metric reads
    \begin{subequations}\label{eq:Riemannian-grad-simplex}
    \begin{align}\label{eq:grad-f-simplex}
        \ggrad f(p) &= \Pi_{p}\partial f(p),\quad p\in\mc{S}_{n}
        \intertext{with}\label{eq:replicator-map}
        \Pi_{p} &:= \Diag(p)-p p^{\T}.
    \end{align}
    \end{subequations}
\end{lemma}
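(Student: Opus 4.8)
The plan is to use the defining property of the Riemannian gradient, namely $\mr{d}_p f(v) = g_p(\ggrad f(p), v)$ for all $v \in T_0$, together with the explicit metric $G(p) = \Diag(\eins_n/p)$ from Lemma \ref{lem:FR-metric}. First I would note that the differential of $f$ in ambient coordinates acts on a tangent vector $v \in T_0$ by $\mr{d}_p f(v) = \la \partial f(p), v\ra$, the ordinary Euclidean pairing. The subtlety is that $\partial f(p)$ is only defined up to a multiple of $\eins_n$, since $f$ is a priori only defined on the simplex; but this ambiguity is exactly what gets killed by the projection, so I would either fix a smooth extension or simply observe that $\Pi_p \eins_n = \Diag(p)\eins_n - p p^\T \eins_n = p - p = 0$, so $\ggrad f(p) = \Pi_p \partial f(p)$ is well-defined regardless.

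The key computation is then to verify two things: (i) $\Pi_p \partial f(p) \in T_0$, i.e.\ $\la \eins_n, \Pi_p \partial f(p)\ra = 0$, which follows from $\eins_n^\T \Pi_p = \eins_n^\T \Diag(p) - (\eins_n^\T p) p^\T = p^\T - p^\T = 0$; and (ii) that it represents the differential correctly, i.e.\ $\la \partial f(p), v\ra = g_p(\Pi_p \partial f(p), v) = \la \Diag(\eins_n/p) \Pi_p \partial f(p), v\ra$ for all $v \in T_0$. For (ii) I would compute $\Diag(\eins_n/p)\Pi_p = \Diag(\eins_n/p)\Diag(p) - \Diag(\eins_n/p) p p^\T = I_n - \eins_n p^\T$, so $g_p(\Pi_p \partial f(p), v) = \la (I_n - \eins_n p^\T)\partial f(p), v\ra = \la \partial f(p), v\ra - (p^\T \partial f(p))\la \eins_n, v\ra = \la \partial f(p), v\ra$, using $\la \eins_n, v\ra = 0$ for $v \in T_0$. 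This matches $\mr{d}_p f(v)$, and since the Riemannian gradient is the unique element of $T_0$ with this property (established already in the excerpt's preliminaries), the two properties together pin down $\ggrad f(p) = \Pi_p \partial f(p)$.

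I do not expect a genuine obstacle here; the only thing to be careful about is bookkeeping around the ambient-versus-intrinsic coordinate description. Concretely, one should be explicit that a tangent vector to $\mc{S}_n$ at $p$, viewed in $\R^n$, lies in $T_0 = \{v : \la \eins_n, v\ra = 0\}$ (as shown in the proof of Lemma \ref{lem:FR-metric}), and that the metric $g_p$ on such vectors is $\la u, \Diag(\eins_n/p) v\ra$. With that fixed, the proof is the two short linear-algebra identities $\eins_n^\T \Pi_p = 0$ and $\Diag(\eins_n/p)\Pi_p = I_n - \eins_n p^\T$, followed by invoking uniqueness of the Riemannian gradient. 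If one wants to be fully rigorous about $f$ being defined only on $\mc{S}_n$, I would add a sentence remarking that any two smooth extensions of $f$ to a neighborhood in $\R^n$ differ by a function whose gradient at $p$ is orthogonal to $T_0$, hence a multiple of $\eins_n$ (plus terms vanishing on $T_0$), and $\Pi_p$ annihilates $\eins_n$, so the formula is independent of the chosen extension.
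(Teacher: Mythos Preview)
Your proof is correct but takes a genuinely different route from the paper. The paper works through the local coordinates $\eta=\vartheta(p)$: it first computes $\partial_{\eta} f = B_{0}^{\T}\partial f(p)$ via the chain rule, then explicitly inverts the metric tensor \eqref{eq:G-eta-simplex} to obtain $G(\eta)^{-1}=\Diag(\eta)-\eta\eta^{\T}=:\Pi_{\eta}$, writes the Riemannian gradient in local coordinates as $\Pi_{\eta}B_{0}^{\T}\partial f(p)$, and finally pushes this back to ambient coordinates via $B_{0}$, checking the matrix identity $B_{0}\Pi_{\eta}B_{0}^{\T}=\Pi_{p}$. Your approach instead stays entirely in ambient coordinates and verifies directly the defining property $g_{p}(\Pi_{p}\partial f(p),v)=\la\partial f(p),v\ra$ for $v\in T_{0}$ via the two short identities $\eins_{n}^{\T}\Pi_{p}=0$ and $\Diag(\eins_{n}/p)\Pi_{p}=I_{n}-\eins_{n}p^{\T}$, then invokes uniqueness. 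Your argument is more elementary and self-contained, and your discussion of the well-definedness under the $\eins_{n}$-ambiguity of $\partial f(p)$ is a nice addition the paper does not make explicit. The paper's approach, on the other hand, yields as a by-product the explicit inverse metric $\Pi_{\eta}$ in local coordinates and the identity $B_{0}\Pi_{\eta}B_{0}^{\T}=\Pi_{p}$, both of which are reused later (e.g.\ in the derivation of the e-geodesics and vector transports on $\mc{S}_{n}$).
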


\begin{proof}
    Writing $f(p)=f(\vartheta^{-1}(\eta))$ using \eqref{eq:inv-varphi}, the chain rule gives
    \begin{equation}
        \partial_{\eta} f(p) = B_{0}^{\T}\partial f(p),\qquad p = \vartheta^{-1}(\eta).
    \end{equation}
    A straightforward computation yields the inverse of the metric tensor \eqref{eq:G-eta-simplex}
    \begin{equation}\label{eq:def-pi-eta}
        G(\eta)^{-1} = \Diag(\eta)-\eta \eta^{\T}:= \Pi_{\eta},
    \end{equation}
    that we denote by $\Pi_{\eta}$ and, in turn, the Riemannian gradient in \textit{local} coodinates
    \begin{equation}
        \ggrad_{\eta} f(p) = \Pi_{\eta} B_{0}^{\T}\partial f(p),\qquad p = \vartheta^{-1}(\eta),
    \end{equation}
    which by definition is a tangent vector in local coordinates. Adopting the coordinate representation \eqref{eq:Jacobian-varphi-inv} finally yields
    \begin{equation}
        \ggrad f(p) = B_{0} \Pi_{\eta} B_{0}^{\T}\partial f(p)
        = \Pi_{p}\partial f(p)
    \end{equation}
    with $\Pi_{p}$ given by \eqref{eq:replicator-map}.
\end{proof}

\subsubsection{Summary}
The following tables summarize the basic expressions derived above. The relation of the last two rows of Table \ref{tab:distributions-Bregman} will be examined in Remark~\ref{rem:phi}.
\begin{table}[ht]
  \begin{center}
    \caption{Elementary Riemannian manifolds $\mc{M}$, the corresponding distributions $p(y)$, metric tensors $G$ and inverse metric tensors $G^{-1}$. 
    For the probability simplex $\mc{S}_{n}$, working with the ambient coordinates is a convenient alternative (last row).}
    \label{tab:distributions-list}
    \begin{tabular}{|l|l|l|l|} 
    \hline
     $\mc{M}$ & $p(y)$ &  $G$ &  $G^{-1}$ \\
      \hline
      $\mc{P}_{n}$ & $\prod_{i\in[n]}\frac{\eta_{i}^{y_{i}} e^{-\eta_{i}}}{y_{i}!}$ & \eqref{eq:G-orthant} & $\Diag(\eta)$\\
      $\mc{B}_{n}$ & $\prod_{i\in[n]}\eta_{i}^{y_{i}}(1-\eta_{i})^{1-y_{i}}$ &  \eqref{eq:G-box} &  $\Diag(\eta\cdot(\eins-\eta))$\\
      $\vartheta(\mc{S}_{n})$  &                    
      $\prod_{i=1}^{n-1} \eta_{i}^{\delta_{i}(y)} (1-\la\eins_{n-1},\eta\ra)^{1-\la\eins_{n-1},\delta(y)\ra}$ &
     \eqref{eq:G-eta-simplex} & \eqref{eq:def-pi-eta}\\
      $\mc{S}_{n}$  &
      $\prod_{i\in[n-1]} p_{i}^{\delta_{i}(y)}
      p_{n}^{1-\la\eins_{n-1},\delta(y)\ra}$ & 
      \eqref{eq:G-simplex} &  \eqref{eq:replicator-map}\\
      \hline
    \end{tabular}
  \end{center}
\end{table}

\begin{table}[ht]
  \begin{center}
    \caption{The conjugate potentials $\psi, \phi$ corresponding to the elementary Riemannian manifolds $\mc{M}$ of Table \ref{tab:distributions-list}. Using the convention $0 \log 0=0$, the domain of each $\phi$ is  extended to the closure $\ol{\dom\phi}$.}
    \label{tab:distributions-Bregman}
    \begin{tabular}{|l|l|l|l|l|} 
    \hline
     $\mc{M}$ &   $\psi$ & $\dom \psi$ &
      $\phi=\psi^{\ast} $ & $\ol{\dom \phi}$ \\
      \hline
      $\mc{P}_{n}$ & $\la\eins_{n},e^{\theta}\ra$ & $\R^n$ & $\la\eta,\log\eta\ra-\la\eins_{n},\eta\ra$
      & $\R^n_{+}$\\
      $\mc{B}_{n}$ & $\sum_{i\in[n]}\log(1+e^{\theta_{i}})$& $\R^n$ & 
    $\la\eta,\log\eta\ra + \la \eins_{n-1}-\eta,\log(\eins_{n-1}-\eta)\ra$ & ${[0,1]}^n$ \\
      $\vartheta(\mc{S}_{n})$ 
      & $\log(1 + \la\eins_{n-1},e^{\theta}\ra)$&       
      $\R^{n-1}$& $\la\eta,\log \eta\ra
      + (1-\la\eins_{n-1},\eta\ra)\log (1-\la\eins_{n-1},\eta\ra) $ & $\Delta^0_{n-1}$ \\
    $\mc{S}_{n}$ &
    $\frac{e^{q}}{\la\eins_{n},e^{q}\ra}$ &
    $T_{0}\;\eqref{eq:def-T0}$ & 
    $\la p,\log p\ra$ &
    $\Delta_{n}$
    \\
      \hline

    \end{tabular}
  \end{center}
\end{table}

\begin{remark}[\textbf{Bregman kernels}]\label{rem:phi}
 Each function $\phi$ listed in Table \ref{tab:distributions-Bregman} is Legendre. This includes in particular the correspondence $\phi=\psi^{\ast}$ in the last row due to \cite[Example 11.12]{RockafellarWets2010}, with the lack of \textit{strict} convexity of the log-exponential function $\psi=\frac{e^{q}}{\la\eins_{n},e^{q}\ra}$ removed by restricting $\psi$ to the tangent space $T_{0}$. Proposition \ref{prop:Legendre-affine-subspace} with the affine subspace $\{x\in\R^{n}\colon \la\eins_{n},x\ra=1\}$ and $\vphi=\phi$, where $\dom\phi=\R_{+}^{n}$, yields the assertion.

 Accordingly, we denote the functions listed in Table \ref{tab:distributions-Bregman} by
 \begin{equation}\label{eq:phi-list}
     \phi = \begin{cases}
         \phi_{+} &\text{if}\; \mc{M}=\mc{P}_{n}, \\
         \phi_{\square} &\text{if}\; \mc{M}=\mc{B}_{n}, \\
         \phi_{\Delta}^{0} &\text{if}\; \mc{M}=\vartheta(\mc{S}_{n}), \\
         \phi_{\Delta} &\text{if}\; \mc{M}=\mc{S}_{n}.
     \end{cases}
 \end{equation}
\end{remark}

We compute directly using \eqref{def:Bregman}
\begin{subequations}\label{eq:KL-list}
\begin{align}
    \label{eq:D-phi-orthant}
    D_{\phi_{+}}(x,y)& =\la x, \log x - \log y\ra - \la \eins, x-y \ra =: \KL(x,y) , 
    \intertext{and }
     \label{eq:D-phi-box}
    D_{\phi_{{\square}}}(x,y)
    &= D_{\phi_{+}}(x,y) + D_{\phi_{+}}(\eins_{n}-x,\eins_{n}-y),
    \\ \label{eq:D-simplex-equal}
    D_{\phi_{\Delta}^{0}}(\eta,\eta')
    &= D_{\phi_{\Delta}}(p,p'),
    \qquad\qquad (\text{Lemma \ref{lem:D-simplex-equal}})
    \\
    D_{\phi_{\Delta}}(p,p')
    &= \Big\la p,\log\frac{p}{p'}\Big\ra.
\end{align}
\end{subequations}

It remains to establish Equation \eqref{eq:D-simplex-equal}.

\begin{lemma}[\textbf{Bregman divergence using ambient coordinates}]\label{lem:D-simplex-equal}
    One has
    \begin{equation}\label{eq:Dphi=KL}
        D_{\phi_{\Delta}^{0}}(\eta,\eta')
        = D_{\phi_{\Delta}}(p,p')
        = \KL(p,p') = \Big\la p,\log\frac{p}{p'}\Big\ra,\qquad
        \eta=\vartheta(p),\;\eta'=\vartheta(p'),\quad
        \forall p, p' \in\mc{S}_{n}.
    \end{equation}
\end{lemma}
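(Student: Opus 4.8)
The plan is to verify the chain of equalities in \eqref{eq:Dphi=KL} from right to left by direct computation, exploiting the explicit formulas for $\phi_{\Delta}$ and $\phi_{\Delta}^{0}$ recorded in Table~\ref{tab:distributions-Bregman}. The last equality $D_{\phi_{\Delta}}(p,p') = \KL(p,p') = \la p, \log\frac{p}{p'}\ra$ is essentially immediate: with $\phi_{\Delta}(p) = \la p, \log p\ra$ on $\Delta_{n}$, one has $\partial\phi_{\Delta}(p) = \log p + \eins_{n}$, so plugging into \eqref{def:Bregman} gives $D_{\phi_{\Delta}}(p,p') = \la p,\log p\ra - \la p',\log p'\ra - \la \log p' + \eins_{n}, p - p'\ra = \la p, \log p - \log p'\ra - \la\eins_{n}, p - p'\ra$, and the last term vanishes because $\la\eins_{n},p\ra = \la\eins_{n},p'\ra = 1$ for $p,p'\in\mc{S}_{n}$. (One should note the derivative is taken along $T_{0}$, i.e.\ only the component of $\partial\phi_{\Delta}$ in $T_{0}$ matters, but the $\eins_{n}$-part is annihilated by $p-p'\in T_{0}$ anyway, so the formula is unambiguous.)

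For the first equality $D_{\phi_{\Delta}^{0}}(\eta,\eta') = D_{\phi_{\Delta}}(p,p')$ with $\eta = \vartheta(p)$, $\eta' = \vartheta(p')$, I would substitute $p = \vartheta^{-1}(\eta) = (\eta, 1 - \la\eins_{n-1},\eta\ra)^{\T}$ from \eqref{eq:inv-varphi}. Writing $p_{n} = 1 - \la\eins_{n-1},\eta\ra$ and $p'_{n} = 1 - \la\eins_{n-1},\eta'\ra$, the right-hand side $\la p, \log\frac{p}{p'}\ra$ expands as $\sum_{i\in[n-1]}\eta_{i}\log\frac{\eta_{i}}{\eta'_{i}} + p_{n}\log\frac{p_{n}}{p'_{n}}$. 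On the other side, from $\phi_{\Delta}^{0}(\eta) = \la\eta,\log\eta\ra + (1 - \la\eins_{n-1},\eta\ra)\log(1 - \la\eins_{n-1},\eta\ra)$ one computes $\partial\phi_{\Delta}^{0}(\eta) = \log\eta - \log(1 - \la\eins_{n-1},\eta\ra)\eins_{n-1} = \log\eta - (\log p_{n})\eins_{n-1}$, i.e.\ the $i$-th component is $\log\frac{\eta_{i}}{p_{n}} = \theta_{i}$, consistently with \eqref{eq:def-theta}. Feeding $\phi_{\Delta}^{0}$ and $\partial\phi_{\Delta}^{0}$ into \eqref{def:Bregman} and simplifying, the terms involving $\log\eta'_{i}$ and $\log p'_{n}$ reorganize, and using $\la\eins_{n-1},\eta\ra - \la\eins_{n-1},\eta'\ra = p'_{n} - p_{n}$ one recovers exactly $\sum_{i\in[n-1]}\eta_{i}\log\frac{\eta_{i}}{\eta'_{i}} + p_{n}\log\frac{p_{n}}{p'_{n}}$.

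This is a routine if slightly fiddly bookkeeping computation; there is no real obstacle, only the need to track the $n$-th coordinate carefully and to use the affine constraint at the right moment to cancel the linear terms. The one point deserving a remark is well-definedness at the boundary: the statement is asserted for $p,p'\in\mc{S}_{n} = \rint(\Delta_{n})$, so all coordinates of $p,p'$ (hence all $\eta_{i}, \eta'_{i}, p_{n}, p'_{n}$) are strictly positive and every logarithm above is finite; the convention $0\log 0 = 0$ from the caption of Table~\ref{tab:distributions-Bregman} is only needed if one later wants to extend the identity to the closure, which is not required here. Alternatively — and this would make the proof essentially a one-liner — one may invoke the general fact that a Bregman divergence is invariant under an invertible affine reparametrization of its argument together with the corresponding pullback of the kernel; since $\vartheta$ is the restriction of an affine bijection and $\phi_{\Delta}^{0} = \phi_{\Delta}\circ\vartheta^{-1}$ by comparison of the table entries, the equality $D_{\phi_{\Delta}^{0}}(\eta,\eta') = D_{\phi_{\Delta}}(p,p')$ follows at once. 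I would include the direct computation for concreteness and present the affine-invariance argument as the conceptual reason.
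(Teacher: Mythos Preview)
Your proposal is correct and follows essentially the same approach as the paper: a direct computation of $D_{\phi_{\Delta}^{0}}(\eta,\eta')$ from \eqref{def:Bregman} using the explicit gradient $\partial\phi_{\Delta}^{0}(\eta)=\log\eta-\log(1-\la\eins_{n-1},\eta\ra)\eins_{n-1}$, followed by bookkeeping with $p_{n}=1-\la\eins_{n-1},\eta\ra$ to collapse everything to $\la p,\log p-\log p'\ra$. Your separate verification of $D_{\phi_{\Delta}}(p,p')=\KL(p,p')$ and the affine-invariance remark are pleasant additions, but the core argument matches the paper's.
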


\begin{proof}
Setting $\phi=\phi_{\Delta}^{0}$, we directly compute using \eqref{eq:inv-varphi}
\begin{subequations}
\begin{align}
\phi(\eta) &= \la p, \log p\ra|_{p = \vartheta^{-1}(\eta)}
\label{eq:phi-simplex-local} \\
&= \la\eta,\log \eta\ra
+ (1-\la\eins_{n-1},\eta\ra)\log (1-\la\eins_{n-1},\eta\ra),
\\ \label{eq:partial-phi-simplex}
\partial\phi(\eta) 
&= B^{\T} \log p|_{p = \vartheta^{-1}(\eta)}
= \log\eta - \log(1-\la\eins_{n-1},\eta\ra)\eins_{n-1}
\intertext{Hence}
D_{\phi}(\eta,\eta') 
&= \phi(\eta)-\phi(\eta')-\la\partial\phi(\eta'), \eta-\eta'\ra
\\
&=
\la \eta,\log\eta\ra + p_{n}\log p_{n} -(\la \eta',\log\eta'\ra 
+ p_{n}'\log p_{n}') 
\\ &\qquad\qquad
-\la \log\eta' - \log p_{n}' \eins_{n-1}, \eta - \eta'\ra
\\
&= \la \eta,\log \eta\ra + p_{n}\log p_{n} 
- p_{n}'\log p_{n}'- \la \eta,\log \eta'\ra + \log p_{n}'\la \eins_{n-1} ,\eta-\eta'\ra
\\
&= \la p, \log p\ra - p_{n}'\log p_{n}' - \la \eta,\log\eta'\ra + \log p_{n}'\la \eins_{n-1} ,\eta-\eta'\ra + \underbrace{\log p_{n}' - \log p_{n}'}_{=0} \\
& =  \la p, \log p\ra - \la \eta,\log\eta'\ra
- \underbrace{\left(p_{n}' + \la \eins_{n-1} ,\eta'\ra  - 1\right)}_{=0} \log p_{n}' - \underbrace{\left(1- \la \eins_{n-1} ,\eta\ra\right)}_{=p_{n}} \log p_{n}'\\
&= \la p, \log p-\log p'\ra 
= \KL(p,p').
\end{align}
\end{subequations}
\end{proof}

\section{SMART: Convergence and Acceleration}
\label{sec:ConvexAcceleration}

In this section, we adopt the viewpoint of the Bregman Proximal Gradient (BPG) (Section \ref{BPG-section}) to understand the convergence rate of SMART (Section \ref{sec:SMART-convergence}).  Furthermore, we detail various accelerated variants of the BPG iteration (Section \ref{ABPG-section}) which are supposed to accelerate the BPG iteration at minimal cost.

\subsection{Bregman Proximal Gradient (BPG)}\label{BPG-section}

The classical \textit{mirror descent (MD)} method \cite{Nemirovski:1983} for solving the convex optimization problem 
\begin{equation}\label{eq:objective-f-general}
\min_{x \in C_{F}} F(x),\qquad \dom F=:C_{F} \subset \R^{n},
\end{equation}
with $F$ continuously differentiable convex and $C_{F}$ nonempty closed convex, was investigated by Beck and Teboulle 
\cite{Beck:2003aa} in more general form after
replacing the squared Euclidean norm by general distance-like functions as proximal term. Specifically, using a Bregman divergence  \eqref{def:Bregman} as distance function, the update scheme with stepsize $\tau_{k}>0$ reads
\begin{subequations}\label{eq:MDA-subproblem}
\begin{align}
x_{k+1} &= \underset{x \in  C_{\vphi}}{\argmin}\;
\la \partial F(x_{k}), x \ra + \frac{1}{\tau_{k}} D_{\vphi}(x, x_{k}),
\qquad
k = 0,1,\dotsc,
\\
&=:\mrm{Mirr}_{\vphi}\big(\tau_k \partial F(x_k)\big),
\qquad x_{0} \in C_{\vphi}:=C_{F}\cap\intr(\dom \vphi).
\end{align}
\end{subequations}
This may also be seen as a linearization of the classical proximal point iteration \cite{Rockafellar1976} and using a Bregman divergence $D_{\vphi}$ as proximity measure instead of the squared Euclidean distance and, in this sense, as a linearization of the approach of Censor and Zenios \cite{Censor1992}. Rewriting \eqref{eq:MDA-subproblem} in the form (cf.,~e.g., \cite{Beck:2003aa})
\begin{subequations}\label{eq:mirror-step}
\begin{align}
\mrm{Mirr}_{\vphi}\big(\tau_k \partial F(x_{k})\big) 
&= \arg\min_{x\in C_{\vphi}}\; D_{\vphi}(x,x_{k+1})
\\
&=
x_{k+1} 
= \partial \vphi^{\ast}\big(\partial \vphi(x_{k})-\tau_k \partial F(x_{k})\big) \in C_{\vphi} 
\end{align}
\end{subequations}
reveals that the \emph{mirror descent map} $\mrm{Mirr}_{\vphi}$ equals the \emph{interior Bregman gradient step} studied, e.g., in \cite{Auslender-IGA-2006}. Evaluating \eqref{eq:mirror-step} yields the specific update expressions \eqref{eq:SMART-iteration}.

\subsection{Convergence Analysis}\label{sec:SMART-convergence}
In this section, we establish the convergence rate $\mc{O}(\frac{1}{k})$ of the SMART iteration for solving problem \eqref{eq:def-KL-objective}, by recognizing it as special case of the BPG scheme \eqref{eq:MDA-subproblem}. Since the objective function has a gradient which is \textit{not} $L$-Lipschitz, the following weaker notion is instrumental.

\begin{definition}[\textbf{relative $L$-smoothness}]\label{def:L-smooth}\cite[Prop. 1]{Bauschke:2017aa} A convex function $F$ is called \emph{$L$-smooth relative} to the Legendre function $\vphi$ if $\dom\vphi \subset \dom F$ and there exists a constant $L>0$ such that
\begin{equation}\label{eq:L-phi-f-reformulated}
D_{F}(x,y) \leq L D_{\vphi}(x,y),\qquad\forall (x,y)\in \dom\vphi\times \intr(\dom\vphi).
\end{equation}
\end{definition}

\noindent
We adopt the following 
\begin{description}
\item[Assumptions A] $\text{ }$
\begin{enumerate}[(i)]
\item the Bregman kernel $\vphi$ is of Legendre type;
\item the objective function $F:\R^n\to \R\cup \{+\infty\}$ is a convex
function and continuously differentiable on $\intr \dom\vphi$, with $\dom\vphi \subset \dom F$;
\item  the objective is bounded from below, i.e. $-\infty < \inf_{x \in \dom \vphi} F(x)=:F_\ast$;

\item the objective $F$ is $L$-smooth relative to $\vphi$.
\end{enumerate}
\end{description}

\begin{remark}[\textbf{well posedness}]\label{rem:MD-well-posedness}
    Assumption A imply that the mirror descent map \eqref{eq:MDA-subproblem} is a well-defined, single-valued map $\mrm{Mirr}_{\vphi}\colon\intr(\dom\vphi)\to\intr(\dom\vphi)$ (Lemma \ref{lem:Bregman-projection}).
\end{remark}

Now we consider the problem \eqref{eq:def-KL-objective},
\begin{subequations}
\begin{gather}\label{eq:f-KL-Dphif}
    f(x) = \KL(A x, b) =: D_{\phi_{f}}(A x,b)
    \intertext{with regularizing proximal term}
    D_{\phi}(x,y)
\end{gather}
\end{subequations}
given by \eqref{eq:KL-list} and $\phi$ given by any function listed in \eqref{eq:phi-list}. 
In each case, $(f,\phi)$ play the role of $(F,\vphi)$ in the Assumptions A.

\begin{proposition}[\textbf{Existence}]\label{prop:Existence}
    Consider $\phi_{f}$ in \eqref{eq:def-Shannon-ent},  $f$ defined by \eqref{eq:f-KL-Dphif}, with $A$ and $b$  as in \eqref{eq:def-KL-objective} and $\phi$ given by any function listed in \eqref{eq:phi-list}.  Set $K = \{A x\colon x\in \dom \phi \subset\R^{n}_{+}\}\subset \R^{m}_{+}$
    and assume $b\in\intr(K)$ and $\dom\phi_{f} \cap K \neq\emptyset$. Then
    \begin{equation}\label{eq:prop-existence-problem-f}
        \underset{\R^{n}_{+}\cap\dom\phi}{\argmin} f(x) \neq \emptyset.
    \end{equation}
\end{proposition}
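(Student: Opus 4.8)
The plan is to show that the objective $f$ attains its infimum on the closed set $\R_+^n \cap \dom\phi$ by a standard coercivity-plus-lower-semicontinuity argument, adapted to the fact that $f$ is not defined by an $L$-smooth kernel but is nonetheless continuous and convex on its domain. First I would fix a point $x_0 \in \dom\phi_f \cap \{x : Ax \in K\}$ with $Ax_0$ in the set where $\KL(\cdot,b)$ is finite (this uses $\dom\phi_f\cap K\neq\emptyset$), so the sublevel set $S_\alpha = \{x\in\R_+^n\cap\dom\phi : f(x)\le\alpha\}$ is nonempty for $\alpha = f(x_0)$. Since $f(x)=\KL(Ax,b)$ is a composition of the lower semicontinuous convex function $y\mapsto\KL(y,b)$ (finite and continuous on $\R_+^m$ since $b\in\R_{++}^m$) with the linear map $x\mapsto Ax$, and since $\dom\phi$ is one of the closed sets $\R_+^n$, $[0,1]^n$, $\Delta_n$ (using $\ol{\dom\phi}$ from Table~\ref{tab:distributions-Bregman}), the set $S_\alpha$ is closed. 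The Weierstrass theorem then gives a minimizer, provided $S_\alpha$ is bounded — which is automatic in the box and simplex cases, so the only real work is the orthant case $\dom\phi=\R_+^n$.

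For the orthant case, the crux is to rule out minimizing sequences escaping to infinity, and here the hypothesis $b\in\intr(K)$ is exactly what is needed. Suppose $(x_k)\subset\R_+^n$ with $\|x_k\|\to\infty$; passing to a subsequence, $x_k/\|x_k\|\to d$ with $d\in\R_+^n$, $\|d\|=1$. I would argue that $Ad\neq 0$: if $Ad=0$ then $A(x_0 + td) = Ax_0$ for all $t\ge 0$ would keep $f$ bounded, which alone does not contradict existence, so instead I use that $b\in\intr(K)$ forces the recession directions of $K$ to be "genuine" — more precisely, I examine $f(x_k) = \KL(Ax_k, b) = \la Ax_k, \log(Ax_k) - \log b\ra - \la\eins, Ax_k - b\ra$. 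Along any direction $d$ with $Ad\neq 0$ (so $(Ad)_i>0$ for some $i$), the dominant term behaves like $\|x_k\|\cdot(Ad)_i\log\|x_k\| \to +\infty$, since the $t\log t$ growth of the KL term beats the linear term $-\la\eins, Ax_k\ra$. Hence $f(x_k)\to\infty$, contradicting that $(x_k)$ is minimizing. The remaining possibility is $Ad=0$ for every accumulation direction $d$ of the normalized sequence; this is where $b\in\intr(K)$ enters decisively: I would show the minimization can be reduced to the subspace $\aff(\ker A^\perp$-image$)$ or, more cleanly, invoke that $f$ depends on $x$ only through $Ax$ and that $\{Ax : x\in\R_+^n\} = K$ is a closed convex cone (polyhedral, since $A$ has finitely many columns), so it suffices to minimize $y\mapsto\KL(y,b)$ over $K$; with $b\in\intr(K)$ this reduced problem has bounded sublevel sets by the $t\log t$ coercivity just described, giving a minimizer $y^\ast\in K$, and any preimage $x^\ast\in\R_+^n$ with $Ax^\ast=y^\ast$ lies in $\R_+^n\cap\dom\phi$ and attains the minimum.

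The main obstacle I anticipate is the bookkeeping around $\dom\phi_f$: the KL term $\KL(Ax,b)$ is finite and continuous for all $x\in\R_+^n$ (because $b>0$ makes $\log b$ finite and $0\log 0 = 0$), so $\dom\phi_f\cap K\neq\emptyset$ is essentially automatic and the hypothesis is used mainly to ensure $S_\alpha\neq\emptyset$; the genuinely load-bearing hypothesis is $b\in\intr(K)$, and the delicate point is handling directions $d$ in the recession cone of $\dom\phi$ with $Ad=0$. I would handle this by the cone-reduction above rather than by a direct sequence argument in $x$-space, since the reduction to minimizing $\KL(\cdot,b)$ over the polyhedral cone $K$ cleanly separates the "harmless" degeneracy (many $x$ mapping to the same $y$) from the coercivity that actually matters. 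The box and simplex cases need no such argument since their domains are compact, so a single sentence dispatches them.
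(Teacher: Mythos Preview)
Your argument is correct but proceeds quite differently from the paper. The paper's proof is a single sentence: it observes that minimizing $f(x)=D_{\phi_f}(Ax,b)$ over $\R^n_+\cap\dom\phi$ is the same as minimizing $D_{\phi_f}(y,b)$ over $y\in K$, and then invokes Lemma~\ref{lem:Bregman-projection} (the Bauschke--Borwein Bregman projection result) with $C=K$, $\vphi=\phi_f$, and $y_0=b$. That lemma delivers existence (and uniqueness, and interiority in $\dom\phi_f$) off the shelf. You arrive at the same reduction to the image space $K\subset\R^m_+$, but instead of citing the projection lemma you run a self-contained Weierstrass argument: compactness handles the box and simplex immediately, and for the orthant you use that $K=A(\R^n_+)$ is a closed polyhedral cone and that $y\mapsto\KL(y,b)$ is coercive on $\R^m_+$ by the $t\log t$ growth of each summand. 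Your route is more elementary and avoids the dependency on \cite{Bauschke:1997aa}; the paper's route is shorter and also yields that the optimal $y^\ast$ lies in $\R^m_{++}$.

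One remark on your exposition: you spend considerable effort locating where $b\in\intr(K)$ is ``load-bearing'' for coercivity, but in fact it is not. The function $\KL(\cdot,b)$ is coercive on all of $\R^m_+$ as soon as $b\in\R^m_{++}$, since each term $y_i\log(y_i/b_i)-y_i+b_i$ is nonnegative and tends to $+\infty$ as $y_i\to\infty$; this already gives bounded sublevel sets on any closed subset of $\R^m_+$, in particular on $K$. The hypothesis $b\in\intr(K)$ is stronger than what either proof actually needs---in the paper's argument it merely supplies $b\in K\cap\R^m_{++}$, which verifies the condition $C\cap\intr(\dom\vphi)\neq\emptyset$ of Lemma~\ref{lem:Bregman-projection}. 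So your detour through recession directions with $Ad=0$ can be dropped: once you reduce to $y$-space, coercivity is immediate and no further case analysis is required.
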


\begin{proof}
    Problem \eqref{eq:prop-existence-problem-f} satisfies the assumptions of Lemma \ref{lem:Bregman-projection} with the specified $K$ and $\vphi=\phi_{f}$.
\end{proof}

We continue with Lemmata preparing the main result, Theorem \ref{th:SMART1}.

\begin{lemma}\cite[Lemma 1]{Kahl:2023aa}\label{lem:def-Bregman-gap}
Let $f$ be defined by \eqref{eq:f-KL-Dphif} and consider $\phi_{f}$ in \eqref{eq:def-Shannon-ent}.
Then 
\begin{equation}
D_{f}(x,y)=D_{\phi_{f}}(A x, A y)=\KL(Ax,Ay).
\end{equation}
\end{lemma}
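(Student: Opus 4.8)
The claim is the identity $D_f(x,y) = D_{\phi_f}(Ax, Ay) = \KL(Ax, Ay)$, where $f(x) = \KL(Ax,b) = D_{\phi_f}(Ax,b)$ with $\phi_f$ the Shannon-type kernel $\phi_f(y) = \la y,\log y\ra - \la\eins,y\ra$. My plan is to expand everything directly from the definition of the Bregman divergence, noting at the outset that the reference point $b$ will drop out — which is exactly why the statement is clean. First I would record the gradient $\partial\phi_f(y) = \log y$ (since $\tfrac{\partial}{\partial y_i}(y_i\log y_i - y_i) = \log y_i$), and hence by the chain rule $\partial f(x) = A^\T \partial\phi_f(Ax) = A^\T \log(Ax)$, using $\dom\phi \subset \R^n_+$ so that $Ax$ has positive components where needed.

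Next I would write out $D_f(x,y) = f(x) - f(y) - \la \partial f(y), x - y\ra$ and substitute. Writing $u = Ax$, $v = Ay$, this becomes
\begin{equation*}
D_f(x,y) = D_{\phi_f}(u,b) - D_{\phi_f}(v,b) - \la A^\T\log v,\, x - y\ra = D_{\phi_f}(u,b) - D_{\phi_f}(v,b) - \la \log v,\, u - v\ra .
\end{equation*}
Now expand $D_{\phi_f}(u,b) = \la u,\log u - \log b\ra - \la\eins, u - b\ra$ and similarly for $v$. The terms involving $b$ are $-\la u,\log b\ra + \la v,\log b\ra + \la\eins, b\ra - \la\eins, b\ra$ combined with the linear correction; collecting them, the $\log b$ contributions are $-\la u - v,\log b\ra$, and I would check these cancel against a matching term produced by $-\la\log v, u-v\ra$ only after also accounting for... actually the cleaner bookkeeping is: $D_{\phi_f}(u,b) - D_{\phi_f}(v,b) = \la u,\log u\ra - \la v,\log v\ra - \la u - v,\log b\ra - \la\eins, u-v\ra$. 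Then subtracting $\la\log v, u-v\ra$ and regrouping gives $\la u,\log u\ra - \la v,\log v\ra - \la\log v, u-v\ra - \la u-v,\log b\ra - \la\eins,u-v\ra$. This is precisely $D_{\phi_f}(u,v) - \la u - v,\log b\ra$, and the stray $\log b$ term is the one genuine thing to dispose of.

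The resolution of that leftover term is the only real content: $\la u - v, \log b\ra = \la Ax - Ay, \log b\ra = \la x - y, A^\T\log b\ra$, and this need not vanish pointwise — so the identity as I have expanded it seems off by this term unless I have mis-collected. I expect the correct accounting to show the $\log b$ terms cancel internally: in $D_{\phi_f}(u,b)$ the full affine part is $-\la u - b,\eins\ra$, not involving $b$ beyond $\la u, \log b\ra$, and the difference $D_{\phi_f}(u,b) - D_{\phi_f}(v,b)$ contains $-\la u,\log b\ra + \la v,\log b\ra = -\la u - v,\log b\ra$; meanwhile $f(y) = D_{\phi_f}(v,b)$ so $\partial f(y)$ already incorporates $\partial_y[-\la Ay,\log b\ra] = -A^\T\log b$, i.e. $\partial f(y) = A^\T(\log v - \log b)$, \emph{not} $A^\T\log v$. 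This is the step I expect to be the main obstacle — getting the gradient of $f$ right, including the $-\log b$ piece. With $\partial f(y) = A^\T(\log(Ay) - \log b)$, the term $-\la\partial f(y), x-y\ra = -\la \log v - \log b, u - v\ra$ supplies exactly the $+\la u-v,\log b\ra$ needed to cancel the leftover, and the remainder collapses to $\la u,\log u - \log v\ra - \la\eins, u - v\ra = D_{\phi_f}(u,v) = \KL(Ax,Ay)$. I would present the computation as a single \texttt{align} chain with the $\log b$ cancellation highlighted, then invoke Lemma~\ref{lem:def-Bregman-gap}'s statement as the conclusion, noting the displayed $\KL$ form follows from \eqref{eq:def-our-D}.
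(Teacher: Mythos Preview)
Your computation is correct: once you use the full gradient $\partial f(y) = A^{\T}\big(\log(Ay)-\log b\big)$, the $\log b$ terms cancel and the remainder collapses to $D_{\phi_f}(Ax,Ay)=\KL(Ax,Ay)$ exactly as you describe. Note, however, that the paper does \emph{not} give its own proof of this lemma---it simply cites \cite[Lemma~1]{Kahl:2023aa} and moves on---so there is no ``paper's proof'' to compare against; your direct expansion is presumably what the cited reference does as well.

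One presentational remark: your write-up contains a false start (first computing $\partial f(y)=A^{\T}\log(Ay)$, discovering a stray $\la u-v,\log b\ra$ term, then backtracking to the correct gradient). In a clean proof you would simply compute $\partial f(y)=A^{\T}\big(\log(Ay)-\log b\big)$ from the outset---this follows immediately from $f(y)=D_{\phi_f}(Ay,b)$ and $\partial\phi_f(z)=\log z$ via the chain rule---and then the single \texttt{align} chain goes through without detour.
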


\begin{lemma}\label{lem:DhA-contraction}
Suppose $A \in \R_{+}^{m \times n}$ and consider $\phi_{+}$ from \eqref{eq:phi-list}. Then
\begin{equation}\label{eq:Dphi-inequality}
D_{\phi_{f}}(A x, A y) \leq \|A\|_1 D_{\phi_{+}}(x,y),\qquad
\forall x, y \in \R_{+}^{n}.
\end{equation}
\end{lemma}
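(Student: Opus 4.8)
The plan is to reduce the matrix inequality \eqref{eq:Dphi-inequality} to the scalar case componentwise and exploit the (joint) convexity of the generalized KL divergence. First I would write out both sides explicitly. Using \eqref{eq:def-our-D} and \eqref{eq:def-Shannon-ent}, the left-hand side is
\begin{equation*}
D_{\phi_{f}}(Ax,Ay) = \sum_{i\in[m]} \Big( (Ax)_i \log\frac{(Ax)_i}{(Ay)_i} - (Ax)_i + (Ay)_i \Big),
\end{equation*}
while by \eqref{eq:D-phi-orthant} the right-hand side is $\|A\|_1 \sum_{j\in[n]}\big(x_j\log\frac{x_j}{y_j} - x_j + y_j\big)$. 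Since $\|A\|_1 = \max_{j\in[n]}\sum_{i\in[m]} A_{ij}$ is the maximum column sum, it suffices to produce, for each row index $i$, a bound of the scalar divergence term in terms of the column contributions $A_{ij}$.

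The key step is the following: for fixed $i$, consider the scalar function $t\mapsto t\log t - t$ (the one-dimensional Shannon kernel, call it $\varphi_0$), which is convex. The crucial identity is that $D_{\varphi_0}$ is the Bregman divergence of $\varphi_0$, and that for a convex combination one has a Jensen-type inequality. Concretely, I would use that $(Ax)_i = \sum_j A_{ij} x_j$ and $(Ay)_i = \sum_j A_{ij} y_j$; writing $s_i := \sum_j A_{ij}$ (so $s_i \le \|A\|_1$) and the weights $\lambda_{ij} := A_{ij}/s_i$ (a probability vector over $j$ when $s_i>0$), the term $(Ax)_i = s_i \sum_j \lambda_{ij} x_j$. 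The joint convexity of the map $(u,v)\mapsto u\log(u/v) - u + v$ on $\R_+\times\R_{++}$ — equivalently, the fact that generalized KL is jointly convex — gives
\begin{equation*}
\Big(\textstyle\sum_j \lambda_{ij} x_j\Big)\log\frac{\sum_j \lambda_{ij} x_j}{\sum_j \lambda_{ij} y_j} - \sum_j \lambda_{ij} x_j + \sum_j \lambda_{ij} y_j \;\le\; \sum_j \lambda_{ij}\Big( x_j\log\frac{x_j}{y_j} - x_j + y_j\Big).
\end{equation*}
Multiplying through by $s_i$ and using $s_i \lambda_{ij} = A_{ij}$, then summing over $i$, yields $D_{\phi_f}(Ax,Ay) \le \sum_j \big(\sum_i A_{ij}\big) D_{\varphi_0}(x_j,y_j) \le \|A\|_1 \sum_j D_{\varphi_0}(x_j,y_j) = \|A\|_1 D_{\phi_+}(x,y)$, which is the claim. (The case $s_i = 0$ forces the $i$-th row of $A$ to vanish, so that term contributes $0$ on the left and is trivially fine; and $b_i \in \R_{++}$ with $b\in\intr(K)$ is not even needed here since the statement is for all $x,y\in\R_+^n$ — one should note $(Ay)_i>0$ requires some $A_{ij}y_j>0$, otherwise the divergence term is $+\infty=+\infty$ or handled by the convention, but since the inequality is claimed for $x,y\in\R_+^n$ one tacitly restricts to $y$ with $Ay\in\R_{++}^m$, consistent with the $\dom$ conventions for $D_{\phi_f}$).

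The main obstacle is establishing the joint convexity of the generalized KL divergence $(u,v)\mapsto u\log(u/v)-u+v$ cleanly, or equivalently the perspective-function argument behind it. This is classical (the function $v\,\ell(u/v)$ for convex $\ell$, here $\ell(t)=t\log t - t + 1$ up to affine terms, is the perspective and hence jointly convex), so I would either cite it or give the two-line Hessian check: the Hessian of $(u,v)\mapsto u\log u - u - u\log v + v$ is $\bpm 1/u & -1/v \\ -1/v & u/v^2 \epm$, which is positive semidefinite since $1/u>0$ and the determinant is $1/(uv^2) - 1/v^2 \cdot \ldots$ — wait, determinant $= \frac{u}{uv^2} - \frac{1}{v^2} = \frac{1}{v^2} - \frac{1}{v^2}=0$, so it is PSD of rank one. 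With joint convexity in hand the rest is a bookkeeping application of Jensen plus the definition of the induced $\ell^1$ matrix norm as the maximal column sum, and no step presents real difficulty.
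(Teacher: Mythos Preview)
Your proof is correct and follows essentially the same route as the paper's. The paper invokes the \emph{log-sum inequality} \cite[Thm.~2.7.1]{Cover:2006aa},
\[
\Big\la a,\log\frac{a}{b}\Big\ra \ge \la\eins,a\ra\,\log\frac{\la\eins,a\ra}{\la\eins,b\ra},\qquad a,b\in\R_{+}^{n},
\]
applied row-wise with $a_j=A_{ij}x_j$, $b_j=A_{ij}y_j$, which is precisely the Jensen-type inequality you derive from joint convexity of $(u,v)\mapsto u\log(u/v)-u+v$ after normalizing by $s_i=\sum_j A_{ij}$; the log-sum inequality is exactly this convexity statement without the normalization step, so the paper avoids the separate $s_i=0$ case-split but the substance is identical.
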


\begin{proof}
We apply the log-sum inequality \cite[Thm.~2.7.1]{Cover:2006aa} 
\begin{equation}\label{eq:log-sum-inequality}
\forall a, b \in \R_{+}^{n},\qquad
\Big\la a, \log\frac{a}{b} \Big\ra
\geq \la \eins, a \ra \log\frac{\la\eins, a \ra}{\la\eins, b\ra},
\end{equation}
which naturally extends by continuity to the boundary of $\R_{+}^{n}$. By assumption, $\|A\|_1 \ge \sum_{i \in [m]} A_{ij},\; j \in [n]$. We compute 
\begin{subequations}
\begin{align}
\|A\|_1 D_{\phi_{+}}(x,y) 
&\ge  \sum_{j \in [n]} \Big(\sum_{i \in [m]} A_{ij}\Big)\Big(x_{j} \log\frac{x_{j}}{y_{j}}-x_{j}+y_{j}\Big)
\\
&= \sum_{i \in [m]}\Big(\sum_{j \in [n]}
A_{ij} x_{j} \log\frac{A_{ij} x_{j}}{A_{ij}y_{j}} - \sum_{j \in [n]} A_{ij} (x_{j}-y_{j}) \Big)
\\
&\overset{\eqref{eq:log-sum-inequality}}{\geq} 
\sum_{i \in [m]}\Big[\Big(\sum_{j \in [n]} A_{ij} x_{j}\Big)
\log\frac{\sum_{j \in [n]} A_{ij} x_{j}}{\sum_{j \in [n]} A_{ij} y_{j}} - \sum_{j \in [n]} A_{ij} (x_{j}-y_{j}) \Big]
\\
&= \Big\la (A x)\log\frac{A x}{A y}\Big\ra - \la\eins_{m}, A x - A y \ra \\
&= D_{\phi_{f}}(A x, A y).
\end{align}
\end{subequations}
\end{proof}

Next, we check the relative $L$-smoothness (Definition \ref{def:L-smooth}) of the SMART objective function for the three relevant scenarios.

\begin{proposition}\label{prop:L-smooth}
Let $f(x)=D_{\phi_{f}}(A x,b)$.  
Then $f$ is $\|A\|_1$-smooth relative to any $\phi \in\{\phi_{+},\phi_{\square},\phi_{\Delta}\}$ specified by \eqref{eq:phi-list} and Table \ref{tab:distributions-Bregman}.
\end{proposition}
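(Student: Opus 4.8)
The plan is to chain the two estimates already established: Lemma~\ref{lem:def-Bregman-gap} identifies the Bregman divergence of the objective, $D_{f}(x,y)=D_{\phi_{f}}(Ax,Ay)$, and Lemma~\ref{lem:DhA-contraction} bounds the right-hand side by $\|A\|_{1}D_{\phi_{+}}(x,y)$ for every $x,y\in\R_{+}^{n}$. Since $\dom\phi_{+}=\R_{+}^{n}$ and $\intr(\dom\phi_{+})=\R_{++}^{n}$, this composition is already exactly the inequality $D_{f}(x,y)\le\|A\|_{1}D_{\phi_{+}}(x,y)$ on $\dom\phi_{+}\times\intr(\dom\phi_{+})$, i.e.\ relative $\|A\|_{1}$-smoothness with respect to $\phi_{+}$. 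Hence the whole proposition reduces to showing, for $\phi\in\{\phi_{\square},\phi_{\Delta}\}$, the pointwise domination $D_{\phi_{+}}(x,y)\le D_{\phi}(x,y)$ on the set $\dom\phi\times\intr(\dom\phi)$ — which is contained in $\R_{+}^{n}\times\R_{++}^{n}$ — after which Lemma~\ref{lem:DhA-contraction} again applies to $(x,y)$ and the claim follows by transitivity.

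For the box kernel I would use the decomposition \eqref{eq:D-phi-box}, $D_{\phi_{\square}}(x,y)=D_{\phi_{+}}(x,y)+D_{\phi_{+}}(\eins_{n}-x,\eins_{n}-y)$, and note that on $\dom\phi_{\square}\times\intr(\dom\phi_{\square})=[0,1]^{n}\times(0,1)^{n}$ the pair $(\eins_{n}-x,\eins_{n}-y)$ again lies in $\R_{+}^{n}\times\R_{++}^{n}$, so the second summand is a bona fide Bregman divergence of the convex, differentiable $\phi_{+}$ and therefore nonnegative; this yields $D_{\phi_{+}}\le D_{\phi_{\square}}$ there. For the simplex kernel I would observe that on $\mc{S}_{n}$ the affine constraint $\la\eins_{n},p\ra=1$ forces $\la\eins_{n},p-p'\ra=0$, so that $D_{\phi_{+}}(p,p')=\la p,\log p-\log p'\ra=\KL(p,p')=D_{\phi_{\Delta}}(p,p')$ by \eqref{eq:KL-list}; here one reads $\intr(\dom\phi_{\Delta})$ in the relative sense afforded by Proposition~\ref{prop:Legendre-affine-subspace}, namely $\rint(\Delta_{n})=\mc{S}_{n}$, so that admissible pairs satisfy $p\in\Delta_{n}\subset\R_{+}^{n}$ and $p'\in\mc{S}_{n}\subset\R_{++}^{n}$ and Lemma~\ref{lem:DhA-contraction} applies as needed.

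I do not expect a genuine obstacle: the argument is essentially bookkeeping layered on top of Lemmata~\ref{lem:def-Bregman-gap} and \ref{lem:DhA-contraction}. The only points demanding a little care are (i) making sure the relevant argument pairs lie in the region $\R_{+}^{n}\times\R_{++}^{n}$ on which Lemma~\ref{lem:DhA-contraction} is valid — in particular interpreting the interior of $\dom\phi_{\Delta}$ relative to the affine hull of the simplex — and (ii) checking that the auxiliary term in \eqref{eq:D-phi-box} is genuinely a Bregman divergence, hence $\ge 0$, on the box.
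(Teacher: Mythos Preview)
Your proposal is correct and follows essentially the same route as the paper: chain Lemma~\ref{lem:def-Bregman-gap} with Lemma~\ref{lem:DhA-contraction} for $\phi_{+}$, then use the decomposition~\eqref{eq:D-phi-box} and nonnegativity of $D_{\phi_{+}}(\eins-x,\eins-y)$ for $\phi_{\square}$, and finally the inclusion $\Delta_{n}\subset\R_{+}^{n}$ for $\phi_{\Delta}$. Your treatment of the simplex case is in fact more explicit than the paper's one-line remark---you spell out that $D_{\phi_{+}}(p,p')=D_{\phi_{\Delta}}(p,p')$ on $\Delta_{n}\times\mc{S}_{n}$ because $\la\eins_{n},p-p'\ra=0$, and you address the relative-interior subtlety---but the underlying argument is the same.
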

\begin{proof}
We check each case.
\begin{itemize}
\item
The case $\phi_{+}$ results from Lemma \ref{lem:def-Bregman-gap} and Lemma \ref{lem:DhA-contraction}. 
\item
The case $\phi_{\square}$ follows from the explicit expression of $D_{\phi_{\square}}$ given by \eqref{eq:D-phi-box}, from the case $\phi_{+}$ just established, and from the nonnegativity $D_{\phi_{+}}(\eins_{n}-x,\eins_{n}-y)\ge 0$ for any $x, y\in \mc{B}_n$.
\item 
The case $\phi_{\Delta}$ follows from $\Delta_{n}\subset\R^{n}_{+}$ and the case $\phi_{+}$.
\end{itemize}
\end{proof}

The following main result in this section draws upon \cite[Thm. 2]{Byrne:1993aa}, \cite[Thm. 3.1]{Byrne_bounded_SMART}, and \cite[Thm. 2]{Petra2013a}. These references utilize properties of Legendre functions, specifically Prop.~\ref{prop:L-smooth} and Lem.~\ref{lem:3-point}, alongside the optimality criteria of \eqref{eq:mirror-step} and the convexity of $f$.
Compare also [Thm.1, Thm.2 (i)]\cite{Bauschke:2017aa}.

\begin{theorem}[\textbf{convergence rate}]\label{th:SMART1} Let $X_\ast$ be the solution set of \eqref{eq:def-KL-objective} with $f$ and $\phi \in\{\phi_{+},\phi_{\square},\phi_{\Delta}\}$ as in Proposition \ref{prop:L-smooth} and $L=\|A\|_1$. For  $(x_{k})_{k \in \mathbb N}$
generated by  \eqref{eq:SMART-iteration},
with starting point
$x_0\in \rint (\dom \phi)$ and $\tau_k =\tau\le 1/L$, one has the following.
\begin{itemize}
\item[(i)] The sequence $(x_k)_{k \in \mathbb N}$ converges to a unique point in $x_{\ast} \in X_\ast$, 
\begin{equation}
x_\ast:=\arg\min_{x\in X_\ast} D_\phi(x,x_0).
\end{equation}
\item[(ii)] For every $k$,
\begin{equation}
f(x_{k})- f(x_\ast)\le \frac{L D_\phi(x_\ast,x_0)}{k} \ .
\end{equation}
\end{itemize}
\end{theorem}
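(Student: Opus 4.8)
The statement is the standard BPG convergence result under relative $L$-smoothness, so the plan is to verify that the hypotheses of Theorem~\ref{th:SMART1} place us squarely within the setting of Assumptions~A and then run the textbook descent-and-telescoping argument. First I would record that, by Proposition~\ref{prop:L-smooth}, $f$ is $L$-smooth relative to $\phi$ with $L=\|A\|_1$, that $\phi$ is Legendre (Remark~\ref{rem:phi}), and that $f$ is bounded below on $\dom\phi$ (Proposition~\ref{prop:Existence} gives $X_\ast\neq\emptyset$, hence $F_\ast>-\infty$); thus $(f,\phi)$ satisfies Assumptions~A and, by Remark~\ref{rem:MD-well-posedness}, the SMART iteration \eqref{eq:SMART-iteration} is a well-defined sequence in $\intr(\dom\phi)=\rint(\dom\phi)$ starting from $x_0$. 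I would also note that \eqref{eq:mirror-step} identifies $x_{k+1}=\partial\phi^\ast(\partial\phi(x_k)-\tau\partial f(x_k))$ and characterizes it as $\argmin_{x\in C_\phi}D_\phi(x,x_{k+1})$, so that the first-order optimality condition of the mirror subproblem reads
\begin{equation}
\la \tau\partial f(x_k)+\partial\phi(x_{k+1})-\partial\phi(x_k),\, x-x_{k+1}\ra \ge 0,\qquad \forall x\in \dom\phi.
\end{equation}

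\textbf{Key descent inequality.} The heart of part~(ii) is the one-step estimate: for any $u\in\dom\phi$,
\begin{equation}
\tau\big(f(x_{k+1})-f(u)\big) \le D_\phi(u,x_k)-D_\phi(u,x_{k+1}).
\end{equation}
To get this I would combine three ingredients. Convexity of $f$ gives $f(u)\ge f(x_k)+\la\partial f(x_k),u-x_k\ra$. Relative $L$-smoothness together with $\tau\le 1/L$ gives the sufficient-decrease bound $f(x_{k+1})\le f(x_k)+\la\partial f(x_k),x_{k+1}-x_k\ra+\tfrac1\tau D_\phi(x_{k+1},x_k)$. Subtracting, $\tau(f(x_{k+1})-f(u))\le \tau\la\partial f(x_k),x_{k+1}-u\ra+D_\phi(x_{k+1},x_k)$. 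Now apply the optimality condition above with $x=u$ to bound $\tau\la\partial f(x_k),x_{k+1}-u\ra\le \la\partial\phi(x_k)-\partial\phi(x_{k+1}),x_{k+1}-u\ra$, and finally invoke the three-points identity (Lemma~\ref{lem:3-point}) with $(r,q,p)=(u,x_{k+1},x_k)$ to rewrite $\la u-x_{k+1},\partial\phi(x_k)-\partial\phi(x_{k+1})\ra = D_\phi(u,x_k)-D_\phi(u,x_{k+1})-D_\phi(x_{k+1},x_k)$; the $\pm D_\phi(x_{k+1},x_k)$ terms cancel, leaving the claimed inequality. Choosing $u=x_\ast$, summing over $k=0,\dots,K-1$ telescopes the right side to $D_\phi(x_\ast,x_0)-D_\phi(x_\ast,x_K)\le D_\phi(x_\ast,x_0)$; since the same inequality with $u=x_k$ shows $f(x_{k+1})\le f(x_k)$, i.e.\ the objective values are nonincreasing, we get $K\tau\big(f(x_K)-f(x_\ast)\big)\le D_\phi(x_\ast,x_0)$, and $\tau\le 1/L$ yields (ii).

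\textbf{Convergence of the iterates.} For part~(i), from the $u=x_\ast$ version of the descent inequality the sequence $D_\phi(x_\ast,x_k)$ is nonincreasing, hence bounded; since $\phi$ is Legendre, sublevel sets of $D_\phi(x_\ast,\cdot)$ behave well and $(x_k)$ stays in a compact subset of $\dom\phi$, so it has cluster points. Telescoping also forces $f(x_k)\downarrow f(x_\ast)$, so every cluster point lies in $X_\ast$. To upgrade to convergence of the whole sequence one runs the usual Opial-type / Fejér-monotonicity argument in the Bregman sense: the descent inequality holds with $u$ replaced by \emph{any} $\bar x\in X_\ast$, so $D_\phi(\bar x,x_k)$ converges for each such $\bar x$; picking a cluster point $x_\infty\in X_\ast$ along a subsequence and using that $D_\phi(x_\infty,x_k)\to 0$ along that subsequence forces $x_k\to x_\infty$. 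Finally, $x_\infty=\arg\min_{x\in X_\ast}D_\phi(x,x_0)$ follows from passing to the limit in $D_\phi(x_\ast,x_{k+1})\le D_\phi(x_\ast,x_k)$ for arbitrary $x_\ast\in X_\ast$ together with the three-points identity, exactly as in \cite{Byrne:1993aa,Petra2013a}. I expect the descent inequality (combining the right optimality condition with the three-points identity) to be the only place requiring care; the Fejér-monotonicity bookkeeping for (i) is routine but a bit lengthy, and I would compress it by citing \cite{Byrne:1993aa,Byrne_bounded_SMART,Petra2013a,Bauschke:2017aa}.
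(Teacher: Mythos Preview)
Your proposal is correct and follows exactly the approach the paper indicates: the paper does not spell out a proof of Theorem~\ref{th:SMART1} but points to \cite{Byrne:1993aa,Byrne_bounded_SMART,Petra2013a} and \cite{Bauschke:2017aa}, naming precisely the ingredients you use---relative $L$-smoothness (Proposition~\ref{prop:L-smooth}), the three-points identity (Lemma~\ref{lem:3-point}), the optimality condition for the mirror step \eqref{eq:mirror-step}, and convexity of $f$. Your descent-and-telescoping derivation for part~(ii) and the Bregman--Fej\'er monotonicity argument for part~(i) are the standard ones carried out in those references, so there is nothing to add.
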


\subsection{Accelerated Bregman Proximal Gradient (ABPG)}\label{ABPG-section}
This section briefly reviews recent related work (Section \ref{sec:acceleration-related}) and corresponding algorithms (Section \ref{sec:accelerated-algorithms}), which served as baselines for the experimental evaluation reported in Section \ref{sec:Experiments}.

\subsubsection{Acceleration, Related Work}\label{sec:acceleration-related}

In \cite{Dragomir:2022aa}, the optimality of the $\mc{O}(1/k)$ rate for a wide range of Bregman proximal gradient (BPG) algorithms was established, under \textit{general} assumptions regarding the objective function $F$ and the Bregman function $\vphi$. In particular, $L$-smoothness of the objective function $F$ relative to the Bregman function $\vphi$ is merely required, rather than $L$-Lipschitz continuity of the gradient $\partial F$.

The derivation of \textit{accelerated} Bregman proximal gradient (ABPG) algorithms, on the other hand, require additional assumptions. The authors of \cite{Hanzely:2021vc} consider assumptions which implies a $\mc{O}(1/k^{\gamma})$ rate with $\gamma\in[1,2]$:
\begin{itemize}
    \item The \textit{triangle-scaling property} with \textit{uniform triangle-scaling exponent (TSE)} $\gamma$,
    \begin{subequations}\label{eq:TSE}
    \begin{align}
    D_{\vphi}\big((1-\theta) x + \theta z, (1-\theta) x + \theta \wt{z}\big)
    \leq \theta^{\gamma}D_{\vphi}(z,\wt{z}),\quad &\forall\theta\in[0,1],
    \quad(\textbf{uniform TSE})
    \\
    & 
    \forall x, z, \wt{z}\in\intr\dom \vphi.
    \end{align}
    \end{subequations}
    Major examples are provided by \textit{jointly} convex Bregman divergences $D_{\vphi}$ which satisfy the inequality with TSE $\gamma=1$. In particular, the KL-divergence is jointly convex and each of the related Bregman divergences \eqref{eq:KL-list}.
    \item 
    The \textit{intrinsic} TSE defined as
    \begin{subequations}
    \begin{align}
    \gamma_{\mrm{in}}
    := \limsup_{\theta\searrow 0}\frac{D_{\vphi}\big((1-\theta)x+\theta z,(1-\theta)x+\theta\wt{z}\big)}{\theta^{\gamma}} &< \infty,\qquad
    (\textbf{intrinsic TSE})
    \\ 
    \forall x, z, \wt{z}&\in\intr\dom \vphi.
    \end{align}
    \end{subequations}
    A broad class of Bregman divergences has $\gamma_{\mrm{in}}=2$ which constitutes a tight upper bound for any uniform TSE.
\end{itemize}
The analysis in \cite{Hanzely:2021vc} rests upon the \textit{triangle-scaling gain} $G(x,z,\wt{z})$ defined by the relaxed triangle-scaling inequality
\begin{equation}\label{eq:triangle-scaling gain}
    D_{\vphi}\big((1-\theta) x + \theta z, (1-\theta) x + \theta \wt{z}\big)
    \leq G(x,z,\wt{z}) \theta^{\gamma}D_{\vphi}(z,\wt{z}),\qquad\forall\theta\in[0,1].
\end{equation}
$G(x,z,\wt{z})$ is bounded based on the relative scaling of the Hessian of $\vphi$ at different points. In particular, using \eqref{eq:triangle-scaling gain}, \textit{adaptive} ABPG algorithms are proposed of the form
\begin{equation}
    \min_{x\in C} F(x)+\Psi(x),
\end{equation}
with $F$ being $L$-smooth relative to $\phi$, $C$ closed and $C,\Psi$ convex and simple, in the sense that the key step of the ABPG method 
\begin{equation}\label{eq:ABPG}
    z_{k+1}=\argmin_{x\in C}\Big\{
    F(y_{k})+\la\partial F(y_{k}), x-y_{k}\ra + \theta_{k}^{\gamma-1}L D_{\vphi}(x,z_{k})+\Psi(x)\Big\},
\end{equation}
can be solved efficiently.
The convergence analysis of ABPG uses basic relations derived by \cite{Chen_Teboulle_1993} and \cite{Tseng:2008} in order to relate two subsequent updates.

\subsubsection{Algorithms}\label{sec:accelerated-algorithms}
We specify three accelerated ABPG algorithms and briefly characterize their properties.
Here, the \textit{local} triangle-scaling property \cite{Hanzely:2021vc} is relevant which in view of Lemma \ref{lem:def-Bregman-gap} takes the form
\begin{equation}\label{eq:local-TSE}
 D_{\phi_f}(A x_{k+1}, A y_{k+1}) < \theta_k^{\gamma_k} L D_\phi(z_{k+1}, z_k),\qquad k\in\N
\end{equation}
where the definitions of $x_{k+1}, y_{k+1}, z_{k+1} $ follow below in \eqref{eq:FSMART-1}.
\begin{itemize}
    \item
    \textit{ABPG-e algorithm \cite{Hanzely:2021vc}}. Employing \textit{exponent adaption} with an initial value $\gamma_{0}>2$ in \eqref{eq:triangle-scaling gain}, $\gamma_{k}$ is reduced by a fixed factor $\delta<1$ in subsequent iterations until the local triangle-scaling property \eqref{eq:local-TSE} is met. The resulting value $\gamma_{k}$ serves as a \textit{posterior certificate} of accelerated convergence which cannot be guaranteed beforehand, however. 
    \item
    \textit{ABPG-g algorithm \cite{Hanzely:2021vc}.} Here the local triangle-scaling property is gradually achieved by \textit{gain adaption}, that is by adjusting the gain $G_{k}=G(x_{k},z_{k},\wt{z}_{k})$ within an inner iterative loop.
    \item
    \textit{FSMART algorithm \cite{Petra2013a}.} According to Proposition \ref{prop:L-smooth}, the objective function $f$ in \eqref{eq:def-KL-objective}
    is $L$-smooth relative to $\phi$ in \eqref{eq:phi-list} with $L=\|A\|_1$. 
    The ABPG iteration \eqref{eq:ABPG} leads with $\gamma=1$ and 
    $\Psi\equiv 0$ to the F(ast)-SMART iteration \cite{Petra2013a} 
    \begin{subequations} \label{eq:FSMART-1}
    \begin{align}
        y_{k}&=(1-\theta_{k})x_k+\theta_{k}z_k \\
        z_{k+1}&= z_k \exp \left( -A^\top \log \left( \frac{Ay_k}{b} \right) /  L \right)\\
        x_{k+1} &= (1-\theta_{k})x_k+\theta_{k}z_{k+1},
    \end{align}
    \end{subequations}
where $x_0=z_0\in\intr(\dom\varphi)$ and $\theta_k\in(0,1]$ satisfies 
$
\frac{1-\theta_{k+1}}{\theta_{k+1}^2}\le \frac{1}{\theta_k^2}.
$
As the uniform TSE $\gamma$ equals $1$ for our choices of $\phi$ in \eqref{eq:phi-list}, 
a $\mc{O}(1/k)$ rate can only be guaranteed theoretically according to \cite[Thm.~1]{Hanzely:2021vc}. Convergence of the sequence $(x_k)_{k \in \mathbb N}$ generated by FSMART has remained an open issue. Empirically, however, FSMART effectively accelerates the SMART iteration. In addition, no additional cost per iteration are encountered, as opposed to the inner loop for gain adaption of ABPG-g.
\end{itemize}
In \cite[Section 4.1]{Hanzely:2021vc}, the authors address challenges associated with establishing a $\mc{O}(k^{-2})$ rate. This requires bounding the geometric mean $\ol{G}:= \left (G_0^\gamma G_1 \cdots G_k\right)^\frac{1}{k+\gamma}$ of gains at each step without additional assumptions. The authors conjecture that in practical scenarios a specific reference function $\phi$ can be utilized, which may possess inherent properties contributing to rapid convergence. Exploiting such properties for the considered $\phi$  remains a subject for further research.

\section{SMART: A Geometric Perspective}
\label{sec:Geometry}

In this section, we adopt the viewpoint of information geometry. First, 
we identify suitable retractions  on each of our smooth manifolds 
(Section \ref{sec:infgeo-SMART}) which enables to describe SMART as Riemannian gradient flow with a fixed stepsize. Stepsize selection adapted by line search is studied in 
Section \ref{RG-section-line-search}. Finally, we detail various geometric variants of the conjugate gradient iteration (Section \ref{CG-section}) based on vector transport, which are supposed to accelerate Riemannian first-order gradient iterations.

\subsection{Information Geometry}\label{sec:infgeo-SMART}
In this section, we further explore the geometry of the specific manifolds introduced in Section \ref{sec:specific-manifolds}.

\subsubsection{E-Geodesics, Exponential Maps}
A basic concept of Riemannian geometry \cite[Def. 1.4.3]{Jost:2017aa} is the exponential map $\exp\colon T\mc{M}\to\mc{M}$ defined by $\exp_{p}(t)=\gamma_{v}(1)$, where $\gamma_{v}(t)$ is the geodesic curve through $\gamma_{v}(0)=p\in\mc{M}$ with $v\in T_{p}\mc{M}$ with respect to the Riemannian connection. Since we work with the e-connection, we compute below the analogous \textit{auto-parallel curves} called \textit{e-geodesics}, and define the corresponding exponential maps.

A basic result of information geometry \cite[Section 2.3]{Amari:2000} is that any statistical manifold $\mc{M}$ corresponding to a distribution of the exponential family \eqref{eq:p-exponential} is \textit{flat} with respect to the e-connection and that the natural parameters $\theta$ constitute a corresponding affine coordinate system, which means that the e-geodesics have the simple form 
\begin{equation}
\theta_{u}(t) = \theta + t u.     
\end{equation}
In particular, the natural parameters are defined everywhere, as $\dom\psi$ in Table \ref{tab:distributions-Bregman} displays. Both facts are convenient for numerical computation.

Since we model optimization problems using the $\eta$-coordinates, however, we transform the e-geodesics accordingly and still call them e-geodesics. For each specific manifold $\mc{M}$, this transformation is determined via the mappings $\eta=\eta(\theta)$ and its inverse $\theta = \theta(\eta)$ given by \eqref{eq:eta-by-theta} and \eqref{eq:theta-by-eta}, respectively, while taking into account that  $\mc{M}$ is also flat with respect to the $m$-connection, with corresponding affine coordinates $\eta$ and auto-parallel curves of the form
\begin{equation}\label{eq:eta-affine}
    \eta_{v}(t) = \eta + t v.
\end{equation}
The transformation reads
\begin{subequations}
    \begin{align}\label{eq:e-geodesic-to-eta-a}
        \gamma_{v}(t) &:= \eta\big(\theta_{u}(t)\big)\big|_{\theta = \theta(\eta), u = u(\eta, v)} 
        \\ \label{eq:e-geodesic-to-eta-b}
        &= \eta\big(\theta(\eta) + t u(\eta,v)\big)
        \intertext{with}\label{eq:u-v-tangents}
        u(\eta,v) &= d\theta(\eta) v = \frac{d}{dt}\theta\big(\eta_{v}(t)\big)\big|_{t=0}
         \overset{\substack{ \eqref{eq:eta-affine} }}{=}
         \frac{d}{dt}\theta\big( \eta + t v\big)\big|_{t=0} \overset{\substack{ \eqref{eq:metric-Geta}}}{=} G(\eta)v.\\
    \end{align}
\end{subequations}

\begin{figure}
    \centering
    \includegraphics[width=0.32\textwidth]{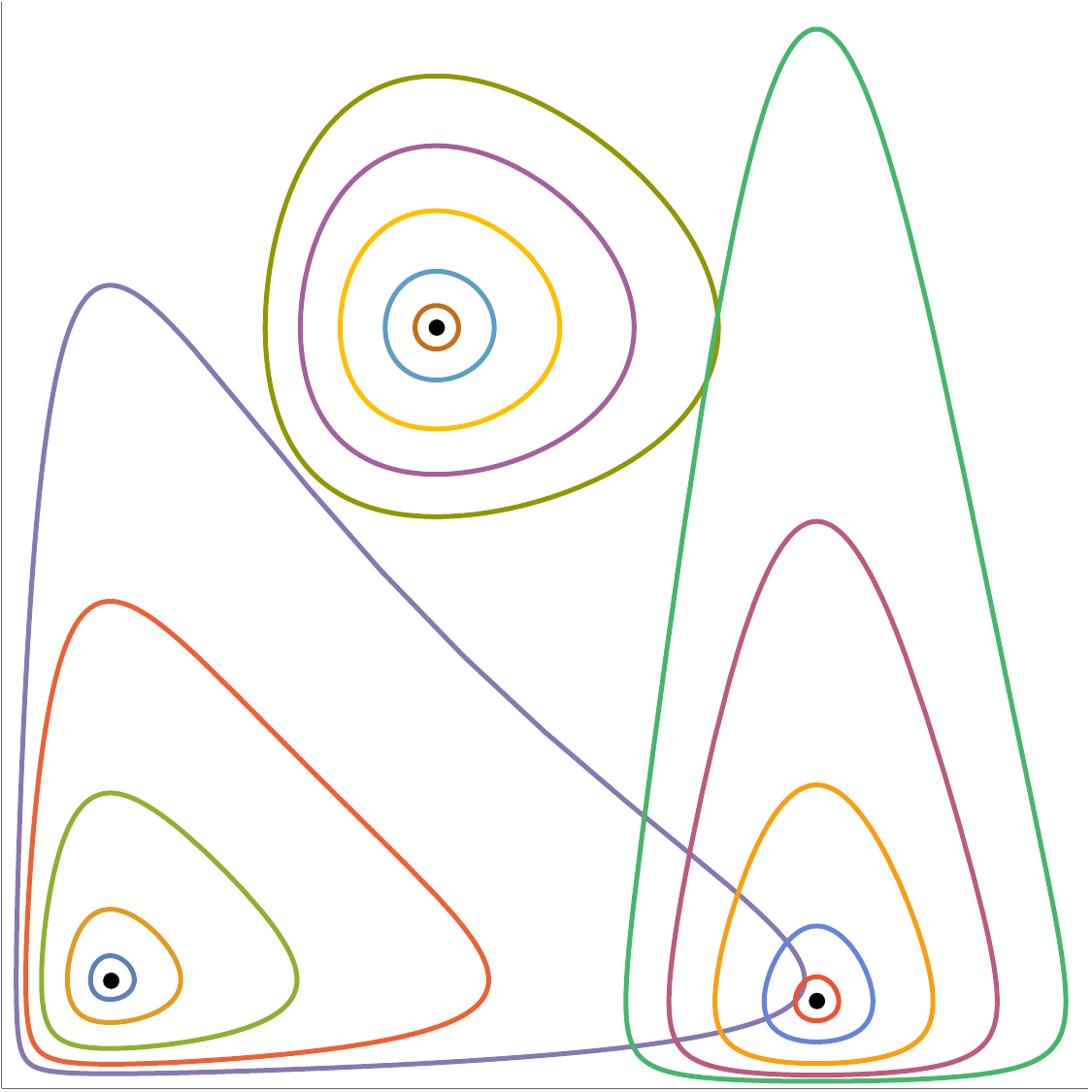}
    \hfill
    \includegraphics[width=0.32\textwidth]{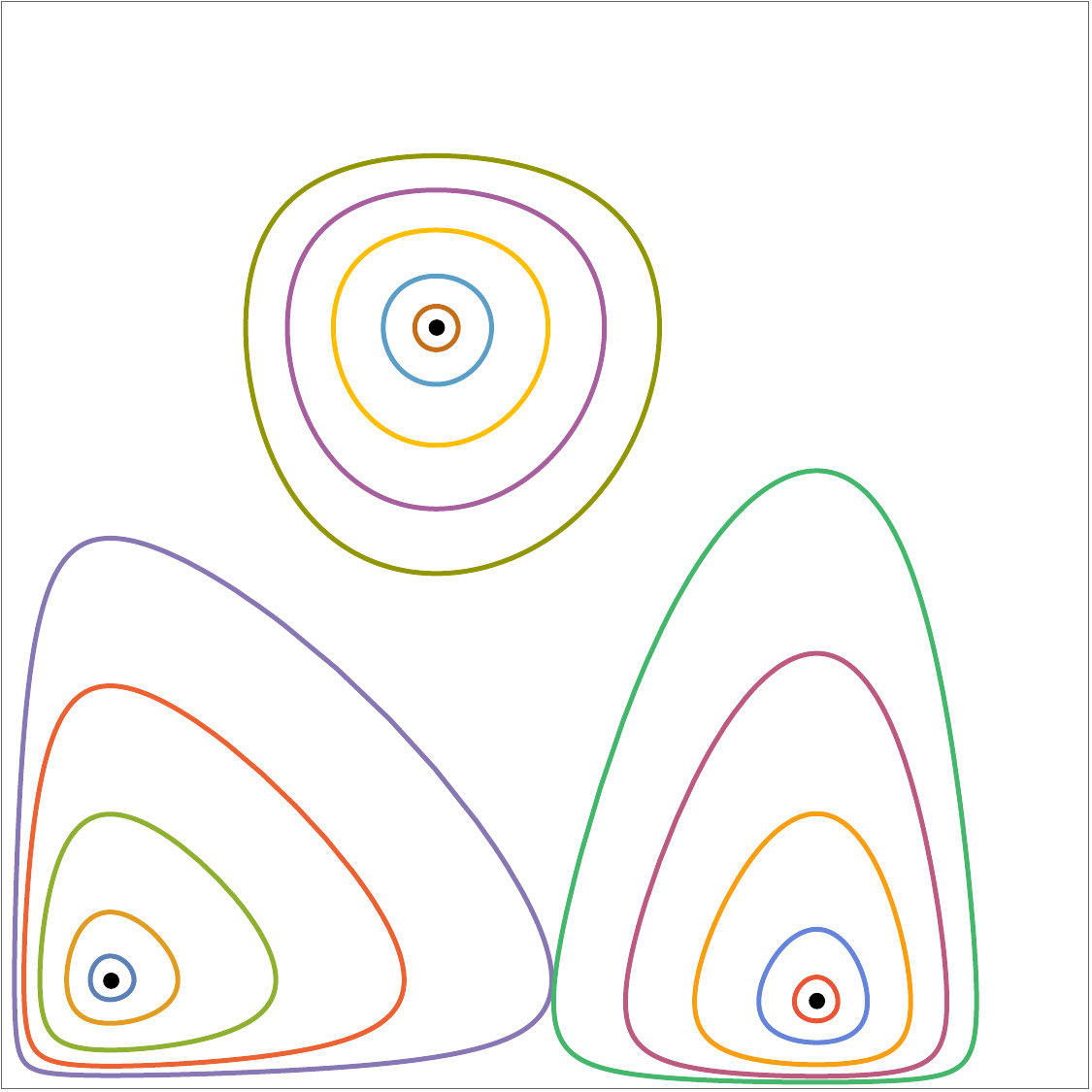}
    \hfill
    \includegraphics[width=0.32\textwidth]{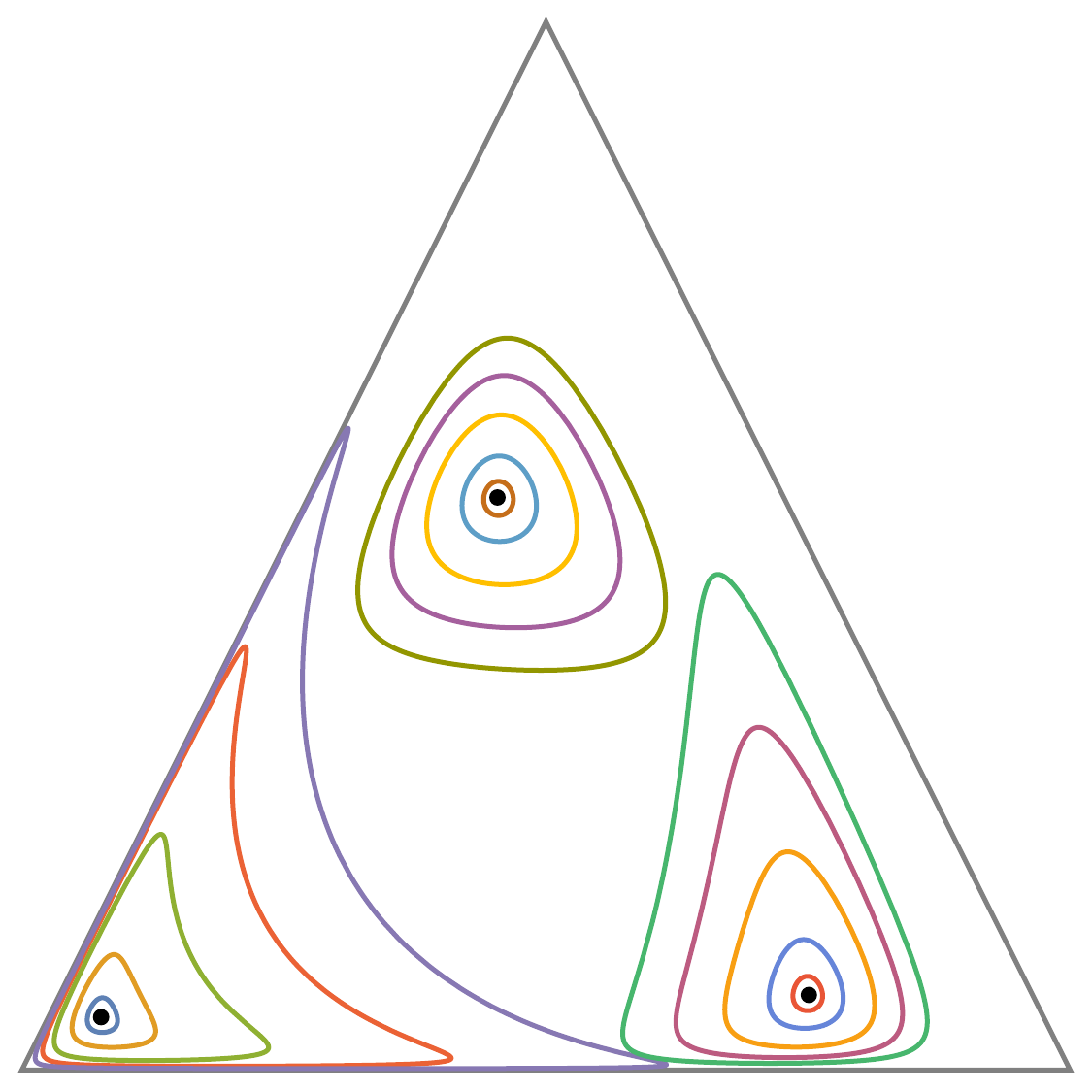}
    \centerline{
    \parbox{0.32\textwidth}{\centering Orthant $\mc{P}_{2}\cap \big((0,1)\times (0,1)\big)$}
    \hfill
    \parbox{0.32\textwidth}{\centering Box $\mc{B}_{2}$}
    \hfill
    \parbox{0.32\textwidth}{\centering Simplex $\mc{S}_{3}$}
    }
    \caption{E-geodesics \eqref{eq:Exp-maps} emanating from three points $\eta$ (black dots) in all directions $v=(\cos \omega,\sin \omega),\,\omega\in[0,2\pi)$ with $t\in\{0.02, 0.05, 0.1, 0.15, 0.2\}$.}
    \label{fig:e-geodesics}
\end{figure}

Defining
\begin{equation}\label{eq:def-e-Exp-map}
    \Exp_{\eta}(t v) := \gamma_{v}(t),
\end{equation}
which by construction is well-defined for any $t\in\R$ and tangent vector $v$ (or $v_\eta$), we obtain for each of the specific manifolds from Section \ref{sec:specific-manifolds} the formula for the corresponding \textbf{e-geodesic} (see Figure \ref{fig:e-geodesics} for an illustration)
\begin{subequations}\label{eq:Exp-maps}
    \begin{align}\label{eq:Exp-orthant}
        \mc{P}_{n}\colon\quad
        \Exp_{\eta}(tv) &= 
        \eta\cdot e^{\frac{tv}{\eta}},
        \\ \label{eq:Exp-box}
        \mc{B}_{n}\colon\quad
        \Exp_{\eta}(tv) &=
        \frac{\eta \cdot e^{\frac{tv}{\eta\cdot (\eins-\eta)}}}{\eins-\eta + \eta \cdot e^{\frac{tv}{\eta \cdot (\eins-\eta)}}},
        \\
        \vartheta(\mc{S}_{n})\colon\quad
        \Exp_{\eta}(tv_\eta) &=
        \frac{\eta\cdot e^{t G(\eta)v_{\eta}}}{ 1-\la \eins_{n-1},\eta \ra +\la \eta, e^{t G(\eta)v_\eta}\ra},\qquad G(\eta)~\text{from}~\eqref{eq:G-eta-simplex}
        \label{eq:e-geo-simplex-local} \\ \label{eq:e-geo-simplex}
        \mc{S}_{n}\colon\quad
        \Exp_{p}(tv) &=
        \frac{p\cdot e^{\frac{tv}{p}}}{\la p, e^{\frac{tv}{p}}\ra},\qquad 
        p = \vartheta^{-1}(\eta),\quad v = B v_{\eta} \in T_{0},
    \end{align}
\end{subequations}
where \eqref{eq:e-geo-simplex} results from \eqref{eq:e-geo-simplex-local} by applying $\vartheta^{-1}$ and the relations $G(\eta)= B^{\T} G(p) B$, $v = B v_{\eta}$ and $B^{\T} z = \ol{z}-z_{n}\eins_{n-1},\;\forall z\in\R^{n}$ with $B$ given by \eqref{eq:Jacobian-varphi-inv}.

\begin{lemma}\label{lemma:char-e-Exp-map}
For the e-geodesics defined by \eqref{eq:def-e-Exp-map},
we have
\begin{equation}\label{eq:char-e-Exp-map}
 \Exp_{\eta}(tv) = \partial\phi^{\ast}\big(\partial\phi(\eta) + tG(\eta)v\big).
\end{equation}
\end{lemma}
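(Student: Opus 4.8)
The plan is to verify \eqref{eq:char-e-Exp-map} by unwinding the defining formula \eqref{eq:e-geodesic-to-eta-b}--\eqref{eq:u-v-tangents} of the e-geodesic $\gamma_v(t)$ and matching it term by term with the right-hand side, using the Legendre duality between $\psi$ and $\phi$ recorded in Section \ref{sec:FR-Geometry}. The key observation is that the two parametrizations are related by $\eta = \partial\psi(\theta)$ and $\theta = \partial\phi(\eta)$ (equations \eqref{eq:eta-by-theta} and \eqref{eq:theta-by-eta}), so that the map $\theta \mapsto \eta(\theta)$ is precisely $\partial\psi = (\partial\phi)^{\ast} = \partial\phi^{\ast}$ by Theorem \ref{th:gradient_mapping_one_to_one}. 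Thus I would start from
\begin{equation*}
\Exp_\eta(tv) = \gamma_v(t) = \eta\big(\theta(\eta) + t\,u(\eta,v)\big) = \partial\psi\big(\partial\phi(\eta) + t\,u(\eta,v)\big) = \partial\phi^{\ast}\big(\partial\phi(\eta) + t\,u(\eta,v)\big),
\end{equation*}
where the first equality is \eqref{eq:def-e-Exp-map}, the second is \eqref{eq:e-geodesic-to-eta-b}, and the third substitutes $\theta(\eta)=\partial\phi(\eta)$ and $\eta(\cdot)=\partial\psi(\cdot)$.

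It then only remains to identify $u(\eta,v) = G(\eta)v$, which is exactly the content of \eqref{eq:u-v-tangents}: differentiating the m-geodesic $\eta_v(t) = \eta + tv$ and pushing it through the coordinate change $\theta(\cdot) = \partial\phi(\cdot)$ gives $u(\eta,v) = d\theta(\eta)v = \partial^2\phi(\eta)v = G(\eta)v$ by \eqref{eq:metric-Geta}. Substituting this into the display above yields \eqref{eq:char-e-Exp-map} directly. Strictly speaking, for the simplex written in ambient coordinates $\mc{S}_n$ (last row) one should note that the formula is first established in the local coordinates $\vartheta(\mc{S}_n)$ and then transported by $\vartheta^{-1}$ exactly as \eqref{eq:e-geo-simplex} was obtained from \eqref{eq:e-geo-simplex-local}; alternatively one invokes the isomorphism of Lemma \ref{lem:FR-metric} together with Lemma \ref{lem:D-simplex-equal} so that $\partial\phi_\Delta$ and $\partial\phi_\Delta^0$ correspond under the coordinate change, and \eqref{eq:char-e-Exp-map} is coordinate-independent.

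As a sanity check one can also verify \eqref{eq:char-e-Exp-map} directly against the explicit e-geodesic formulas \eqref{eq:Exp-maps}: e.g. for $\mc{P}_n$ one has $\phi_+(\eta) = \la\eta,\log\eta\ra - \la\eins,\eta\ra$, hence $\partial\phi_+(\eta) = \log\eta$, $\partial\phi_+^{\ast}(\theta) = e^{\theta}$, and $G(\eta) = \Diag(\eins/\eta)$, so the right-hand side of \eqref{eq:char-e-Exp-map} becomes $e^{\log\eta + tv/\eta} = \eta\cdot e^{tv/\eta}$, matching \eqref{eq:Exp-orthant}; the box and simplex cases are analogous. I do not expect a serious obstacle here — the statement is essentially a restatement of Legendre duality — the only mild subtlety is bookkeeping between the $\theta$/$\eta$ parametrizations and, in the simplex case, between local and ambient coordinates, which is why I would present the coordinate-free duality argument as the main line and relegate the explicit verifications to a remark or omit them.
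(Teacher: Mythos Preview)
Your proposal is correct and follows essentially the same approach as the paper's proof: both unwind the definition \eqref{eq:def-e-Exp-map}--\eqref{eq:e-geodesic-to-eta-b}, substitute the Legendre-dual identifications $\theta(\eta)=\partial\phi(\eta)$ and $\eta(\cdot)=\partial\psi(\cdot)=\partial\phi^{\ast}(\cdot)$ from \eqref{eq:eta-by-theta}, \eqref{eq:theta-by-eta}, \eqref{eq:def-phi-psi-ast}, and invoke \eqref{eq:u-v-tangents} for $u(\eta,v)=G(\eta)v$. The paper's proof is slightly terser (it does not separately discuss the simplex ambient coordinates or include the sanity checks), but the logical content is the same.
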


\begin{proof}
    The definition \eqref{eq:def-e-Exp-map} yields with \eqref{eq:e-geodesic-to-eta-b} 
    \begin{equation}
            \Exp_{\eta}(v)
            = \eta\big(\theta(\eta) + u(\eta,v)\big)
            \overset{\eqref{eq:u-v-tangents}}{=}
            \eta\big(\theta(\eta) + G(\eta) v\big),
    \end{equation}
    and substituting $\eta, \theta$ as given by \eqref{eq:eta-by-theta}, \eqref{eq:theta-by-eta} together with \eqref{eq:def-phi-psi-ast} shows
    \eqref{eq:char-e-Exp-map}.
\end{proof}

E-geodesics and, in particular, the mappings \eqref{eq:Exp-maps}, provide \textit{retractions} in the sense of \cite[Definition 4.1.1]{Absil:2008aa}, that are surrogates for the exponential map with respect to the Riemannian (Levi Civita) connection. This fact follows generally from \cite[Prop.~5.4.1]{Absil:2008aa}, but we include a short proof below. 

\begin{lemma}[\textbf{e-geodesics provide retractions}]\label{lem:retractions}
    Let $\mc{M}\in\{\mc{P}_{n},\mc{B}_{n},\mc{S}_{n}\}$ denote either manifold and denote by 
    \begin{equation}\label{eq:x-eta-p-notation}
        x = \begin{cases}
            \eta\in\mc{M} &\text{if}\;\mc{M}=\mc{P}_{n}\;\text{or}\;\mc{M}=\mc{B}_{n}, \\
            p\in\mc{M} &\text{if}\;\mc{M}=\mc{S}_{n},
        \end{cases}
    \end{equation}
    and by $v\in T_{x}\mc{M}$ a corresponding tangent vector. 
    Each of the mappings \eqref{eq:Exp-maps} provides a retraction
    \begin{subequations}\label{eq:cond-retraction}
        \begin{align}\label{eq:cond-retraction-a}
            R_{x} &\colon T_{x} \mc{M} \to \mc{M},
            \qquad R_{x}:= \Exp_{x},
            \intertext{i.e.~it satisfies the defining conditions}
            \label{eq:cond-retraction-b}
            R_{x}(0)&=x, \qquad\quad 0\in T_{x}\mc{M}, \\ \label{eq:cond-retraction-c}
            dR_{x}(0)&=\mrm{id}_{T_{x}\mc{M}},\quad \forall x\in\mc{M}.
        \end{align}
    \end{subequations}
\end{lemma}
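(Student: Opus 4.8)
The plan is to verify the two retraction axioms \eqref{eq:cond-retraction-b} and \eqref{eq:cond-retraction-c} directly from the characterization of e-geodesics in Lemma~\ref{lemma:char-e-Exp-map}, namely $\Exp_{x}(tv) = \partial\phi^{\ast}\big(\partial\phi(x) + tG(x)v\big)$, which handles the orthant and box cases uniformly (with $x=\eta$), and then to treat the simplex case separately using the ambient-coordinate formula \eqref{eq:e-geo-simplex}. This avoids invoking \cite[Prop.~5.4.1]{Absil:2008aa} and keeps the argument self-contained, as the statement announces.

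First I would check \eqref{eq:cond-retraction-b}: setting $t=0$ (equivalently $v=0$) in \eqref{eq:char-e-Exp-map} gives $\Exp_{x}(0) = \partial\phi^{\ast}(\partial\phi(x)) = x$, where the last equality is the inverse-gradient relation $(\partial\phi)^{-1} = \partial\phi^{\ast}$ from Theorem~\ref{th:gradient_mapping_one_to_one}. For the simplex one reads off the same fact from \eqref{eq:e-geo-simplex} by substituting $v=0$: the numerator becomes $p$, the denominator $\la p,\eins\ra = 1$, so $\Exp_{p}(0)=p$. Next, for the local-coordinate derivative condition \eqref{eq:cond-retraction-c}, I would differentiate $t\mapsto \Exp_{x}(tv)$ at $t=0$ using the chain rule on \eqref{eq:char-e-Exp-map}: the derivative is $\partial^{2}\phi^{\ast}\big(\partial\phi(x)\big)\,G(x)v$. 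Now $\partial^{2}\phi^{\ast}(\partial\phi(x)) = \big(\partial^{2}\phi(x)\big)^{-1}$ by the Legendre relation \eqref{eq:Hpsi-Hphi}, and $\partial^{2}\phi(x) = G(x)$ is precisely the metric tensor by \eqref{eq:metric-Geta}; hence the derivative equals $G(x)^{-1}G(x)v = v$, i.e.\ $dR_{x}(0) = \mathrm{id}$. Since the map $t\mapsto tv$ has derivative $v$ at $t=0$, this is exactly the statement that the differential of $R_{x}$ at $0\in T_{x}\mc{M}$ is the identity on $T_{x}\mc{M}$.

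For the simplex case I would instead differentiate \eqref{eq:e-geo-simplex} directly. Write $c(t) = \la p, e^{tv/p}\ra$ for the normalizing denominator, so $\Exp_{p}(tv) = c(t)^{-1}\, p\cdot e^{tv/p}$. Then $\frac{d}{dt}\big(p\cdot e^{tv/p}\big)\big|_{t=0} = v$ componentwise, $c(0)=\la p,\eins\ra = 1$, and $c'(0) = \la p, v/p\ra = \la\eins_{n-1},v\ra = 0$ because $v\in T_{0}$ by \eqref{eq:def-T0}. Applying the quotient rule gives $\frac{d}{dt}\Exp_{p}(tv)\big|_{t=0} = v - (\la\eins,v\ra) p = v$, and the resulting tangent vector lies in $T_{0}$ as required (indeed $\la\eins,v\ra = 0$). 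Alternatively, one can obtain this from the local-coordinate computation above together with the coordinate change $v = Bv_{\eta}$, $G(\eta) = B^{\T}G(p)B$ used to derive \eqref{eq:e-geo-simplex} from \eqref{eq:e-geo-simplex-local}, which makes the simplex case a corollary of the general Lemma~\ref{lemma:char-e-Exp-map} argument rather than a separate computation.

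I do not anticipate a serious obstacle here; the only point requiring a little care is bookkeeping the identification between a tangent vector $v$ and the curve $t\mapsto tv$ whose velocity at $0$ is $v$, so that ``$dR_{x}(0)=\mathrm{id}$'' is stated on the correct space — for $\mc{P}_{n},\mc{B}_{n}$ the tangent space is all of $\R^{n}$ (resp.\ $\R^{n-1}$ in the $\eta$-chart), while for $\mc{S}_{n}$ it is the hyperplane $T_{0}$, and one must check that the velocity vector computed above genuinely lies in $T_{0}$. The substantive analytic content — that the Jacobian of $\partial\phi^{\ast}$ at $\partial\phi(x)$ inverts the metric — is already packaged in the Legendre duality \eqref{eq:Hpsi-Hphi} and \eqref{eq:metric-Geta}, so the proof reduces to assembling these identities.
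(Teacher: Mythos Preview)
Your proof is correct and follows essentially the same route as the paper: verify $R_{x}(0)=x$ from the Legendre inverse relation $(\partial\phi)^{-1}=\partial\phi^{\ast}$, and verify $dR_{x}(0)=\mathrm{id}$ by differentiating \eqref{eq:char-e-Exp-map} and cancelling $\partial^{2}\phi^{\ast}(\partial\phi(x))\,G(x)=G(x)^{-1}G(x)$ via \eqref{eq:Hpsi-Hphi} and \eqref{eq:metric-Geta}. The only cosmetic differences are that the paper computes the full differential $d\Exp_{x}(u)v=\partial^{2}\phi^{\ast}(\partial\phi(x)+G(x)u)G(x)v$ at a general $u$ before specializing to $u=0$ (this formula is reused later for vector transport), and the paper applies the characterization \eqref{eq:char-e-Exp-map} uniformly to all three manifolds rather than treating $\mc{S}_{n}$ by a separate direct calculation on \eqref{eq:e-geo-simplex}; your extra simplex computation is redundant but of course not wrong, and you yourself note the alternative via the coordinate change $v=Bv_{\eta}$.
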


\begin{proof}
    Condition \eqref{eq:cond-retraction-b} is immediate from \eqref{eq:char-e-Exp-map},
    \begin{equation}
        \Exp_{x}(0) = \partial\phi^{\ast}\big(\partial\phi(x) + 0\big) = x.
    \end{equation}
    Furthermore, we have
    \begin{subequations}\label{eq:diff-e-Exp-map}
        \begin{align}
            d\Exp_{x}(u)v & =  \frac{d}{dt}\Exp_{x}(u+tv)\big|_{t=0}
            \overset{\substack{ \eqref{eq:char-e-Exp-map}}}{=}  
            \frac{d}{dt}\partial\phi^{\ast}\big(\partial\phi(x) + G(x)(u+t v)\big)\big|_{t=0}\\
            & = \partial^2\phi^{\ast}\big(\partial \phi(x) + G(x) u\big)G(x)v.
        \end{align}
    \end{subequations}
Thus
\begin{equation}
d\Exp_{x}(0)v =  \partial^2\phi^{\ast}\big(\partial \phi(x) \big)G(x)v  \overset{\substack{ \eqref{eq:def-phi-psi-ast}\\\eqref{eq:theta-by-eta}}}{=} \partial^2 \psi(\theta) G(x)v  \overset{\substack{ \eqref{eq:Hpsi-Hphi}}}{=} v,
\end{equation}
which establishes condition \eqref{eq:cond-retraction-b}.
\end{proof}

\begin{remark}[\textbf{notation}]\label{rem:notation}
    In the remainder of this paper, we adopt the notation \eqref{eq:x-eta-p-notation}.
\end{remark}

In connection with first-order optimization, the tangent vector $v$ in \eqref{eq:Exp-maps} and in \eqref{eq:char-e-Exp-map}, respectively, will be a Riemannian gradient. Adopting the notation \eqref{eq:x-eta-p-notation} and introducing the following shorthands for the corresponding e-geodesics will be convenient.
\begin{subequations}\label{eq:exp-maps}
    \begin{align}
        \mc{P}_{n}\colon\quad
        \exp_{x}(\partial f) &:= \Exp_{x}\big(G(x)^{-1}\partial f\big)
        = x\cdot e^{\partial f}, &
        \partial f &= \partial f(x),
        \\
        \mc{B}_{n}\colon\quad
        \exp_{x}(\partial f) &:= \Exp_{x}\big(G(x)^{-1}\partial f\big)
        = \frac{x \cdot e^{\partial f}}{\eins-x + x \cdot e^{\partial f}}, &
        \partial f &= \partial f(x),
        \\
        \mc{S}_{n}\colon\quad
        \exp_{x}(\partial f) &:= \Exp_{x}(\Pi_{x} \partial f)
        = \frac{x\cdot e^{\partial f}}{\la x, e^{\partial f}\ra}, &
        \partial f &= \partial f(x).
    \end{align}
\end{subequations}

\subsubsection{Vector Transport}\label{sec:vector-transport}
In order to exploit \textit{second-order} information on a manifold for geometric optimization, a \textit{vector transport} is required \cite[Def.~8.1.1]{Absil:2008aa}. Such a mapping provides a surrogate for \textit{parallel transport}
\cite[Def.~3.1.2]{Jost:2017aa}, analogously to replacing the exponential map with respect to the Riemannian connection by a retraction. 

Given a retraction, a convenient way for obtaining a vector transport $\mc{T}$ is to take the differential of the retraction \cite[Section 8.1.2]{Absil:2008aa},
\begin{equation}\label{eq:mcT-via-R}
    \mc{T}\colon T\mc{M}\to T\mc{M},\qquad
    \mc{T}_{(x,u)}(v) = d\Exp_{x}(u) v \in T_{x'}\mc{M},\qquad u,v\in T_{x}\mc{M},\quad x' = \Exp_{x}(u).
\end{equation}
Based on the characterization of e-geodesics given in Lemma \ref{lemma:char-e-Exp-map}, we can express the induced vector transports in closed form.
\begin{lemma} \label{lem:e-transport}
For the e-geodesics defined by \eqref{eq:def-e-Exp-map},
we have
\begin{equation}\label{eq:char-diff-e-Exp-map}
d\Exp_{x}(u)v= G (x')^{-1}G(x)v,\qquad u,v\in T_{x}\mc{M},\quad x' = \Exp_{x}(u).
\end{equation}
\end{lemma}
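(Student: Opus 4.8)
The plan is to differentiate the closed-form expression for $\Exp_x$ obtained in Lemma \ref{lemma:char-e-Exp-map} and simplify the result using the Legendre duality relations \eqref{eq:one-to-one-gradient}, \eqref{eq:Hpsi-Hphi}. The computation \eqref{eq:diff-e-Exp-map} already done in the proof of Lemma \ref{lem:retractions} gives us the starting point for free: taking the $t$-derivative at $t=0$ of $\Exp_x(u+tv) = \partial\phi^\ast\big(\partial\phi(x)+G(x)(u+tv)\big)$ yields
\begin{equation}\label{eq:transport-starting}
  d\Exp_x(u)v = \partial^2\phi^\ast\big(\partial\phi(x)+G(x)u\big)\,G(x)\,v.
\end{equation}
So the only thing left is to identify the matrix $\partial^2\phi^\ast\big(\partial\phi(x)+G(x)u\big)$ with $G(x')^{-1}$, where $x' = \Exp_x(u)$.

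First I would recall that $\phi=\psi^\ast$, so $\phi^\ast=\psi$ (by Theorem \ref{th:gradient_mapping_one_to_one}, since $\psi$ is Legendre its biconjugate is itself), and hence $\partial\phi^\ast = \partial\psi$ and $\partial^2\phi^\ast = \partial^2\psi$. Next, observe that the argument $\partial\phi(x) + G(x)u$ is precisely the natural parameter of the point $x'$: by \eqref{eq:theta-by-eta} we have $\partial\phi(x) = \theta(x)$, and by \eqref{eq:u-v-tangents} the e-geodesic in natural coordinates is $\theta_u(t) = \theta(x) + tu(x,v)$ with $u(x,v) = G(x)v$, so $\partial\phi(x)+G(x)u = \theta(x')$ where $x' = \Exp_x(u)$. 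Therefore
\begin{equation}\label{eq:transport-identify}
  \partial^2\phi^\ast\big(\partial\phi(x)+G(x)u\big) = \partial^2\psi\big(\theta(x')\big)
  = H(\theta(x')) \overset{\eqref{eq:Hpsi-Hphi}}{=} \big(\partial^2\phi(x')\big)^{-1} = G(x')^{-1},
\end{equation}
using \eqref{eq:metric-Geta} for the last equality. Substituting \eqref{eq:transport-identify} into \eqref{eq:transport-starting} gives exactly \eqref{eq:char-diff-e-Exp-map}.

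The argument is essentially a bookkeeping exercise threading together the Legendre relations already assembled in Section \ref{sec:FR-Geometry}; there is no genuine obstacle. The one point that deserves care is the identification of the argument $\partial\phi(x)+G(x)u$ with the natural parameter $\theta(x')$ of the endpoint — one must be consistent about whether $u$ denotes a tangent vector in $\eta$-coordinates (so that the natural-coordinate direction is $G(x)u$, as in \eqref{eq:u-v-tangents}) or already a direction in $\theta$-coordinates; the statement of Lemma \ref{lemma:char-e-Exp-map} fixes the convention via the term $tG(\eta)v$ inside $\partial\phi^\ast$, so the identification goes through verbatim. For the simplex manifold $\mc{S}_n$ with ambient coordinates, the same chain of equalities holds with $G(x)$, $\partial\phi$ etc.\ interpreted via Lemma \ref{lem:FR-metric} and $\Pi_x$ from \eqref{eq:replicator-map}, since those were precisely set up so that \eqref{eq:char-e-Exp-map} remains valid.
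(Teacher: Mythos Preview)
Your proof is correct and follows essentially the same approach as the paper: both start from the derivative formula \eqref{eq:diff-e-Exp-map}, identify the argument $\partial\phi(x)+G(x)u$ as the natural parameter $\theta'$ of the endpoint $x'=\Exp_x(u)$, and then invoke the Legendre relation \eqref{eq:Hpsi-Hphi} together with $\phi^\ast=\psi$ to rewrite $\partial^2\phi^\ast(\theta')=\partial^2\psi(\theta')=G(x')^{-1}$. Your exposition is slightly more detailed (e.g.\ spelling out why $\phi^\ast=\psi$ via biconjugation), but the logical structure is identical.
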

\begin{proof} With $x' = \Exp_{x}(u)$ and $\theta':=\partial \phi (x')$, we obtain 
\begin{equation}\label{eq:theta-prime-eta-prime}
\theta'=\partial \phi(x) +G(x)u,
\end{equation}
due to \eqref{eq:char-e-Exp-map} and $\partial \phi$  and $\partial \phi^\ast$ being one-to-one.
Further, we have
\begin{equation}
 d\Exp_{x}(u)v  \overset{\substack{\eqref{eq:diff-e-Exp-map}}}{=} \partial^2\phi^{\ast}\big(\partial \phi(x) +G(x) u\big)G(x)v 
 \overset{\substack{\eqref{eq:theta-prime-eta-prime}\\\eqref{eq:def-phi-psi-ast}}}{=}  
 \partial^2 \psi (\theta')G(x)v \overset{\substack{\eqref{eq:theta-by-eta}\\\eqref{eq:Hpsi-Hphi}}}{=}G (x')^{-1}G(x)v.
\end{equation}
\end{proof}
Applying, the previous result to the concrete manifolds from Section \ref{sec:FR-Geometry}, we obtain the vector transports corresponding to the mappings \eqref{eq:Exp-maps},
\begin{subequations}\label{eq:mcT-concrete}
    \begin{align}
        \mc{P}_{n}\colon\quad
        \mc{T}_{(x,u)}(v) &= \frac{x'}{x}\cdot v, &
        x' &= \Exp_{x}(u),
        \\
        \mc{B}_{n}\colon\quad
        \mc{T}_{(x,u)}(v) &= \frac{x'\cdot (\eins-x')}{x\cdot (\eins-x)} \cdot v, &
        x' &= \Exp_{x}(u),
        \\
        \vartheta(\mc{S}_{n})\colon
        \mc{T}_{(\eta,u_\eta)}(v_\eta) &= \Pi_{\eta'} G(\eta) v_{\eta}, &
        \eta' &= \Exp_{\eta}(u_{\eta}),
        \label{eq:transport-simplex-local} \\ \label{eq:transport-simplex} 
        \mc{S}_{n}\colon\quad
        \mc{T}_{(x,u)}(v) &= \Pi_{x'}\Big(\frac{v}{x}\Big),\qquad v=B v_\eta, &
        x' &= \Exp_{x}(u),
    \end{align}
\end{subequations}
where the mapping $\Pi_{\eta'}$ and $\Pi_{x'}$ are defined by \eqref{eq:def-pi-eta} and \eqref{eq:replicator-map}, respectively, and \eqref{eq:transport-simplex} is equivalent to \eqref{eq:transport-simplex-local} due to $B (\Pi_{\eta'} G(\eta) v_{\eta}) = B \Pi_{\eta'} B^{\T} G(x) B v_{\eta} = \Pi_{x'} G(x) v$.

Below, vector transports $\mc{T}$ will be applied to Riemannian gradients $v=\ggrad f(x)$. Defining corresponding shorthands, analogous to the expressions \eqref{eq:exp-maps} for e-geodesics emanating in the direction of Riemannian gradients, will be convenient.

\begin{lemma}[\textbf{vector transport of Riemannian gradients}]\label{lem:VT-short}
    Let 
    \begin{equation}\label{eq:def-mcT-grad-f}
        \mc{T}_{(x,u)}^{g}(f) := \mc{T}_{(x,u)}(\ggrad f)
    \end{equation}
    denote the vector transport of Riemannian gradients. Then
    \begin{equation}\label{eq:mcT-grad-f-dexp}
        \mc{T}_{(x,u)}^{g}(f) = d\exp_{x}(u)\partial f,\qquad
        x' = \exp_{x}(u),\qquad \partial f = \partial f(x),
    \end{equation}
    with $\exp_{x}$ defined by \eqref{eq:exp-maps}.
\end{lemma}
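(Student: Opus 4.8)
The plan is to verify \eqref{eq:mcT-grad-f-dexp} by combining two earlier facts: the closed-form vector transport from Lemma~\ref{lem:e-transport} and the relation between the shorthand e-geodesics \eqref{eq:exp-maps} and the full exponential maps \eqref{eq:Exp-maps}. Recall that by definition \eqref{eq:def-mcT-grad-f} and \eqref{eq:exp-maps}, the shorthand $\exp_{x}(u)$ is just $\Exp_{x}$ evaluated at the rescaled argument $G(x)^{-1}u$ (for $\mc{P}_{n},\mc{B}_{n}$) or $\Pi_{x}u$ (for $\mc{S}_{n}$), in either case $\Exp_{x}\big(G(x)^{-1}u\big)$ using the convention that $G(x)^{-1}=\Pi_{x}$ on the simplex. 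So when we write $\mc{T}_{(x,u)}^{g}(f)$ on the left-hand side, the subscript $u$ inside $\mc{T}$ refers to a \emph{tangent vector}, whereas on the right-hand side the $u$ inside $\exp_{x}$ plays the role of a \emph{Euclidean gradient}; the first step is to state this identification precisely so the two sides are comparable.

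First I would recall that the Riemannian gradient is $\ggrad f(x) = G(x)^{-1}\partial f(x)$ in the $\mc{P}_{n},\mc{B}_{n}$ cases and $\ggrad f(x) = \Pi_{x}\partial f(x)$ in the $\mc{S}_{n}$ case (Lemma~\ref{lem:Rgrad-simplex}), i.e.~$\ggrad f(x) = G(x)^{-1}\partial f(x)$ uniformly with the convention above. Then by Lemma~\ref{lem:e-transport} with $v = \ggrad f(x)$ and transport base-point direction $G(x)^{-1}\partial f$,
\begin{equation}
\mc{T}_{(x,G(x)^{-1}\partial f)}^{g}(f)
= d\Exp_{x}\big(G(x)^{-1}\partial f\big)\,\ggrad f
= G(x')^{-1}G(x)\,G(x)^{-1}\partial f
= G(x')^{-1}\partial f,
\end{equation}
where $x' = \Exp_{x}(G(x)^{-1}\partial f) = \exp_{x}(\partial f)$ by \eqref{eq:exp-maps}. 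On the other side, differentiating the shorthand $\exp_{x}$ and using \eqref{eq:diff-e-Exp-map} together with the chain rule for the rescaling $u \mapsto G(x)^{-1}u$ gives $d\exp_{x}(u)\partial f = d\Exp_{x}\big(G(x)^{-1}u\big)\,G(x)^{-1}\partial f$, which at the relevant argument equals $\partial^{2}\phi^{\ast}\big(\partial\phi(x)+\partial f\big)\partial f$; by the same Legendre-transform manipulation used in the proof of Lemma~\ref{lem:e-transport} (namely $\partial^{2}\phi^{\ast}(\partial\phi(x')) = \partial^{2}\psi(\theta') = G(x')^{-1}$ via \eqref{eq:def-phi-psi-ast}, \eqref{eq:theta-by-eta}, \eqref{eq:Hpsi-Hphi}), this is again $G(x')^{-1}\partial f$. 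Comparing the two expressions establishes \eqref{eq:mcT-grad-f-dexp}. I would also remark that one may simply verify the three concrete formulas directly: substituting $v = \ggrad f(x)$ into \eqref{eq:mcT-concrete} produces $\tfrac{x'}{x}\cdot\partial f$, $\tfrac{x'\cdot(\eins-x')}{x\cdot(\eins-x)}\cdot\partial f$, and $\Pi_{x'}(\partial f / x)$ respectively, and each of these matches the componentwise derivative of the corresponding map in \eqref{eq:exp-maps}.

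The main obstacle is purely bookkeeping rather than conceptual: keeping straight the two distinct roles of the argument (tangent vector versus Euclidean gradient) and correctly threading the metric factors $G(x)$, $G(x')^{-1}$ through the rescaling that converts $\Exp$ into the shorthand $\exp$, especially in the simplex case where $G(p)^{-1}$ must be read as the replicator operator $\Pi_{p}$ acting on $T_{0}$ and where the ambient/local coordinate change via $B$ in \eqref{eq:Jacobian-varphi-inv} is implicit. I would therefore either present the uniform Legendre-duality argument above (cleanest, covers all three cases at once), or, if a reader prefers concreteness, tabulate the three cases and check each against \eqref{eq:exp-maps} and \eqref{eq:mcT-concrete} by a one-line componentwise computation.
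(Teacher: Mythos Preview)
Your uniform Legendre-duality argument is correct and is in fact cleaner than what the paper does. The paper's proof is purely computational: it differentiates each of the three maps in \eqref{eq:exp-maps} by hand to obtain the expressions \eqref{eq:Tg-expressions}, then separately substitutes $v=\ggrad f(x)$ into each line of \eqref{eq:mcT-concrete} and observes that the results coincide case by case. Your route---pulling the rescaling $u\mapsto G(x)^{-1}u$ through the chain rule and then invoking Lemma~\ref{lem:e-transport} once to get $G(x')^{-1}\partial f$ on both sides---handles all three manifolds simultaneously and makes transparent \emph{why} the identity holds (the metric factor from $\ggrad f$ exactly cancels the Jacobian of the rescaling). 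You also correctly flag the bookkeeping issue that the symbol $u$ plays two roles (tangent vector for $\mc{T}$, Euclidean gradient for $\exp_x$), which the paper leaves implicit.

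One minor slip: in your closing remark, the concrete formulas you list after substituting $v=\ggrad f(x)$ into \eqref{eq:mcT-concrete} are not fully simplified. For $\mc{P}_n$ you get $\tfrac{x'}{x}\cdot\ggrad f = \tfrac{x'}{x}\cdot (x\cdot\partial f) = x'\cdot\partial f$, not $\tfrac{x'}{x}\cdot\partial f$; similarly for $\mc{B}_n$ the denominator $x\cdot(\eins-x)$ cancels against the metric factor in $\ggrad f$, yielding $x'\cdot(\eins-x')\cdot\partial f$; and for $\mc{S}_n$ you get $\Pi_{x'}\big(\tfrac{\Pi_x\partial f}{x}\big)=\Pi_{x'}\partial f$ (using $\Pi_{x'}\eins=0$), not $\Pi_{x'}(\partial f/x)$. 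These are precisely the entries of \eqref{eq:Tg-expressions}, so after this correction your case-by-case check matches the paper's exactly.
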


\begin{proof}
    Evaluating the right-hand side of \eqref{eq:mcT-grad-f-dexp} for the expressions \eqref{eq:exp-maps} is a routine calculation (cf.~\eqref{eq:Tg-expressions} below). Substituting the argument $\ggrad f(x)$ into the corresponding expressions \eqref{eq:mcT-concrete} shows equality.
\end{proof}

Evaluating \eqref{eq:def-mcT-grad-f} or \eqref{eq:mcT-grad-f-dexp} yields the following closed-form expressions.
\begin{subequations}\label{eq:Tg-expressions}
    \begin{align}
        \mc{P}_{n}\colon\qquad
        \mc{T}_{(x,u)}^{g}(f) &= x'\cdot\partial f(x), &
        x' &= \exp_{x}(u),
        \\
        \mc{B}_{n}\colon\qquad
        \mc{T}_{(x,u)}^{g}(f) &= x'\cdot (\eins-x')\cdot\partial f(x), &
        x' &= \exp_{x}(u),
        \\
        \mc{S}_{n}\colon\qquad
        \mc{T}_{(x,u)}^{g}(f) &= \Pi_{x'}\partial f(x), &
        x' &= \exp_{x}(u).
    \end{align}
\end{subequations}

\subsection{SMART As Riemannian Gradient Descent}\label{RG-section-line-search}

In the context of minimizing an objective function $f$ defined on a Riemannian manifold $\mc{M}$, any numerical first-order update must effectively utilize the Riemannian gradient $\ggrad f(x) \in T_{x}\mc{M}$ in order to update $x\in \mc{M}$ (recall the notation: Remark \ref{rem:notation}). This typically involves the exponential map with respect to the Levi-Civita connection. However, often a retraction is used, because this is computationally cheaper than evaluating the exponential map or if the latter is not globally defined, as is the case for the simplex manifold, $\mc{S}_{n}$. 

Therefore, in this paper, we employ throughout the exponential maps \eqref{eq:Exp-maps} corresponding to the e-geodesics on each of the three manifolds $\mc{P}_{n}, \mc{B}_{n}$ and $\mc{S}_{n}$. These retractions (Lemma \ref{lem:retractions}), when applied to the Riemannian gradient, yield the expressions \eqref{eq:exp-maps}.

\begin{proposition}[\textbf{equivalence of e-geodesic and mirror descent map}]
    Application of the mappings $\Exp_{\eta}$ defined by \eqref{eq:Exp-maps}, to the Riemannian gradient of an objective function $f$, yields the corresponding mirror descent mappings \eqref{eq:mirror-step} and
    \eqref{eq:SMART-iteration}.
    
\end{proposition}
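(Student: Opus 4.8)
The plan is to verify the claimed equivalence manifold-by-manifold by directly comparing the closed forms on both sides. The key tool is Lemma \ref{lemma:char-e-Exp-map}, which expresses the e-geodesic retraction as $\Exp_{\eta}(tv) = \partial\phi^{\ast}(\partial\phi(\eta) + tG(\eta)v)$, together with the shorthand notation \eqref{eq:exp-maps} in which the retraction is applied to the \emph{Riemannian} gradient $\ggrad f(x) = G(x)^{-1}\partial f(x)$ (resp. $\Pi_x\partial f(x)$ on the simplex, by Lemma \ref{lem:Rgrad-simplex}). Substituting $v = G(x)^{-1}\partial f(x)$ into the characterization collapses the metric factors, giving $\Exp_{x}(t\,\ggrad f(x)) = \partial\phi^{\ast}(\partial\phi(x) - t\,\partial f(x))$ once we insert a stepsize $-\tau_k$. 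This is exactly the right-hand side of the mirror step \eqref{eq:mirror-step}, $x_{k+1} = \partial\vphi^{\ast}(\partial\vphi(x_k) - \tau_k\partial F(x_k))$, with $\vphi = \phi$. Hence the abstract equivalence is essentially immediate from Lemma \ref{lemma:char-e-Exp-map}; the remaining work is to confirm that in each of the three concrete cases the formulas in \eqref{eq:exp-maps} coincide with the explicit SMART updates in \eqref{eq:SMART-iteration}.

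For the positive orthant, $\partial\phi_{+}(\eta) = \log\eta$ and $\partial\phi_{+}^{\ast}(\theta) = e^{\theta}$, so $\partial\phi_{+}^{\ast}(\log x_k - \tau_k\partial f(x_k)) = x_k\cdot e^{-\tau_k\partial f(x_k)}$, matching \eqref{eq:SMART-iteration} with $Z = 1$; one then checks this equals the original multiplicative form \eqref{eq:SMART-iteration-orthant} by computing $\partial f(x) = A^{\T}\log\frac{Ax}{b}$ for $f(x) = \KL(Ax,b)$. For the box, $\partial\phi_{\square}(\eta) = \log\frac{\eta}{\eins-\eta}$ and $\partial\phi_{\square}^{\ast}$ is the componentwise logistic function $\theta\mapsto \frac{e^{\theta}}{1+e^{\theta}}$, and substituting $\theta = \log\frac{x_k}{\eins-x_k} - \tau_k\partial f(x_k)$ yields precisely $\frac{x_k\cdot e^{-\tau_k\partial f}}{\eins - x_k + x_k\cdot e^{-\tau_k\partial f}}$, i.e. the $\mc{B}_n$ line of \eqref{eq:exp-maps} and the $Z$ given in \eqref{eq:SMART-iteration}. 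For the simplex, using the ambient-coordinate kernel $\phi_{\Delta}(p) = \la p,\log p\ra$ from Table \ref{tab:distributions-Bregman} and Lemma \ref{lem:Rgrad-simplex} (so the Riemannian gradient is $\Pi_p\partial f(p)$), the e-geodesic formula \eqref{eq:e-geo-simplex} with $v = \Pi_p\partial f(p)$ gives $\frac{p\cdot e^{-\tau_k\partial f}}{\la p, e^{-\tau_k\partial f}\ra}$, which is exactly the $\mc{S}_n$ case with $Z = \la x_k, e^{-\tau_k\partial f(x_k)}\ra$. Here one has to be slightly careful: in $\Exp_p(tv) = \frac{p\cdot e^{tv/p}}{\la p, e^{tv/p}\ra}$ the argument $v/p = \Pi_p\partial f/p = \partial f - \la p,\partial f\ra\eins$, and the additive constant $-\la p,\partial f\ra$ cancels between numerator and denominator, so the replicator projection is effectively transparent — this is the one small verification worth spelling out.

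The main obstacle, such as it is, is purely bookkeeping: making sure that the Legendre-conjugate pairs $(\phi,\phi^{\ast})=(\phi,\psi)$ are used consistently with the correct coordinate system (natural $\theta$ versus mean $\eta$ versus ambient $p$), and that for the simplex the isomorphism of Lemma \ref{lem:FR-metric} together with the identification in Remark \ref{rem:phi} of $\phi_{\Delta}$ as a Legendre function on the affine subspace $\{\la\eins,x\ra=1\}$ legitimizes writing the mirror step with $\vphi = \phi_{\Delta}$ and $\partial\phi_{\Delta}^{\ast}$ equal to the softmax map listed in Table \ref{tab:distributions-Bregman}. There is no genuine difficulty — by Lemma \ref{lemma:char-e-Exp-map} the two maps are the \emph{same} map written in two notations — but the proposition deserves a proof that names the conjugate pair in each case and performs the three substitutions, so I would structure the proof as: (1) invoke Lemma \ref{lemma:char-e-Exp-map} to reduce $\Exp_x(\ggrad f)$ to $\partial\phi^{\ast}(\partial\phi(x) - \tau\partial f(x))$; (2) identify this with the mirror-step form \eqref{eq:mirror-step}; (3) instantiate $\phi \in \{\phi_{+},\phi_{\square},\phi_{\Delta}\}$ and read off \eqref{eq:SMART-iteration} including the three normalizers $Z$, noting the cancellation of $\la p,\partial f\ra$ in the simplex case.
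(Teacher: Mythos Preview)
Your proposal is correct and follows essentially the same approach as the paper, which simply points to the pre-computed expressions \eqref{eq:exp-maps} and observes that they coincide with \eqref{eq:SMART-iteration}. Your version is more thorough in that you additionally invoke Lemma~\ref{lemma:char-e-Exp-map} to obtain the abstract identification $\Exp_{x}(-\tau\,\ggrad f(x)) = \partial\phi^{\ast}(\partial\phi(x)-\tau\,\partial f(x))$ with the mirror-step form \eqref{eq:mirror-step} before specializing, and you spell out the cancellation of the constant $\la p,\partial f\ra$ in the simplex case --- both useful clarifications that the paper leaves implicit.
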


\begin{proof}
The assertion follows directly from the expressions \eqref{eq:exp-maps}.
\end{proof}

\begin{corollary} Let $(\mc{M},g)$ be endowed with the Riemannian
metrics introduced in Section \ref{sec:FR-Geometry}. Then the SMART iteration  \eqref{eq:SMART-iteration}
equals
\begin{equation}\label{eq:SMART_vs_RG}
   x_{k+1} = {\Exp}_{x_{k}}\big(-\tau_k \ggrad f(x_k)\big)
   = \exp_{x_{k}}\big(-\tau_{k}\partial f(x_{k})\big).
\end{equation}
\end{corollary}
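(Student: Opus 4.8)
The plan is to unwind the definitions and verify the claimed chain of equalities in \eqref{eq:SMART_vs_RG} directly, manifold by manifold, using the identifications already established in the excerpt. The two equalities to prove are: first, that $\Exp_{x_k}(-\tau_k\ggrad f(x_k)) = \exp_{x_k}(-\tau_k\partial f(x_k))$, where $\Exp$ is the e-geodesic retraction and $\exp$ is the shorthand from \eqref{eq:exp-maps}; and second, that this coincides with the SMART update $x_{k+1}$ as written in \eqref{eq:SMART-iteration}.

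For the first equality, I would appeal to the shorthand definitions \eqref{eq:exp-maps} themselves: in each of the three cases $\mc{P}_n$, $\mc{B}_n$, $\mc{S}_n$, the notation $\exp_x(\partial f)$ is \emph{defined} as $\Exp_x(G(x)^{-1}\partial f)$ (equivalently $\Exp_x(\Pi_x\partial f)$ in the simplex case, since $\Pi_x = G(x)^{-1}$ by \eqref{eq:def-pi-eta}/\eqref{eq:replicator-map}). So I need only observe that the Riemannian gradient $\ggrad f(x)$ equals $G(x)^{-1}\partial f(x)$ in the $\mc{P}_n$ and $\mc{B}_n$ coordinates (this is the defining relation $\mr{d}_x f(v) = g_x(\ggrad f(x), v) = \langle \ggrad f(x), G(x) v\rangle$ together with $\mr{d}_x f(v) = \langle \partial f(x), v\rangle$), and equals $\Pi_p\partial f(p)$ in ambient simplex coordinates, which is exactly Lemma \ref{lem:Rgrad-simplex}. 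Scaling by $-\tau_k$ and feeding into $\Exp_{x_k}$ (which is linear in its first slot's argument up to the componentwise exponential formulas \eqref{eq:Exp-maps}) then gives the middle expression. Concretely: $\Exp_{x_k}(-\tau_k\ggrad f(x_k)) = \Exp_{x_k}(-\tau_k G(x_k)^{-1}\partial f(x_k)) = \exp_{x_k}(-\tau_k\partial f(x_k))$ by \eqref{eq:exp-maps}.

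For the second equality, I would just match \eqref{eq:exp-maps} against \eqref{eq:SMART-iteration} term by term. In the orthant case, $\exp_{x_k}(-\tau_k\partial f(x_k)) = x_k\cdot e^{-\tau_k\partial f(x_k)}$, which is exactly the SMART update with $Z(x_k)=1$; one may additionally note via the explicit gradient $\partial f(x)_j = \sum_i A_{ij}\log\frac{(Ax)_i}{b_i}$ that this recovers the multiplicative product form \eqref{eq:SMART-iteration-orthant}, though that is not strictly needed. In the box case, the formula $\frac{x_k\cdot e^{-\tau_k\partial f(x_k)}}{\eins - x_k + x_k\cdot e^{-\tau_k\partial f(x_k)}}$ matches \eqref{eq:SMART-iteration} with the stated $Z(x_k) = \eins - x_k + x_k\cdot e^{-\tau\partial f(x_k)}$; and the simplex case matches with $Z(x_k) = \langle x_k, e^{-\tau_k\partial f(x_k)}\rangle$. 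So the corollary is essentially a bookkeeping statement, collecting the preceding Proposition (equivalence of e-geodesic application and the mirror descent map) with the explicit shorthand expressions.

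The only mild subtlety — and the closest thing to an obstacle — is making the identification $\ggrad f(x) = G(x)^{-1}\partial f(x)$ clean and uniform across the three coordinate systems, particularly for the simplex where one works either in local $\eta$-coordinates with $G(\eta)^{-1} = \Pi_\eta$ or in ambient $p$-coordinates with the projector $\Pi_p = \Diag(p) - pp^\top$; here I would simply cite Lemma \ref{lem:Rgrad-simplex} and the equivalence $B(\Pi_\eta B^\top G(x) B v_\eta) = \Pi_x G(x) v$ noted after \eqref{eq:mcT-concrete}, so that the ambient formula $\exp_x(\partial f) = \frac{x\cdot e^{\partial f}}{\langle x, e^{\partial f}\rangle}$ is the legitimate coordinate expression of $\Exp_x$ applied to the Riemannian gradient. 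Since all of this is already in place, I expect the proof to be a two-line invocation of \eqref{eq:exp-maps} and the preceding Proposition, exactly as the authors indicate.
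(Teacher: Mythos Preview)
Your proposal is correct and matches the paper's approach: the corollary is stated without a separate proof block because it follows immediately from the preceding Proposition, whose own proof is just ``the assertion follows directly from the expressions \eqref{eq:exp-maps}.'' You have simply made explicit the bookkeeping behind that one-line invocation --- identifying $\ggrad f(x) = G(x)^{-1}\partial f(x)$ (resp.\ $\Pi_p\partial f(p)$ via Lemma~\ref{lem:Rgrad-simplex}), plugging into the shorthand \eqref{eq:exp-maps}, and matching against \eqref{eq:SMART-iteration} --- which is exactly what the authors intend.
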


The step size $\tau_k$ can now be adapted to the current iterate by line~search and we still obtain a global convergence result.

\subsubsection{Armijo Line Search}

Much like the Euclidean space's line search method, we can determine the step size $\tau_k$ through a curvilinear search on the manifold. 

\begin{theorem} \cite[Thm. 4.3.1]{Absil:2008aa}\label{thm:ALS-convergence}
Let $(x_{k})_{k \in \N}$ be a sequence generated by the iteration \eqref{eq:SMART_vs_RG} with step-size $\tau_k=\beta^{m} \tau$ and scalars $\tau > 0,\; \beta,\sigma \in (0,1)$, where $m$ is the smallest nonnegative integer such that
\begin{equation}\label{eq:Armijo-LS}
f(x_{k}) - f(x_{k+1}) \geq \sigma \tau_k \| \ggrad f(x_{k})\|_{x_{k}}^{2}.
\end{equation}
Then every cluster point $\wh{x}$ 
is a critical point of $f$, i.e. $\ggrad f(\wh{x})=0$.
\end{theorem}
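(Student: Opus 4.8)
The statement is the standard Armijo-type global convergence result for line-search descent on a Riemannian manifold, here specialized to the retractions $\Exp_x$ of \eqref{eq:Exp-maps}. Since the result is quoted verbatim as \cite[Thm.~4.3.1]{Absil:2008aa}, the plan is to verify that the three hypotheses of that theorem are met in the present setting and then invoke it. First I would observe that by the Corollary, the iteration \eqref{eq:SMART_vs_RG} is exactly $x_{k+1} = R_{x_k}(\eta_k)$ with $R = \Exp$ the retraction established in Lemma \ref{lem:retractions}, and search direction $\eta_k = -\tau \ggrad f(x_k)$, which is a descent direction at every noncritical $x_k$ because $g_{x_k}(\ggrad f(x_k),\eta_k) = -\tau\|\ggrad f(x_k)\|_{x_k}^2 < 0$. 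The stepsize rule \eqref{eq:Armijo-LS} is precisely the Armijo backtracking condition of that theorem with the admissible backtracking factor $\beta\in(0,1)$ and sufficient-decrease parameter $\sigma\in(0,1)$.

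Next I would check the smoothness prerequisites: $f$ must be smooth (in our cases $f(x)=\KL(Ax,b)$ is real-analytic on the open manifold $\mc{M}\in\{\mc{P}_n,\mc{B}_n,\mc{S}_n\}$), the retraction $\Exp$ must be smooth (immediate from the closed forms \eqref{eq:Exp-maps}, all built from $\exp$, $\log$, and rational expressions with nonvanishing denominators on $\mc{M}$), and $\ggrad f$ must be the Riemannian gradient with respect to $g$ (Lemma \ref{lem:Rgrad-simplex} in the simplex case, the diagonal rescalings $G(x)^{-1}\partial f$ in the orthant and box cases). With these in place, the cited theorem gives that every accumulation point $\wh{x}$ of $(x_k)$ satisfies $\ggrad f(\wh{x})=0$. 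The argument underlying the cited theorem is the familiar one: along a subsequence $x_{k_j}\to\wh{x}$, the monotone decrease $f(x_k)-f(x_{k+1})\ge \sigma\tau_k\|\ggrad f(x_k)\|_{x_k}^2 \ge 0$ forces $\sum_k \tau_k\|\ggrad f(x_k)\|_{x_k}^2 < \infty$ since $f$ is bounded below (by Proposition \ref{prop:Existence}, $f$ attains its minimum, hence $f\ge f_\ast>-\infty$); then one shows $\tau_k$ cannot go to zero along the subsequence without $\|\ggrad f(x_{k_j})\|_{x_{k_j}}\to 0$, using that a stepsize $\beta^{m}\tau$ with $m\ge 1$ was only accepted because $\beta^{m-1}\tau$ failed \eqref{eq:Armijo-LS}, combined with a first-order Taylor expansion of $t\mapsto f(\Exp_{x_{k_j}}(-t\ggrad f(x_{k_j})))$ and the retraction property $d\Exp_x(0)=\mrm{id}$ from Lemma \ref{lem:retractions}.

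The only genuine subtlety — and the main thing to be careful about — is that \cite[Thm.~4.3.1]{Absil:2008aa} as stated typically presumes either compactness of a sublevel set or that the iterates remain in a compact set, in order to control the Taylor remainder uniformly. Here the manifolds are open and noncompact, so strictly speaking one should note that an accumulation point $\wh{x}$ lies in $\mc{M}$ (not on the boundary) by hypothesis — the theorem is a statement \emph{about} cluster points $\wh{x}\in\mc{M}$ — and that near any such $\wh{x}$ one may pass to a compact neighborhood $K\subset\mc{M}$ on which $f$, $\ggrad f$, and $\Exp$ and their relevant derivatives are uniformly bounded, so the Taylor estimate applies along the tail of the convergent subsequence. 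I would flag this localization explicitly and then the conclusion $\ggrad f(\wh{x})=0$ follows verbatim from the cited theorem. (A remark that boundary cluster points are excluded — or handled separately via the essential smoothness of $\phi$, which sends $\|\partial\phi\|\to\infty$ at the boundary — could be added, but is not needed for the statement as phrased.)
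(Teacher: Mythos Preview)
The paper does not supply its own proof of this theorem: it is stated with the citation \cite[Thm.~4.3.1]{Absil:2008aa} and no \texttt{proof} environment follows. Your proposal---verifying that the retraction, descent direction, and Armijo rule fit the hypotheses of the cited result and then invoking it---is therefore entirely consistent with the paper's treatment, and in fact more thorough than what the paper itself provides.
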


The Riemannian gradient method with the monotone geometric Armijo line search strategy \eqref{eq:Armijo-LS}, specialized to our specific setup, is stated in  Algorithm~\ref{alg:RG_Armijo}.

\begin{algorithm}\label{alg:RG_Armijo}
\DontPrintSemicolon \textbf{initialization:} Set $k=0$, pick an
initial point $x_{0}\in\mc{M}$ and choose initial step size $\tau$, parameters $\sigma \in (0,1)$, $\beta \in (0,1)$ and $\veps>0$. Set $f_{0}=f(x_0)$, $v_{0}= -\partial f(x_0)$ and $M_0=G(x_0)^{-1}$ (cf.~Table~\ref{tab:distributions-list}).\; 
\While{ $\| M v_k\|>\veps$}{
$x_{k+1} = \exp_{x_k}(\tau v_k)$\;
$f_{k+1} = f(x_{k+1})$\;
\While{$f_{k+1} - f_k > \sigma \tau v_k^\top M_k v_k $}{
$\tau = \beta \tau$\;
$x_{k+1} = \exp_{x_k}(\tau v_k)$ by \eqref{eq:exp-maps}\;
$f_{k+1} = f(x_{k+1})$ with $f(x_{k+1})=\KL(Ax_{k+1},b)$\;
$v_{k+1} = -\partial f(x_{k+1})$ with $\partial f(x_{k+1})=A^\top \log \frac{Ax_{k+1}}{b}$\;
$M_{k+1}= G(x_{k+1})^{-1}$ with inverse metric tensor as listed in Table~\ref{tab:distributions-list}\;
}
$x_{k+1} = \exp_{x_k}(\tau v_k)$\;
Increment $k\leftarrow k+1$.\;
} \caption{SMART with geometric monotone Armijo line search \cite{Absil:2008aa}}
\end{algorithm}

\subsubsection{Hager-Zhang-type Line Search}

In \cite{HagerZhang:2004}, a novel approach to non-monotone line search was introduced, building on \cite{Grippo1986} where non-monotone strategies were first introduced for Newton's method. For a descent direction $v_k$,
a step size $\tau_k$ is determined by
checking either the non-monotone Wolfe conditions 
\begin{align}
f(x_k+\tau_k v_k) & \leq C_k + \rho_1 \tau_k \partial f (x_k)^\top v_k, \\
\partial f(x_k+\tau_k)^\top v_k  & \geq \rho_2 \partial f (x_k)^\top v_k, 
\end{align}
with scalars $\rho_1,\rho_2>0$,  or, alternatively, the non-monotone Armijo condition 
\begin{equation}\label{eq:nonmonotone-Armijo_Euclidean}
    f(x_k+\tau_k v_k) \leq C_k + \sigma \tau_k \ggrad f (x_k)^\top v_k,
\end{equation}
where 
$\tau_k=\beta^{m} \tau$ and scalars $\tau > 0,\; \beta,\sigma \in (0,1)$, with $m$ the smallest nonnegative integer such that \eqref{eq:nonmonotone-Armijo_Euclidean}
holds.

Unlike monotone strategies that strictly ensure a decrease in the sequence of function values $(f(x_{k}))_{k\in\N}$ with each iteration, this approach does not require $f(x_{k+1}) < f(x_k)$ at every step. Sacrificing monotonicity, as seen in accelerated first-order convex optimization, is a deliberate choice made to achieve faster convergence and allows for more aggressive steps. For example, in \cite{Grippo1986}, a maximum of recent function values must decrease, while in \cite{HagerZhang:2004}, the average of successive function values decreases.

\textit{Geometric} Zhang-Hager-type line search has been explored in several works, including \cite{Sutti:2020vn, Sakai2022, Oviedo:2022, Mueller2022multilevel}. In this context, we focus on the approach presented in \cite{Oviedo:2022}, which builds upon the ideas of the Zhang-Hager technique \cite{HagerZhang:2004}.

Specifically, this approach involves determining the step size $\tau_k > 0$ such that it satisfies 
\begin{equation}
f(R_{x_k}(\tau v_k)) \leq C_k + \rho_1 \tau \la \ggrad f(x_k), v_k\ra_{x_{k}} - \rho_2 \tau^2 \|v_k\|^2_{x_{k}},
\end{equation}
where $\rho_1,\rho_2>0$.
Additionally, each reference value $C_k$ is calculated as a convex combination of $C_{k-1}$ and $f(x_k)$. This allows a comparison of the current function value with a weighted average of previous values and enables an average decrease in the function value.

We now specialize \cite[Alg.~1]{Oviedo:2022} to the negative Riemannian gradient and our particular manifolds. This customized version is presented as Algorithm \ref{alg:RG_HZ}.

\begin{algorithm}\label{alg:RG_HZ}
\DontPrintSemicolon \textbf{initialization:} Set $k=0$, pick an
initial point $x_{0}\in\mc{M}$ and choose initial step size $\tau$, parameters $0< \rho_1< \rho_2<1, \beta \in (0,1), \varrho \in [0,1)$.
 Set $C_{0}=f(x_0)$, $Q_0=1$, $v_{0}= -\partial f(x_0)$ and $M_0=G(x_0)^{-1}$ (cf.~Table~\ref{tab:distributions-list}).\; 
\While{ $\| M v_k\|>\veps$}{
$x_{k+1} = \exp_{x_k}(\tau v_k)$\;
$f_{k+1} = f(x_{k+1})$\;
\While{$f_{k+1} - C_k > \tau (\rho_1 - \tau \rho_2) v_k^\top M_k v_k $}{
$\tau = \beta \tau$\;
$x_{k+1} = \exp_{x_k}(\tau v_k)$ by \eqref{eq:exp-maps}\;
$f_{k+1} = f(x_{k+1})$ with $f(x_{k+1})=\KL(Ax_{k+1},b)$\;
$v_{k+1} = -\partial f(x_{k+1})$ with $\partial f(x_{k+1})=A^\top \log \frac{Ax_{k+1}}{b}$\;
$M_{k+1}= G(x_{k+1})^{-1}$ with inverse metric tensor as listed in Table~\ref{tab:distributions-list}\;
}
$x_{k+1} = \exp_{x_k}(\tau v_k)$\;
$Q_{k+1} = \varrho Q_k +1$\;
$C_{k+1} = \frac{\varrho Q_k C_k + f(x_{k+1})}{Q_{k+1}}$\;
Increment $k\leftarrow k+1$.\;
} \caption{SMART with geometric Hager-Zhang-type line search \cite[Alg. 1]{Oviedo:2022}}
\end{algorithm}

The convergence of Algorithm \ref{alg:RG_HZ} is analyzed in \cite{Oviedo:2022}. As $v_k=-\tau_k \ggrad f(x_k)$
satisfies the conditions
\begin{align}
\la \ggrad f(x_k), v_k \ra_{x_k} & \leq -c_1 \lVert \ggrad f(x_k) \rVert^2_{x_k},\\
\lVert v_k \rVert_{x_k} & \leq c_2 \lVert \ggrad f(x_k) \rVert_{x_k}, 
\end{align}
where $c_1, c_2 >0$, convergence of the sequence 
$(x_{k})_{k \in \N}$  generated by Algorithm~\ref{alg:RG_HZ}
to a stationary point follows under the following conditions (\cite[Assumption 1]{Oviedo:2022}):
\begin{enumerate}
    \item $f \colon \mc M \to \R$ is continuously differentiable on $\mc M$ and is bounded below in the level set $\mc L_{x_0}=\{x\in\mc{M}\colon f(x)\le f(x_0)\}$;
    \item the differential $d\wh{f}_x$ of the function
    \begin{equation}
        \wh{f}_{x}\colon T_{x}\mc{M}\to\R,\qquad
        \wh{f}_{x}(v) := f\big(R_{x}(v)\big),\qquad v\in T_{x}\mc{M},
    \end{equation}
    represented by the tangent vector $u_{\wh{f}(v)}$ and the identification $T_{v}(T_{x}\mc{M})\simeq T_{x}\mc{M}$ such that 
    \begin{equation}\label{eq:def-u-by-fhat}
    d\wh{f}_{x}(v)w = \la u_{\wh{f}(v)},w\ra_{x},\qquad\forall v, w\in T_{x}\mc{M},
    \end{equation}
    is required to be uniformly Lipschitz continuous at $0 \in T_{x}\mc{M}$, i.e. there exist $\kappa, L > 0$ such that
    \begin{equation}\label{eq:cond-2-concrete}
    \lVert d\wh{f}_x(v) - d\wh{f}_x(0) \rVert_x 
    = \|u_{\wh{f}(v)}-u_{\wh{f}(0)}\|_{x}
    \leq L \lVert v \rVert_x, \qquad x \in \mc L_{x_0}, \quad v \in T_x \mc M
    \end{equation}
    holds,  where $\lVert v \rVert_x \leq \kappa$.
\end{enumerate}
We elaborate condition (2). By the chain rule and the general relation
\begin{equation}
    df(x)v = \la \ggrad f(x),v\ra_{x},\quad x\in\mc{M},\quad v\in T_{x}\mc{M},
\end{equation}
one has
\begin{equation}
    d\wh{f}_{x}(v)w = df\big(R_{x}(v)\big) dR_{x}(v) w
    = \big\la\ggrad f\big(R_{x}(v)\big), dR_{x}(v) w\big\ra_{x}.
\end{equation}
Thus, by \eqref{eq:def-u-by-fhat},
\begin{equation}
    u_{\wh{f}(v)} = dR_{x}(v)^{\T} \ggrad f\big(R_{x}(v)\big)
\end{equation}
and in particular, due to \eqref{eq:cond-retraction-b}, \eqref{eq:cond-retraction-c},
\begin{equation}
    u_{\wh{f}(0)} = \ggrad f(x).
\end{equation}
Taking now into account \eqref{eq:cond-retraction-a} and \eqref{lem:e-transport}, condition \eqref{eq:cond-2-concrete} becomes 
\begin{subequations}\label{eq:Ovieto-our-case}
    \begin{align}
        \lVert d\wh{f}_x(v) - d\wh{f}_x(0) \rVert_x 
        &= \big\|dR_{x}(v)^{\T} \ggrad f\big(R_{x}(v)\big) - \ggrad f(x)\big\|_{x}
        \\
        &= \big\|G(x)G(x')^{-2}\partial f(x') - G(x)^{-1}\partial f(x)\big\|_{x},\qquad
        x' = R_{x}(v)
        \\
        &\leq L \|v\|_{x} = L \|R_{x}^{-1}(x')\|_{x}.
    \end{align}
\end{subequations}
As a simple concrete scenario, consider the orthant $\mc{P}^{2}\subset\R^{2}$ with objective function $f(x) = D_{\phi}(x,\eins)$ given by \eqref{eq:D-phi-orthant}. Fixing $x$ and subtracting the left-hand side from the right-hand side of \eqref{eq:Ovieto-our-case}, defines the function
\begin{subequations}\label{cond-geom-L-smooth}
\begin{align}
    h_{x}(x') &:= L \|R_{x}^{-1}(x')\|_{x} - \big\|G(x)G(x')^{-2}\partial f(x') - G(x)^{-1}\partial f(x)\big\|_{x}
    \\
    \intertext{where}
    R_{x}^{-1}(x') &= x\cdot \log\Big(\frac{x'}{x}\Big)
\end{align}
\end{subequations}
by \eqref{eq:Exp-orthant} and \eqref{eq:cond-retraction-a}. Using $G(x)=\Diag(x)^{-1}$ and $\partial f(x)=\log x$, we thus have
\begin{equation}
    h_{x}(x') 
    = L\Big\|x\cdot \log\frac{x'}{x}\Big\|_{x} - \Big\|\frac{x'\cdot x'}{x}\cdot \log x' - x \cdot \log x\Big\|_{x}
\end{equation}
and specifically for $x=\eins$
\begin{equation}
    h_{\eins}(x') = L\|\log x'\| - \|x'\cdot x'\cdot \log x'\|.
\end{equation}
Since $\big((x'_{i})^{2}\log x_{i}' - L \log x_{i}'\big)\to\infty$ for $x_{i}'\to\infty$ and fixed $L>0$, 
we conclude that $h_{x}(x') \geq 0$ and thus also condition (2) above \textit{might not} hold in general.

\begin{remark}[\textbf{conditions for convergence of Algorithm \ref{alg:RG_HZ}}]\label{rem:conditions-convergence}
    The above reasoning indicates that specifying \textit{general} conditions for the convergence of Algorithm \ref{alg:RG_HZ} for \textit{each} manifold \eqref{eq:three-simple-sets} considered in this paper is a subtle issue. Although we indicated how condition (2) above might be violated, we did not take into account that $v$ is bounded, $\|v\|_{x}\leq \kappa$, which in turn bounds $x'$ by $x' = R_{x}(v)$. Thus, we regard this issue as open and leave it for future work, motivated also by the fact that Algorithm \ref{alg:RG_HZ} converged in all our numerical experiments. 
  
\end{remark}

\subsubsection{Barzilai-Borwein Method}
An alternative non-monotone line search approach expected to perform well is the Barzilai-Borwein (BB) method, introduced in \cite{BarzilaiBorwein:88} for the Euclidean setting.

Canonical line search methods in Euclidean optimization aim to approximate the exact line search problem given by
\begin{equation}\label{eq:extact-linesearch-Euclidean}
\tau_{k} = \arg \min_\tau f(x_k + \tau v_{k})
\end{equation}
where $v_{k}$ is a descent direction.

The BB method approaches this problem differently. Let $g_k = \partial f(x_k)$ denote the Euclidean gradient of $f$ at $x_k$,  $s_k = x_{k+1} - x_{k}$ and $y_k = g_{k+1} - g_{k}$. The approach is motivated  by a quasi-Newton iteration, where the update rule is given by $x_{k+1} = x_k - B_k^{-1} g_k$.
Here, $B_k$ is an approximation of the Hessian of the objective function, which satisfies the secant equation 
\begin{equation}\label{eq:sectant-equation}
B_{k+1} s_k = y_k.
\end{equation}
Barzilai and Borwein simplify $B_{k+1}$ to a scalar $\frac{1}{\tau_k}$, which typically cannot precisely fulfill the secant equation \eqref{eq:sectant-equation}. Instead, if $\la s_k,y_k\ra>0$, they approximate it as 
\begin{equation}\label{eq:approx-secant-long}
\tau_{k+1} = \arg \min_\tau \Big\| \frac{1}{\tau}s_k - y_k \big \|^2,  
\end{equation}
resulting in the so-called \emph{long BB step length},
characterized by the optimality conditions of \eqref{eq:approx-secant-long} that give
\begin{equation}
\tau_{k+1} = \frac{\langle s_k, s_k \rangle}{\langle s_k, y_k \rangle}.
\end{equation}

We can now translate this condition into the geometric setting by using the Riemannian metric, Riemannian gradient and the retraction corresponding to the constraint manifold.
The resulting Riemannian gradient method with
geometric Barzilai-Borwein line step is introduced in \cite{Iannazzo2017}.
Instead of the difference $x_{k+1}-x_k$ they consider the vector $v_k =  -\tau_k \ggrad f(x_k) \in T_{x_k}\mc M$ and transport it to $T_{x_{k+1}}\mc M$.
This yields
\begin{equation}\label{eq:def-sk-geom}
s_k := \mathcal{T}_{(x_k,v_k)}(v_k) = \mathcal{T}_{(x_k, v_{k})}(-\tau_k \ggrad f(x_k))  = -\tau_k \mathcal{T}^g_{(x_k, v_{k})}(f).
\end{equation}
The difference of the Riemannian gradients is defined  as in \cite{Iannazzo2017}
\begin{equation}\label{eq:def-yk-geom}
y_k := \ggrad f(x_{k+1}) - \mathcal{T}^g_{(x_k,v_k)}(f)) 
 \overset{\substack{ \eqref{eq:def-sk-geom}}}{=}
 \ggrad f(x_{k+1}) + \frac{1}{\tau_k} \mathcal{T}_{(x_k,v_{k})}(v_{k}).
\end{equation}
Analogously to the Euclidean setting the Riemannian long BB step length writes
\begin{equation}
\tau_{k+1} = \frac{\langle s_k, s_k \rangle_{x_{k+1}}}{\langle s_k, y_k \rangle_{x_{k+1}}}.
\end{equation}

To guarantee convergence, the step size additionally needs to fulfill the non-monotone Armijo condition.
Similar to \cite{HagerZhang:2004} the update of the cost $C_k$ is replaced by a combination of the last $m_k\le 10$ function values
\begin{align*}
    C_k = \max_{0 \leq j \leq m_k} f(x_{k-j}),
\end{align*}
where $m_0 = 0$ and $m_k = \min \{m_{k-1} +1, 10\}$ for $k > 0$.
The algorithm is summarized in Algorithm \ref{alg:RG_BB}.

\begin{algorithm}[H]\label{alg:RG_BB}
\SetAlgoLined
\DontPrintSemicolon

\KwData{initial point $x_0\in\mc{M}$, function $f$, initial step size $\tau_0$, $\gamma_{\min} \in [0,1]$, $\gamma_{\max} \geq 1$, \\
$\rho, \beta \in (0,1)$.}
\KwResult{sequence of $(x_k)$ towards $ \arg\min f$.}
$C_0 = f(x_0)$\; 
\For{$k = 0, 1,2,\dots$}{ 
$v_{k} = -\ggrad f(x_k)$\;
$x_{k+1} = R_x(\tau_k v_{k})$\;
\;

calculate $s_k = -\tau_k \mathcal{T}_{(x_k, v_{k})}(\ggrad f(x_k))$  and $y_k = \ggrad f(x_{k+1}) + \frac{1}{\tau_k} s_k$\;
calculate $\gamma_{k} = \frac{\langle s_k, s_k \rangle_{x_{k+1}}}{\vert \langle s_k,y_k \rangle_{x_{k+1}}\vert}$ \;
set $\gamma_{k} = \max(\gamma_{\min}, \min(\gamma_{k}, \gamma_{\max}))$\;
\;
find step size $\tau_k = \gamma_k \beta^m$ where $m$ is the smallest integer such that
\begin{align*}
f(R_{x_k}(\tau_k v_{k})) \leq C_k + \rho \tau_k \langle \ggrad f(x_k), v_{k} \rangle_{x_k} \end{align*}\;
update the iterate $x_{k+1} = R_{x_k}(\tau_k v_{k})$\;
update 
$C_{k+1} = \max_{0 \leq j \leq m_k} f(x_{k+1-j})$\;

}
\caption{Riemannian Gradient Descent with Barzilai-Borwein Line Search \cite{Iannazzo2017}}
\end{algorithm}

\begin{theorem}\label{thm:BB-convergence}
Let $(x_{k})_{k \in \N}$ be a sequence generated by Algorithm~\ref{alg:RG_BB}.
Then every cluster point $\wh{x}$ 
is a critical point of $f$, i.e. $\ggrad f(\wh{x})=0$.
\end{theorem}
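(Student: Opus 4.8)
The statement is the Riemannian analogue of a classical result on the Barzilai–Borwein method with non-monotone Armijo backtracking, and the natural strategy is to reduce it to the abstract convergence theorem for non-monotone line search on manifolds. The plan is to verify the three ingredients that such a theorem requires: (a) the search direction $v_k = -\ggrad f(x_k)$ is a genuine (gradient-related) descent direction, so that $\la \ggrad f(x_k), v_k\ra_{x_k} = -\|\ggrad f(x_k)\|_{x_k}^2 \le 0$ and $\|v_k\|_{x_k} = \|\ggrad f(x_k)\|_{x_k}$; (b) the trial step lengths $\tau_k = \gamma_k \beta^m$ are bounded away from pathologies, because $\gamma_k$ is clipped to the fixed interval $[\gamma_{\min},\gamma_{\max}]$ before backtracking begins; and (c) the reference values $C_k = \max_{0\le j\le m_k} f(x_{k-j})$ with the sliding window $m_k \le 10$ satisfy $f(x_{k+1}) \le C_k$ at each accepted step, which forces $(C_k)_{k\in\N}$ to be non-increasing along a subsequence and hence the whole sequence $(f(x_k))$ to be bounded and convergent.

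First I would establish that the inner backtracking loop terminates finitely. Since $f\big(R_{x_k}(\tau v_k)\big)$ is a smooth function of $\tau$ near $0$ with derivative at $\tau=0$ equal to $\la \ggrad f(x_k), v_k\ra_{x_k} = -\|\ggrad f(x_k)\|_{x_k}^2$ (using $\mr{d}R_{x_k}(0)=\mrm{id}$ from Lemma~\ref{lem:retractions}), for any $\rho\in(0,1)$ the Armijo inequality
\begin{equation}
f\big(R_{x_k}(\tau v_k)\big) \le C_k + \rho\,\tau\,\la \ggrad f(x_k), v_k\ra_{x_k}
\end{equation}
holds for all sufficiently small $\tau>0$ whenever $\ggrad f(x_k)\neq 0$ and $f(x_k)\le C_k$; the latter is maintained inductively by the definition of $C_k$ as a max over a window ending at index $k$. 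Consequently the accepted $\tau_k$ is well-defined, $\tau_k \ge \min(\gamma_{\min}, c\,\beta)$ for a constant $c$ depending on a local Lipschitz bound for $x\mapsto \mr{d}R_x$ and $\ggrad f$ on the sublevel set, and the accepted step satisfies the sufficient-decrease bound with $C_k$ in place of $f(x_k)$.

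Next I would run the standard telescoping argument on $C_k$. Because $f(x_{k+1})\le C_k + \rho\tau_k\la\ggrad f(x_k),v_k\ra_{x_k} \le C_k$, one shows $C_{k+1}\le C_k$ whenever the sliding window does not "reset", and more carefully that $C_k$ is eventually non-increasing; since $f$ is bounded below on the sublevel set $\mc{L}_{x_0}$ (all iterates stay there as $f(x_{k+1})\le C_k \le C_0 = f(x_0)$), the limit $C_\infty := \lim_k C_k$ exists. Summing the Armijo inequalities along the indices that realize the max in $C_k$ — the Grippo–Lampariello–Lucidi / Zhang–Hager bookkeeping — yields $\sum_k \tau_k \|\ggrad f(x_k)\|_{x_k}^2 < \infty$. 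Combined with the lower bound $\tau_k \ge \tau_{\min}>0$ established above, this gives $\ggrad f(x_k)\to 0$, and in particular every cluster point $\wh{x}$ has $\ggrad f(\wh{x})=0$ by continuity of $\ggrad f$.

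The main obstacle is the uniform lower bound $\tau_k \ge \tau_{\min}>0$ on the accepted step lengths. In the Euclidean BB setting this follows from a global Lipschitz gradient; here the objective $f(x)=\KL(Ax,b)$ is \emph{not} $L$-smooth (this is stressed throughout the paper), so one cannot invoke a global descent lemma. The resolution is that all iterates remain in the sublevel set $\mc{L}_{x_0}$, and one must argue that the relevant compositions — namely $\tau\mapsto f\big(R_{x_k}(\tau v_k)\big)$ together with the retraction differential — admit a Lipschitz-type control that is \emph{uniform over $\mc{L}_{x_0}$}. For $\mc{P}_n$ and $\mc{B}_n$ the sublevel sets of $\KL(Ax,b)$ are bounded (and for $\mc{S}_n$ compact), and on such sets $\partial f$, $G$, $G^{-1}$, and the explicit e-geodesics \eqref{eq:exp-maps} are all Lipschitz; so the needed constant exists but its verification is the technical heart and mirrors exactly the delicate point flagged in Remark~\ref{rem:conditions-convergence}. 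Once this uniform control is in hand, the convergence of Algorithm~\ref{alg:RG_BB} follows from \cite[Alg.~1 and its analysis]{Oviedo:2022} combined with \cite{Iannazzo2017}, specialized to the negative Riemannian gradient and the retractions \eqref{eq:Exp-maps}.
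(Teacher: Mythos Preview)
Your approach is essentially the one the paper invokes: both reduce the statement to the standard convergence theory for non-monotone Armijo line search on manifolds. The paper's own proof is far more terse --- it simply observes that the e-geodesic retractions \eqref{eq:Exp-maps} are \emph{globally defined} and then defers to \cite[Thm.~3]{Hu2019} (with the extension to the GLL-type reference value $C_k=\max_{0\le j\le m_k} f(x_{k-j})$ worked out in \cite[Thm.~4.13]{Thesis_Maren}). Your sketch spells out what that citation contains.

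Two small points are worth noting. First, you do not actually need a uniform bound $\tau_k\ge\tau_{\min}>0$; the standard GLL dichotomy argument handles the case $\tau_{k_j}\to 0$ along a subsequence by exploiting the Armijo \emph{failure} at $\tau_{k_j}/\beta$, which only requires a \emph{local} Lipschitz-type control near the cluster point rather than a global one over $\mc L_{x_0}$. This makes the obstacle you highlight less severe than your write-up suggests. Second, your side claim that the sublevel sets of $\KL(Ax,b)$ are bounded on $\mc P_n$ is not generally true (it fails whenever $A$ has a nonnegative kernel direction), so that part of your argument would need to be restricted to $\mc B_n$ or $\mc S_n$, or replaced by a localization around the cluster point. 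In any case, the Lipschitz issue you identify is real and is precisely the technical detail the paper delegates to the cited references rather than verifying in-line.
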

\begin{proof}
Since the retractions considered in  \eqref{eq:Exp-maps} are globally defined, the result can be proved analogously to \cite[Thm.~3]{Hu2019}. For details see \cite[Thm.~4.13]{Thesis_Maren}, where the more general update of $C_k$ (as used in
Algorithm~\ref{alg:RG_BB}) is considered.
\end{proof}

\subsection{Accelerating SMART: Riemannian Conjugate Gradient}\label{CG-section}

The algorithms introduced above only exploit the negative Riemannian gradient $v_k = - \ggrad f(x_k)$ as descent direction at every iteration. Alternative choices of descent directions which proved to be efficient in Euclidean scenarios, may be generalized to geometric scenarios as well. 
\cite{Oviedo:2022} extends the conjugate gradient iteration to a Riemannian method by updating the search direction at every iteration to
\begin{align*}
    v_{k+1} = - \ggrad f(x_{k+1}) + \beta_k \mathcal{T}_{(x_{k},\alpha_k v_k)}(v_{k}),
\end{align*}
with the vector transport $\mc{T}$ defined by \eqref{eq:mcT-via-R} and parameter $\beta_k$ yet to be chosen.

Some of the common choices for the parameter $\beta_{k}$ for existing Riemannian conjugate gradient methods include (see \cite{Sakai2022})
\begin{subequations}\label{eq:beta-CG}\allowdisplaybreaks
    \begin{align}
        \beta_{k+1}^{FR} 
        &= \frac{\Vert \ggrad f(x_{k+1}) \Vert_{x_{k+1}}^2}{\Vert \ggrad f(x_k) \Vert_{x_k}^2}, &
        &(\text{Fletcher-Reeves})
        \\
        \beta_{k+1}^{PR} 
        &= \frac{\langle \ggrad f(x_{k+1}), y_k \rangle_{x_{k+1}}}{\Vert \ggrad f(x_k) \Vert_{x_k}^2}, &
        &(\text{Polak-Ribi\`{e}re})
        \\ &\qquad
        y_k = \ggrad f(x_{k+1}) - \mathcal{T}_{(x_{k},\alpha_k v_k)}^{g}(f), &
        &(\text{$\mc{T}^{g}$ by \eqref{eq:Tg-expressions}})
        \\\label{eq:beta-Dai-Yuan}
        \beta_{k+1}^{DY} 
        &= \frac{\Vert \ggrad f(x_{k+1}) \Vert_{x_{k+1}}^2}{\langle \ggrad f(x_{k+1}), \mathcal{T}_{(x_{k},\alpha_k v_k)}(v_k) \rangle_{x_{k+1}}- \langle \ggrad f(x_k), v_k \rangle_{x_k}}, &
        &(\text{Dai-Yuan})
        \\
        \beta_{k+1}^{HS} 
        &= \frac{\langle \ggrad f(x_{k+1}), y_k \rangle_{x_{k+1}}}{\langle \ggrad f(x_{k+1}), \mathcal{T}_{(x_{k},\alpha_k v_k)}(v_k) \rangle_{x_{k+1}}- \langle \ggrad f(x_k), v_k \rangle_{x_k}}, &
        &(\text{Hestenes-Stiefel})
        \\ \label{eq:beta-Hager-Zhang}
        \beta_{k+1}^{HZ} 
        &= \beta_{k+1}^{HS} - \mu \frac{\Vert y_k \Vert^2_{x_{k+1}} \langle \ggrad f(x_{k+1}), \mathcal{T}_{(x_{k},\alpha_k v_k)} (v_k) \rangle_{x_{k+1}}}{(\langle \ggrad f(x_{k+1}), \mathcal{T}_{(x_{k},\alpha_k v_k)} (v_k) \rangle_{x_{k+1}} - \langle \ggrad f(x_k), v_k \rangle_{x_k} )^2}, &
        &(\text{Hager-Zhang})
        \\ \label{eq:beta-oviedo}
        \beta_k^{OV} 
        &= \mu_k \;\frac{\langle \ggrad f(x_{k+1}), \mathcal{T}_{(x_{k},\alpha_k v_k)}(v_k) \rangle_{x_{k+1}}}{- \lVert v_k \rVert^2_{x_k}}, &
        &\text{\cite{Oviedo:2022}}
    \end{align}
\end{subequations}
where $\mu>0$ in \eqref{eq:beta-Hager-Zhang} and $\{\mu_k\}$ in \eqref{eq:beta-oviedo} is a bounded positive sequence such that $0 < \mu_k < \infty,\;\forall k$.

\vspace{0.5cm}
Algorithm \ref{alg:CG} summarizes the Riemannian CG iteration.

\vspace{0.5cm}

\begin{algorithm}[H]\label{alg:CG}
\SetAlgoLined
\DontPrintSemicolon

\KwData{initial point $x_0$, objective function $f$, initial step size $\alpha_0$, $\sigma \in (0,1)$, $\beta \in (0,1)$.}
\KwResult{sequence of $(x_k)_{k\in\N}$ converging to a minimum of $f$.}

Set the initial search direction to $v_0 = - \ggrad f(x_0)$.\;
\For{$k = 0, 1,2,\dots$}{ 
$\ol{\alpha} = \alpha_k$\;
Find a step size $\alpha_k = \ol{\alpha} \beta^m$, where $m$ is the smallest integer such that (with $R_{x_{k}}$ by \eqref{eq:cond-retraction-a})
\[
f\big(R_{x_k}(\alpha_k v_k)\big) - f(x_k) \leq \sigma \alpha_k \langle \ggrad f(x_k), v_k \rangle_{x_k}.\;
\]

Update the iterate to $x_{k+1} = R_{x_k}(\alpha_k v_k)$.\;

Determine $\beta_{k+1}$ by evaluating one of the expressions \eqref{eq:beta-CG}.

Update the search direction to 
\[
v_{k+1} = - \ggrad f(x_{k+1}) + \beta_k \mathcal{T}_{(x_{k},\alpha_k v_{k})}(v_k).\;
\]
$k = k+1$
}
\caption{Riemannian Conjugate Gradient \cite{Sakai2022}}
\end{algorithm}

\begin{remark}[\textbf{convergence conditions for Algorithm \ref{alg:CG}}]\label{rem:conditions-convergence-CG}
Conditions \cite[Ass. 3.1, 3.2]{Sakai2022} that imply convergence can be extended to a retraction. For instance, condition \cite[Ass. 3.2]{Sakai2022} leads to \eqref{cond-geom-L-smooth}, which unfortunately does not hold in general for our manifolds. While we consistently observe convergence in all our numerical experiments, we defer the detailed analysis for future investigation.
\end{remark}

Table \ref{tab:list-of-algorithms} summaries the
algorithms explored in this paper.

\begin{table}[htbp]
\begin{center}
 \begin{tabular}{||c | c  | c | c ||} 
 \hline
Algorithm & Convergence rate & Convergence of iterates & Boundary\\ [0.5ex] 
 \hline\hline
 \texttt{SMART} & $O(1/k)$ {\small (Thm.~\ref{th:SMART1} (ii))} & towards solution {\small (Thm.~\ref{th:SMART1} (i))}&\checkmark\\ 
 \hline
 \texttt{FSMART} & {\small emp.}~$O(1/k^{\gamma})$ {\footnotesize \cite[Thm. 1]{Hanzely:2021vc}} & open &\checkmark\\
 \hline
 \texttt{FSMART-e} & {\small emp.}~$O(1/k^{\gamma})$ {\footnotesize \cite[Thm. 2]{Hanzely:2021vc}} & open &\checkmark\\
 \hline
\texttt{FSMART-g} & {\small emp.}~$O(1/k^{\gamma})$ {\footnotesize \cite[Thm. 3]{Hanzely:2021vc}} & open&\checkmark\\
 \hline
\texttt{RG-Armijo} & open & conv.~subsequence {\small (Thm.~\ref{thm:ALS-convergence})} & \xmark\\ 
\hline
\texttt{RG-HZ} & open & open & \xmark \\
\hline
\texttt{RG-BB}  & open & conv.~subsequence {\small (Thm.~\ref{thm:BB-convergence})} & \xmark\\
\hline
\texttt{CG} & open &  open& \xmark \\[1ex] 
 \hline
\end{tabular}
\end{center}
\caption{
\textbf{Overview. Algorithms and their properties:} convergence rate, convergence of iterates (vs.~function values), boundary behavior. The first four algorithms are well-defined on the boundary of the feasible sets whereas the latter four are not. The FSMART algorithms exhibit empirically acceleration with a convergence rate $\mc{O}(1/k^{\gamma})$, $\gamma \in (1,2)$. For FSMART-e (ABPG-e \cite[Alg. 2]{Hanzely:2021vc} adapted to our scenario) and FSMART-g (ABPG-g \cite[Alg. 3]{Hanzely:2021vc} adapted to our scenario) only this can be checked a posteriori. In summary, SMART is a well understood method that achieves optimal convergence rate in terms of function values and converges
to the minimum Bregman distance solution from any starting point. The table lists a range of algorithms which emerged from SMART. They show empirically a faster convergence rate and stimulate research on various open points relevant to non-Lipschitz scenarios of optimization.
}
\label{tab:list-of-algorithms}
\end{table}

\section{Experiments}
\label{sec:Experiments}
\newcommand{\showExperimentToy}[1]{
\begin{centering}
		\begin{tabular}{c@{\hskip 2.4em}c}
  			\footnotesize \texttt{Trajectory of Iterates} & \footnotesize \texttt{Objective vs. Iterations} 
            \\[-0.6em]
            
		    \includegraphics[valign=T, width=#1\textwidth]{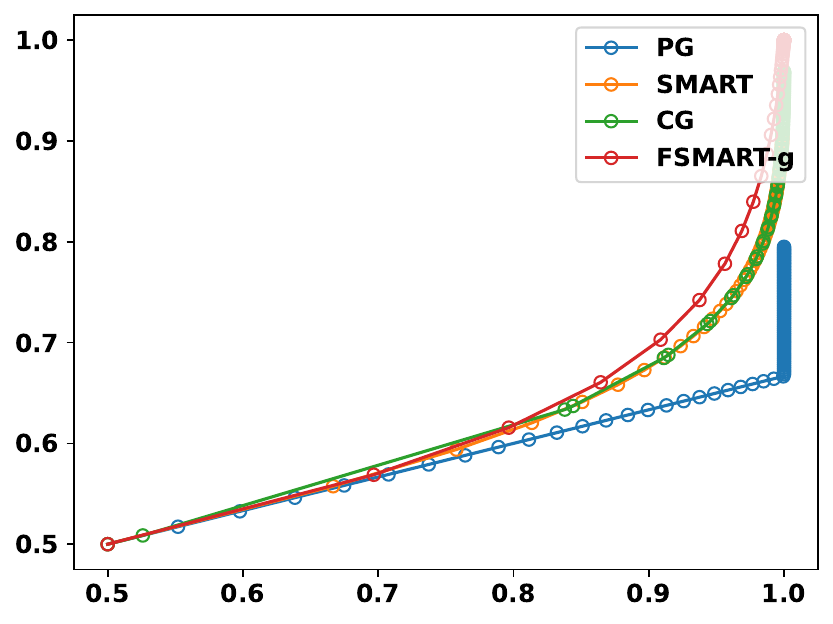}
		    &
            \includegraphics[valign=T, width=#1\textwidth]{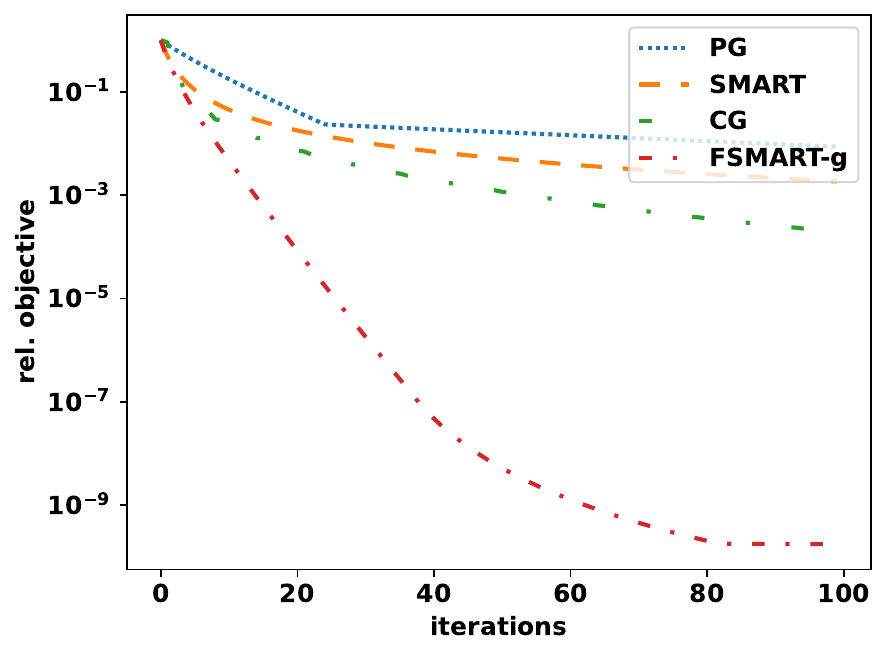}
			
		\end{tabular}
	\end{centering}
}

\newcommand{\showExperimentExpanderCG}[1]{
\begin{centering}
		\begin{tabular}{c@{\hskip 0.4em}c@{\hskip 0.4em}c}
			$\mathbf{m \times n = 40 \times 200}$ & $\mathbf{m \times n = 70 \times 200}$ & $\mathbf{m \times n = 100 \times 200}$
            \\[0.2em]

		    \includegraphics[width=#1\textwidth]{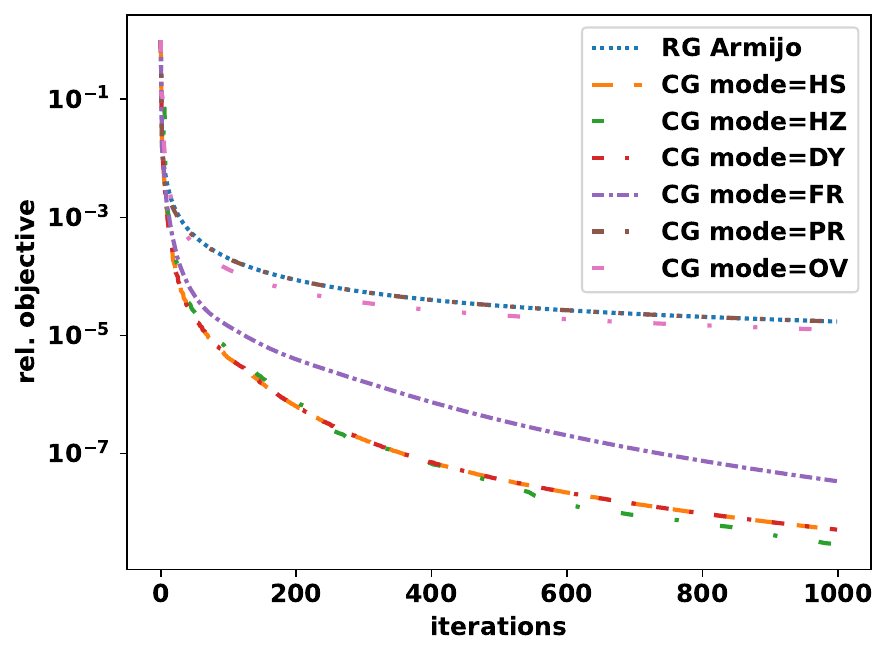}
		    &
		    \includegraphics[width=#1\textwidth]{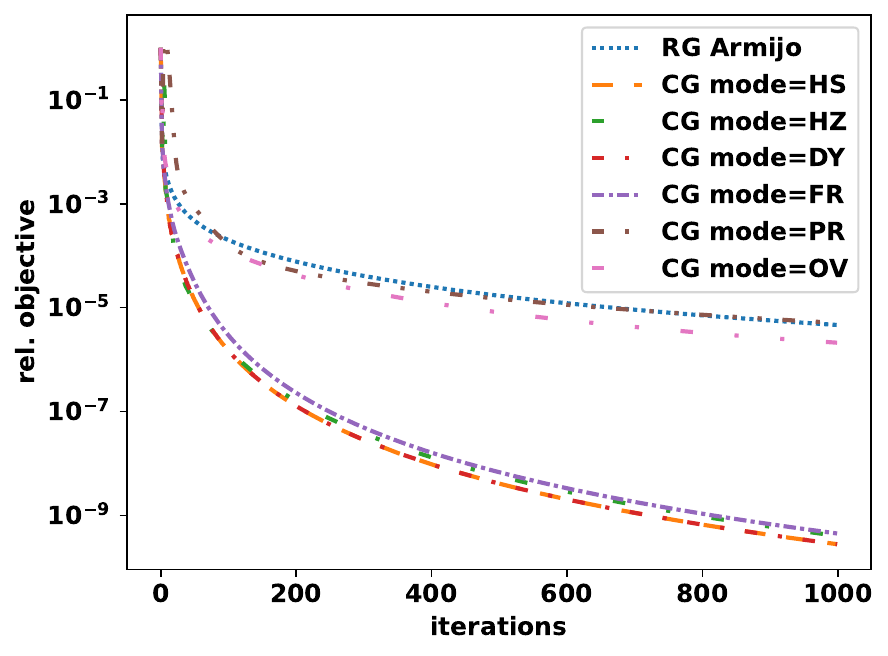}
            &
		    \includegraphics[width=#1\textwidth]{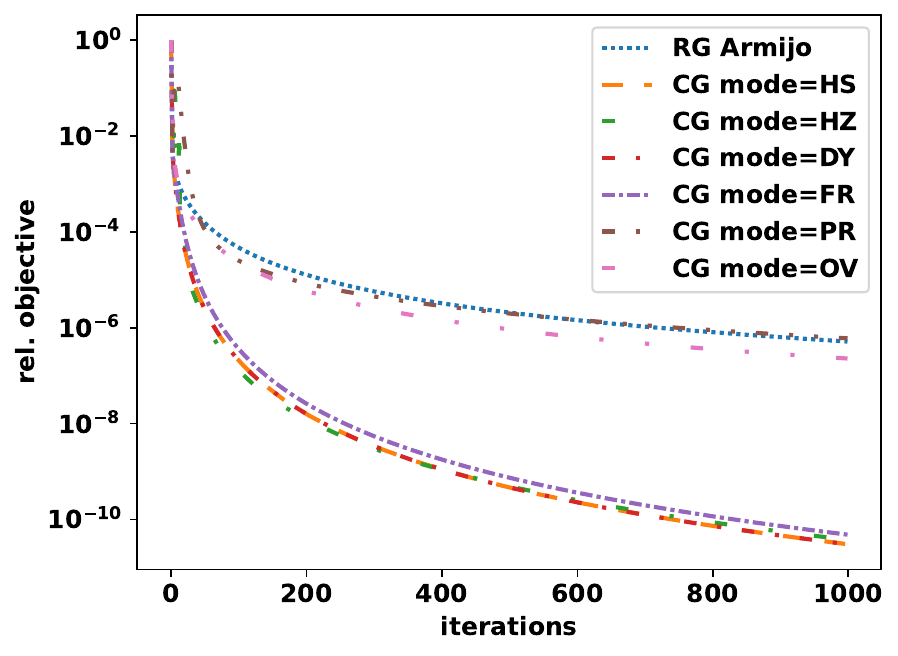}
            \\[0.2em]

		    \includegraphics[width=#1\textwidth]{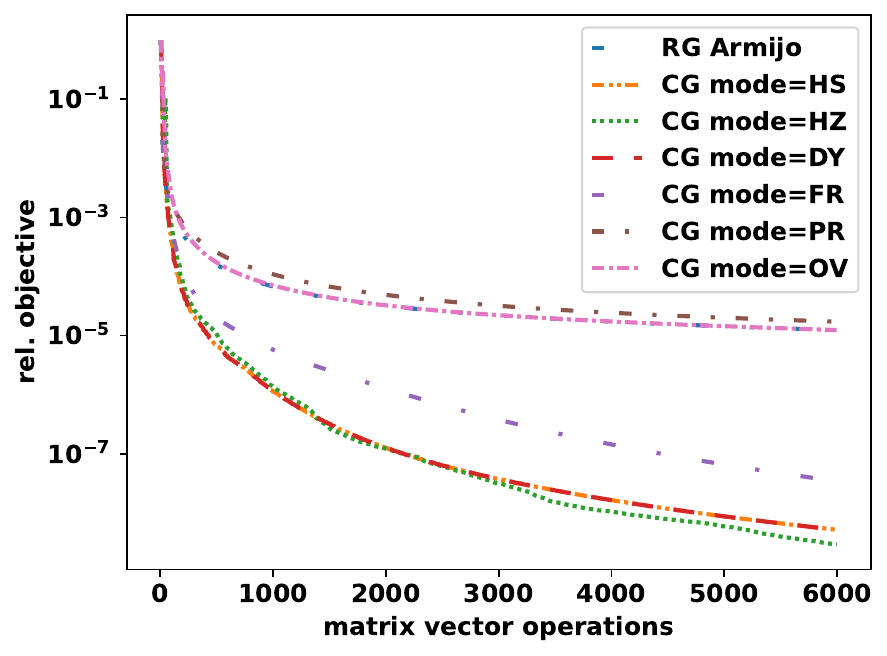}
		    &
		    \includegraphics[width=#1\textwidth]{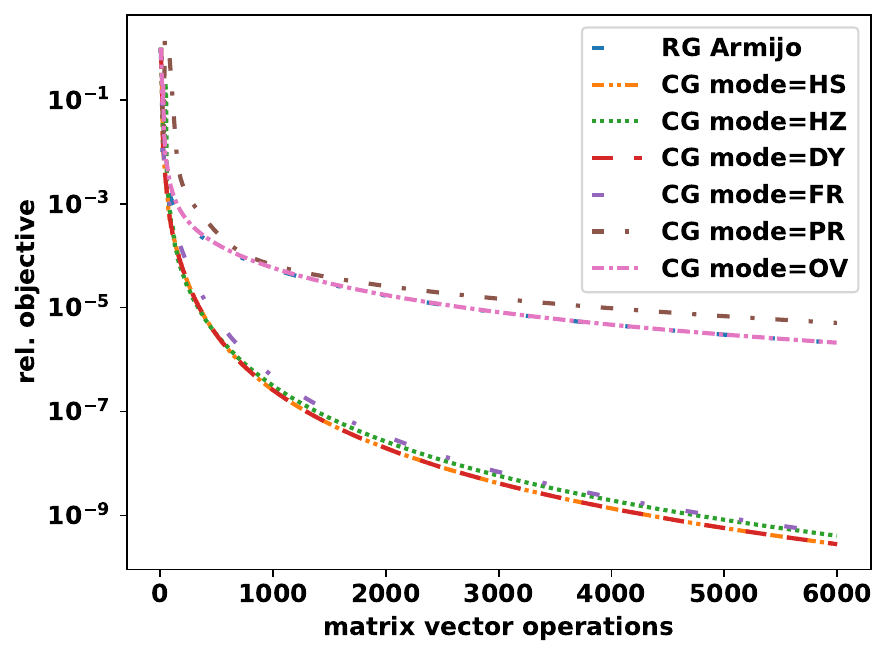}
            &
		    \includegraphics[width=#1\textwidth]{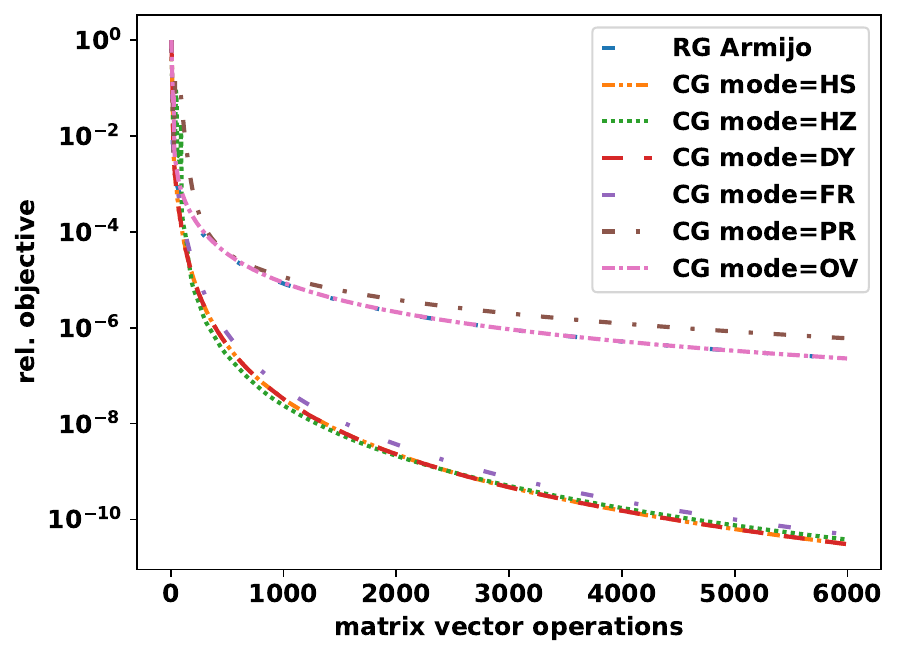}

		\end{tabular}
	\end{centering}
}

\newcommand{\showExperimentExpander}[1]{
\begin{centering}
		\begin{tabular}{c@{\hskip 0.4em}c@{\hskip 0.4em}c}
			$\mathbf{m \times n = 40 \times 200}$ & $\mathbf{m \times n = 70 \times 200}$ & $\mathbf{m \times n = 100 \times 200}$
            \\[0.2em]

		    \includegraphics[width=#1\textwidth]{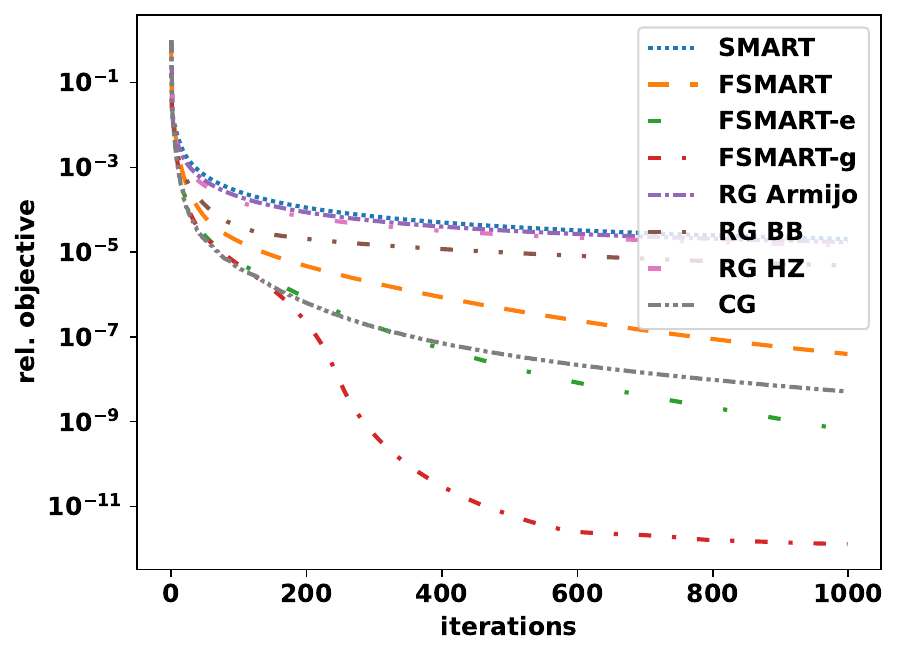}
		    &
		    \includegraphics[width=#1\textwidth]{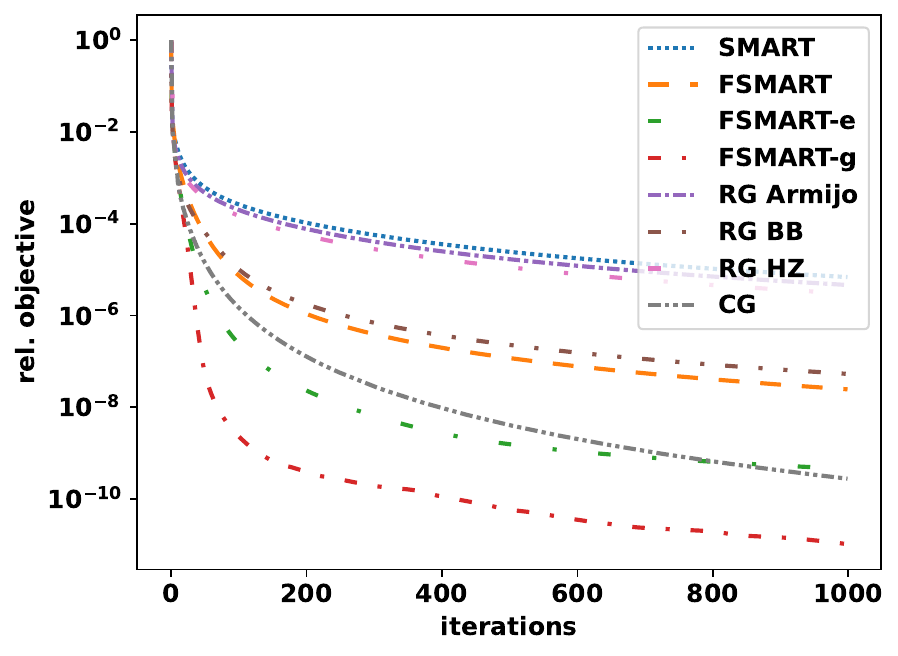}
            &
		    \includegraphics[width=#1\textwidth]{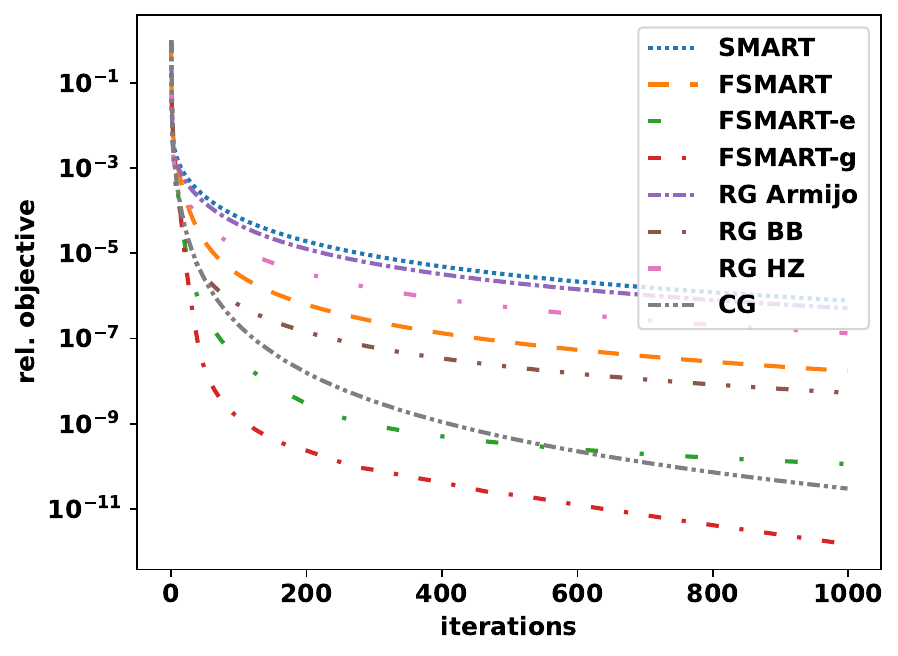}
            \\[0.2em]

		    \includegraphics[width=#1\textwidth]{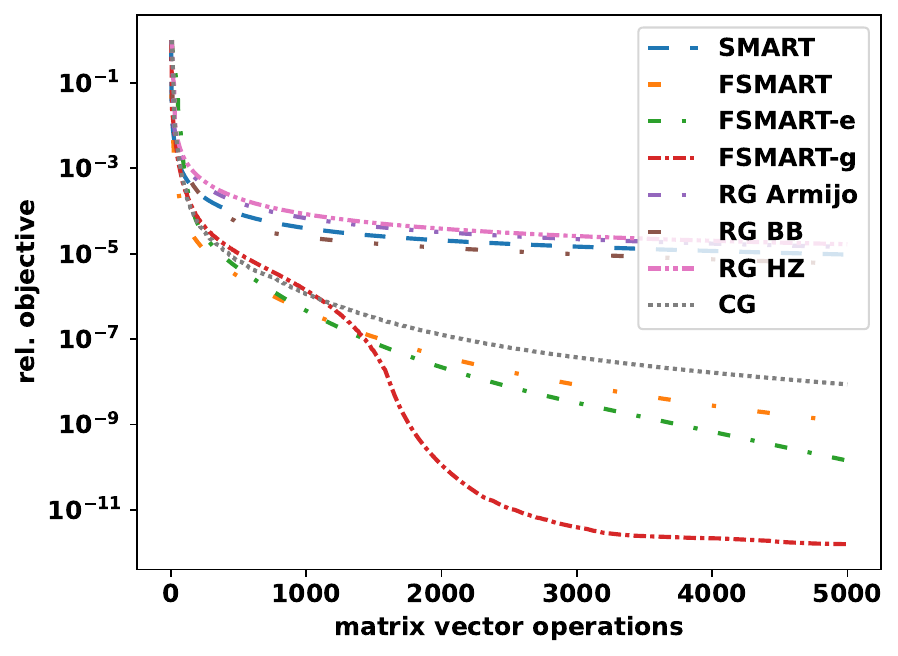}
		    &
		    \includegraphics[width=#1\textwidth]{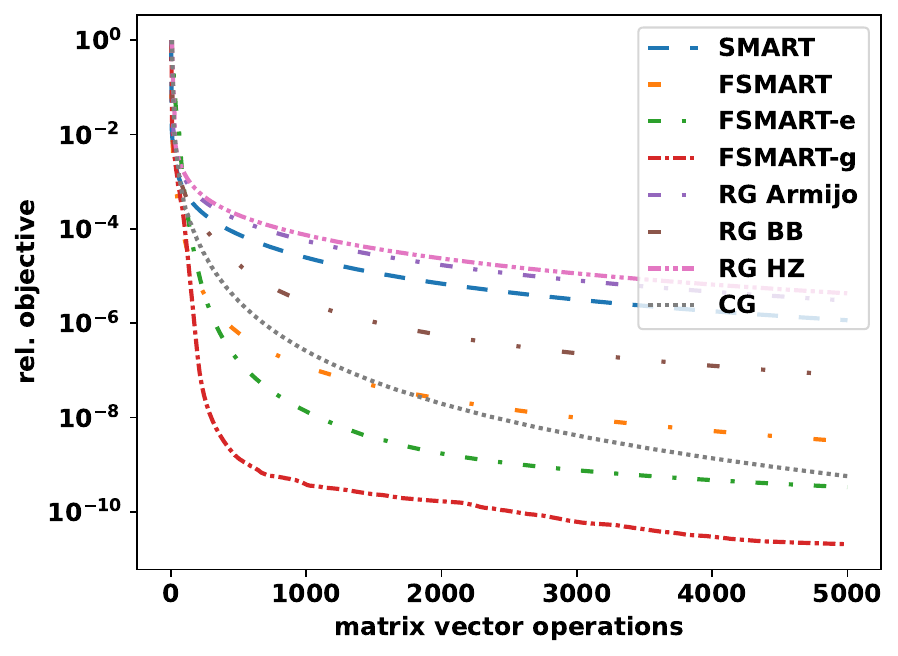}
            &
		    \includegraphics[width=#1\textwidth]{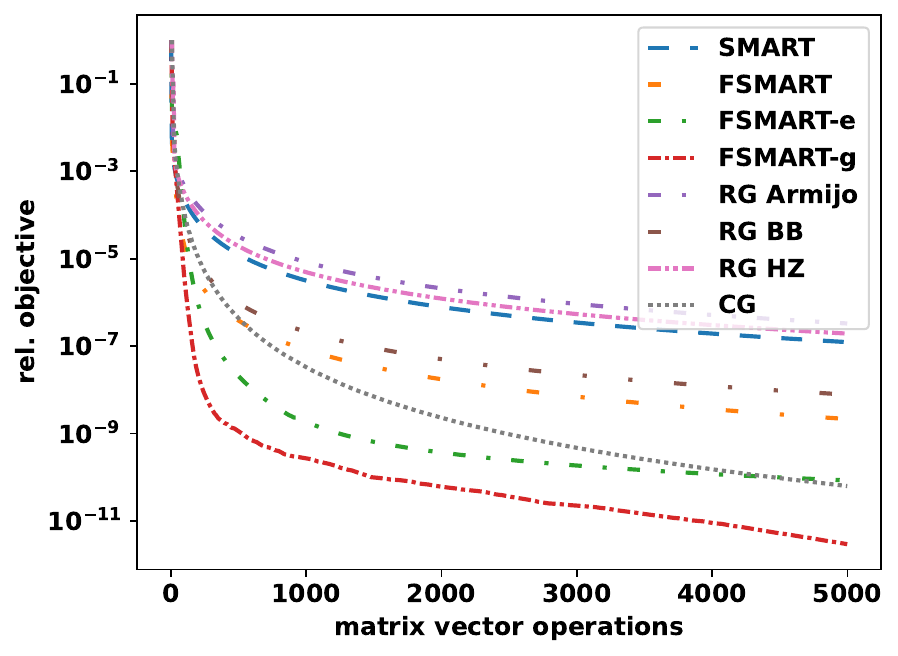}
		\end{tabular}
	\end{centering}
}

\newcommand{\showExperimentTomo}[1]{
\begin{centering}
		\begin{tabular}{c@{\hskip 0.4em}c@{\hskip 0.4em}c}
			\footnotesize \texttt{Shepp-Logan} & \footnotesize \texttt{Bone}  & \footnotesize \texttt{Vessel} 
            \\[0.2em]

		    \includegraphics[width=#1\textwidth]{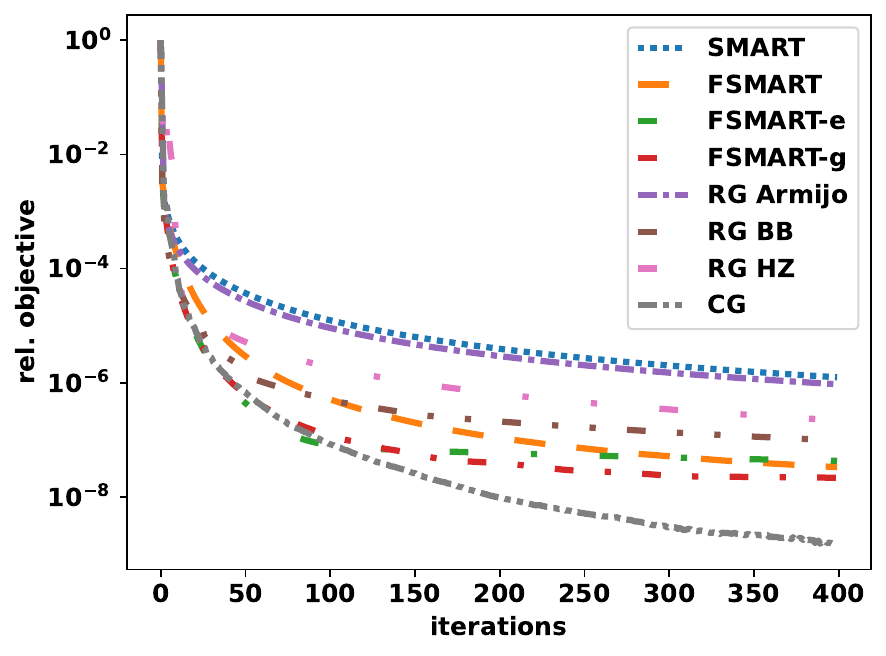}
            &
		    \includegraphics[width=#1\textwidth]{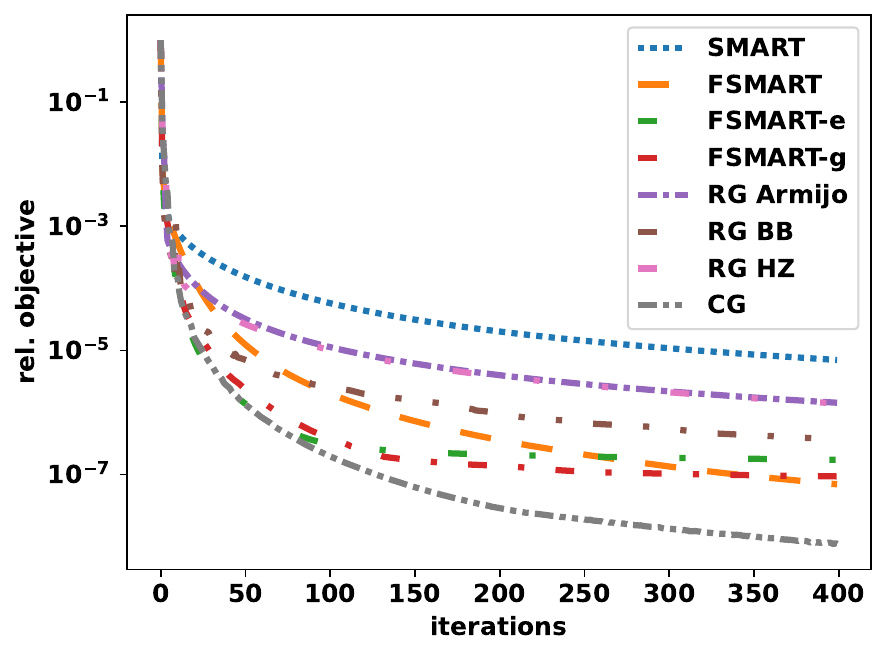}
		    &
		    \includegraphics[width=#1\textwidth]{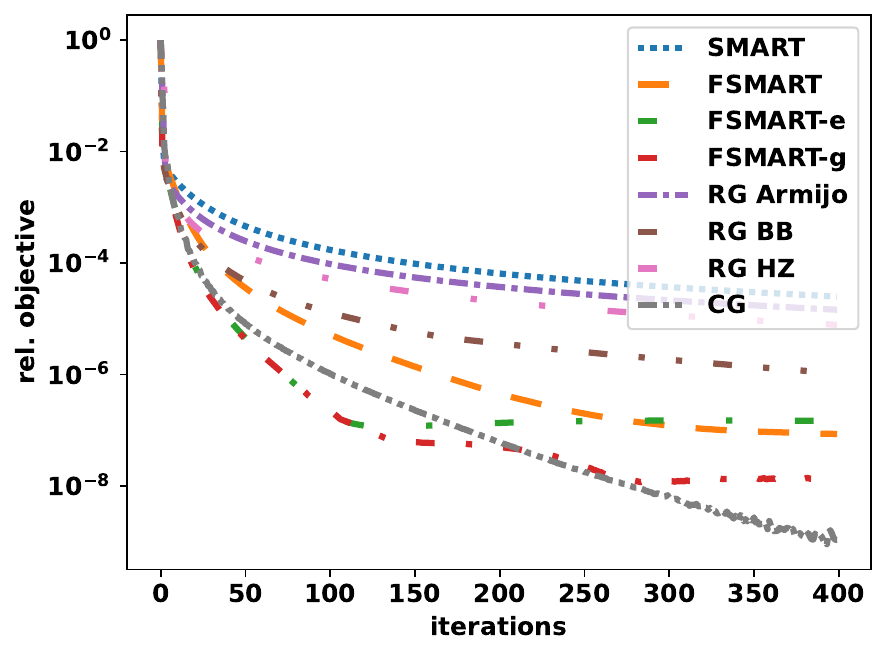}
            \\[0.2em]

		    \includegraphics[width=#1\textwidth]{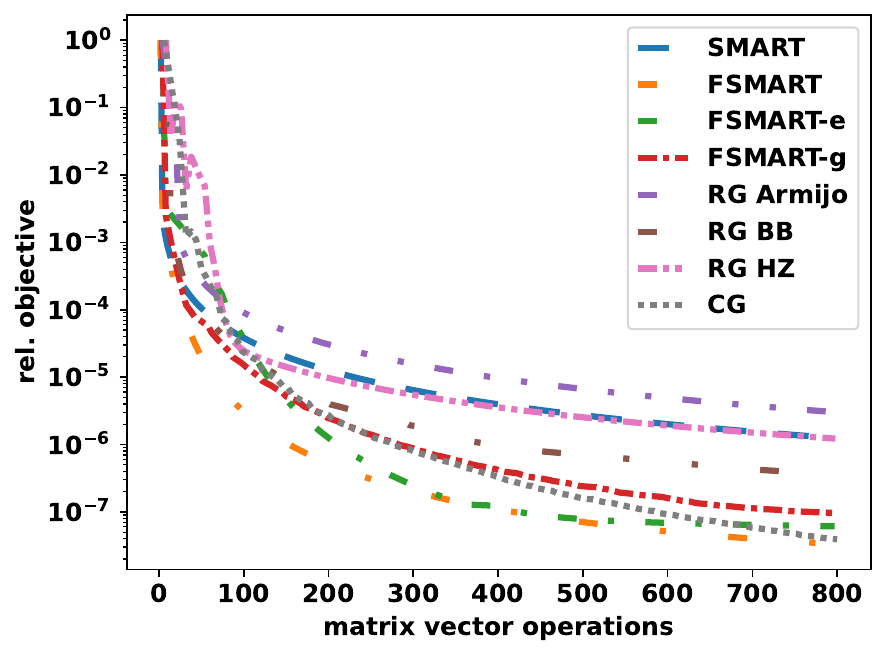}
            &
		    \includegraphics[width=#1\textwidth]{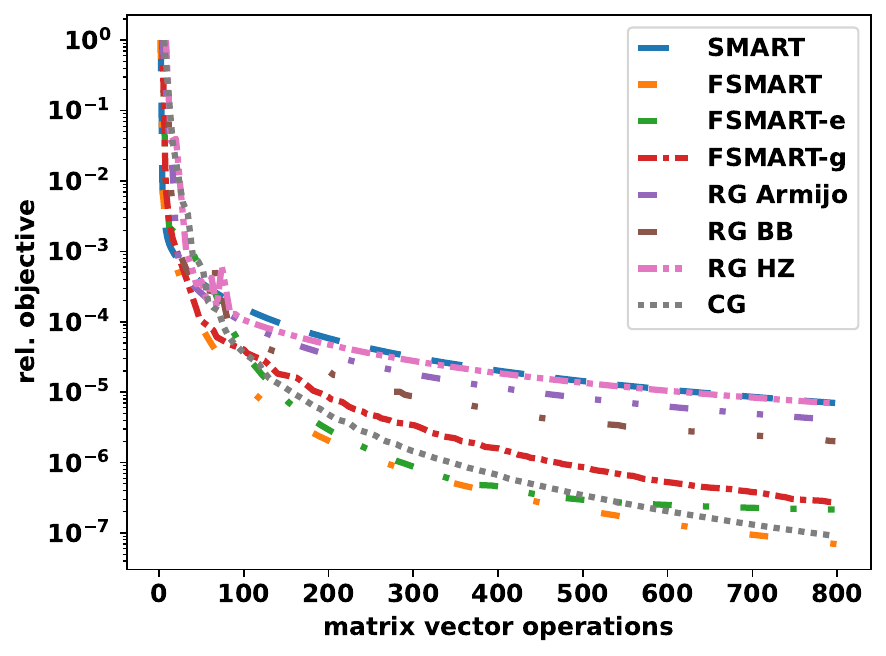}
		    &
		    \includegraphics[width=#1\textwidth]{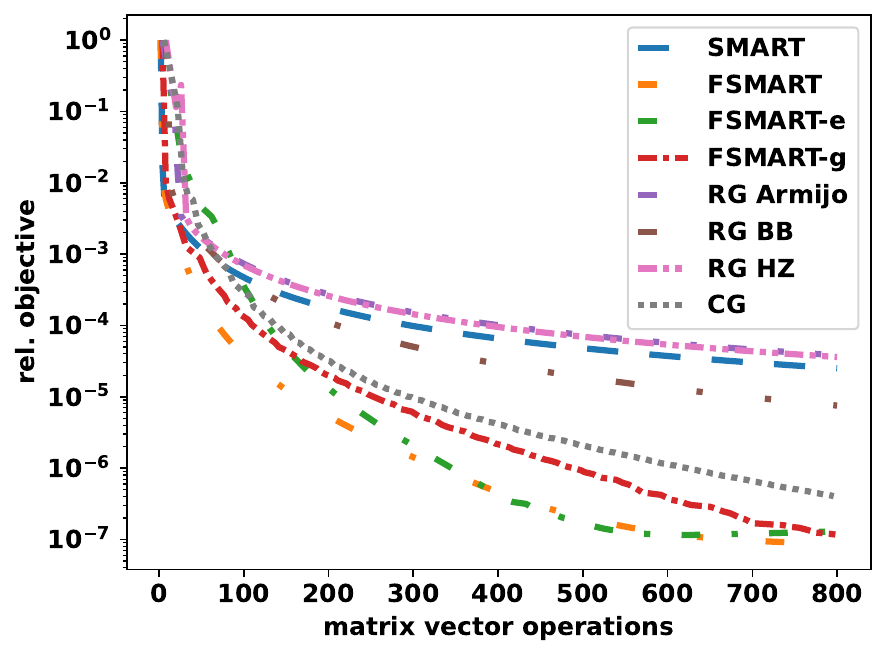}
		\end{tabular}
	\end{centering}
}

\newcommand{\showExperimentBlur}[1]{
\begin{centering}
		\begin{tabular}{c@{\hskip 0.4em}c@{\hskip 0.4em}c}
			\footnotesize \texttt{Kitten} & \footnotesize \texttt{Tiger}  & \footnotesize \texttt{QR-Code} 
            \\[0.2em]

		    \includegraphics[width=#1\textwidth]{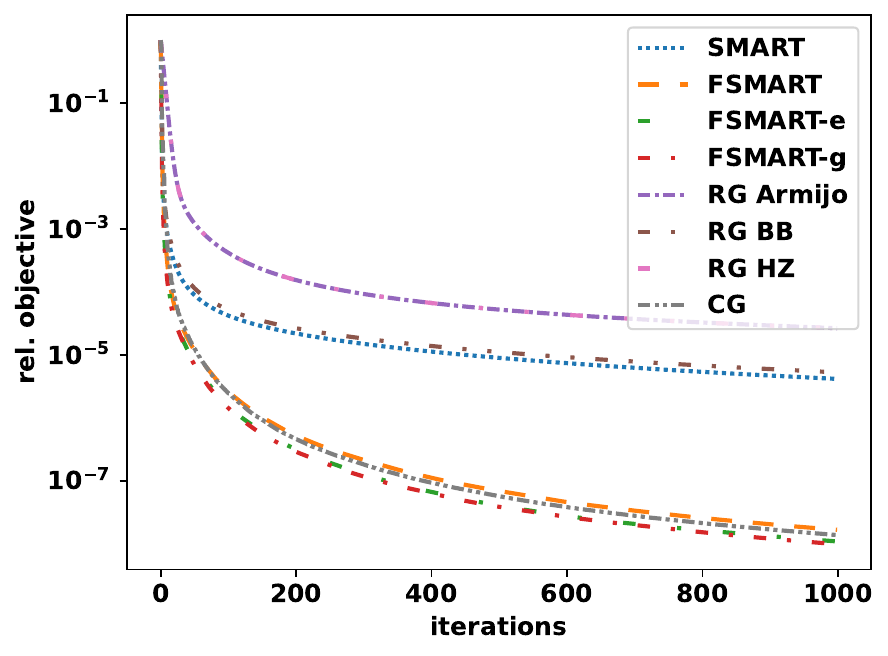}
            &
		    \includegraphics[width=#1\textwidth]{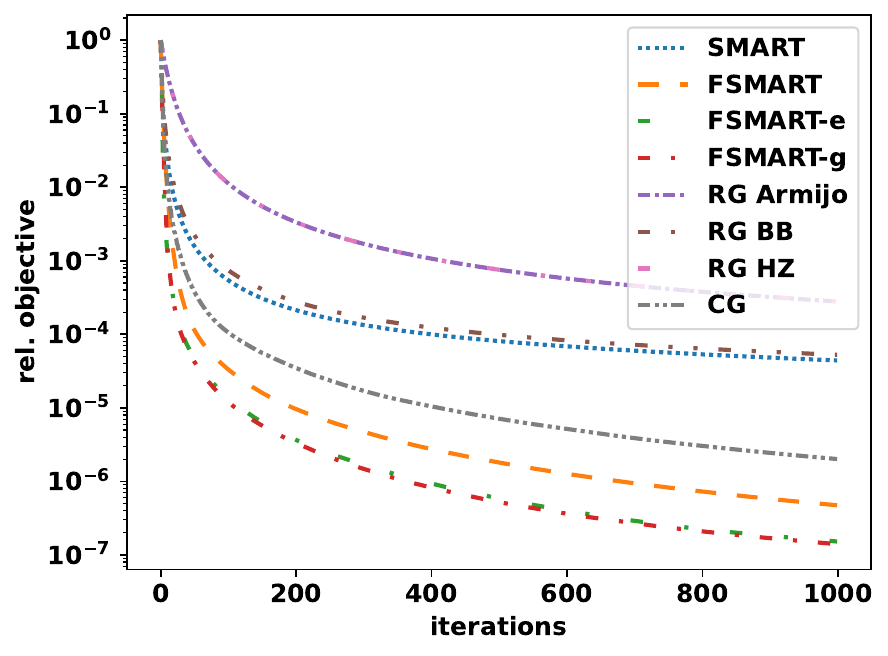}
		    &
		    \includegraphics[width=#1\textwidth]{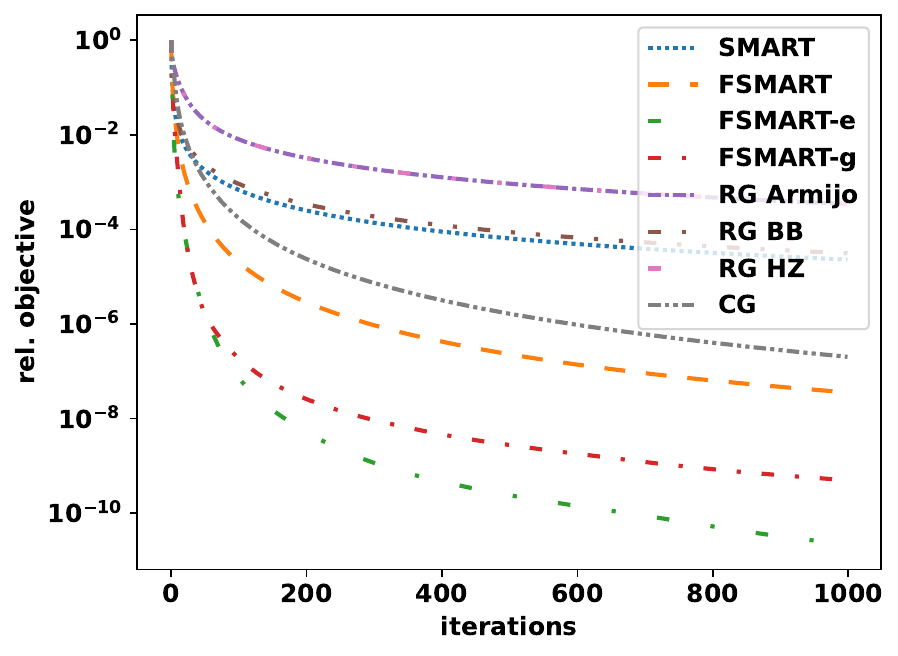}
            \\[0.2em]

		    \includegraphics[width=#1\textwidth]{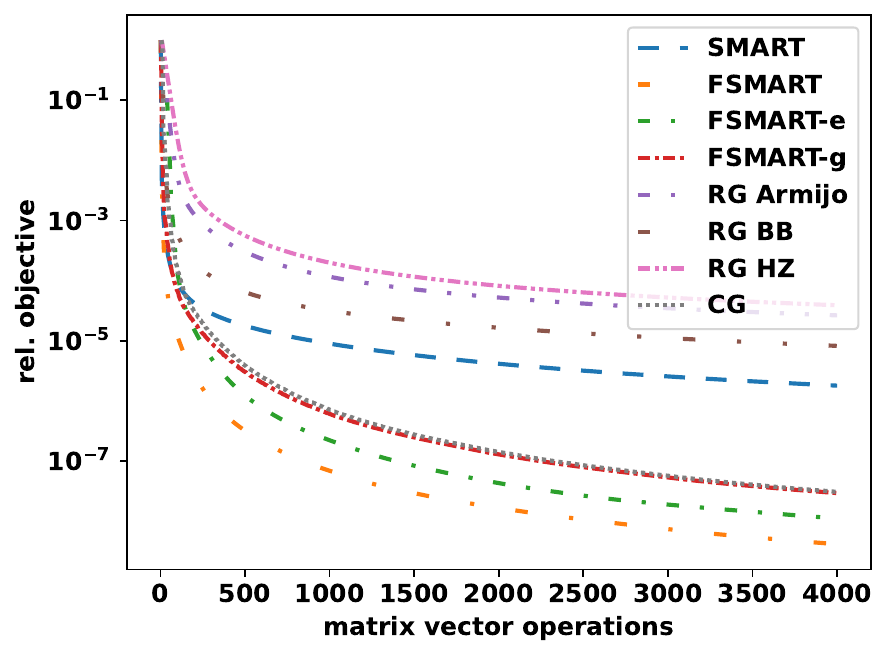}
            &
		    \includegraphics[width=#1\textwidth]{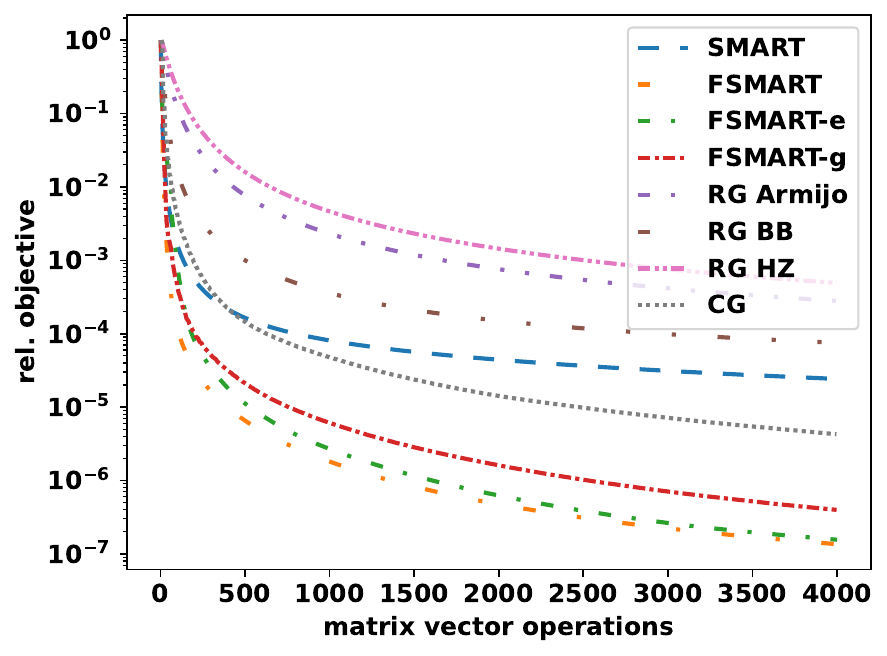}
		    &
		    \includegraphics[width=#1\textwidth]{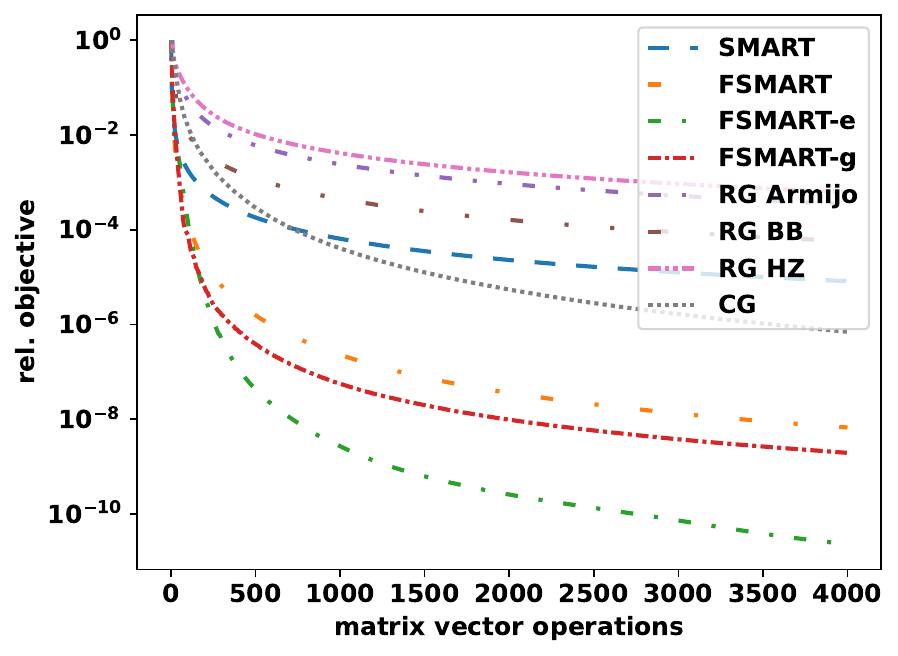}
		\end{tabular}
	\end{centering}
}

\newcommand{\showExperimentExpanderSpikes}[1]{
\begin{centering}
		\begin{tabular}{c@{\hskip 0.4em}c@{\hskip 0.4em}c}
			\footnotesize \texttt{Original} & \footnotesize \texttt{SMART}  & \footnotesize \texttt{FSMART} 
            \\[0.2em]

		    \includegraphics[width=#1\textwidth]{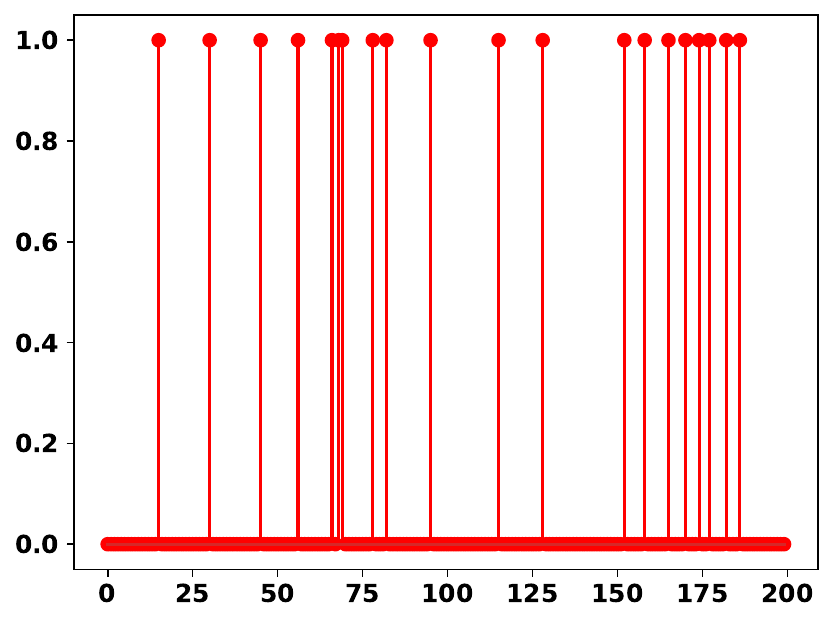}
		    &
		    \includegraphics[width=#1\textwidth]{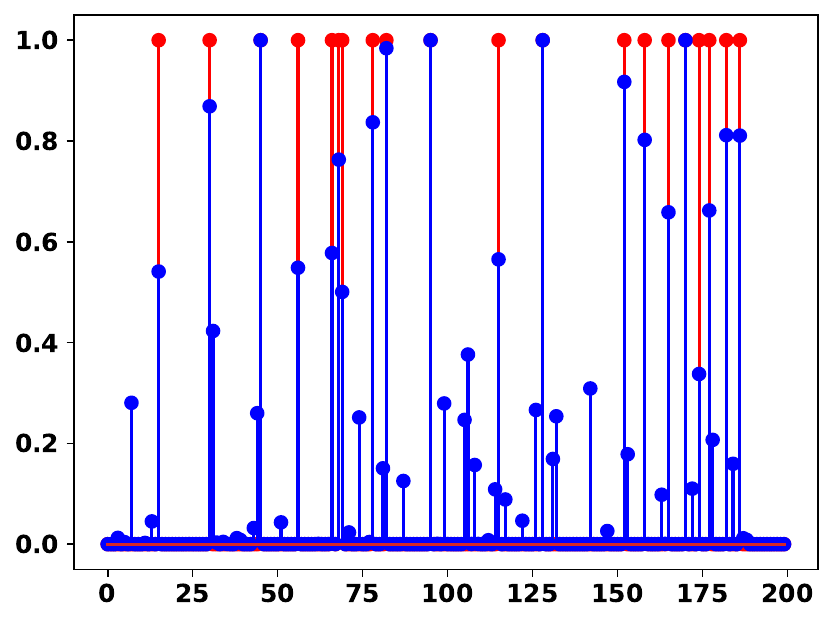}
            &
		    \includegraphics[width=#1\textwidth]{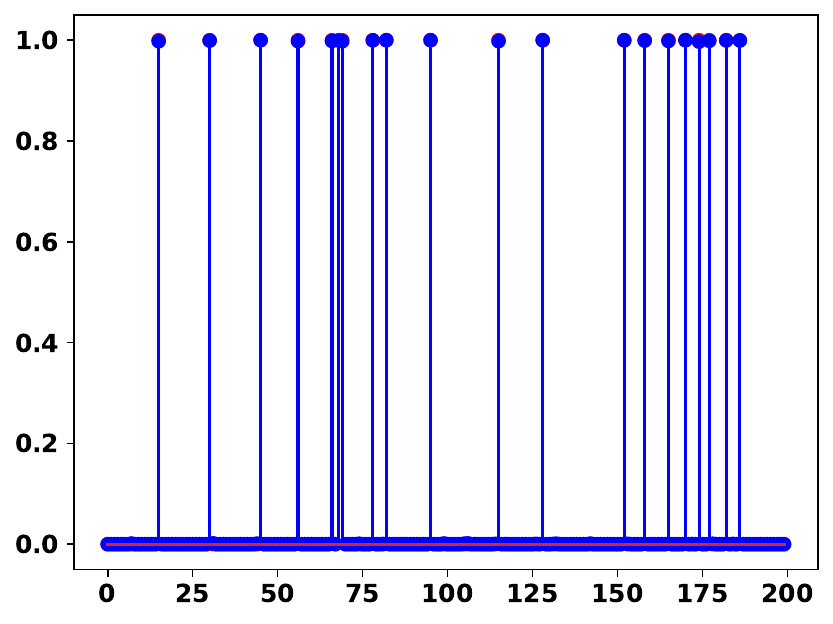}
            \\[0.6em]

			\footnotesize \texttt{FSMART-e} & \footnotesize \texttt{FSMART-g}  & \footnotesize \texttt{RG Armijo} 
            \\[0.2em]
            
		    \includegraphics[width=#1\textwidth]{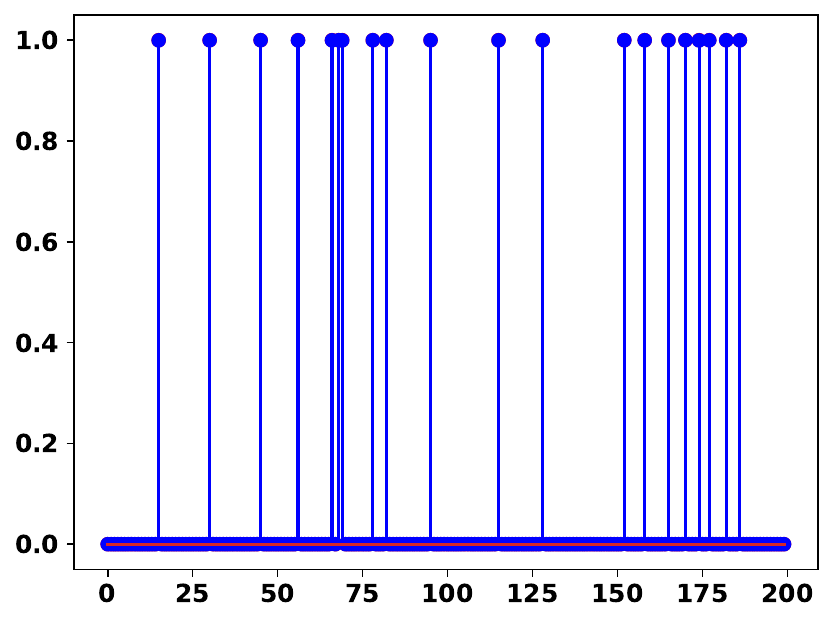}
		    &
		    \includegraphics[width=#1\textwidth]{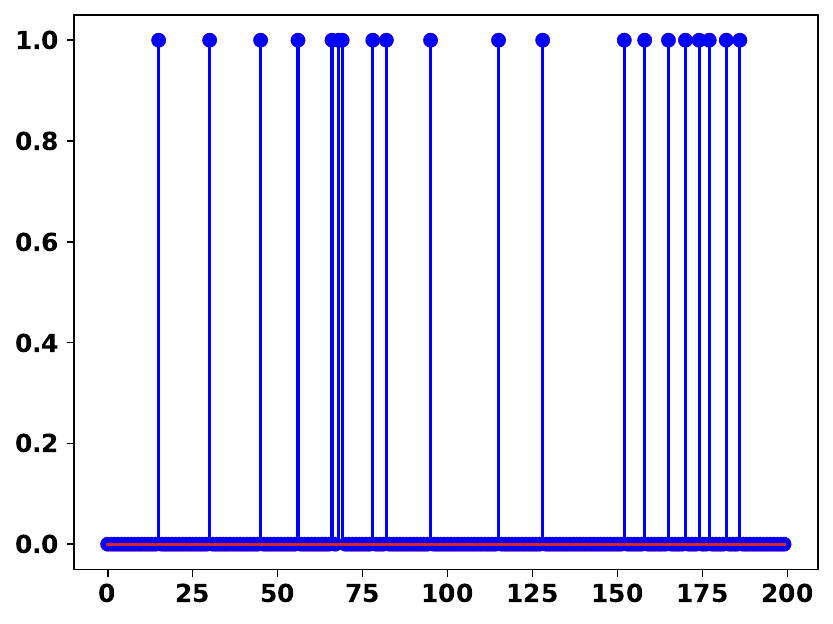}
            &
		    \includegraphics[width=#1\textwidth]{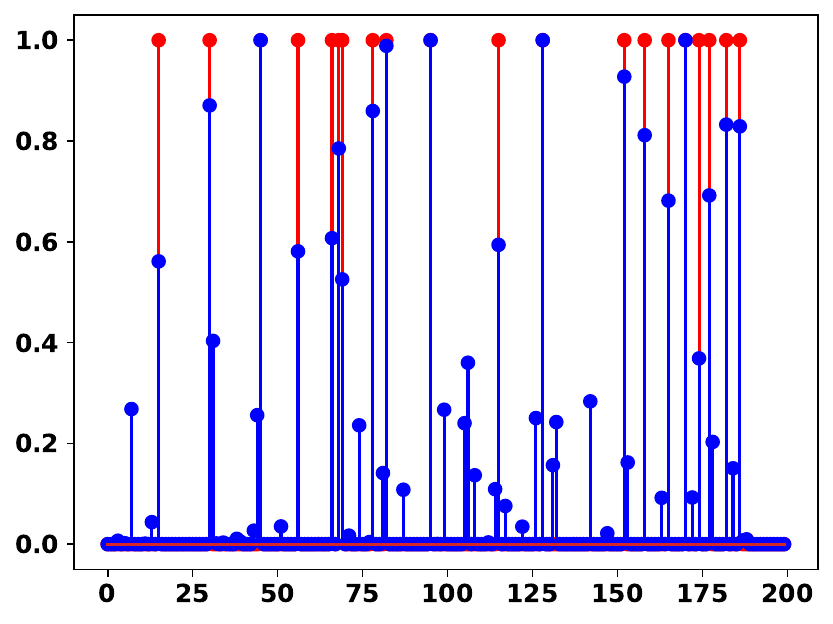}
            \\[0.6em]

			\footnotesize \texttt{RG BB} & \footnotesize \texttt{RG HZ}  & \footnotesize \texttt{CG} 
            \\[0.2em]
            
		    \includegraphics[width=#1\textwidth]{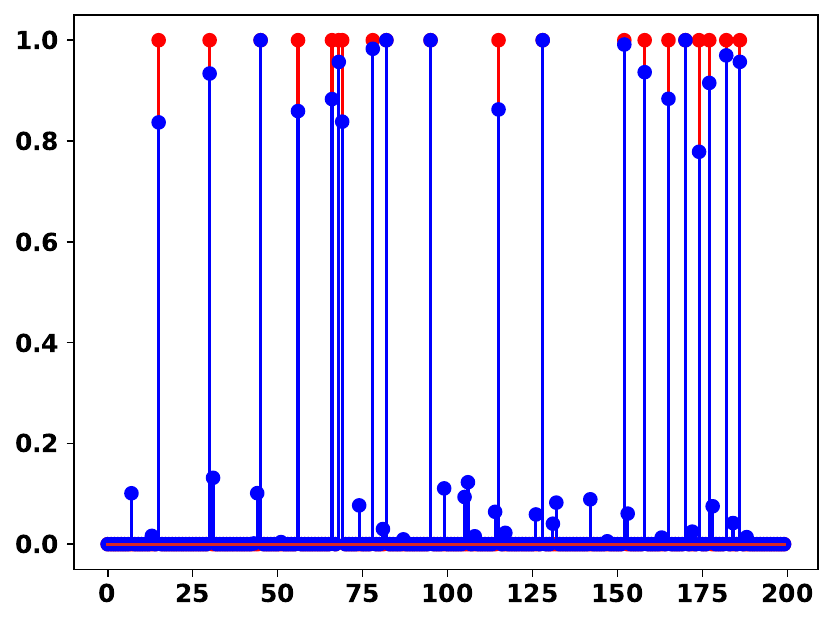}
		    &
		    \includegraphics[width=#1\textwidth]{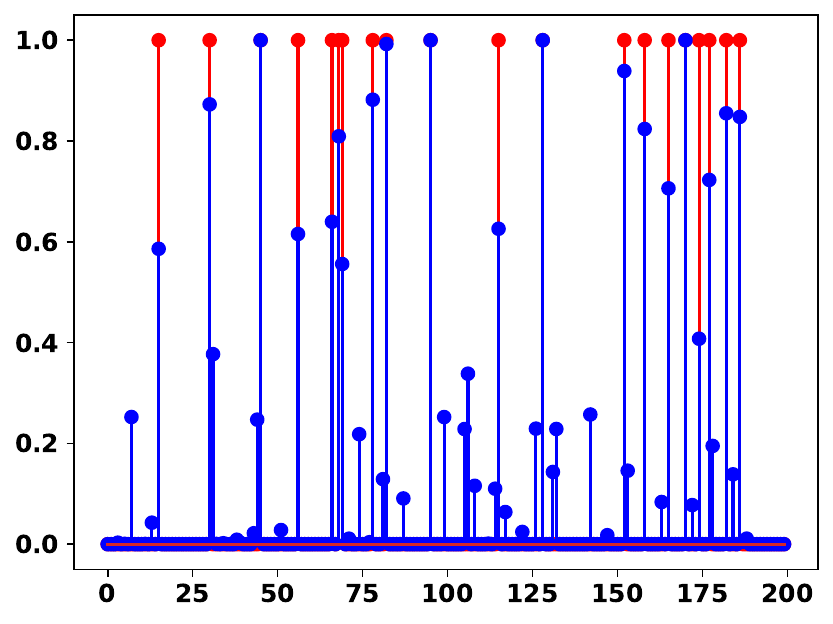}
            &
		    \includegraphics[width=#1\textwidth]{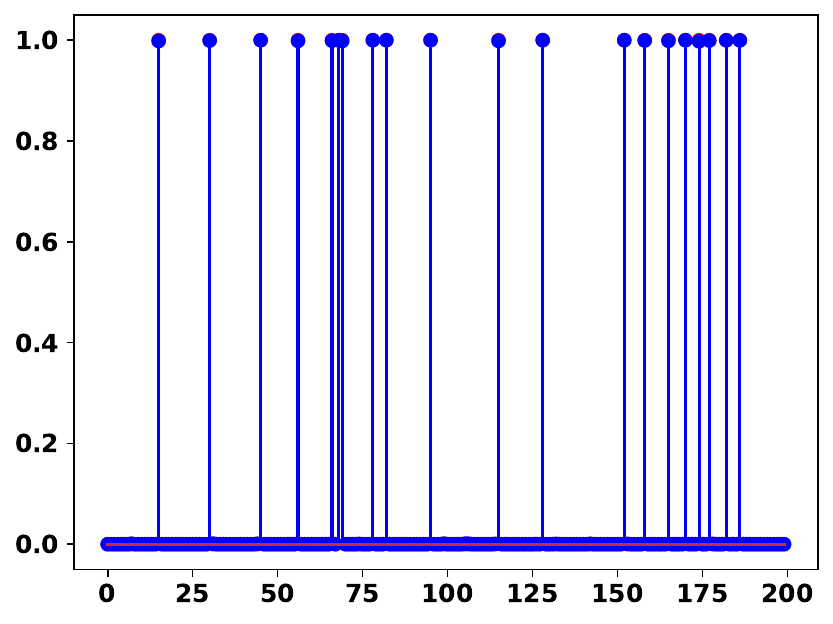}
		\end{tabular}
	\end{centering}
}

\newcommand{\showExperimentAMVOGamma}[2]{
\begin{centering}
		\begin{tabular}{c@{\hskip 2.4em}c}
  			\footnotesize \texttt{A-posteriori certificates} & \footnotesize \texttt{Average Matrix Vector Operations} 
            \\[-0.6em]
            
		    \includegraphics[valign=T, width=#1\textwidth]{Figures/Experiments/Experiment#2_CSI_Gamma.pdf}
		    &
            \includegraphics[valign=T, width=#1\textwidth]{Figures/Experiments/Experiment#2_CSI_averageMatrixVectorOperations.pdf}
			
		\end{tabular}
	\end{centering}
}

\newcommand{\showTomoImages}[1]{
\begin{centering}
		\begin{tabular}{c@{\hskip 0.4em}c@{\hskip 0.6em}c@{\hskip 0.6em}c}
			& \footnotesize \texttt{Shepp-Logan} & \footnotesize \texttt{Bone} & \footnotesize \texttt{Vessel}
            \\[0.2em]

            \rotatebox{90}{\quad \qquad \footnotesize \textbf{Original}}
            &
			\includegraphics[width=#1\textwidth]{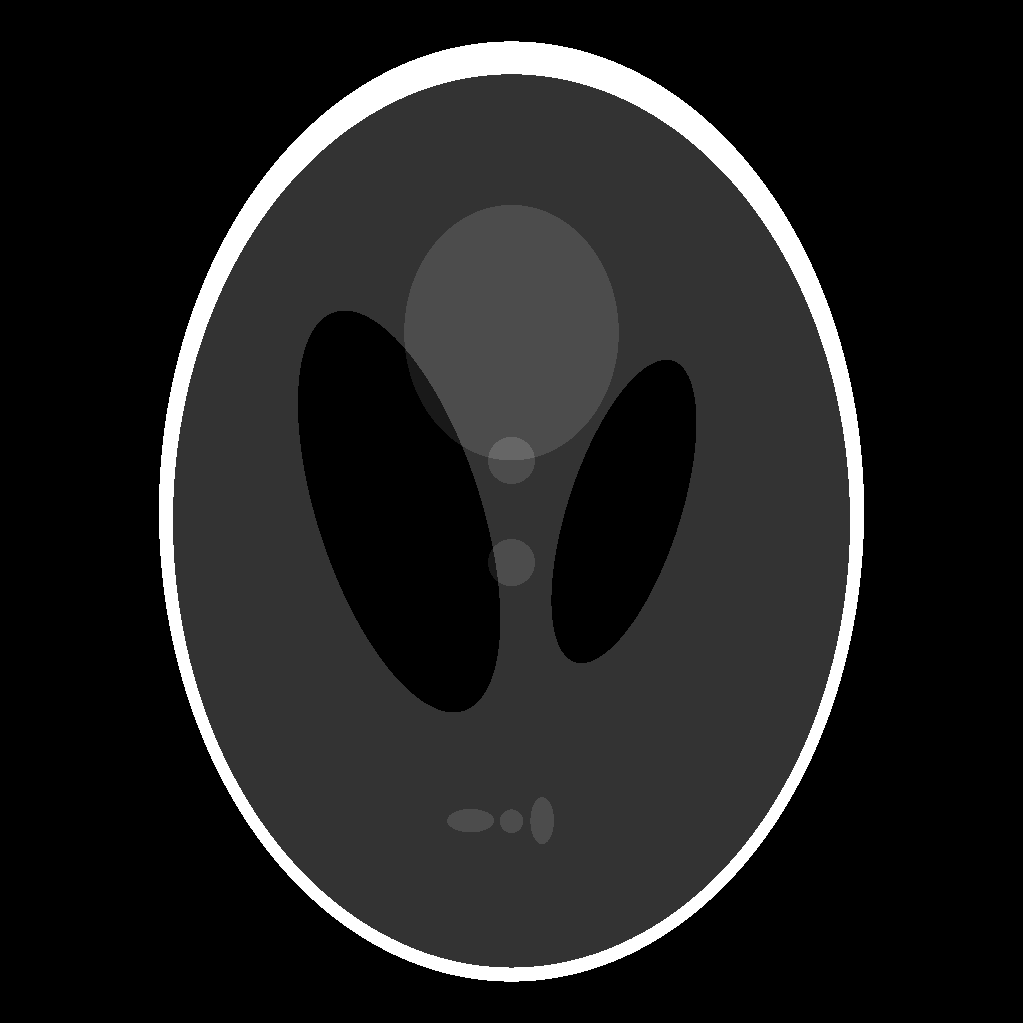}
			&
			\includegraphics[width=#1\textwidth]{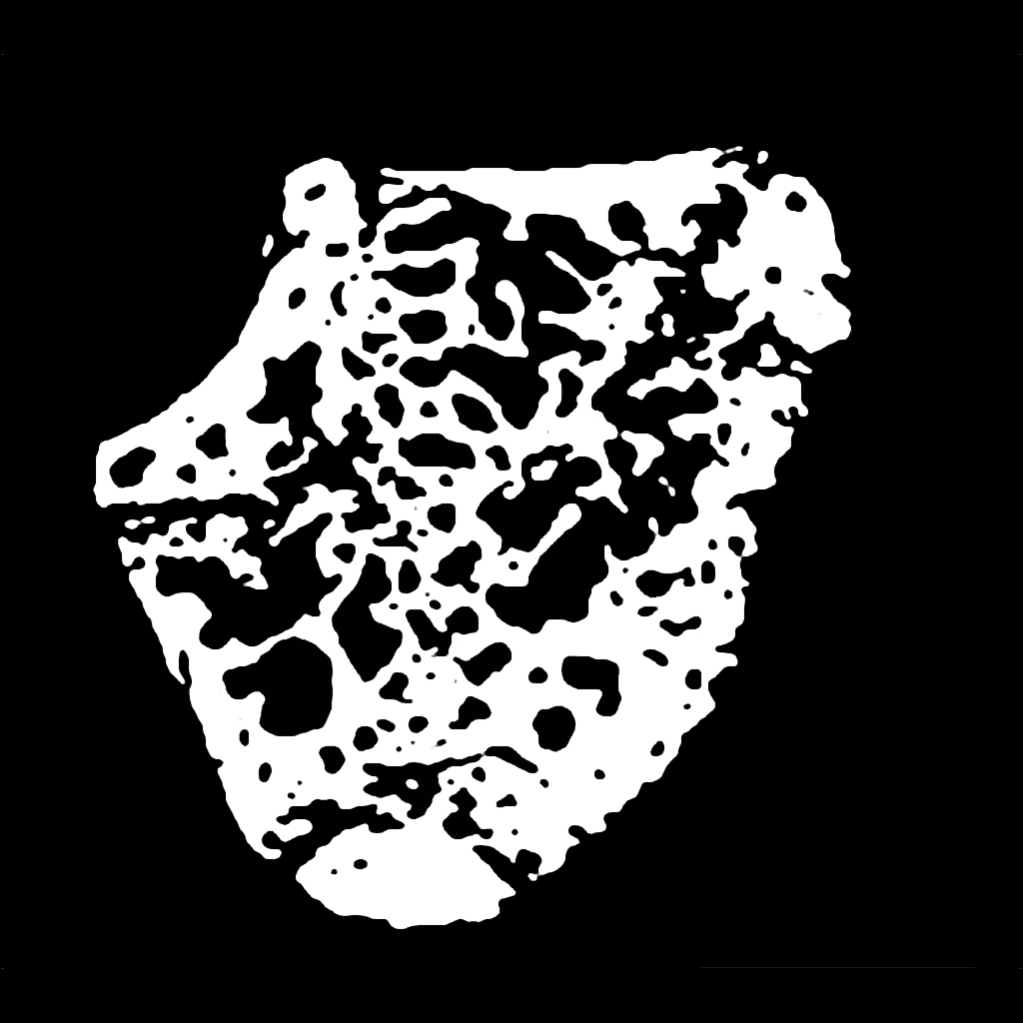}
			&
			\includegraphics[width=#1\textwidth]{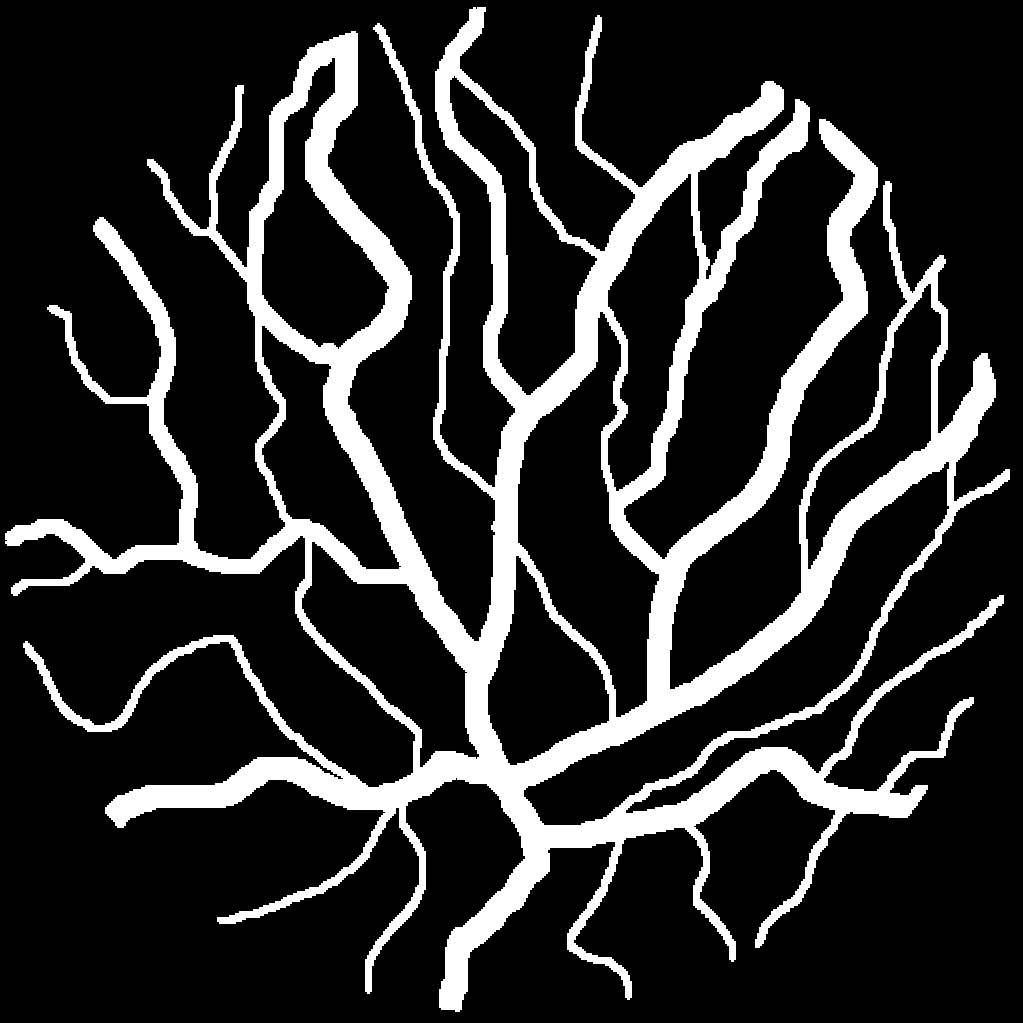}
			\\[0.2em]

            \rotatebox{90}{\quad \qquad \footnotesize \textbf{SMART}}
            &
			\includegraphics[width=#1\textwidth]{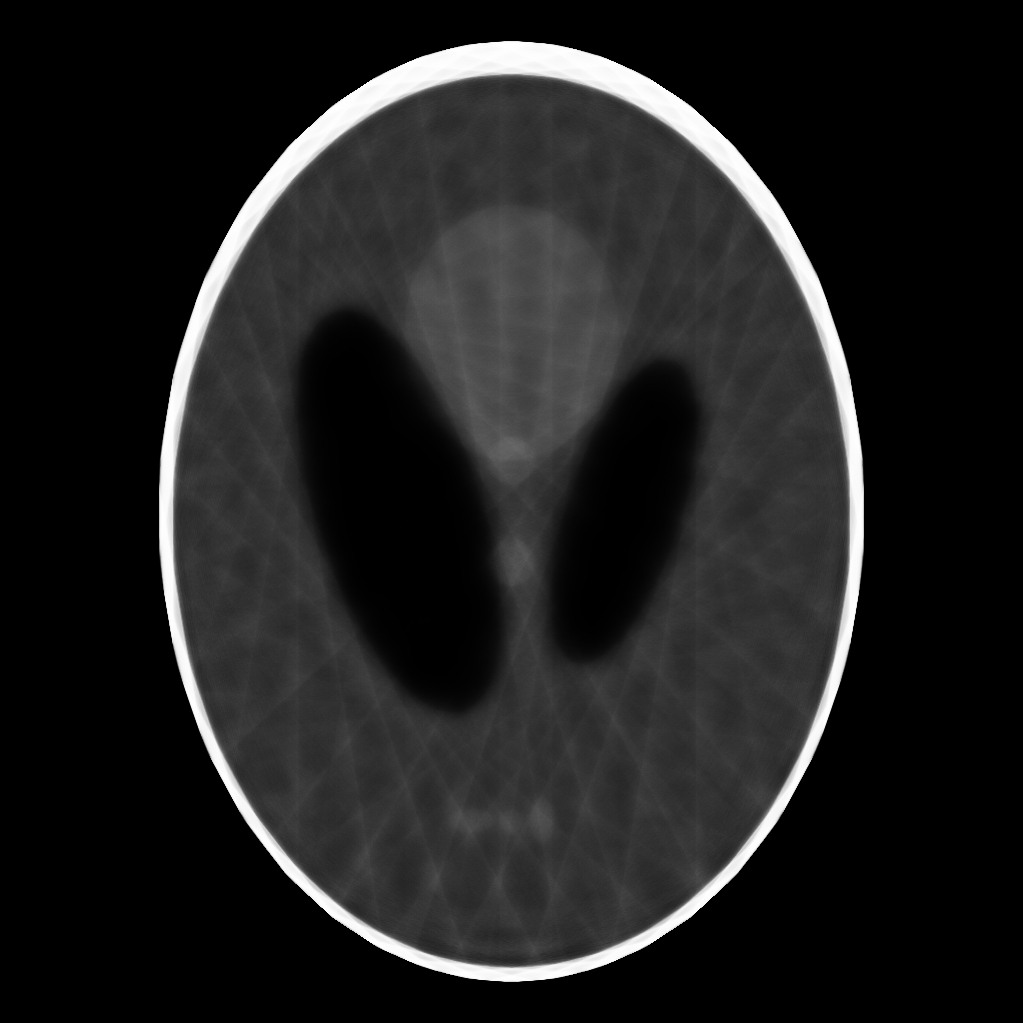}
			&
			\includegraphics[width=#1\textwidth]{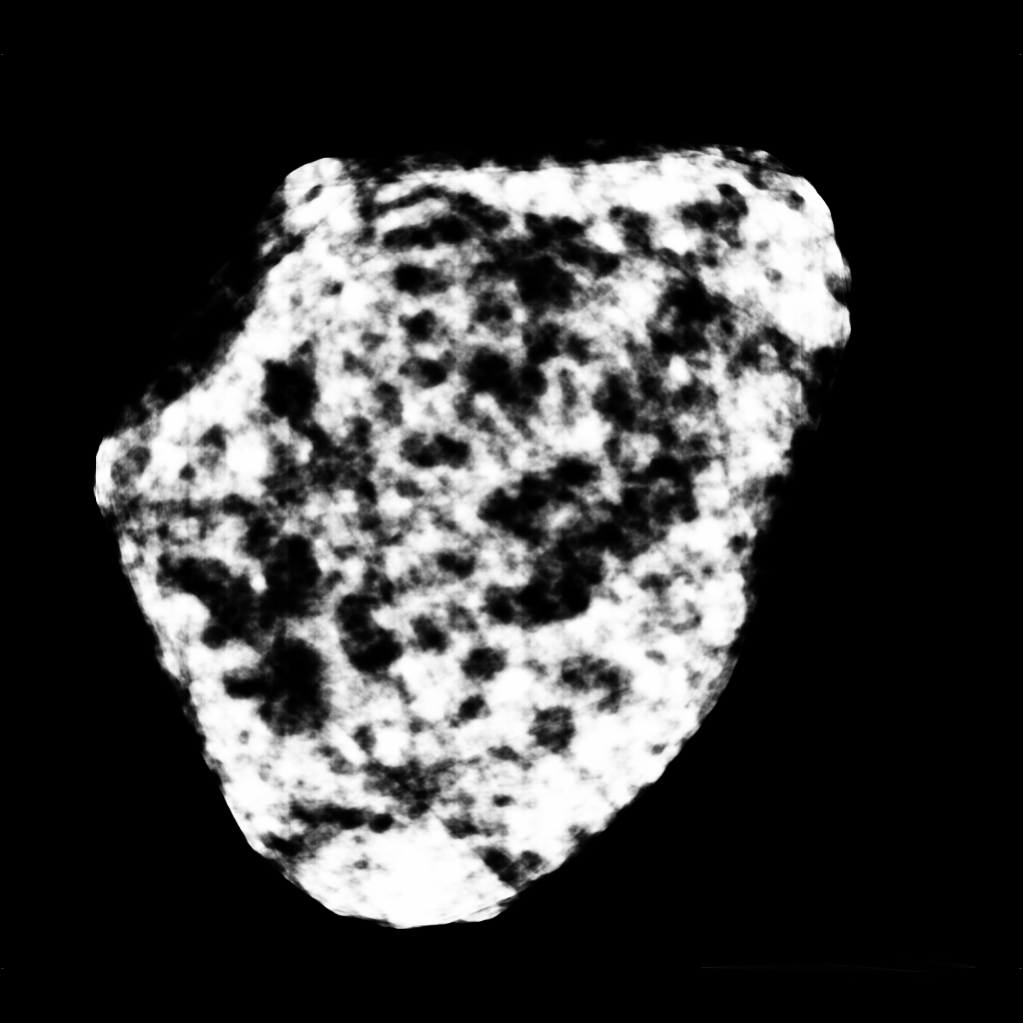}
			&
			\includegraphics[width=#1\textwidth]{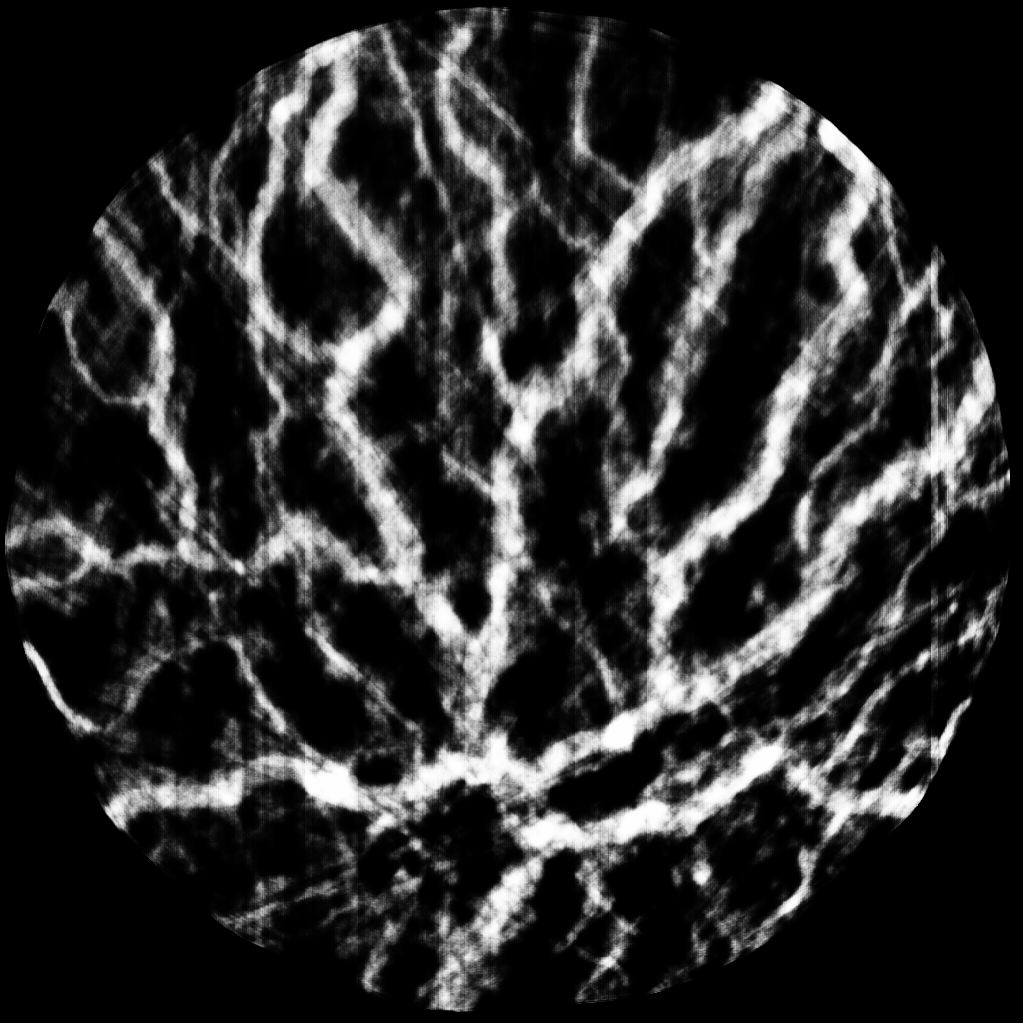}
            \\[0.2em]

            \rotatebox{90}{\quad \qquad \footnotesize \textbf{FSMART}}
            &
			\includegraphics[width=#1\textwidth]{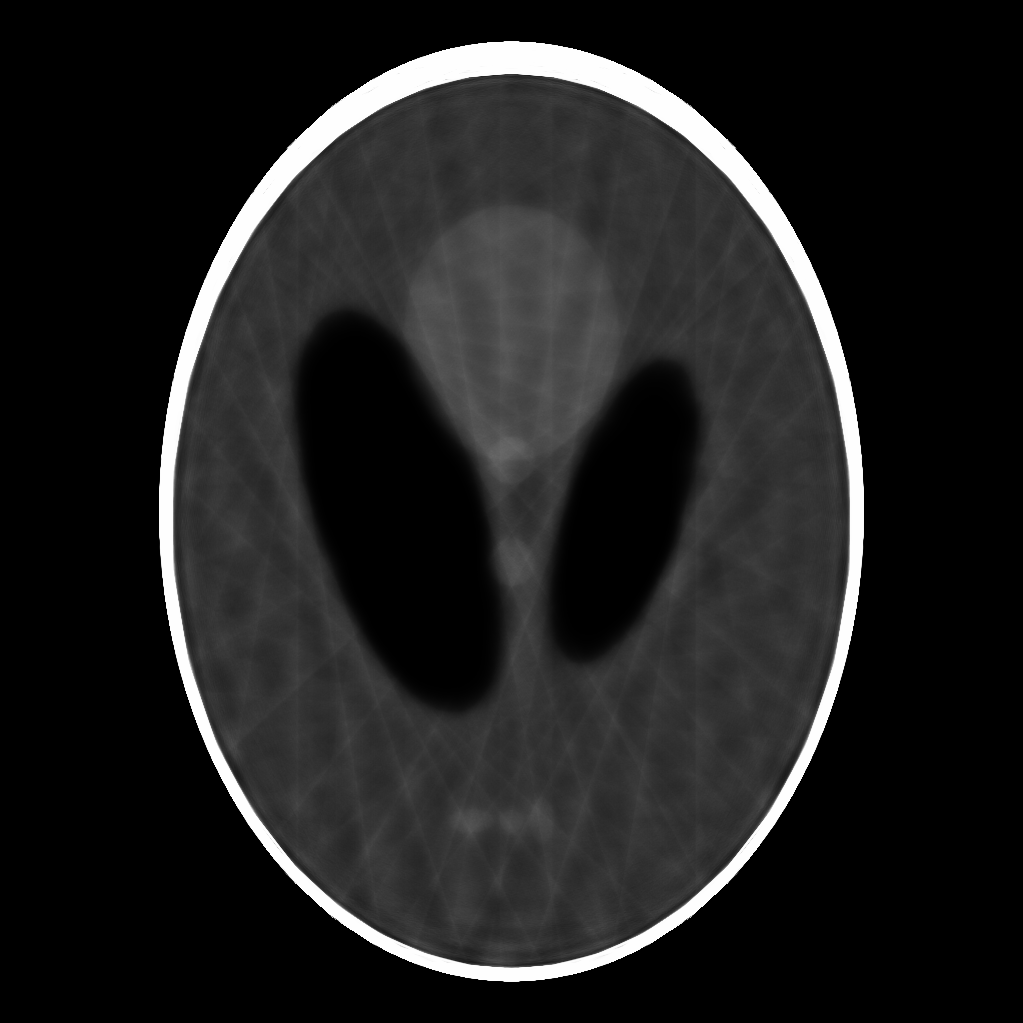}
			&
			\includegraphics[width=#1\textwidth]{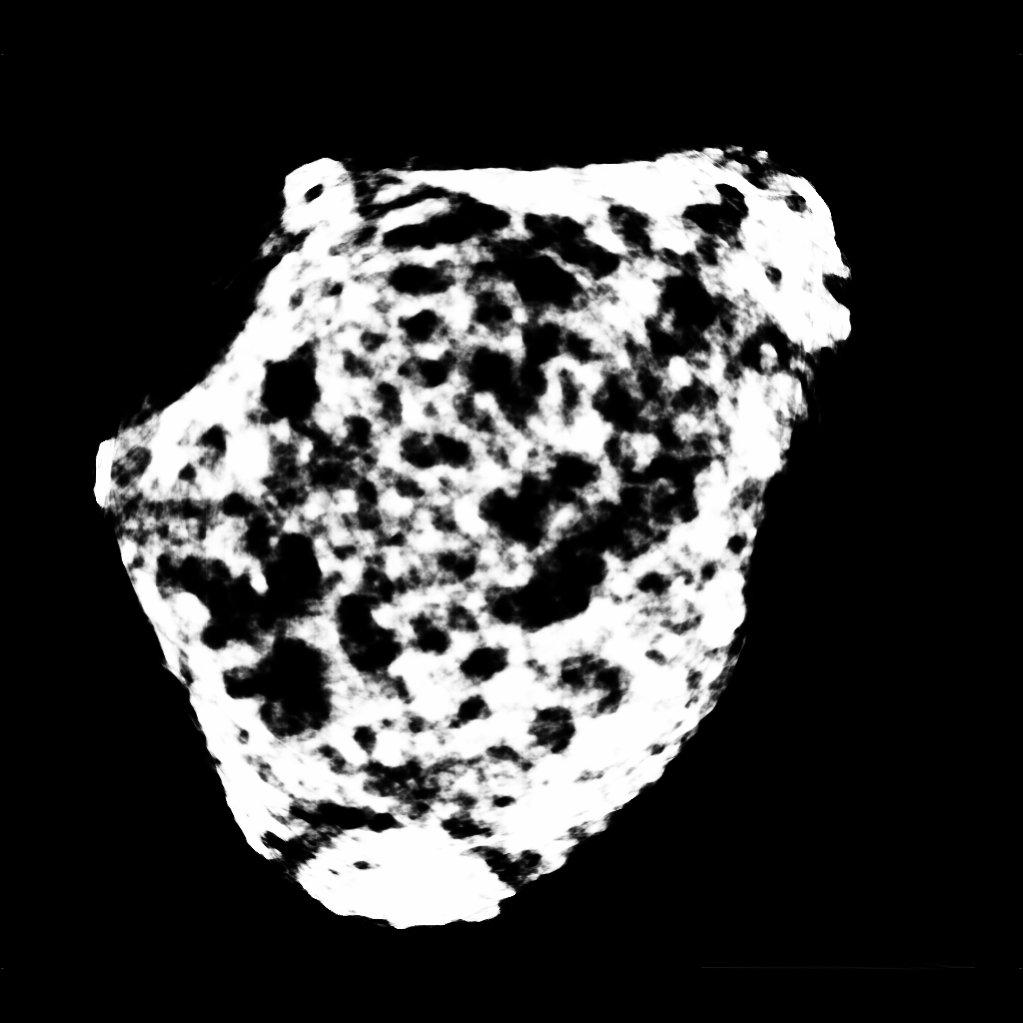}
			&
			\includegraphics[width=#1\textwidth]{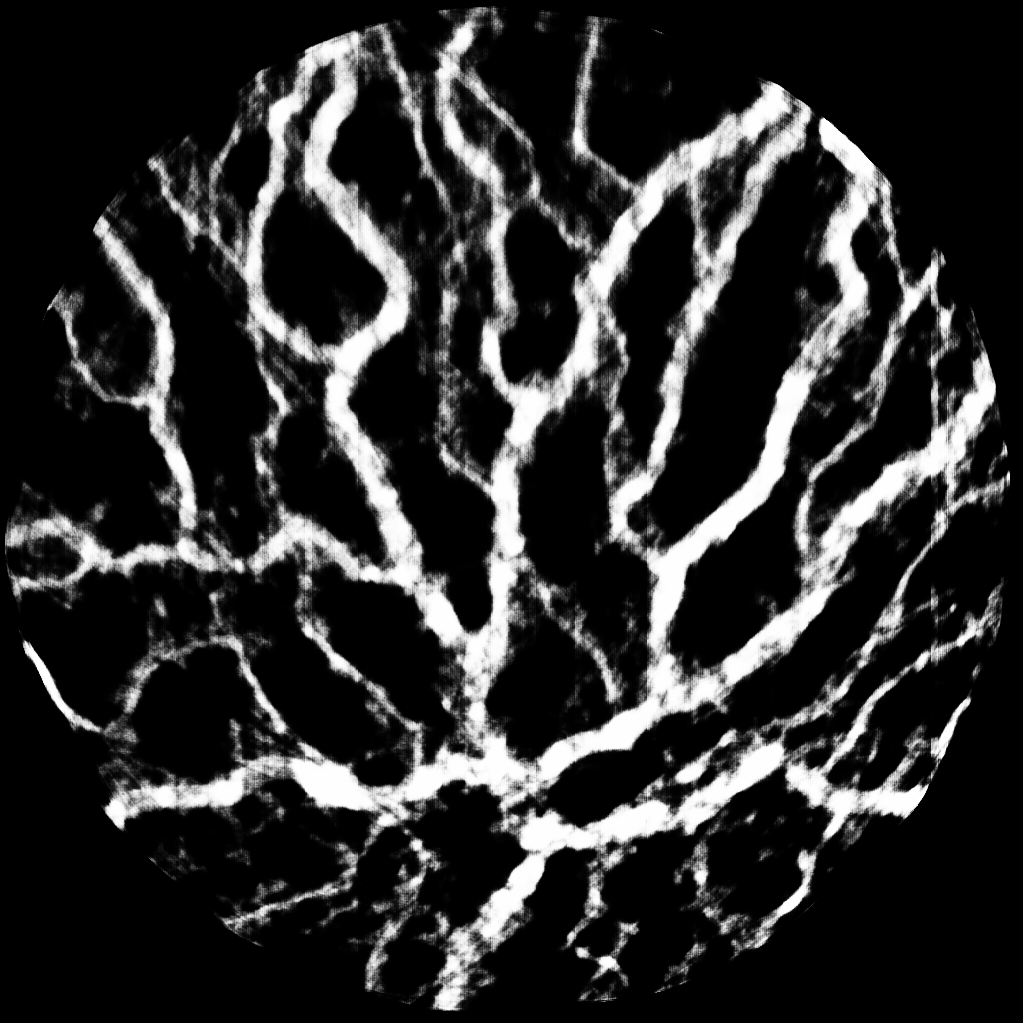}
            \\[0.2em]

            \rotatebox{90}{\quad \qquad \quad \footnotesize \textbf{CG}}
            &
			\includegraphics[width=#1\textwidth]{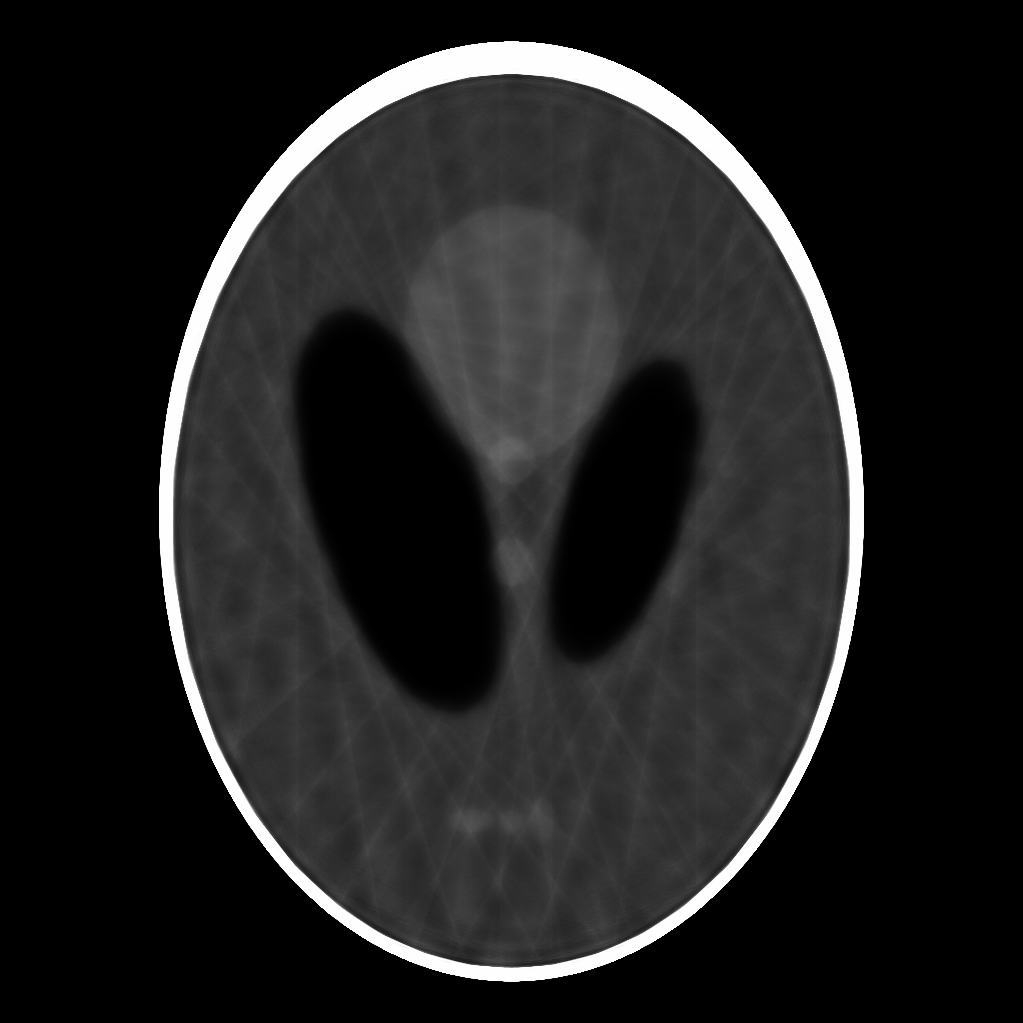}
			&
			\includegraphics[width=#1\textwidth]{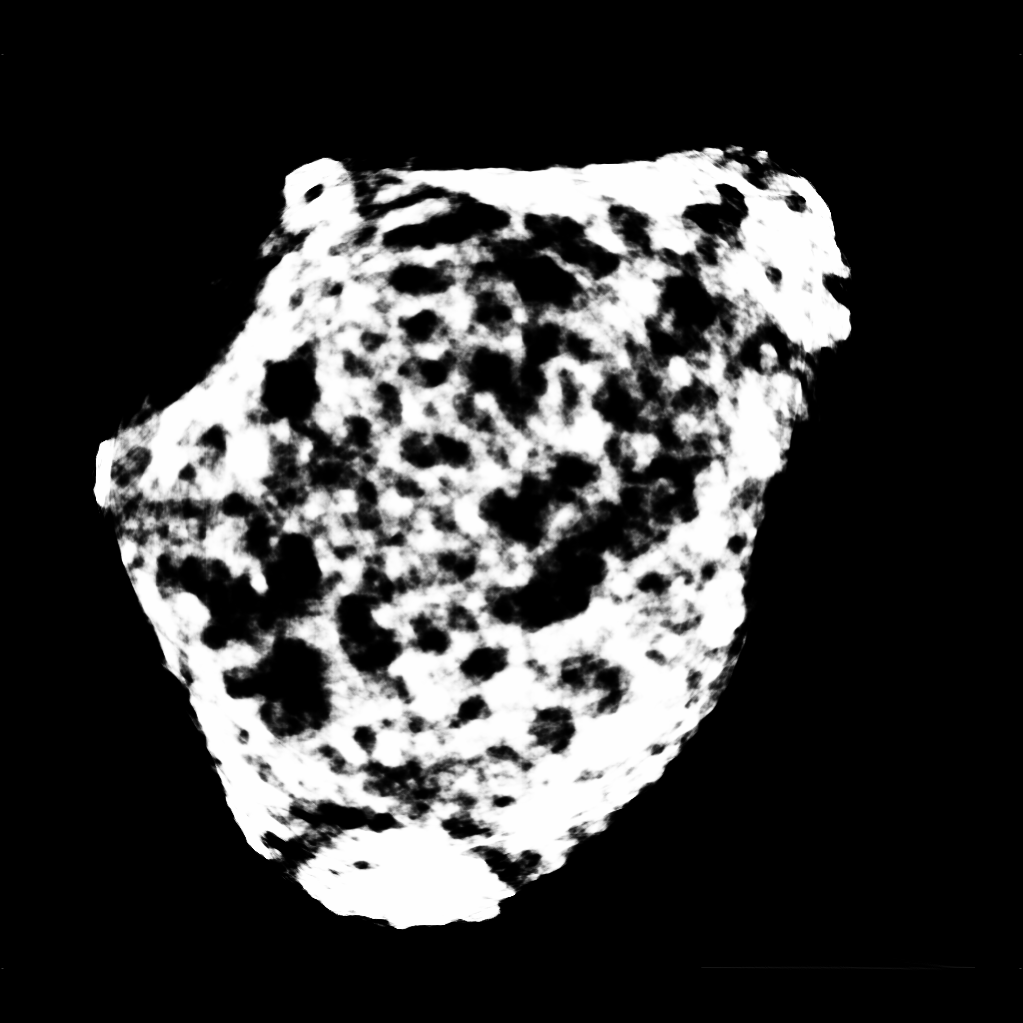}
			&
			\includegraphics[width=#1\textwidth]{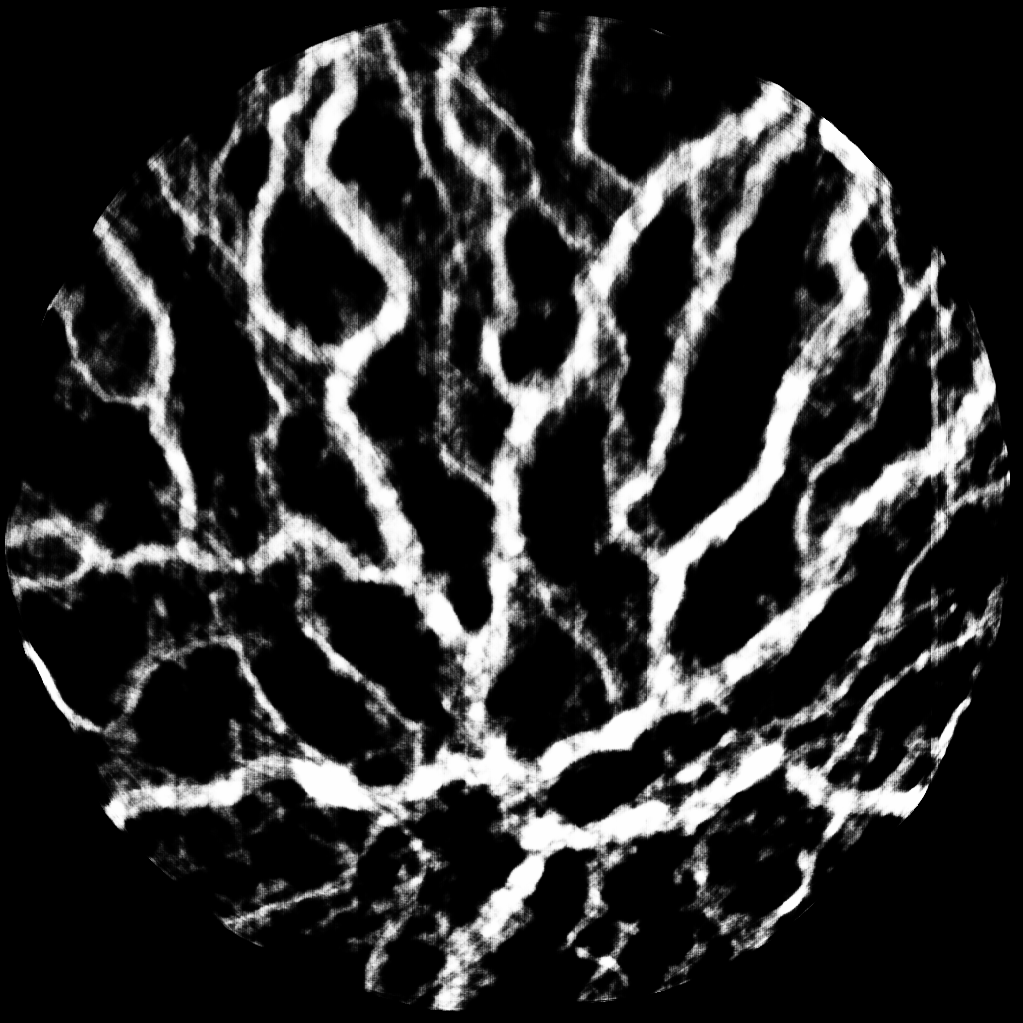}
			
		\end{tabular}
	\end{centering}
}

\newcommand{\showBlurImages}[1]{
\begin{centering}
		\begin{tabular}{c@{\hskip 0.4em}c@{\hskip 0.6em}c@{\hskip 0.6em}c}
			& \footnotesize \texttt{Kitten} & \footnotesize \texttt{Tiger} & \footnotesize \texttt{QR-Code}
            \\[0.2em]
            
            \rotatebox{90}{\quad \qquad \footnotesize \textbf{Original}}
            &
			\includegraphics[width=#1\textwidth]{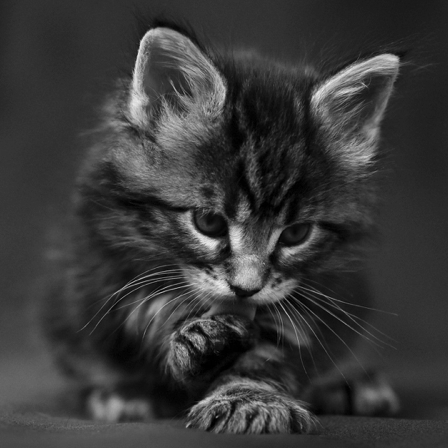}
			&
			\includegraphics[width=#1\textwidth]{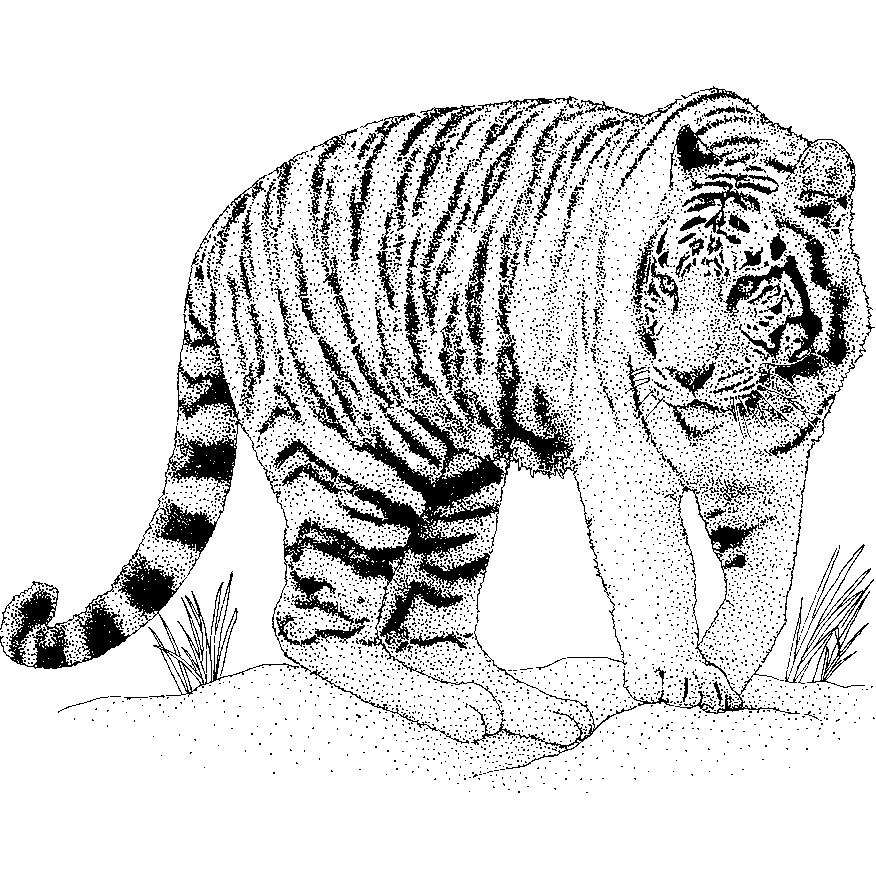}
			&
			\includegraphics[width=#1\textwidth]{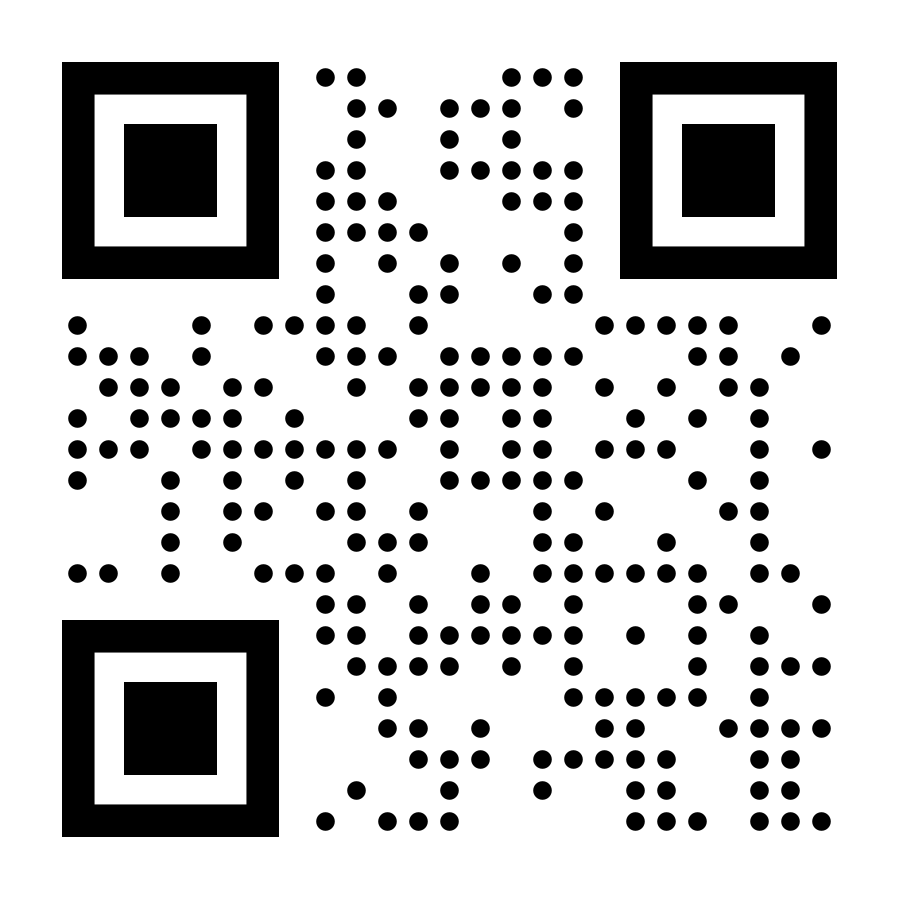}
            \\[0.2em]

            \rotatebox{90}{\quad \qquad \footnotesize \textbf{Blurred}}
            &
			\includegraphics[width=#1\textwidth]{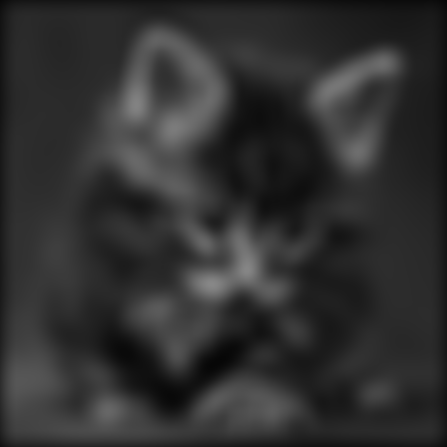}
			&
			\includegraphics[width=#1\textwidth]{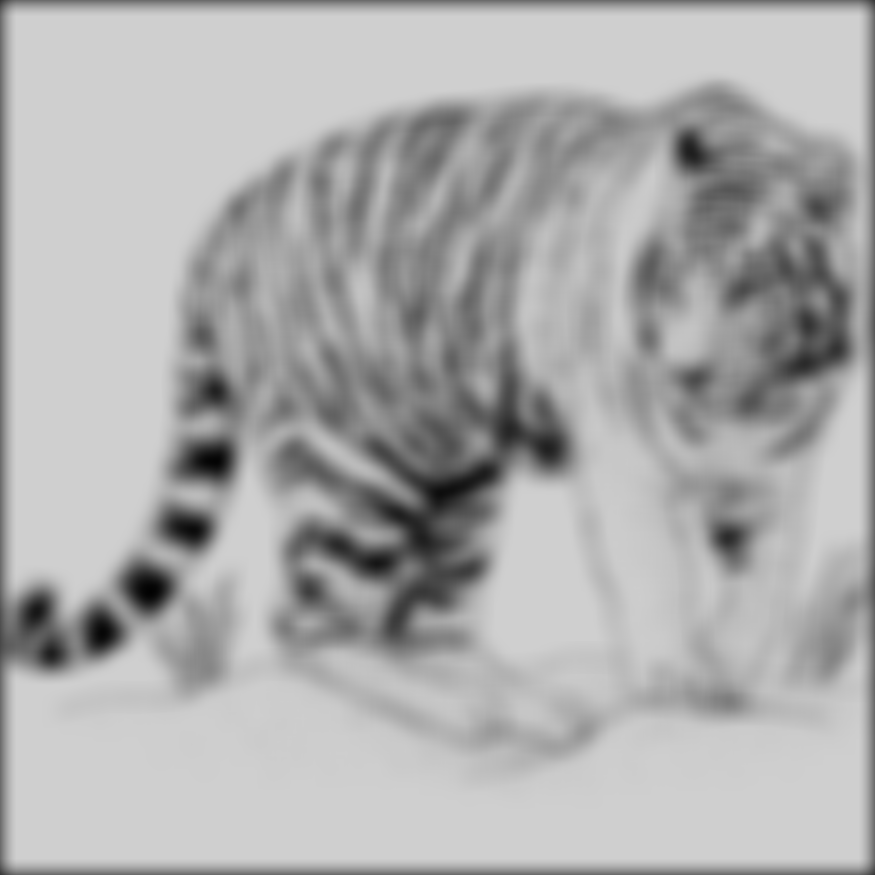}
			&
			\includegraphics[width=#1\textwidth]{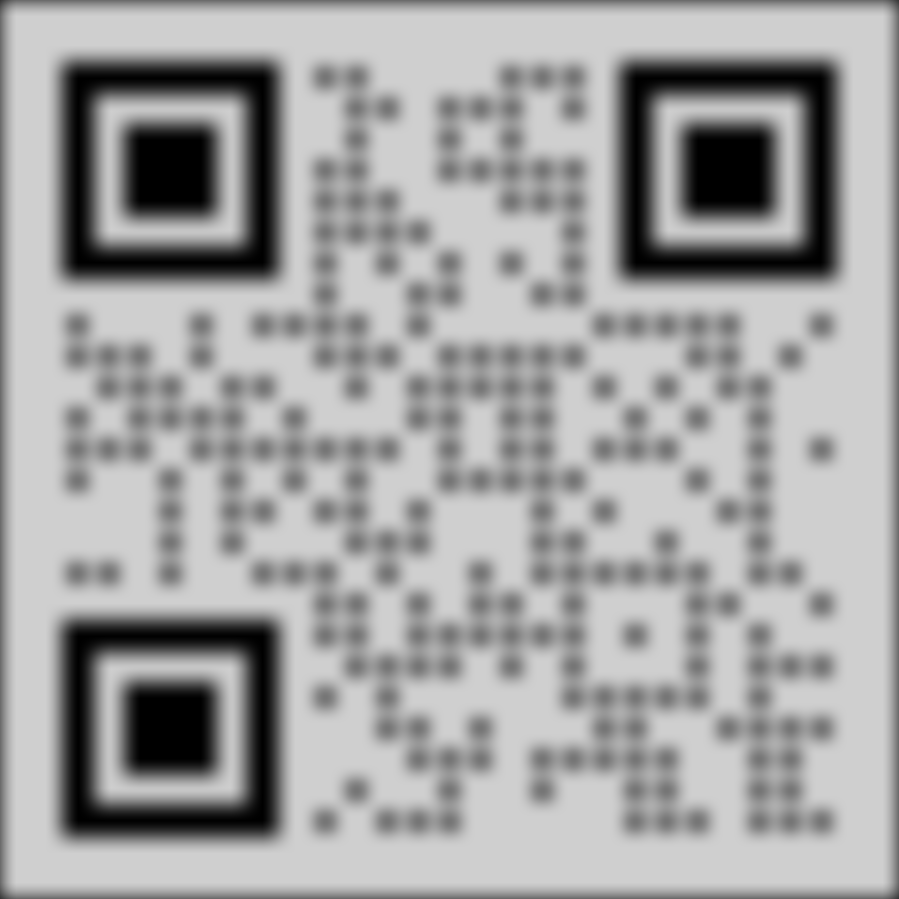}
            \\[0.2em]

            \rotatebox{90}{\quad \qquad \footnotesize \textbf{SMART}}
            &
			\includegraphics[width=#1\textwidth]{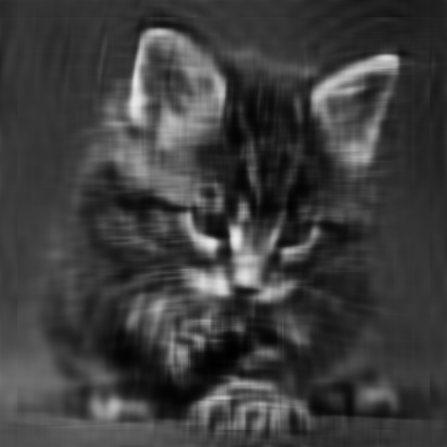}
			&
			\includegraphics[width=#1\textwidth]{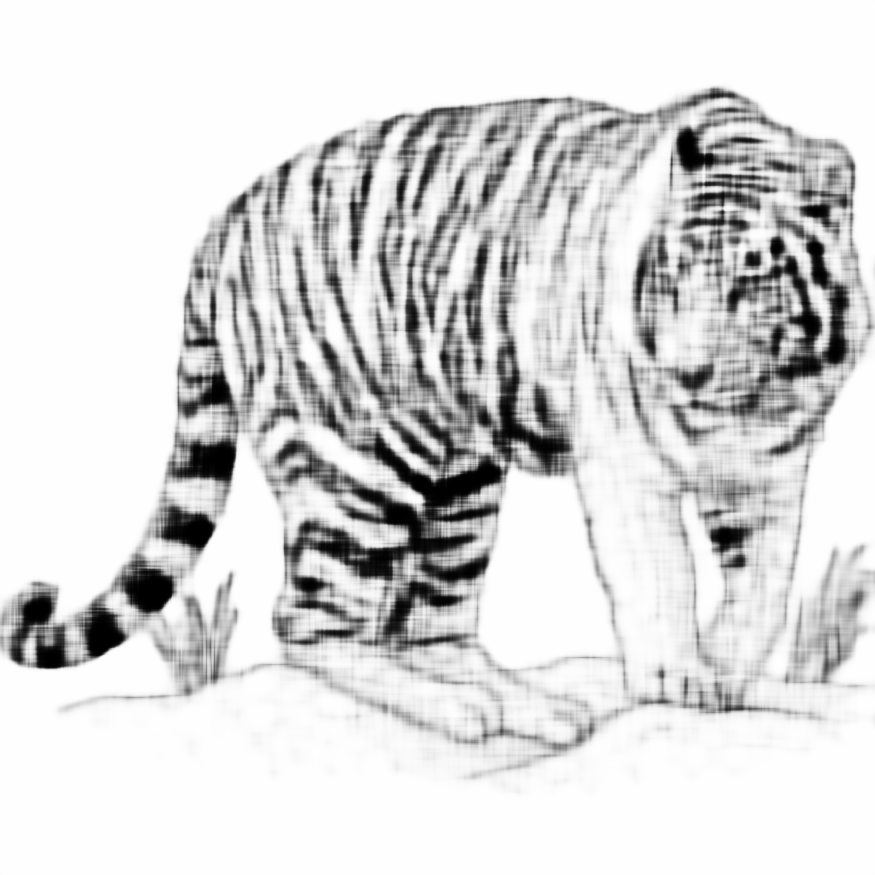}
			&
			\includegraphics[width=#1\textwidth]{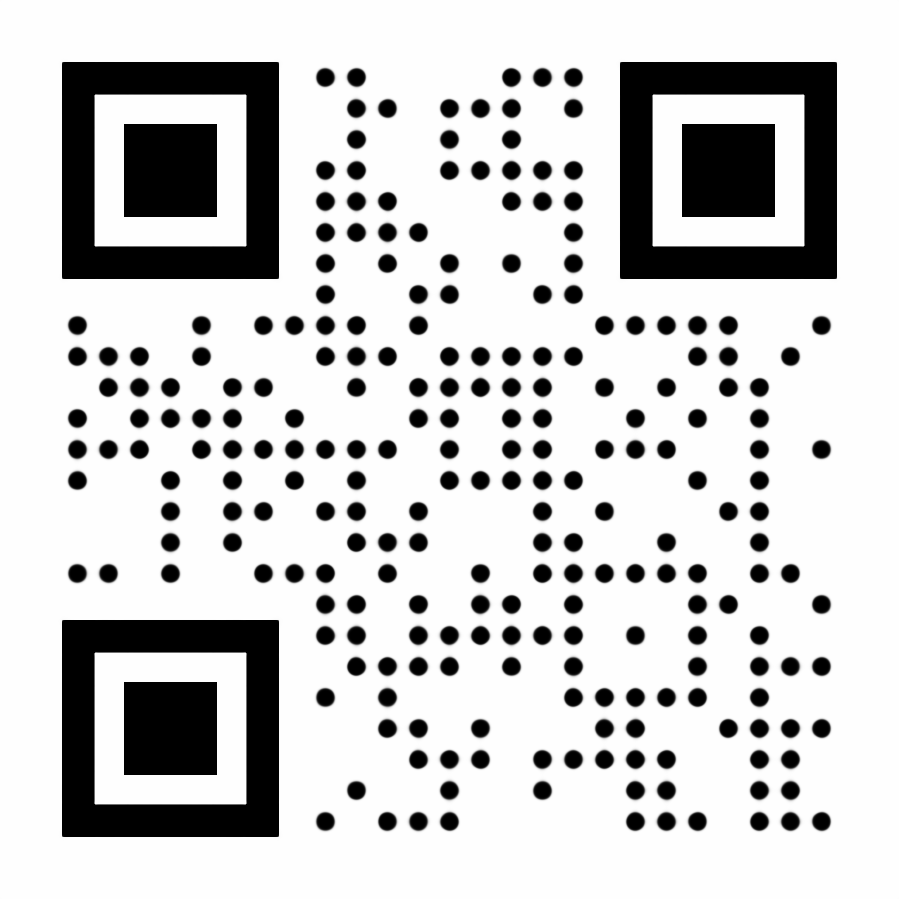}
            \\[0.2em]

            \rotatebox{90}{\quad \qquad \footnotesize \textbf{FSMART}}
            &
			\includegraphics[width=#1\textwidth]{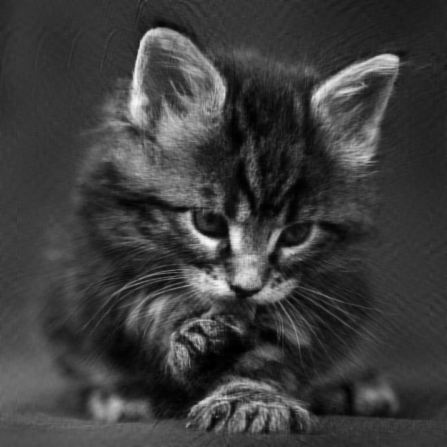}
			&
			\includegraphics[width=#1\textwidth]{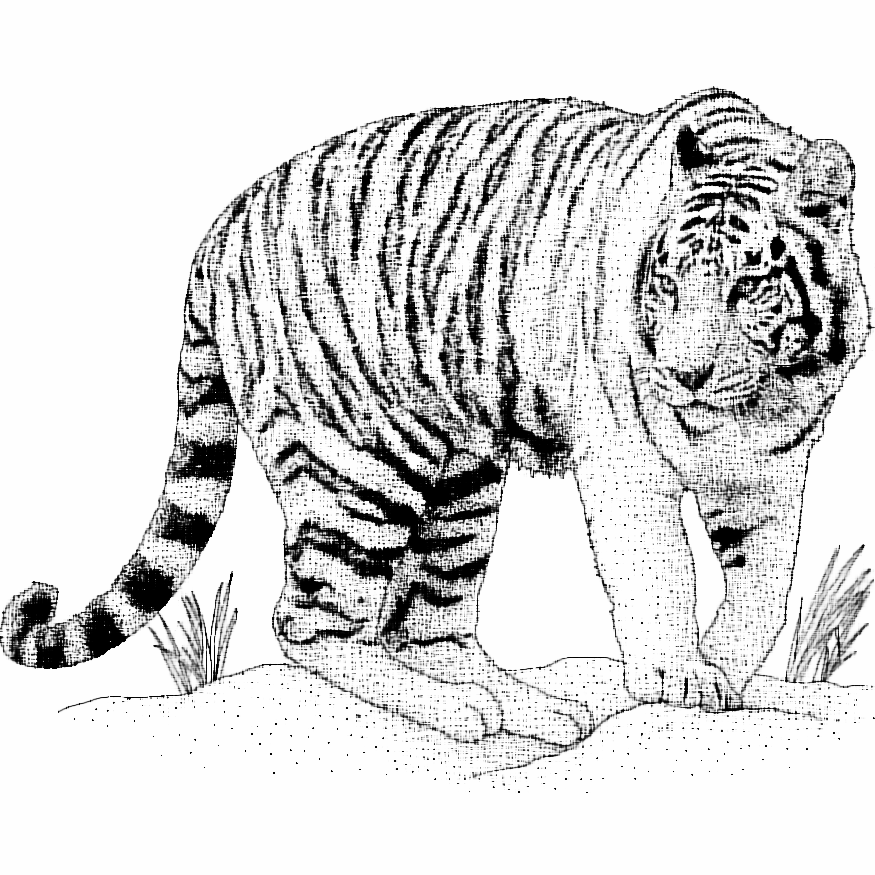}
			&
			\includegraphics[width=#1\textwidth]{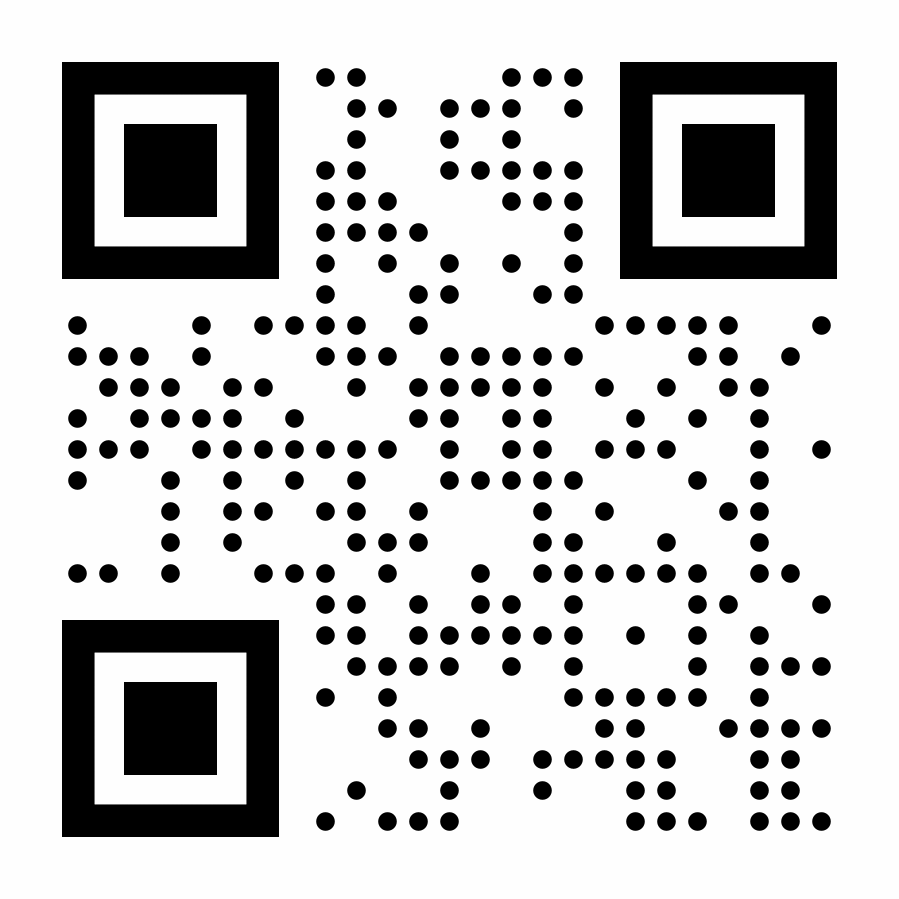}
            \\[0.2em]

            \rotatebox{90}{\quad \qquad \quad \footnotesize \textbf{CG}}
            &
			\includegraphics[width=#1\textwidth]{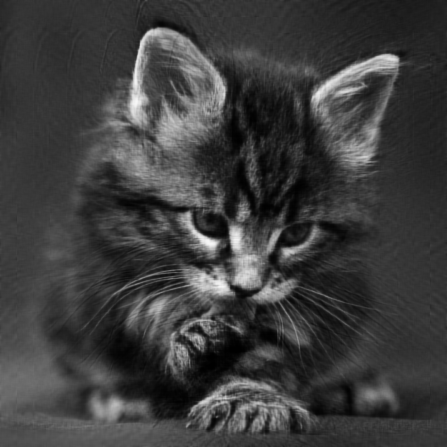}
			&
			\includegraphics[width=#1\textwidth]{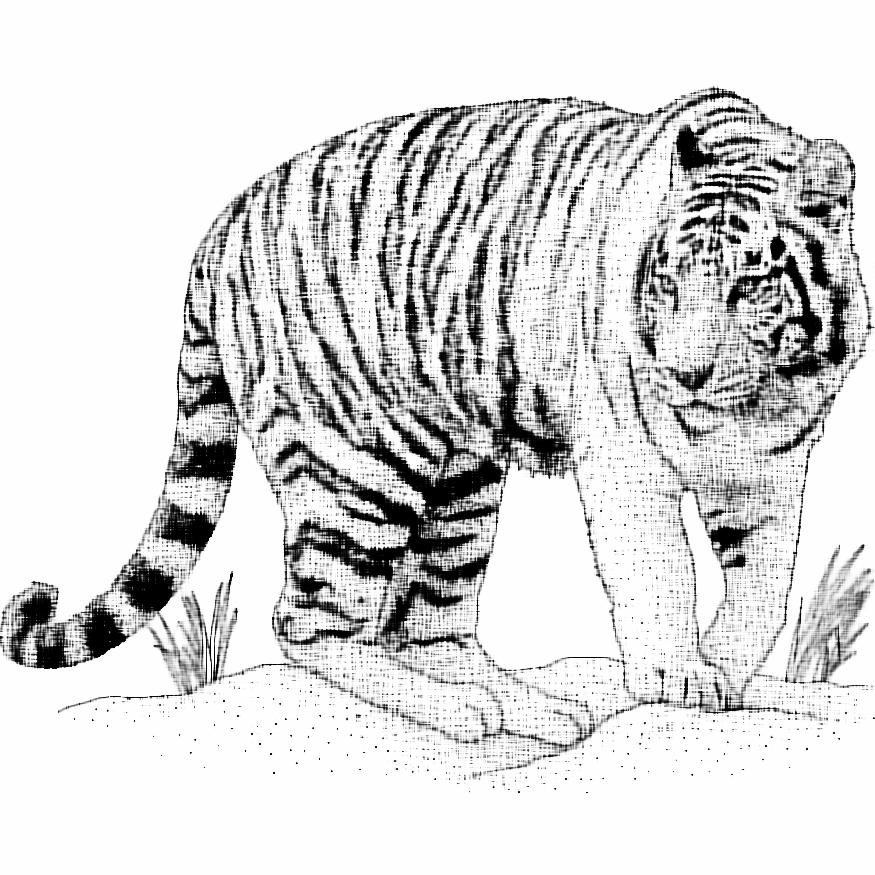}
			&
			\includegraphics[width=#1\textwidth]{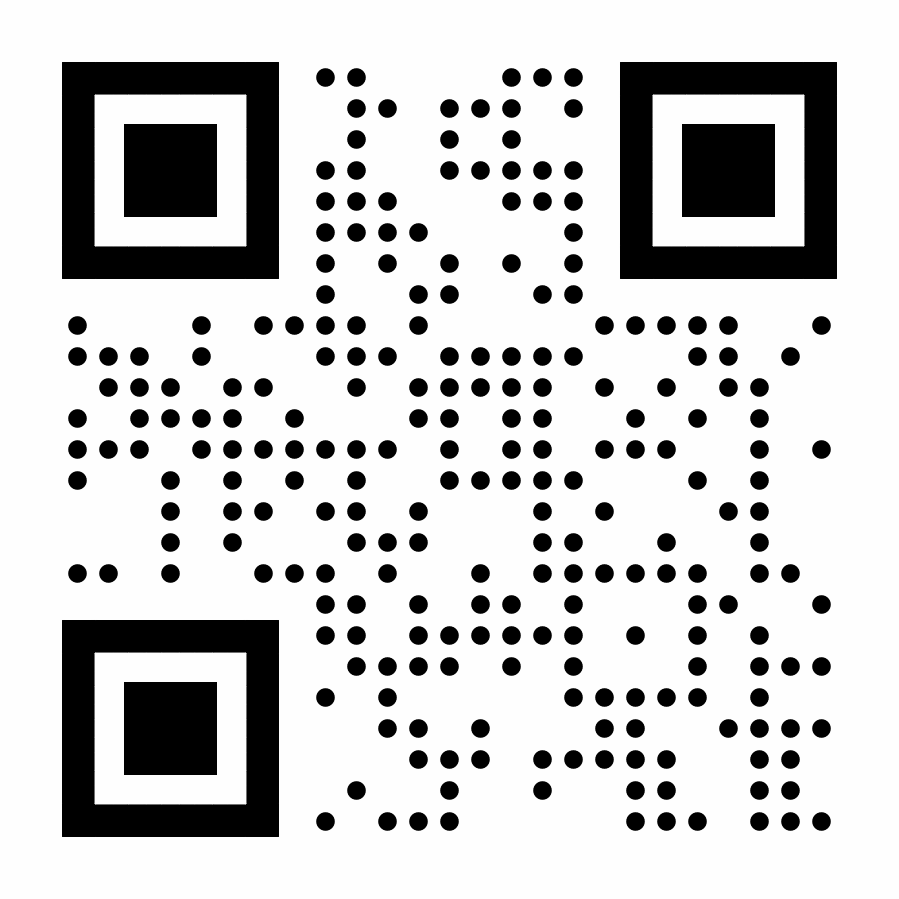}
		\end{tabular}
	\end{centering}
}

Due to the variety of algorithms and multiple manifolds, we opt to focus solely on the Bernoulli manifold, detailed in Section~\ref{sec:Bernoulli-manifold}. Our study includes an extensive comparison between SMART and the recent accelerated Bregman proximal gradient methods discussed in \cite{Hanzely:2021vc}. Our adaptation of these methods, as explained in Section \ref{sec:ConvexAcceleration}, 
result in several distinct variations of SMART, namely FSMART, FSMART-e, and FSMART-g.
Additionally, we characterize SMART as a Riemannian gradient descent scheme on the parameter manifold induced by the Fisher-Rao geometry. This characterization allows us to incorporate a retraction, as specified in equation \eqref{eq:Exp-box}, into a geometric line search strategy. We refer to this variant of SMART as Riemannian gradient (RG). We explore three well-known line search methods within our geometric setup: Armijo, Hager-Zhang, and Barzilai-Borwein.
Furthermore, we include a Riemannian conjugate gradient  from \cite{Oviedo:2022} in our assessment. 
We consider relative-entropy regression over the box in three diverse applications: image deblurring, discrete tomography, and sparse recovery with expanders.  For each application, we compare the algorithms examined in this paper and demonstrate significant performance improvements over the basic version of SMART.
For detailed results and discussions, please refer to Figures \ref{fig:ExperimentToy} --
\ref{fig:ExperimentBlurAMVOGamma}.
Across all three applications we generate problem data uniformly. We start with either a binary signal $\hat x\in\{0,1\}^n$ or a signal $\hat x\in [0,1]^n$ that undergoes \emph{undersampling} by $A\in\R^{m\times n}_+$, where $m\leq n$. This process results in $b=A\hat x +e$ with $e\in\R^m$ representing added noise. Thus, problem \eqref{eq:def-KL-objective} is applicable to both the inconsistent (or underdetermined) system $Ax\approx b$. Furthermore, we are interested in analyzing the algorithms' behavior when the solution resides at the boundary. Therefore, observing both scenarios is crucial for our analysis.

\subsection{Implementation Details}\label{sec:implDetails}
We implemented the algorithms listed in Table~\ref{tab:list-of-algorithms} for solving ~\eqref{eq:def-KL-objective} numerically. The Bregman kernel is $\phi_\square$ from \eqref{eq:phi-list}. We chose the uninformative barycenter $\tfrac{\eins}{2}$ as the natural initialization for all algorithms. For each algorithm the same set of parameters was used across all experiment problem instances. The algorithms and corresponding parameter choices are listed below:

\begin{description}
\item[SMART] solely performs the multiplicative update specified in \eqref{eq:SMART-iteration} with its stepsize fixed to $\tau_k=\frac{1}{L}$, where again $f$ is L-smooth relative to $\phi_\square$. 

\item[FSMART] based on the iteration suggested in \cite{Petra2013a}, where initially $\theta_0=1$ is chosen, which is then subsequently updated via $\theta_{k+1}=\frac{\sqrt{\theta_k^4+4\theta_k^2}-\theta^2_k}{2}$, as suggested in \cite{Tseng:2008}.
\item[FSMART-e]  as described in \cite[Algorithm 2]{Hanzely:2021vc} was applied to ~\eqref{eq:def-KL-objective} with parameters $\gamma_{\text{min}}=1$, $\gamma_0=5$ and $\delta=0.05$. The choices for $\gamma_0$ and $\delta$ deviate slightly from the recommendation in \cite{Hanzely:2021vc},  but were chosen to facilitate fastest possible convergence on the selected problem instances. To ensure comparability restarting mechanisms and stopping criteria based on the divergence of iterates were foregone. Updates for $\theta$ were conducted via Newton's method.
\item[FSMART-g] specified in \cite[Algorithm 3]{Hanzely:2021vc} is used with parameters: $\rho=1.2$, $\gamma=2$ and $G_{\text{min}}=10^{-3}$ . Restarting, stopping criteria and updating $\theta$ was handled analogously to ABPG-e.
\item[RG-Armijo] a SMART iteration with Armijo line search for choosing the step size $\tau_k$ via the retraction in \eqref{eq:Exp-box} to iterate according to \eqref{eq:SMART_vs_RG}, see Algorithm~\ref{alg:RG_Armijo}. The line search parameters are $\sigma = 10^{-3}$, $\beta = 0.8$ and initial stepsize $\tau = 0.2$. 
\item[RG-HZ]  a SMART iteration with Hager-Zhang line search which includes the Armijo line search condition implemented with the same parameters as stated in \textbf{RG-Armijo} for comparision. Additional parameters are $\sigma_{2}=10^{-3}$ and $\varrho=0.5$.

\item[RG-BB] is a SMART iteration with Barzilai-Borwein line search which includes the Armijo line search condition implemented with the same parameters as stated in \textbf{RG-Armijo} for comparision. Additional parameters are $\gamma_{\min}=10^{-7}$ and $\gamma_{\max}=1.0$. For details see
Algorithm~\ref{alg:RG_BB}.

\item[CG] is the Riemannian conjugate gradient, Algorithm \ref{alg:CG}. It includes the Armijo line search condition implemented with the same parameters as stated in \textbf{RG-Armijo} for comparision.

\item[PG] is the projected gradient algorithm with Armijo linesearch.

\end{description}

\subsection{Toy Example}
We first conduct a toy experiment with
\begin{equation}\label{eq:toyExample}
    \hat x= \begin{pmatrix} 1 & 1 \end{pmatrix}^\top, \qquad A= \begin{pmatrix} 0.25 & 0.75 \end{pmatrix}, \qquad b = 1.
\end{equation}
    In this specific underdetermined scenario, signal $\hat x$, can be uniquely reconstructed from a single measurement $b$ when confined to the box.
    We demonstrate the smooth trajectory of iterates in contrast to the behavior observed in projected gradient descent in Figure~\ref{fig:ExperimentToy} (left).

\begin{figure}[htpb]
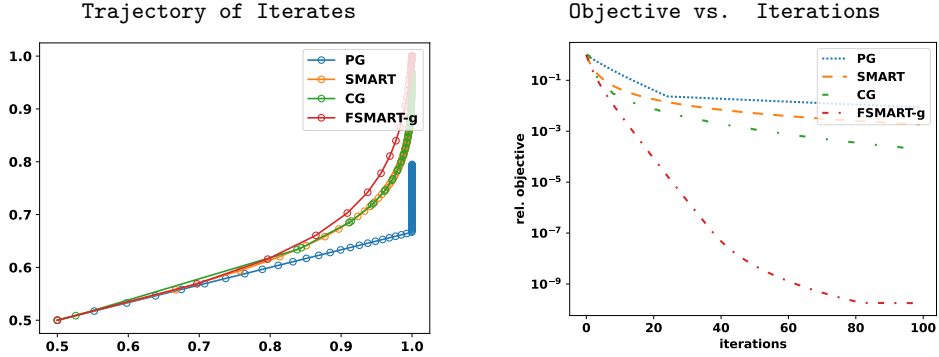

    \begin{center}
        \showExperimentToy{0.35}
    \end{center}
    \caption{\textbf{Toy example:} Trajectories of iterates, starting at $x_0=(0.5, 0.5)^{\T}$ and converging to the unique solution $\hat x=(1,1)^{\top}$, are displayed in the \textbf{left} panel. The projected gradient method (PG) shows a kink due to its nonsmooth projection step. Methods aware of the constraints geometry depict smoother, more optimal iterate trajectories by exploiting smooth Riemannian geometry to accommodate box constraints. In the \textbf{right} panel, FSMART-g stands out with larger initial steps and a rapid decrease in the relative objective value.
    }
    \label{fig:ExperimentToy}
\end{figure}

\subsection{Expander Graphs}
We consider binary measurement matrices $A\in\{0,1\}^{m\times n}$ with varying row numbers, $m \in \{40, 70, 100\}$, and a fixed number of columns $n=200$ with $\|A\|_1=12$. These matrices correspond to adjacency matrices of expander graphs  as detailed in \cite{Indyk_expander}. The sparsity of $\hat x\in \{0,1\}^{200}$ is set to $20$. With high probability, all algorithms should converge to the same solution, given the chosen sparsity and structure of $A$, ensuring likely uniqueness.

First, we compare different CG variants that differ in the $\beta_k$ parameters choice in \eqref{eq:beta-CG}. Results are depicted in Figure \ref{fig:ExperimentExpanderCG}. Additionally, Figure \ref{fig:ExperimentExpander} illustrates a comparison of various methods described in Section \ref{sec:implDetails}, where the DY mode \eqref{eq:beta-Dai-Yuan} is employed for the conjugate gradient (CG) approach. The original signal $\hat x$ and the resulting reconstructions by different algorithms are presented in Figure \ref{fig:ExperimentExpanderSpikes}. Finally, Figure \ref{fig:ExperimentExpanderAMVOGamma} displays a-posteriori certificates obtained from FSMART-e, illustrating $\gamma_k$ observations across iterations in the left panel, while visualizing average matrix vector operations on the right panel.

\begin{figure}[htpb]
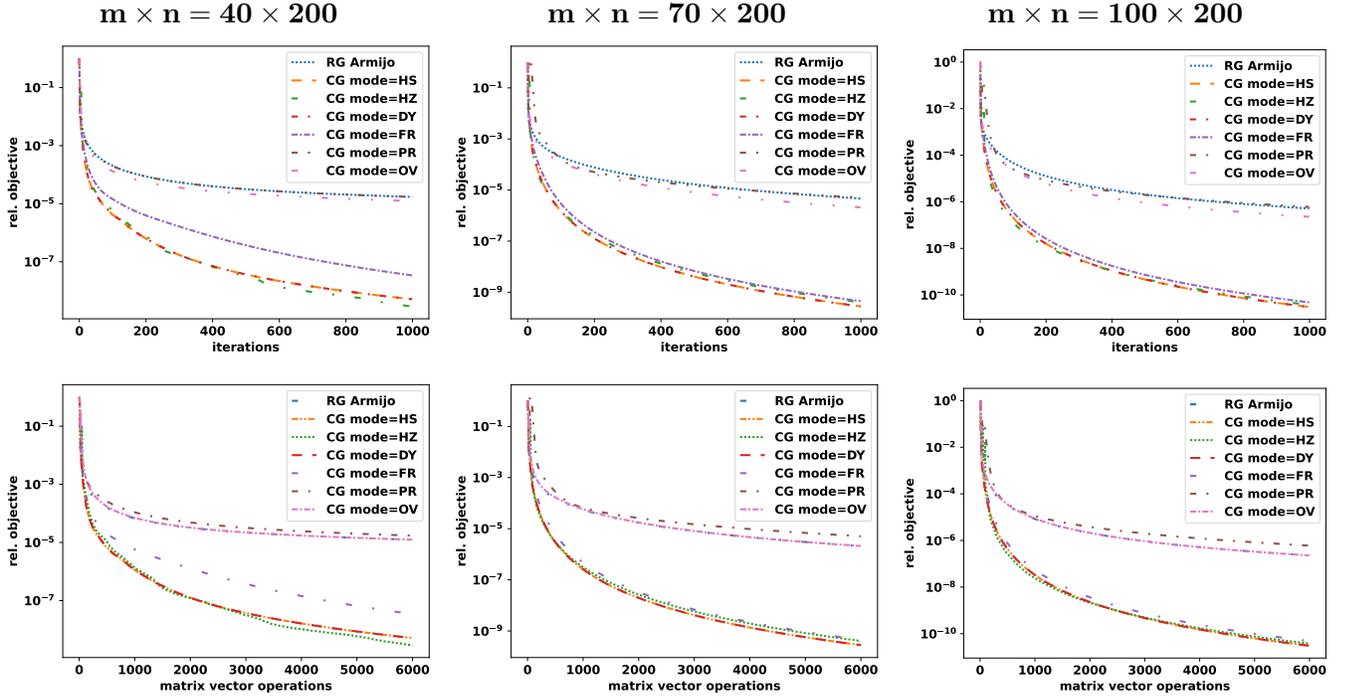

    \begin{center}
        \showExperimentExpanderCG{0.35}
    \end{center}
    \caption{
    \textbf{Comparison of CG variants on expander graphs} across three instances with varying levels of ill-posedness ($m \in \{40, 70, 100\}$ from left to right). The plots illustrate the relative decrease in objective values over iterations (\textbf{first row}) and costly operations (\textbf{second row}) for different approaches detailed in \eqref{eq:beta-CG} used to select $\beta_k$ for updating the search direction in the conjugate gradient approach. Additionally, Riemannian gradient descent with Armijo line search is considered as baseline. We observe a rough clustering of the variants into two groups: $\{$OV, PR$\}$  perform similarly to the Armijo baseline, while $\{$DY, FR, HS, HZ$\}$ exhibit notably faster convergence.
    }
    \label{fig:ExperimentExpanderCG}
\end{figure}

\begin{figure}[htpb]
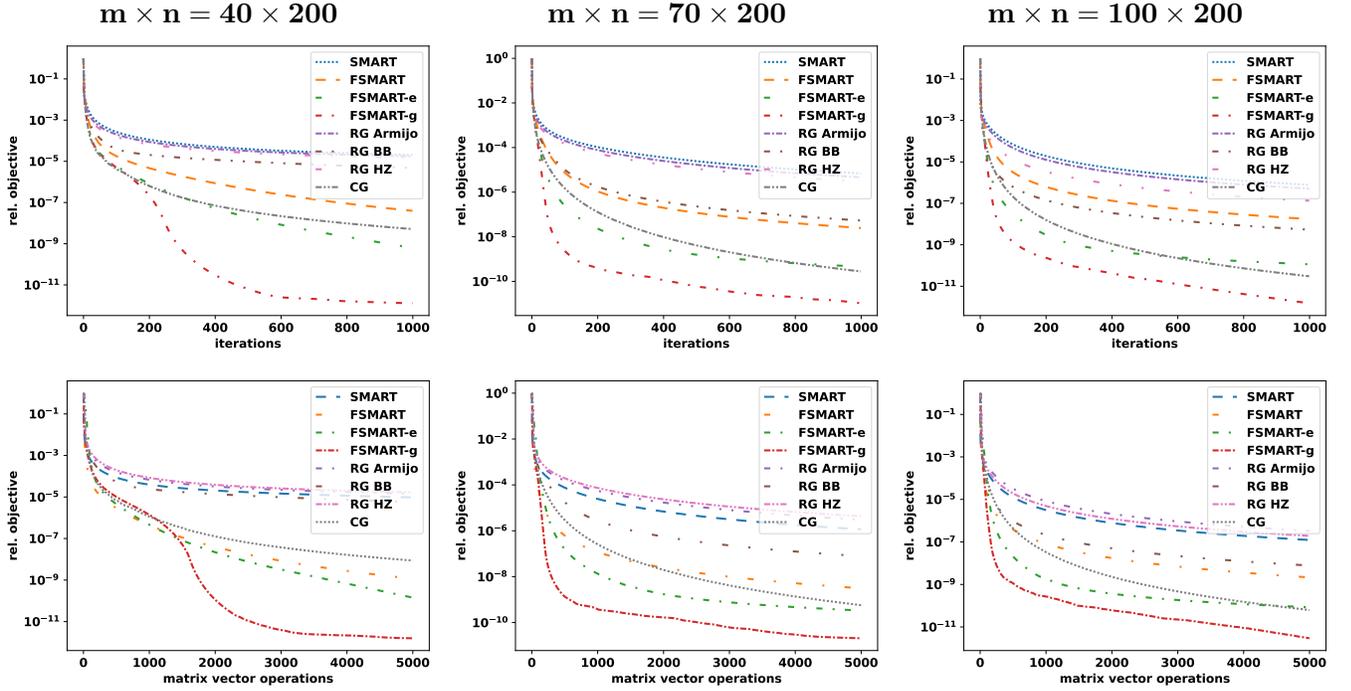

    \begin{center}
        \showExperimentExpander{0.35}
    \end{center}
    \caption{\textbf{Comparison of different algorithms on expander graphs}  across instances with varying levels of ill-posedness ($m \in \{40, 70, 100\}$ from left to right). The plots illustrate the relative decrease in objective values over iterations (\textbf{first row}) and in terms of costly operations (\textbf{second row}) for the different approaches detailed in Section \ref{sec:implDetails}. Among the accelerated algorithms within the Bregman regime, FSMART-e and particularly FSMART-g demonstrate superior performance. Within the Riemannian optimization regime, the conjugate gradient (CG) implemented with the DY mode for $\beta_k$-update rule 
    \eqref{eq:beta-Dai-Yuan} achieves results comparable to FSMART-e.
    }
    \label{fig:ExperimentExpander}
\end{figure}

\begin{figure}[htpb]
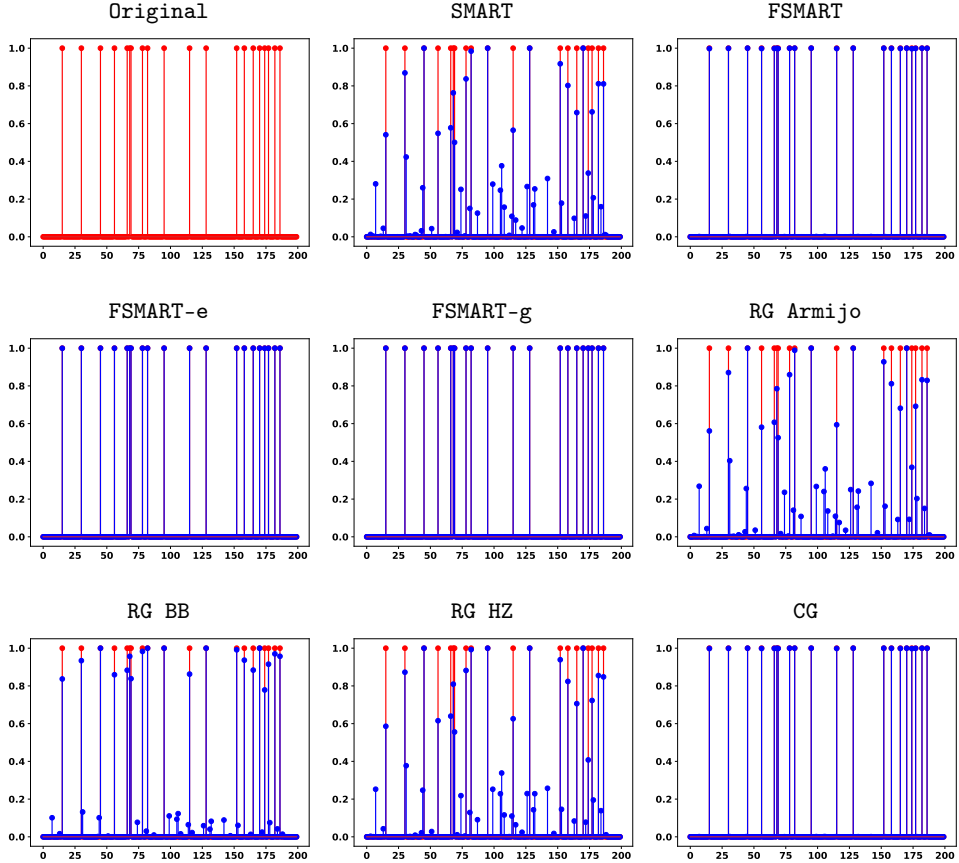

    \begin{center}
        \showExperimentExpanderSpikes{0.25}
    \end{center}
    \caption{ 
    \textbf{Sparse spike reconstructions} from adjacency matrices corresponding to expander graphs, generated by different algorithms, are displayed for $m=40$ after $1000$ iterations. The original binary signal $\hat x$ with sparsity $20$ is presented in the top left. Faster algorithms (see Figure \ref{fig:ExperimentExpander}) demonstrate perfect signal reconstruction. 
    However, it is worth noting that reconstructions from slower algorithms can be thresholded entrywise at $0.5$, transforming them into perfect reconstructions.
    }
    \label{fig:ExperimentExpanderSpikes}
\end{figure}

\begin{figure}[htpb]
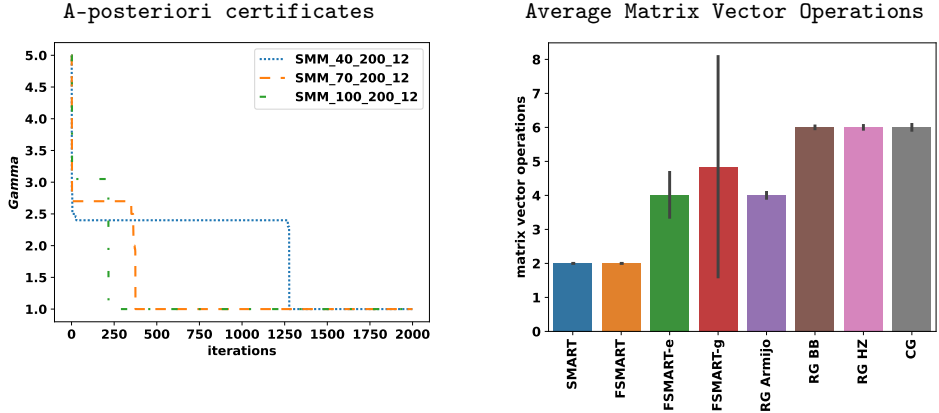

    \begin{center}
        \showExperimentAMVOGamma{0.35}{Expander}
    \end{center}
    \caption{\textbf{A-posteriori certificates for spike recovery} are obtained from FSMART-e by observing $\gamma_k$-values over iterations shown in the (\textbf{left}) panel for all problem instances. We observe that $\gamma_k$ drops to 1 in all instances. We explored the drop-down-point for each instance and observed that it occurs when FSMART-e approaches the solution. 
    \textbf{Average of matrix vector operations} are shown in the (\textbf{right}) panel for each algorithm over iterates and instances. By definition SMART, FSMART constantly employ two matrix vector operations per iteration. The other algorithms require potentially more operations as they employ line search strategies.
    }
    \label{fig:ExperimentExpanderAMVOGamma}
\end{figure}

\subsection{Tomographic Reconstruction}
We consider large scale tomographic reconstruction as a problem class, where we reconstruct the three phantoms shown in Fig.~\ref{fig:ExperimentTomoImages}. Tomographic projection matrices $A$ were generated using the ASTRA-toolbox\footnote{https://www.astra-toolbox.com}, with  parallel beam geometry and equidistant angles in the range $[0, \pi]$. Each entry in $A$ is nonnegative as it corresponds to the length of the intersection of a ray with a pixel. The undersampling rate was chosen to be $2\%$ which corresponds to $20$ projection angles. None of the images in Fig.~\ref{fig:ExperimentTomoImages} is expected to be the unique solution to $Ax=b$ within the $[0,1]^n$ box due to the large undersampling ratio \cite{Petra2014}. Hence, different algorithms might converge to different solutions within  $[0,1]^n$. 
Figure \ref{fig:ExperimentTomoAMVOGamma} shows the a-posteriori certificates obtained from FSMART-e by observing $\gamma_k$ over the iterations in the left panel, while the right panel visualizes the average matrix vector operations.

\begin{figure}[htpb]
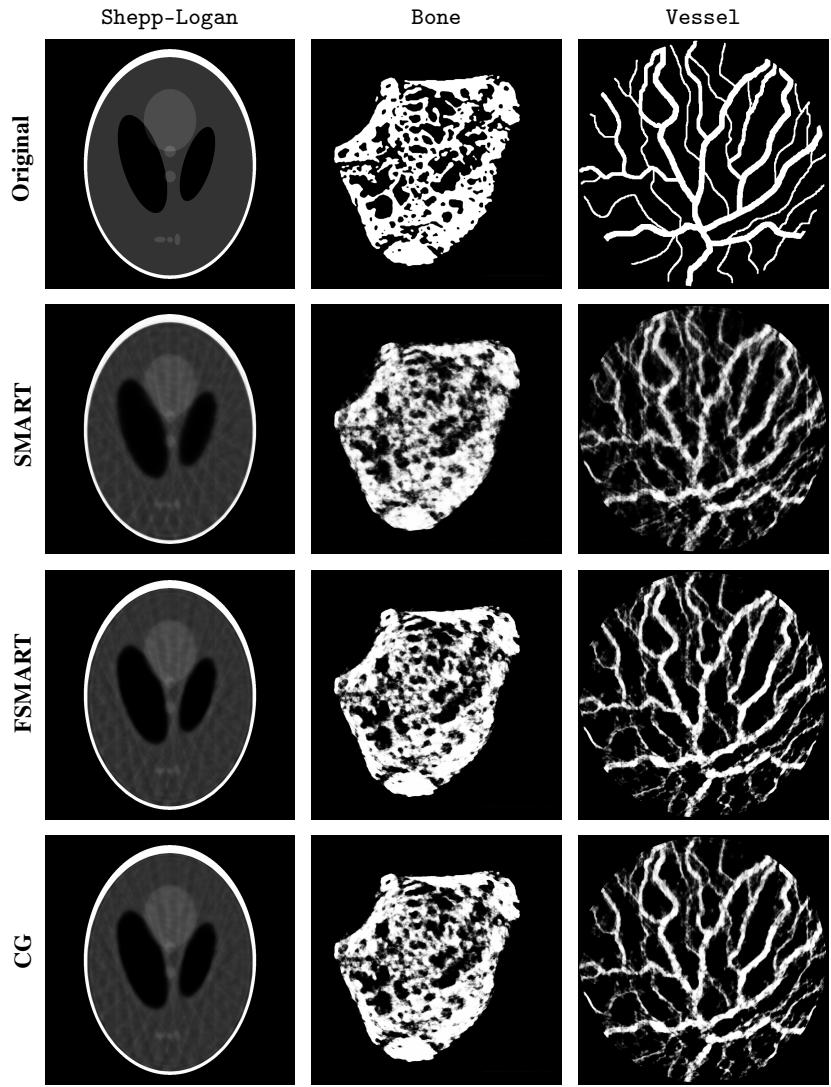

\begin{center}
    \showTomoImages{0.2}
\end{center}
  \caption{
  \textbf{Phantoms and reconstructions:} The first row shows large-scale phantoms of size $1024\times 1024$ used for numerical evaluation. Subsequent rows illustrate the resulting reconstructions from $2\%$ undersampling  from the algorithms SMART, FSMART, and CG (with DY mode), after 400 iterations. We refer to Figure \ref{fig:ExperimentTomo} for the plot depicting the relative decrease of the objective and remark
  that reconstructions from faster algorithms (last 2 rows) resemble the original more closely.
  }
  \label{fig:ExperimentTomoImages}
\end{figure}

\begin{figure}[htpb]
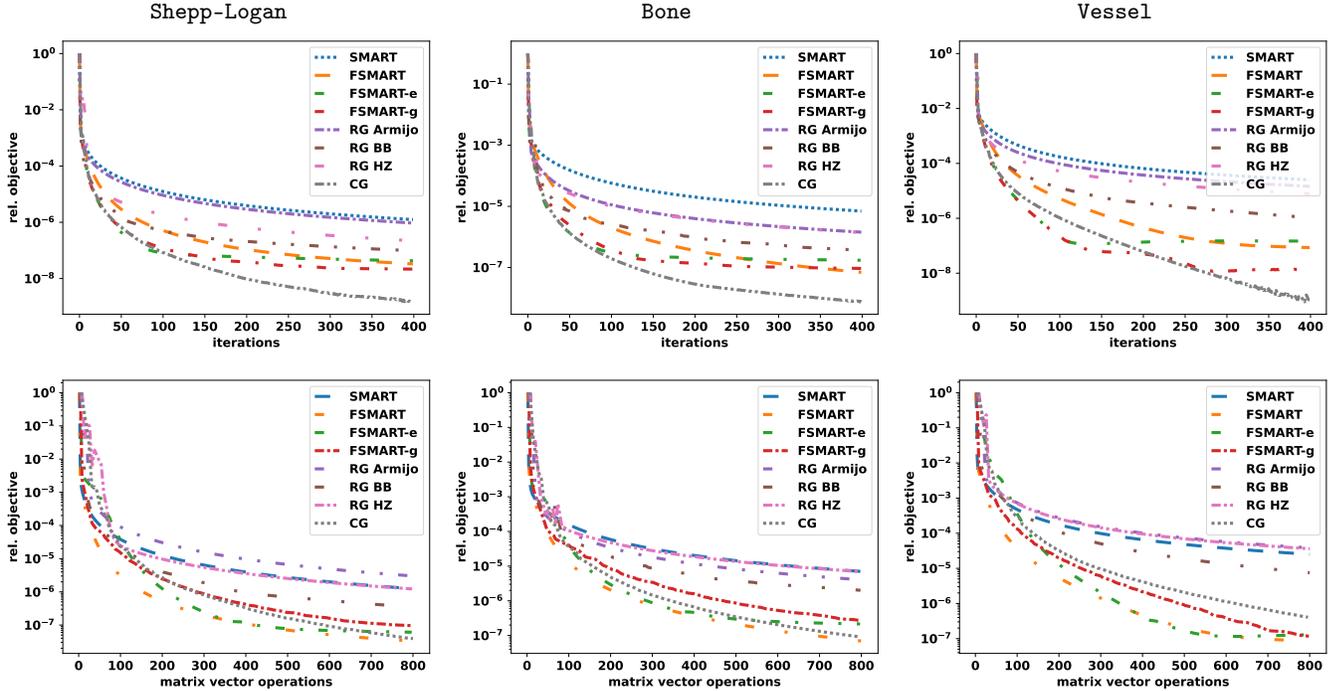

    \begin{center}
        \showExperimentTomo{0.35}
    \end{center}
    \caption{\textbf{Comparison of different algorithms on tomographic reconstruction} of phantoms shown in Figure \ref{fig:ExperimentTomoImages}.
    The plots show the relative decrease in objective values over iterations (\textbf{first row}) and in terms of costly operations (\textbf{second row}) for the different approaches detailed in Section \ref{sec:implDetails}. In these instances, the conjugate gradient method with DY mode \eqref{eq:beta-Dai-Yuan} from the Riemannian optimization framework exhibits the best performance, followed by FSMART-g from the Bregman framework.
    }
    \label{fig:ExperimentTomo}
\end{figure}

\begin{figure}[htpb]
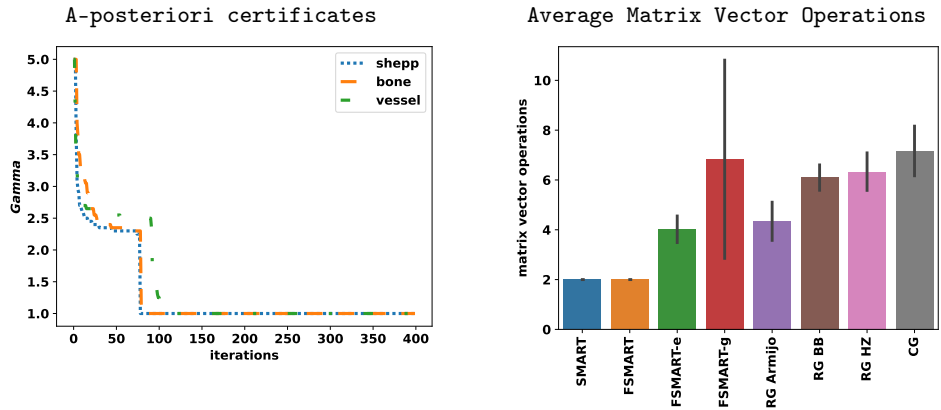

    \begin{center}
        \showExperimentAMVOGamma{0.35}{Tomo}
    \end{center}
    \caption{
    \textbf{A-posteriori certificates for tomographic reconstruction} are obtained from FSMART-e by tracking $\gamma_k$ values across iterations, depicted in the (\textbf{left}) panel for all problem instances. It is notable that $\gamma_k$ consistently drops to 1 in all instances. We investigated this drop-point for each instance and found it aligns with FSMART-e approaching the solution. The (\textbf{right}) panel illustrates the \textbf{average number of matrix-vector operations} for each algorithm across iterations and instances. SMART and FSMART constantly employ two matrix-vector operations per iteration by definition. However, other algorithms require more operations due to their utilization of line search or triangle gain adaption strategies.
    }
    \label{fig:ExperimentTomoAMVOGamma}
\end{figure}

\subsection{Deblurring}
We evaluated the algorithms listed in Table \ref{tab:list-of-algorithms} for the deblurring task using the images displayed in the first row of Figure \ref{fig:ExperimentBlurImages}. The first image, 'Kitten,' has pixel values within the range $[0,1]$, while 'Tiger' and 'QR-Code' admit binary pixel values. We degraded these images by severe Gaussian blur with a $33 \times 33$ mask and $\sigma=10$, as illustrated in the second row of Figure \ref{fig:ExperimentBlurImages}. Subsequent rows display the resulting reconstructions after 1000 iterations using the SMART, FSMART, and CG (with DY mode) algorithms. We refer to Figure \ref{fig:ExperimentBlur} for the plot illustrating the relative decrease in the objective. Finally, Figure \ref{fig:ExperimentBlurAMVOGamma} shows a-posteriori certificates obtained from FSMART-e, depicting observations of $\gamma_k$ over the iterations in the left panel, and average matrix vector operations in the right panel.

\begin{figure}[htpb]
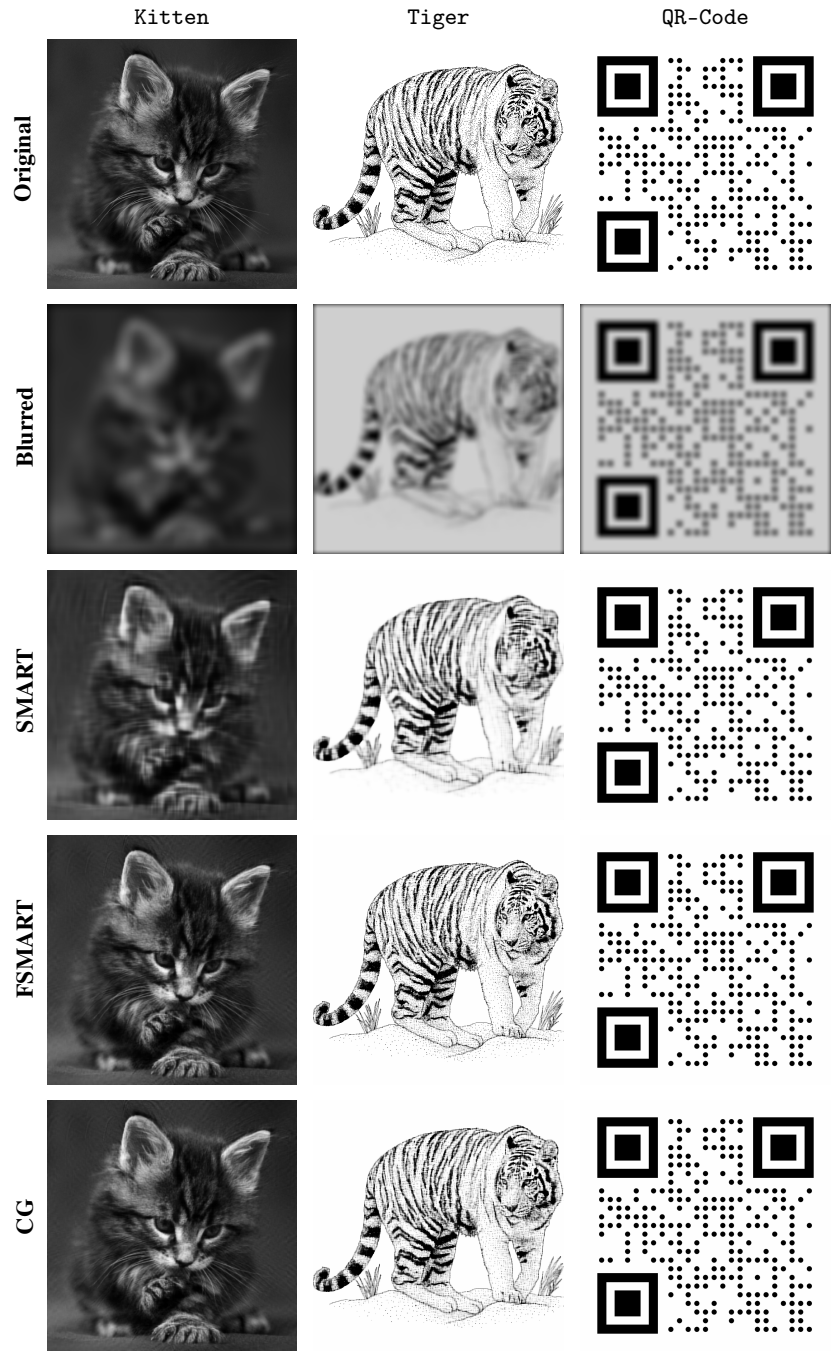

\begin{center}
    \showBlurImages{0.2}
\end{center}
  \caption{
  \textbf{Original images and deblurring:} These plots display the images used for numerical evaluation in the first row and their degraded versions due to severe Gaussian blur with a mask of size $33 \times 33$ and $\sigma=10$ in the second row. The rows below, ahow the resulting reconstructions after 1000 iterations from the algorithms SMART, FSMART, and CG (with DY mode).
  }
  \label{fig:ExperimentBlurImages}
\end{figure}

\begin{figure}[htpb]
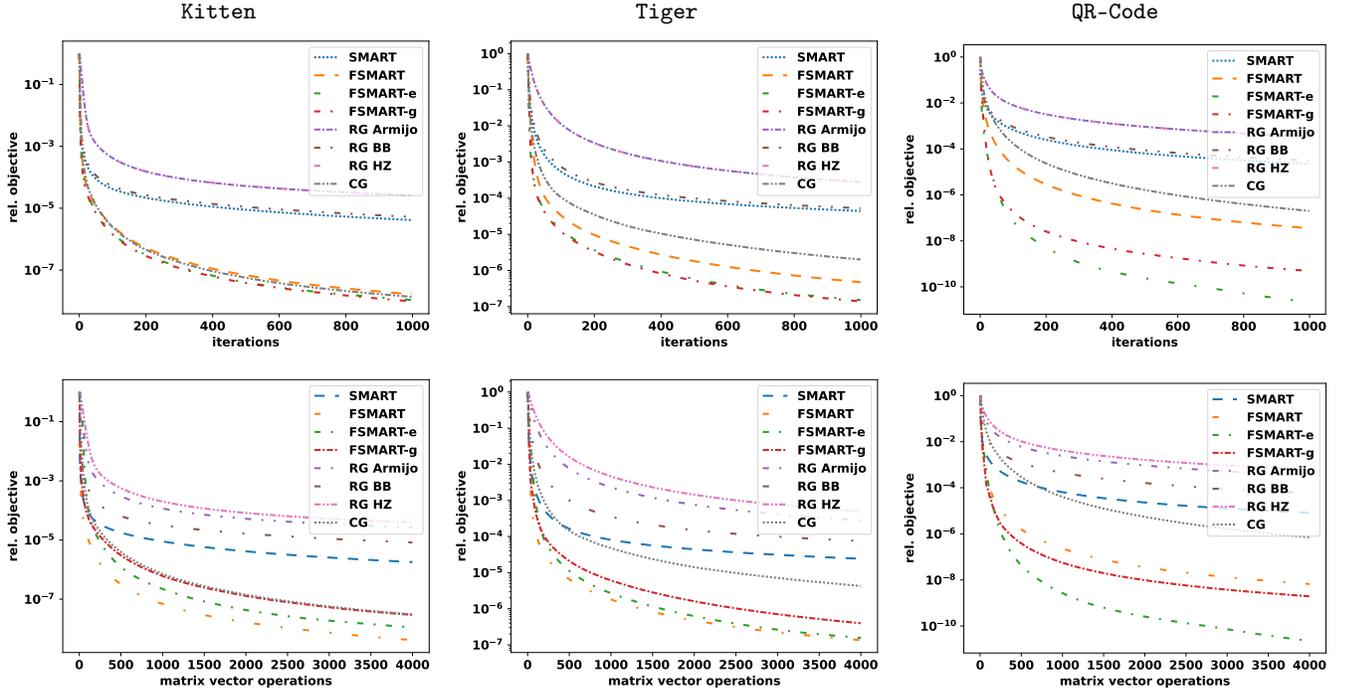

    \begin{center}
        \showExperimentBlur{0.35}
    \end{center}
    \caption{\textbf{Comparison of different algorithms on deblurring} of the instances shown in Figure \ref{fig:ExperimentBlurImages}. 
    The plots illustrate the relative decrease in objective values over iterations (\textbf{first row}) and as a function of costly operations (\textbf{second row}) for the various approaches detailed in Section \ref{sec:implDetails}. Among the accelerated algorithms within the Bregman framework, FSMART, FSMART-e, and FSMART-g perform the best. Within the Riemannian optimization group, the conjugate gradient (CG) implemented with DY mode as the $\beta_k$-update rule \eqref{eq:beta-Dai-Yuan} the best results. 
    }
    \label{fig:ExperimentBlur}
\end{figure}

\begin{figure}[htpb]
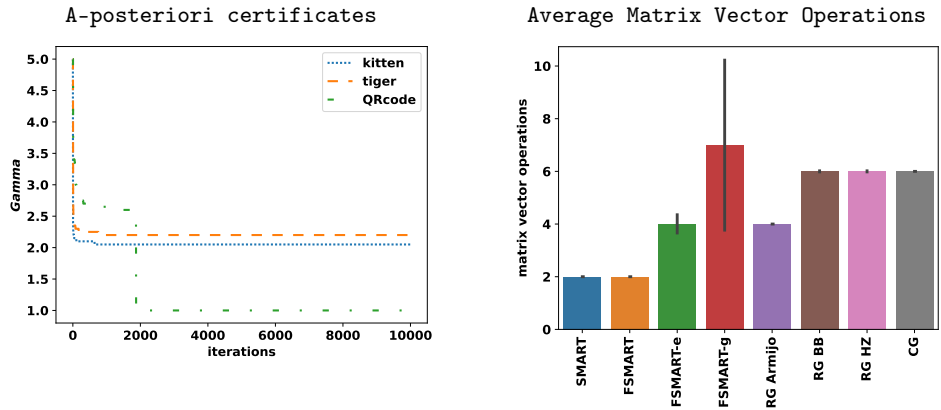

    \begin{center}
        \showExperimentAMVOGamma{0.35}{Blur}
    \end{center}
    \caption{
    \textbf{A-posteriori certificates for deblurring} are obtained from FSMART-e by observing $\gamma_k$ values over iterations, shown in the (\textbf{left}) panel for all problem instances. It is observed that $\gamma_k$ does not always drop to 1 in all instances, especially in the two more challenging cases (kitten, tiger). In these instances, the algorithms are still far from the sought solutions even after 10000 iterations.
    \textbf{Average of matrix vector operations} are shown in the (\textbf{right}) panel for each algorithm over iterates and instances. By definition SMART, FSMART constantly employ two matrix vector operations per iteration. The other algorithms require more operations as they employ line search or triangle gain adaption strategies.}
    \label{fig:ExperimentBlurAMVOGamma}
\end{figure}

\subsection{Discussion}\label{sec:Discussion}
We discuss our observations according to the following aspects:
\begin{enumerate}[(1)]
\item Observations about acceleration in the Bregman framework;
\item Observations about line search strategies;
\item Bregman vs.~Riemannian framework: empirical findings;
\end{enumerate}

\begin{enumerate}[(1)]
\item

\begin{enumerate}[(a)]
\item
\textit{Accelerated Bregman proximal gradient: convergence rate.} In all experiments, the ABPG methods (FSMART, FSMART-e, FSMART-g), especially the adaptive variants (FSMART-e, FSMART-g), demonstrate superior performance compared with the SMART method. 
Moreover, we obtain numerical certificates for the empirical $O(1/k^\gamma)$ rate, where $\gamma \in (1,2)$ in all our experiments. Specifically, the adaptive variants can automatically search for the largest possible $\gamma_k$ for which the convergence rate holds for finite $k$, even though $\gamma=1$ in the TSE \eqref{eq:TSE} for the $\KL$ based divergences considered in this paper. Notably, $\gamma_k\ge 2$ when far from the solution and consistently $\gamma_k=1$ as we approach the solution. Consequently, this causes the convergence rate to slow down as we near the solution.
\item
\textit{Accelerated Bregman proximal gradient: performance.}
The comparison involves three algorithms across three problem instances: In the case of expander graphs, FSMART-g is superior to FSMART and FSMART-e. However, for tomography, FSMART and FSMART-g perform comparably and outperform FSMART-g. A similar pattern emerges in image deblurring, except for the QR-code instance, where FSMART-e notably leads over FSMART-g and FSMART.
\end{enumerate}

\item
\begin{enumerate}[(a)]
\item
\textit{Line search strategies: overall performance.} 
As the SMART iteration represents a Riemannian gradient (RG) step with a fixed steplength, it serves as our baseline for comparison against RG with Armijo, Hager-Zhang (HZ), or Barzilai-Borwein (BB) steplength selection. When not considering computational cost, BB line search outperforms HZ, with HZ slightly surpassing Armijo. Meanwhile, Armijo itself surpasses SMART (without line search) on expander graphs. A similar trend is observed in the tomography examples, with HZ displaying better performance than Armijo on the non-binary image. However, in deblurring, Armijo and HZ perform comparably but are notably surpassed by BB. Interestingly, SMART outperforms the line search methods in this scenario.
It is worth noting that certain sets of line search parameters could reverse this scenario. Nevertheless, our aim was to avoid tuning line search parameters individually for each problem instance.

\item
\textit{Line search strategies: computational costs.} 
Due to the specific line search parameters chosen, the average number of matrix-vector operations per iteration ranges between 4 and 7 for all RG methods using Armijo, HZ, or BB. Among these, Armijo consistently demonstrates the highest computational efficiency and robustness across all problem instances. 
This contradicts the empirical findings in \cite{Sutti:2020vn}, where HZ was performing the best, albeit on a different type of manifold.
\end{enumerate}

\item
\begin{enumerate}[(a)]
\item
\textit{Bregman vs.~Riemannian framework: convergence.} 
Only SMART and Riemannian Gradient with Armijo line search guarantee a monotonically decreasing objective function. A non-monotonic decrease is observed for RG with HZ and BB line searches, particularly noticeable in the tomographic instances. There's no significant difference in convergence rates when the solution lies at the boundary. These trends are consistent across algorithms from both the Bregman and Riemannian gradient frameworks.

\item
\textit{Bregman vs.~Riemannian framework: best performance is achieved by 
accelerated BPG and by CG.} 
In tomographic recovery, the conjugate gradient method with DY mode \eqref{eq:beta-Dai-Yuan} from the Riemannian optimization framework demonstrates superior performance, followed by FSMART-g from the Bregman framework. Moreover, in deblurring and spike recovery tasks, the accelerated Bregman methods alongside the Riemannian CG method notably outperform the Riemannian gradient methods.

\end{enumerate}
\end{enumerate}

\section{Conclusion}
\label{sec:conclusion}
 We studied recent acceleration techniques derived from Bregman proximal gradient methods for SMART and conducted numerical a-posteriori certification for several large-scale problems. Although we could not certify the $\mc{O}(1/k^2)$ rate, the accelerated variants of SMART proved to be remarkably efficient.

Moreover, we characterized SMART as a Riemannian gradient descent scheme on parameter manifolds induced by the Fisher-Rao geometry. This approach utilizes a retraction based on e-geodesics. We computed closed-form expressions for retractions and the corresponding vector transports for three manifolds as case studies. Additionally, we explored various Riemannian line search strategies such as geometric Hager-Zhang-type or Barzilai-Borwein line search, and investigated acceleration using Riemannian conjugate gradient.

Conditions ensuring convergence in geometric settings relate to Lipschitz continuity, extending the well-known gradient Lipschitz condition from Euclidean settings. This geometric $L$-smoothness condition does not precisely align with the scenarios studied in our paper, however. On the other hand, in Section \ref{BPG-section}, we showed how convergence of SMART, considered as Riemannian gradient descent iteration with a fixed step size, can be proven under relative $L$-smoothness. This result might pave the way for proving convergence in the geometric context even when geometric $L$-smoothness conditions are not met.

Our further work will explore possibilities for connecting the local triangle scaling property used by accelerated Bregman proximal gradient, to certify convergence rates with line search methods based on suitable retractions.

\vspace{2mm}
\noindent\textbf{Acknowledgement.}
This work is funded by Deutsche Forschungsgemeinschaft (DFG) under Germany's Excellence Strategy EXC-2181/1 - 390900948 (the Heidelberg STRUCTURES Excellence Cluster).
\newcommand{\etalchar}[1]{$^{#1}$}
\providecommand{\bysame}{\leavevmode\hbox to3em{\hrulefill}\thinspace}
\providecommand{\MR}{\relax\ifhmode\unskip\space\fi MR }
\providecommand{\MRhref}[2]{%
  \href{http://www.ams.org/mathscinet-getitem?mr=#1}{#2}
}
\providecommand{\href}[2]{#2}


\begin{thebibliography}{HLWY19}

\bibitem[ABB04]{Alvarez:2004}
F.~Alvarez, J.~Bolte, and O.~Brahic, \emph{{Hessian Riemannian Gradient Flows in Convex Programming}}, SIAM J. Control Optim. \textbf{43} (2004), no.~2, 477--501.

\bibitem[AC10]{InformationGeometry-2010}
S.-I. Amari and A.~Cichocki, \emph{{Information Geometry of Divergence Functions}}, Bull. Polish Acad. Sci \textbf{58} (2010), no.~1, 183--195.

\bibitem[AMS08]{Absil:2008aa}
P.~A. Absil, R.~Mahony, and R.~Sepulchre, \emph{{Optimization Algorithms on Matrix Manifolds}}, Princeton University Press, 2008.

\bibitem[AN00]{Amari:2000}
S.-I. Amari and H.~Nagaoka, \emph{{Methods of Information Geometry}}, Amer. Math. Soc. and Oxford Univ. Press, 2000.

\bibitem[AT06]{Auslender-IGA-2006}
A.~Auslender and M.~Teboulle, \emph{{Interior Gradient and Proximal Methods for Convex and Conic Optimization}}, SIAM Journal on Optimization \textbf{16} (2006), no.~3, 697--725.

\bibitem[BB88]{BarzilaiBorwein:88}
J.~Barzilai and J.~M. Borwein, \emph{{Two-Point Step Size Gradient Methods}}, IMA Journal of Numerical Analysis \textbf{8} (1988), 141--148.

\bibitem[BB97]{Bauschke:1997aa}
H.~G. Bauschke and J.~M. Borwein, \emph{{Legendre Functions and the Method of Random Bregman Projections}}, J. Convex Anal. \textbf{4} (1997), 27--67.

\bibitem[BBDV09]{Bertero_2009}
M.~Bertero, P.~Boccacci, G.~Desider\`{a}, and G.~Vicidomini, \emph{{Image Deblurring with Poisson Data: from Cells to Galaxies}}, Inverse Problems \textbf{25} (2009), no.~12, 123006.

\bibitem[BBT17]{Bauschke:2017aa}
H.~H. Bauschke, J.~Bolte, and M.~Teboulle, \emph{{A Descent Lemma Beyond Lipschitz Gradient Continuity: First-Order Methods Revisited and Applications}}, Mathematics of Operations Research \textbf{42} (2017), no.~2, 330--348.

\bibitem[BI08]{Indyk_expander}
R.~Berinde and P.~Indyk, \emph{{Sparse Recovery Using Sparse Random Matrices}}, 2008, Technical Report, MIT.

\bibitem[Bro86]{Brown:1986vy}
L.~D. Brown, \emph{{Fundamentals of Statistical Exponential Families}}, Institute of Mathematical Statistics, Hayward, CA, 1986.

\bibitem[BT03]{Beck:2003aa}
A.~Beck and M.~Teboulle, \emph{{Mirror Descent and Nonlinear Projected Subgradient Methods for Convex Optimization}}, Operations Research Letters \textbf{31} (2003), no.~3, 167--175.

\bibitem[Byr93]{Byrne:1993aa}
C.~L. Byrne, \emph{{Iterative Image Reconstruction Algorithms based on Cross-Entropy Minimization}}, IEEE Trans. Image Process. \textbf{2} (1993), no.~1, 96--103.

\bibitem[Byr98]{Byrne_bounded_SMART}
\bysame, \emph{{Iterative Algorithms for Deblurring and Deconvolution with Constraints}}, Inverse Problems \textbf{14} (1998), no.~6, 1455.

\bibitem[Byr14]{Byrne2014}
\bysame, \emph{{Iterative Optimization in Inverse Problems}}, CRC Press, 2014.

\bibitem[Cen81]{Censor1981}
Y.~Censor, \emph{{Row-Action Methods for Huge and Sparse Systems and their Applications}}, SIAM Review \textbf{23} (1981), no.~4, 444--466.

\bibitem[Cen97]{Censor:1997aa}
\bysame, \emph{{Parallel Optimization: Theory, Algorithms and Applications}}, Oxford University Press, 1997.

\bibitem[CL81]{Censor:1981vy}
Y.~Censor and A.~Lent, \emph{{An Iterative Row-Action Method for Interval Convex Programming}}, J. Optimiz. Theory Appl. \textbf{34} (1981), no.~3, 321--353.

\bibitem[CS87]{CensorSegman87}
Y.~Censor and J.~Segman, \emph{On block-iterative entropy maximization}, J. Inform. Optim. Sci. \textbf{8} (1987), no.~3, 275--291.

\bibitem[Csi75]{Csiszar:75}
I.~Csiszar, \emph{{$I$-Divergence Geometry of Probability Distributions and Minimization Problems}}, The Annals of Probability \textbf{3} (1975), no.~1, 146--158.

\bibitem[CT93]{Chen_Teboulle_1993}
G.~Chen and M.~Teboulle, \emph{{C}onvergence analysis of a proximal-like minimization algorithm using {B}regman functions}, SIAM J. Optim. \textbf{3} (1993), no.~3, 538--543.

\bibitem[CT06]{Cover:2006aa}
T.~M Cover and J.~A. Thomas, \emph{{Elements of Information Theory {(2.} ed.)}}, Wiley, 2006.

\bibitem[CZ92]{Censor1992}
Y.~Censor and S.A. Zenios, \emph{{Proximal Minimization Algorithm with D-functions}}, J. Optim. Theory Appl \textbf{73} (1992), no.~3, 451--464.

\bibitem[DR72]{DarrochRatcliff72}
J.~N. Darroch and D.~Ratcliff, \emph{{Generalized Iterative Scaling for Log-Linear Models}}, The Annals of Mathematical Statistics \textbf{43} (1972), no.~5, 1470 -- 1480.

\bibitem[DTdB22]{Dragomir:2022aa}
R.-A. Dragomir, A.~B. Taylor, A.~d'Aspremont, and J.~Bolte, \emph{{Optimal Complexity and Certification of Bregman First-Order Methods}}, Math. Program. \textbf{194} (2022), 41--83.

\bibitem[FLP19]{Ferreira:2019}
O.~P. Ferreira, M.~S. Louzeiro, and L.~F. Prudente, \emph{{Gradient Method for Optimization on Riemannian Manifolds with Lower Bounded Curvature}}, SIAM Journal on Optimization \textbf{29} (2019), no.~4, 2517--2541.

\bibitem[GHS20]{Ghai:2020va}
U.~Ghai, E.~Hazan, and Y.~Singer, \emph{{Exponentiated Gradient vs. Meets Gradient Descent}}, Proc. Mach. Learning Res. \textbf{117} (2020), 1--23.

\bibitem[GLL86]{Grippo1986}
L.~Grippo, F.~Lampariello, and S.~Lucidi, \emph{{A Nonmonotone Line Search Technique for Newton's Method}}, SIAM J. Numer. Anal. \textbf{23} (1986), no.~4, 707--716.

\bibitem[GPn23]{Gutman:2022tu}
D.~H. Gutman and J.~F. Pe\~{n}a, \emph{{Perturbed Fenchel Duality and First-Order Methods}}, Math. Program. \textbf{198} (2023), no.~1, 443--469.

\bibitem[HCGL85]{Herman1985a}
G.T. Herman, Y.~Censor, D.~Gordon, and R.M. Lewitt, \emph{{Comment on A Statistical Model for Positron Emission Tomography}}, J. Amer. Statist. Assoc \textbf{80} (1985), 22--25.

\bibitem[HLWY19]{Hu2019}
J.~Hu, X.~Liu, Z.~Wen, and Y.~Yuan, \emph{{A Brief Introduction to Manifold Optimization}}, 2019, arXiv:1906.05450.

\bibitem[HRX21]{Hanzely:2021vc}
F.~Hanzely, P.~Richt\'{a}rik, and L.~Xiao, \emph{{Accelerated Bregman Proximal Gradient Methods for Relatively Smooth Convex Optimization}}, Comput. Optim. Appl. \textbf{79} (2021), 405--440.

\bibitem[IP17]{Iannazzo2017}
B.~Iannazzo and M.~Porcelli, \emph{{The Riemannian Barzilai–Borwein Method with Nonmonotone Line Search and the Matrix Geometric Mean Computation}}, IMA Journal of Numerical Analysis \textbf{38} (2017), no.~1, 495--517.

\bibitem[JKM23]{Juditsky:2022wf}
A.~Juditsky, J.~Kwon, and \'{E}. Moulines, \emph{{Unifying Mirror Descent and Dual Averaging}}, Math. Programming \textbf{199} (2023), 793--830.

\bibitem[Jos17]{Jost:2017aa}
J.~Jost, \emph{{Riemannian Geometry and Geometric Analysis}}, 7th ed., Springer-Verlag Berlin Heidelberg, 2017.

\bibitem[JS22]{Jin:2022aa}
J.~Jin and S.~Sra, \emph{{Understanding Riemannian Acceleration via a Proximal Extragradient Framework}}, Proc. Mach. Learning Res. \textbf{178} (2022), 1--39.

\bibitem[KBB15]{Krichene:2015}
W.~Krichene, A.~Bayen, and P.~L. Bartlett, \emph{{Accelerated Mirror Descent in Continuous and Discrete Time}}, Advances in Neural Information Processing Systems (C.~Cortes, N.~Lawrence, D.~Lee, M.~Sugiyama, and R.~Garnett, eds.), vol.~28, Curran Associates, Inc., 2015.

\bibitem[KPS{\etalchar{+}}23]{Kahl:2023aa}
M.~M. Kahl, S.~Petra, C.~Schn{\"{o}}rr, G.~Steidl, and M.~Zisler, \emph{{On the Remarkable Efficiency of the SMART Iteration}}, {Scale Space and Variational Methods in Computer Vision (SSVM)} (L.~Calatroni, M.~Donatelli, S.~Morigi, M.~Prato, and M.~Santacesaria, eds.), LNCS, vol. 14009, Springer, 2023, pp.~418--430.

\bibitem[LC91]{Lent1991a}
A.~Lent and Y.~Censor, \emph{{The Primal-Dual Algorithm as a Constraint-Set-Manipulation Device}}, Math. Program. \textbf{50} (1991), no.~1--3, 343--357.

\bibitem[MPZ23]{Mueller2022multilevel}
S.~M\"uller, S.~Petra, and M.~Zisler, \emph{{Multi-level Geometric Optimization for Regularised Constrained Linear Inverse Problems}}, Pure and Applied Functional Analysis \textbf{8} (2023), no.~3, 855--880.

\bibitem[NY83]{Nemirovski:1983}
A.~Nemirovski and D.~Yudin, \emph{{Problem Complexity and Method Efficiency in Optimization}}, Wiley, 1983.

\bibitem[Ovi22]{Oviedo:2022}
H.~Oviedo, \emph{{Global Convergence of Riemannian Line Search Methods with a Zhang-Hager-type Condition}}, Numer Algor \textbf{91} (2022), 1183--1203.

\bibitem[PS14]{Petra2014}
S.~Petra and C.~Schn\"{o}rr, \emph{{A}verage {C}ase {R}ecovery {A}nalysis of {T}omographic {C}ompressive {S}ensing}, Linear Algebra and its Applications \textbf{441} (2014), 168--198, Special issue on Sparse Approximate Solution of Linear Systems.

\bibitem[PSBL13]{Petra2013a}
S.~Petra, C.~Schn\"orr, F.~Becker, and F.~Lenzen, \emph{{B-SMART: Bregman-Based First-Order Algorithms for Non-Negative Compressed Sensing Problems}}, Proc. SSVM, LNCS, vol. 7893, Springer, 2013, pp.~110--124.

\bibitem[Rau23]{Thesis_Maren}
M.~Raus, \emph{{}}, 2023, Master's thesis, Heidelberg University.

\bibitem[RM15]{Raskutti:2015aa}
G.~Raskutti and S.~Mukherjee, \emph{{The Information Geometry of Mirror Descent}}, IEEE Transactions on Information Theory \textbf{61} (2015), no.~3, 1451--1457.

\bibitem[Roc76]{Rockafellar1976}
R.~T. Rockafellar, \emph{{Monotone Operators and the Proximal Point Algorithm}}, SIAM J. Control Optim \textbf{14} (1976), no.~5, 877--898.

\bibitem[Roc97]{Rockafellar:1997}
\bysame, \emph{{Convex Analysis}}, Princeton paperbacks, {Princeton University Press}, Princeton, N.J, 1997.

\bibitem[RW10]{RockafellarWets2010}
R.~T. Rockafellar and R.~J.~B. Wets, \emph{{Variational Analysis {(3.} ed.)}}, Springer, 2010.

\bibitem[SSI23]{Sakai2022}
H.~Sakai, H.~Sato, and H.~Iiduka, \emph{{Global Convergence of Hager-Zhang type Riemannian Conjugate Gradient Method}}, 2023, p.~127685.

\bibitem[SV21]{Sutti:2020vn}
M.~Sutti and B.~Vandereycken, \emph{{Riemannian Multigrid Line Search for Low-Rank Problems}}, SIAM Journal on Scientific Computing \textbf{43} (2021), no.~3, A1803--A1831.

\bibitem[Tse08]{Tseng:2008}
P.~Tseng, \emph{{On Accelerated Proximal Gradient Methods for Convex-Concave Optimization}}, 2008, unpublished manuscript.

\bibitem[ZH04]{HagerZhang:2004}
H.~Zhang and W.~W. Hager, \emph{{A Nonmonotone Line Search Technique and its Application to Unconstrained Optimization}}, SIAM J. Optim. \textbf{14} (2004), no.~4, 1043--1056.

\bibitem[ZS16]{Zhang:2016}
H.~Zhang and S.~Sra, \emph{{First-order Methods for Geodesically Convex Optimization}}, 2016, arXiv:1602.06053.

\end{thebibliography}
\end{document}